\documentclass[11pt,reqno]{amsart}
\usepackage{amsmath,amsfonts,amssymb,bbm}
\usepackage{a4wide}
\usepackage[active]{srcltx}
\usepackage{dsfont}
\usepackage{xfrac}
\usepackage{xcolor}
\usepackage{mathrsfs}
\usepackage{mathtools}
\usepackage{tikz}
\usepackage{cancel}
\usepackage{pgfplots}
\usetikzlibrary{intersections}
\usepackage{tikz-3dplot}
\usetikzlibrary{shadings,intersections,patterns}
\usepackage{pgfplots}
\usepgfplotslibrary{fillbetween}
\usetikzlibrary{patterns}
\usetikzlibrary{intersections, calc, angles}
\usepackage[outline]{contour}
\contourlength{0.05em}
\usepackage{makecell}
\usepackage{verbatim}
\usepackage[shortlabels]{enumitem}
\usepackage{float}
\usepackage[unicode,hypertexnames=false,colorlinks=true,linkcolor=blue,citecolor=blue]{hyperref}
\usepackage{geometry}

\usepackage[
backend=biber,
style=alphabetic,
sorting=nyt,
maxbibnames=50,
giveninits=true,
maxalphanames=10
]{biblatex}
\usepackage{csquotes}
\renewbibmacro{in:}{%
	\ifentrytype{article}{}{}}
\DeclareFieldFormat[article]{volume}{\textbf{#1}}
\DeclareFieldFormat[article]{title}{\textit{#1}}
\DeclareFieldFormat[article]{journaltitle}{#1}
\DeclareFieldFormat[article]{number}{\textbf{#1}}
\renewbibmacro*{volume+number+eid}{%
	\printfield{volume}%
	\setunit*{\textbf{\addcolon}}%<---- was \setunit*{\adddot}%
	\printfield{number}\setunit{\addcomma\space}%
	\printfield{eid}}

\usepackage{cfr-lm}
\usepackage[osf]{libertine}
\usepackage{hyperref}
\usepackage[capitalise, nameinlink, noabbrev]{cleveref} %Leave it as the last package
\hypersetup{colorlinks=true,
	linkcolor=blue,
	citecolor=red,
}

\hypersetup{colorlinks,breaklinks,
	linkcolor=[rgb]{0,0,0},
	citecolor=[rgb]{0,0,0},
	urlcolor=[rgb]{0,0,0}}

\usepackage{esint}

\newcommand{\be}{\begin{equation}}
	\newcommand{\ee}{\end{equation}}
\theoremstyle{plain}

\newtheorem{theorem}{Theorem}[section]
\newtheorem{proposition}[theorem]{Proposition}

\newtheorem{lemma}[theorem]{Lemma}
\newtheorem{definition}[theorem]{Definition}

\newtheorem{remark}[theorem]{Remark}

\newcommand{\R}{\mathbb{R}}

\newcommand{\N}{\mathbb{N}}

\newcommand{\eps}{\varepsilon}

\newcommand{\HH}{\mathcal{H}}

\newcommand{\bea}{\begin{equation*}\begin{aligned}}
		\newcommand{\eea}{\end{aligned}\end{equation*}}

\newcommand{\res}{\mathop{\hbox{\vrule height 7pt width .5pt depth 0pt
			\vrule height .5pt width 6pt depth 0pt}}\nolimits}

\DeclareMathOperator{\dist}{dist}

\DeclareMathOperator{\diver}{div}

\DeclareMathOperator{\sgn}{sgn}

\crefname{subsection}{subsection}{subsections}

\numberwithin{equation}{section}

\title{The singular set in the thin Stefan problem}

\subjclass[2020]{Primary 35R35; Secondary 35K85, 35B65, 28A78}
\keywords{Free boundaries, thin Stefan problem, parabolic Signorini problem, singular set, generic regularity}

\author[M. Carducci]{Matteo Carducci}\thanks{}
\address {Matteo Carducci \newline \indent
	Classe di Scienze, Scuola Normale Superiore \newline \indent
	Piazza dei Cavalieri 7, 56126 Pisa, Italy}
\email{\href{mailto:matteo.carducci@sns.it}{matteo.carducci@sns.it}}

\author[R. Colombo]{Roberto Colombo}\thanks{}
\address {Roberto Colombo \newline \indent
	EPFL SB MATH\newline \indent
	Institute of Mathematics, Station 8, CH-1015 Lausanne, Switzerland}
\email{\href{mailto:roberto.colombo@epfl.ch}{roberto.colombo@epfl.ch}}

\author[C. Torres-Latorre]{Clara Torres-Latorre}\thanks{}
\address{Clara Torres-Latorre\newline \indent Instituto de Ciencias Matemáticas\newline \indent Consejo Superior de Investigaciones Científicas\newline \indent
	C/ Nicolás Cabrera, 13-15, 28049 Madrid, Spain.}
\email{\href{mailto:clara.torres@icmat.es}{clara.torres@icmat.es}}

\pgfplotsset{compat=1.18}
\begin{document}
	
	\newgeometry{margin=2cm}
	\begin{abstract} 
		We study the singular set of the free boundary for solutions of the thin Stefan problem in $\mathbb{R}^{n+1}\times \mathbb{R}$, which models the melting of a thin sheet of ice over water. After a suitable transformation, we reduce to studying the parabolic Signorini (thin obstacle) problem with a time-monotonicity condition.
        First, we prove that the free boundary is locally the graph of a differentiable melting function $t=\tau(x)$ defined on the $n$-dimensional thin space, and that the singular set coincides with the critical set of $\tau$. Next, we show that the singular set has parabolic Hausdorff dimension at most $n$. We then establish generic regularity up to dimension $n+1=4$, proving that the free boundary is regular for almost every time. Finally, under smoothness assumptions on the obstacle, we show that the singular set is contained in a $C^\infty$-hypersurface outside a set of parabolic dimension at most $n-1$. 

Our analysis combines ideas developed for the classical Stefan problem with several new ingredients specific to the thin setting, including frequency formulas for second blow-ups, epiperimetric inequalities, higher order expansions, and a fine stratification of the singular set.
	\end{abstract}
	
	\maketitle
	\tableofcontents
	
	\section{Introduction}
	   We study a free boundary problem that models the melting of a thin sheet of ice over a body of water. The ice occupies a thin layer, such as the surface of a lake, and the melted region expands as time evolves. The interface separating the still frozen part of the sheet from the melted region therefore moves along the thin set. In analogy with the classical Stefan problem \cite{lc,stefan}, we refer to this problem as the \emph{thin Stefan problem}. 
	After a suitable transformation of the temperature, the problem reduces to the study of time-monotone solutions of the parabolic thin obstacle problem
	\be\label{eq:parabolic-signorini-problem}\begin{cases}
		\partial_t u-\Delta u=0 &\quad \text{in }Q_1^+:=Q_1\cap\{x_{n+1}>0\},\\
		\min\{-\partial_{n+1}u,u-\varphi\}=0&\quad \text{on }Q_1':=Q_1\cap\{x_{n+1}=0\},\\
		u(x',x_{n+1},t)=u(x',-x_{n+1},t)&\quad \text{in }Q_1:=B_1\times(-1,0],\\
		\partial_t u>0&\quad \text{in }Q_1^+,\\
	\end{cases}\ee
	 with an obstacle $\varphi:B_{1}'\to \R$ determined by the initial conditions. 
     Throughout the paper, we will assume that
     \be\label{eq:ostacolo}\varphi\in C^{k,\gamma}(B_{1}'),\quad k\ge 4,\quad \gamma\in (0,1),\quad \ell:=k+\gamma.\ee
    The contact set and the free boundary are respectively
	$$\Lambda := \left\{(x',0,t)\in Q_{1}': u(x',0,t)=\varphi(x')\right\}\quad\text{and}\quad  \Gamma :=\partial_{Q_{1}'}\Lambda.$$ 
	In the physical model, $\Lambda$ corresponds to the region that remains covered by ice, while $\Gamma$ is the corresponding melting front. The time-monotonicity of $u$ in \eqref{eq:parabolic-signorini-problem} reflects the progressive and irreversible melting of the ice. A non-zero obstacle $\varphi$ allows spatially varying initial ice density, while a spatially uniform initial ice layer admits a zero obstacle formulation; 
   we refer to \cref{subsect:ice_melting} for a detailed derivation of \eqref{eq:parabolic-signorini-problem} from the physical model.
	
    The free boundary problem \eqref{eq:parabolic-signorini-problem} also arises naturally in diffusion through
	semipermeable membranes, where $\varphi$ represents a spatially varying exterior pressure; see \cite[Section I.2.2]{dl76} and \cite[Section 1.4]{fr21}. In this context, the time monotonicity assumption corresponds to a monotone loading regime. Related models include the boundary Stefan problem introduced in \cite{ac10} and the parabolic Signorini problem arising in boundary heat control studied in \cite{allenshi2016}.
	
	\smallskip
	The parabolic thin obstacle problem is also known as the parabolic Signorini problem, analogously to its well-known stationary counterpart originating in elastostatics \cite{sig33, sig59}, which has been the subject of considerable mathematical interest in the past decades \cite{ac04, acs08, gp09, psu12, survey}. Existence and regularity of solutions were studied in \cite{ls67, dl76, ath82, ura85, au88, au96}. The free boundary was first considered in the time-monotone setting in \cite{ath84}, where it was shown to be locally a Lipschitz graph $t = \tau(x',0)$. The general case (i.e.~without time-monotonicity) was later investigated systematically in \cite{dgpt17}, where the authors established the optimal parabolic $H^{3/2,3/4}$-regularity of solutions and studied structure and regularity of the free boundary. In particular, they proved that the free boundary can be decomposed into a relatively open \emph{regular set} and its complement, which we call the \emph{singular set}\footnote{In \cite{dgpt17}, the term \textit{singular points} refers to points where the contact set has zero density. Here, instead, singular points denote all free boundary points that are not regular, consistent with the terminology commonly adopted in the Stefan problem.}.
    Around regular points, the free boundary is a $C^{1,\alpha}$-hypersurface \cite{dgpt17,petrosyan2019boundedness}. Further developments in the regularity theory for the parabolic Signorini free boundary can be found in \cite{shi18,smooth-parabolic-signorini,fr21}.

    \smallskip
	A classical question in obstacle-type problems is that of \emph{generic regularity}. This goes back to Schaeffer in the 1970s, who constructed examples of smooth obstacles for which the free boundary has large singular sets \cite{sch77}. Nevertheless, he conjectured that for \textit{generic} solutions the free boundary should be smooth \cite{sch74}. 
    In recent years, generic regularity and bounds on the size of singular sets have been established in several obstacle-type free boundary problems.
    For the classical elliptic obstacle problem, it was proved in \cite{mon03,frs20} that in low dimensions, and for monotone one-parameter families of solutions, the free boundary is smooth for almost every value of the parameter. Analogous results were subsequently obtained for the Signorini problem and the fractional obstacle problem \cite{fr21,ft23,cc24}. In the parabolic setting, time itself plays the role of the parameter, so that generic regularity means almost everywhere \textit{in time}. This has been established for the parabolic Signorini problem in dimension $n + 1 = 2$ in \cite{fr21}, and for the Stefan problem in low dimensions in \cite{frs24,col2025}; see also \cite{supercooled} for the supercooled Stefan problem. Related generic regularity results also appear in the theory of minimal surfaces \cite{generic8,generic9-10,generic11}, one-phase free boundary problems \cite{fy23,fy25}, and Coulomb gases \cite{cf24}.

\smallskip

In this paper, we develop a fine analysis of the singular set in the thin Stefan problem, addressing two closely related questions: how large this set can be in space-time, and how often singularities can occur.
We establish bounds on its parabolic Hausdorff dimension and describe its fine structure in arbitrary dimension.
As a consequence of this analysis, we prove generic regularity results in dimensions $n+1\le 4$.

Our analysis combines ideas developed for the classical Stefan problem and the elliptic Signorini problem with several new ingredients in the parabolic thin setting. These include frequency formulas for second blow-ups, epiperimetric inequalities, cleaning estimates, and higher order expansions leading to a fine stratification of the singular set. For further details about the main ideas of the proof, we refer to \cref{subsect:strategy}.

	\subsection{Main results}\label{subsect:main}
	We now state our main results on the regularity and structure of the free boundary of the thin Stefan problem, formulated as \eqref{eq:parabolic-signorini-problem}. 
    Solutions of \eqref{eq:parabolic-signorini-problem} are assumed to satisfy the optimal regularity proved in \cite{dgpt17} (see \eqref{eq:reg-sol}).
    Based on the results in that work, free boundary points can be divided into regular points and singular points. At regular points, solutions detach from the obstacle with optimal rate given by the parabolic distance to the power $3/2$, whereas singular points are characterized by a higher order of detachment. We refer to \cref{subsec:frequency} and \cref{subsubsect:frequencies} for the precise definitions. 
    It is well-known that the free boundary is a $C^{1,\alpha}$-hypersurface around regular points \cite{dgpt17,petrosyan2019boundedness}.
Throughout the paper, we denote by $\Sigma$ the singular set and by $\pi_t(\Sigma):=\{t\in(-1,0]:\exists\,x\text{ such that }(x,t)\in\Sigma\}$ the set of singular times. We also denote by $\text{dim}_{\text{par}}$ the Hausdorff dimension with respect to the parabolic distance $\text{dist}_{\text{par}}((x_1,t_1),(x_2,t_2)):=\sqrt{|x_1-x_2|^2+|t_1-t_2|}$.
	
    Our first result concerns the space-time regularity of the entire free boundary $\Gamma$ and gives a characterization of singular points in terms of the melting-time function.
	
	\begin{theorem}\label{theorem:structure-spacetime-intro}
		Let $u$ be a solution of \eqref{eq:parabolic-signorini-problem}, with obstacle $\varphi$ satisfying \eqref{eq:ostacolo}. Then,
        the free boundary is locally the graph of a $C^1$ function $t=\tau(x',0)$, and the singular points are precisely the critical points of $\tau$.
	\end{theorem}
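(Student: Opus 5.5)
The argument will mirror the classical Stefan problem, where the free boundary is a level set of the melting time. Time-monotonicity makes the melting time the right graph variable.

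\emph{Step 1: the free boundary is a graph.} Set $w:=u-\varphi$ on the thin space. Then $w\ge 0$, and $\partial_t w=\partial_t u\ge 0$ on $Q_1'$ by continuity, since $\partial_t u>0$ in $Q_1^+$. Hence for each $x'$ the set $\{t: w(x',0,t)=0\}$ is an interval of the form $(-1,\tau(x')]$. We define
$$\tau(x'):=\sup\{t: u(x',0,t)=\varphi(x')\}.$$
Strict monotonicity near the free boundary should follow from a Hopf-type argument. Indeed, $\partial_t u$ is caloric and positive in $Q_1^+$, it vanishes on the interior of $\Lambda$, and it satisfies $\partial_{n+1}\partial_t u=0$ on the complement of $\Lambda$ in the thin space. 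From this we get that $\Lambda=\{t\le\tau(x')\}$, that $\Gamma=\{t=\tau(x')\}$, and that $\tau$ is continuous. Athanasopoulos \cite{ath84} already gives the Lipschitz graph property. We will use this, together with the nondegeneracy $\partial_t u\gtrsim$ (something positive) away from $\Lambda$.

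\emph{Step 2: $C^1$ regularity and the characterization of critical points.}

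At a regular point $(x_0',\tau(x_0'))$, the results of \cite{dgpt17,petrosyan2019boundedness} show that $\Gamma$ is a $C^{1,\alpha}$ hypersurface. Its blow-up is the $3/2$-homogeneous profile $\mathrm{Re}(x\cdot e+i|x_{n+1}|)^{3/2}$, which depends only on space. So the normal to $\Gamma$ is $(e,\nu_t)$ with $e\neq0$. We must check that $\Gamma$ is not tangent to the time direction, i.e. that $\nabla\tau(x_0')\neq0$ and is finite. The space-time normal of $\Gamma$ is $(e,\nu_t)$ up to normalization. Since $\Gamma$ is a graph over $x'$, its time component $\nu_t$ must be nonzero. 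The normal has a nonzero spatial component $e$, so $\nabla\tau=-e/\nu_t\neq 0$.

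To see this quantitatively, note that at regular points $\partial_t u$ has a blow-up proportional to $\partial_e$ of the $3/2$-profile. This is a positive $1/2$-homogeneous solution of order $|x-\Gamma|^{1/2}$. Comparing it with the boundary Harnack quotient $\partial_t u/\partial_e u$ then gives $\partial_t\tau$-type ratios bounded and nonzero.

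At a singular point $(x_0',t_0)$ the vanishing order is $\kappa\ge 2$; by the frequency gap the admissible frequencies are $\kappa=2$ or $\kappa\ge 7/2$. We expect the main obstacle to be showing that $\tau$ is differentiable with $\nabla\tau(x_0')=0$. That is, we need
$$|\tau(x')-t_0|=o(|x'-x_0'|),$$
so that $\Gamma$ is tangent to $\{t=t_0\}$. For this, we use growth estimates. Near a singular point,
$$\sup_{Q_r}|u-\varphi|\le Cr^{\kappa}\le Cr^2,$$
with sub-quadratic improvement $o(r^2)$ after subtracting the blow-up polynomial. On the other hand, time-monotonicity and nondegeneracy give a lower bound $\partial_t u\ge c>0$ outside a neighbourhood of $\Lambda$ at unit scale. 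Suppose a free boundary point $(x',\tau(x'))$ satisfied $|\tau(x')-t_0|\ge \varepsilon|x'-x_0'|$. Rescale at scale $r=|x'-x_0'|$: the time separation is then $\varepsilon r\gg r^2$. Integrating $\partial_t u$ over this time interval produces growth of order $\varepsilon r$, much larger than $r^2$. This contradicts the growth bound. We plan to make this rigorous by a blow-up/compactness argument: rescale $u_r(x,t)=(u-\varphi)(x_0+rx,t_0+r^2t)/r^2$, and use that limits are time-independent or time-monotone polynomial profiles.

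Conversely, at regular points we showed $\nabla\tau\neq0$. Hence the singular points are exactly the critical points of $\tau$.

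Finally, continuity of $\nabla\tau$ comes from two facts: the $C^{1,\alpha}$ regularity at regular points, and the uniform $o(|x'-x_0'|)$ modulus at singular points, which is uniform by compactness of the singular set, via upper semicontinuity of the frequency. Together these give $\tau\in C^1$.
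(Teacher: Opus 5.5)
Your graph construction and your treatment of regular points are fine, and they essentially match Steps 1--4 of the paper's proof of \cref{theorem:structure-spacetime}. Quoting the Lipschitz graph property instead of deriving the cone condition from \cref{lemma:semiconvexity-estimate2} is acceptable. Note, though, that $\nu_t\neq0$ at regular points comes from the \emph{Lipschitz} bound $|t_1-t_2|\le L|x_1'-x_2'|$, not from $\Gamma$ merely being a graph.

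The genuine gap is at singular points, where your argument for $\tau(x')-t_0=o(|x'-x_0'|)$ fails.
\begin{itemize}
\item \emph{Scaling mismatch.} The bound $\sup_{Q_r}|\widetilde u|\le Cr^2$ only controls times within $r^2$ of $t_0$. To reach a time separation $\varepsilon r$ you must use the cylinder of radius $\sqrt{\varepsilon r}$, where the available bound is $C\varepsilon r$, or $o(\varepsilon r)$ after subtracting $p$.
\item \emph{Missing lower bound.} A competing lower bound $c\,\varepsilon r$ would require $\partial_t u\ge c>0$ at distance $\ll1$ from $(x_0,t_0)$, which is false. At quadratic points the blow-up is time-independent, so $\partial_t\widetilde u=o(1)$; this is exactly what your $o(\rho^2)$ bound encodes. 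At cubic points $\partial_t\widetilde u=O(r)$ at distance $r$. The free boundary point only fixes the time interval; the claimed growth comes entirely from the unavailable bound $\partial_t u\ge c$.
\item \emph{Compactness cannot repair this.} Parabolic rescaling turns a time graph with nonzero slope into a vertical, time-independent set. For instance (with $n=2$, $(x_0,t_0)=(0,0)$), take a contact set $\{t\le -a|x_1|-M|x_2|\}$ shrinking to the origin at linear speed. Near the origin it lies in every thin parabolic cone around the spine. Nothing in a time-independent first blow-up such as $p=x_1^2+x_2^2-2x_3^2$ excludes it.
\end{itemize}

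The missing idea is a nondegeneracy of $\partial_t\widetilde u$ at scale $r$ that beats linear scaling, converted into a slope bound via semiconvexity. This is \cref{lemma:modulus-of-cont}. Apply \cref{lemma:semiconvexity-estimate2}, with the explicit constant $C\le C'(1+M/\mu)$ from \cref{remark:faciledopo}, to the rescalings at scale $r$. This gives $|\nabla_{x'}\widetilde u|\le\omega(r)\,\partial_t\widetilde u$ near the point.
\begin{itemize}
\item At cubic points $\omega(r)\approx r$, because $\partial_t u\ge c|x_{n+1}|$ from \cref{lemma:monotonicity-ass} is invariant under cubic rescaling.
\item At quadratic points $\omega(r)\approx r^{1/2}$. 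This uses $\partial_t\widetilde u\gtrsim r^{1/2}$ on $\mathscr C_r\cap\{|x_{n+1}|\ge\sigma r\}$. The paper obtains this from the second blow-up analysis: the capacity barrier in Step 1 of \cref{prop:second-blow-up} and Step 1 of the proof of \cref{prop:cleaning-gamma2}.
\end{itemize}

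The level sets $\{\widetilde u=\eps\}'=\{t=\tau_\eps(x')\}$ then satisfy $|\nabla\tau_\eps|=|\nabla_{x'}\widetilde u|/\partial_t\widetilde u\le\omega(Cr^{1/2})$ on the whole ball $B_r'(x_0)$. This bound passes to $\tau$. Because it holds on a full neighbourhood, it also yields continuity of $\nabla\tau$ at $x_0$.

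Your last step does not give that continuity. Knowing $\nabla\tau=0$ on $\Sigma$, together with a uniform $o(|x'-y'|)$ modulus at singular points $y'$, gives no control of $\nabla\tau$ at nearby regular points, whose $C^{1,\alpha}$ estimates degenerate near $\Sigma$. For example, in one variable $\int_0^x g$, with $g>0$ having thin unit spikes accumulating at $0$, is $O(x^2)$ and $C^1$ away from $0$, but not $C^1$.

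Finally, your claimed gap ``$\kappa=2$ or $\kappa\ge 7/2$'' is wrong. Cubic points such as $a|x_{n+1}|(6t+x_{n+1}^2)$ occur, and $\Sigma=\Gamma_2\cup\Gamma_3$ by \cref{prop:onlyle3}. This error is harmless only because you use just $\kappa\ge2$.
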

	 We refer to \cref{theorem:structure-spacetime} for a more precise statement.
After characterizing singular points in terms of the melting-time function, we next quantify the size of the singular set in space-time. The following result provides a first quantitative bound on its parabolic Hausdorff dimension.
	
	\begin{theorem}\label{t:thm0}
		Let $u$ be a solution of \eqref{eq:parabolic-signorini-problem}, with obstacle $\varphi$ satisfying \eqref{eq:ostacolo}. Then $$\text{dim}_{\text{par}}(\Sigma)\le n.$$
	\end{theorem}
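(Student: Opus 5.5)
Our plan is to deduce Theorem~\ref{t:thm0} from the graph structure in Theorem~\ref{theorem:structure-spacetime-intro} and a Whitney-type argument. Write the free boundary locally as $\Gamma=\{(x',0,\tau(x')) \}$ with $\tau\in C^1(B_r')$ and $\Sigma=\{(x',0,\tau(x')):\nabla\tau(x')=0\}$. The map $x'\mapsto (x',\tau(x'))$ is onto $\Gamma$, and the parabolic dimension of $\Sigma$ is at most $n$ if this map is Lipschitz from $(\R^n,|\cdot|)$ to the parabolic metric when restricted to the critical set $\{\nabla\tau=0\}$.

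Concretely, we need $|\tau(x_1')-\tau(x_2')|\le C|x_1'-x_2'|^2$ for critical points $x_1',x_2'$. Then $\dist_{\rm par}$ between the corresponding points of $\Sigma$ is bounded by $C|x_1'-x_2'|$. Lipschitz images do not increase Hausdorff dimension, so $\dim_{\rm par}\Sigma\le\dim\{\nabla\tau=0\}\le n$.

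Mere $C^1$ regularity of $\tau$ only gives $o(|x_1'-x_2'|)$, so the key step is a quadratic growth bound at singular points. For $(x_0',0,t_0)\in\Sigma$ we aim to show
\[
|\tau(x')-t_0|\le C|x'-x_0'|^2,
\]
locally uniformly in the singular point. We would obtain this from the nondegeneracy and optimal growth at singular points.

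At a singular point the solution $u-\varphi$, suitably extended, detaches from the obstacle with parabolic homogeneity $\kappa\ge 2$ (frequencies at singular points are at least $2$, per the classification from \cite{dgpt17} recalled in \cref{subsec:frequency}), giving
\[
\sup_{Q_r(x_0,t_0)}|u-\varphi|\le Cr^{2}.
\]
Time-monotonicity supplies a lower bound in the other direction. Since $\partial_t u>0$ and $\partial_t u$ is a nonnegative caloric function off $\Lambda$, a Harnack/boundary-Harnack argument (as in \cite{ath84}) gives $\partial_t u\ge c>0$ on $\{x_{n+1}=0\}$ away from the contact set, uniformly on compacts. Suppose $\tau(x')\ge t_0+Mr^2$ with $r=|x'-x_0'|$, i.e.\ $x'$ still lies in $\Lambda$ at time $t_0+Mr^2$. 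Integrating $\partial_t u\ge c$ backward in time along a nearby non-contact point would force $|u-\varphi|\gtrsim Mr^2$ inside $Q_{Cr}$, contradicting the growth bound once $M$ is large. The case $\tau(x')\le t_0-Mr^2$ is handled symmetrically: the point is melted long before $t_0$, so $u-\varphi$ has grown by $\gtrsim Mr^2$ by time $t_0$.

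We expect the main obstacle to be making the lower bound on $\partial_t u$ uniform. It must hold in a neighborhood of $\Gamma$ at distance comparable to $r$, not just at a fixed distance. If a clean uniform bound is unavailable, we would fall back on a different route: show that blow-ups at singular points are time-independent polynomials of degree $\ge2$, with no $t$-linear term by monotonicity. The resulting compactness gives $\Gamma\cap Q_r(x_0,t_0)\subset\{|t-t_0|\le \eps r^2\}$ for small $r$. This is weaker than the quadratic bound, but it still yields a dimension estimate of $n$ via a covering argument with parabolic cylinders.
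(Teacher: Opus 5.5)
Your reduction is the right one, and it matches the paper. A locally uniform bound $|\tau(x_1')-\tau(x_2')|\le C|x_1'-x_2'|^2$ between \emph{singular} points is exactly the cleaning property $\Sigma\in\text{Clean}(2)$; one-sided cleaning at the earlier of the two points gives your two-sided bound. \Cref{lemma:par-gmt} then plays the role of your Lipschitz-image argument. The paper proves $\Gamma_2\in\text{Clean}(2)$ (\cref{prop:cleaning-gamma2}) and $\Gamma_3\in\text{Clean}(2+\eta)$ (\cref{prop:cleaning-odd}), and these two results are the whole difficulty. Your argument for the quadratic bound does not work, and the obstacle you flag is not merely technical.

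First, the bound for \emph{all} $x'$ near a singular point is false at quadratic points. Place $(x_0,t_0)\in\Gamma_2$ at the origin and rotate so that $p(x',0)\ge c_1x_1^2$ (recall $p$ is time-independent). Then the contact set in $Q_{r_\delta}$ lies in $\{x_1^2+x_{n+1}^2\le\delta(|x'|^2+|t|)\}$, with $\delta\downarrow0$ as $r_\delta\downarrow0$ (Step 1 of the proof of \cref{prop:second-blow-up}). Hence $-\tau(se_1)\ge(1-\delta)s^2/\delta\gg s^2$, so your ``symmetric case'' is exactly what happens.

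Second, there is no lower bound $\partial_tu\ge c>0$ near $\Gamma$. Indeed $\partial_tu$ vanishes on $\Lambda$. At a quadratic point, since $\partial_tp=0$, one has $\sup_{Q_{r/2}}\partial_t\widetilde u\le C(r^{-2}\sup_{Q_r}|\widetilde u-p|+r^{\ell-2})=o(1)$ (cf.\ Step 3 of \cref{lemma:uniform-bounds-second-blow-up}). At a cubic point it is $O(r)$. The bound you do have, $\partial_tu\ge c|x_{n+1}|$ from \cref{lemma:monotonicity-ass}, overcomes an error of size $r^2$ at height $|x_{n+1}|\sim r$ only after a time of order $r$. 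That gives Lipschitz cleaning, not quadratic cleaning.

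Even granting $\partial_tu\ge c$, the time $t_0+Mr^2$ is only reached at scale $\sqrt M\,r$. At that scale the growth bound merely gives $|u-\varphi|\lesssim Mr^2$, so there is no contradiction with $Cr^2$. Moreover, $\partial_tu=0$ on the thin space as long as the point stays in $\Lambda$, so you would need a barrier to push $u$ off the obstacle anyway.

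The missing idea is that $\partial_tu$ must be compared with the \emph{deviation} of $\widetilde u$ from its blow-up, not with $r^2$. At quadratic points this is the second blow-up analysis:
\begin{itemize}
\item a Poon-type monotonicity formula for $v=w-p$;
\item a barrier argument showing that $v$ has frequency exactly $2$;
\item the fact that second blow-ups are $2$-homogeneous caloric polynomials $q$;
\item the orthogonality relations, together with the semiconvexity estimate $(\partial_{ee}\widetilde u)_-\le C\partial_t\widetilde u$ along the spine (\cref{lemma:semiconvexity-estimate2}), to exclude $\partial_tq\equiv0$.
\end{itemize}
This yields $\partial_t\widetilde u\gtrsim H(r,v)^{1/2}/r^2$ away from the thin space. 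Combined with Harnack, $\partial_{tt}u\ge-C$ and a barrier, this beats the error $H(r,v)^{1/2}$ after a time $Cr^2$ and forces $\widetilde u>0$ on $B'_{r/2}$ for $t>t_0+Cr^2$.

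At cubic points the blow-up is time-dependent (\cref{lemma:p-depend-time}), so the premise of your fallback fails there. One needs the rate $\|\widetilde u-p\|_{L^\infty(Q_r)}\le Cr^{3+\alpha}$ obtained from the epiperimetric inequality (\cref{thm:rate-for-u}). Only with this rate do $\partial_tu\ge c|x_{n+1}|$ and a barrier give $\text{Clean}(2+\eta)$.

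Your fallback inclusion $\Gamma\cap Q_r(x_0,t_0)\subset\{|t-t_0|\le\eps r^2\}$ is also false at quadratic points. By continuity and the estimate above, $\tau$ attains every value in $(t_0-r^2,t_0]$ on $B_r'(x_0)$. Blow-up compactness cannot yield it for $\Sigma$ either, since a time-independent $p$ does not distinguish the times $t_0$ and $t_0+Mr^2$.
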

	This bound is stronger than what follows from the $C^1$-graph representation alone, since an $n$-dimensional $C^1$ space-time hypersurface may have parabolic Hausdorff dimension $n+1$.
In dimension $n+1=2$, it also recovers the generic regularity result in \cite[Theorem 1.4]{fr21}. Indeed, the bound on the parabolic dimension implies that the set of singular times $\pi_t(\Sigma)$ has Hausdorff dimension at most $1/2$, and therefore the free boundary is regular for almost every time.
	
	\smallskip 
    
    Our next two results are the main contributions of the paper and concern generic regularity and finer structure of the singular set.
    The first improves on the result in \cite[Theorem 1.4]{fr21}, by showing generic regularity up to dimension $n+1=4$. 
	\begin{theorem}\label{t:thm1}
		Let $u$ be a solution of \eqref{eq:parabolic-signorini-problem} in dimension $n+1\le 4$, with obstacle $\varphi$ satisfying \eqref{eq:ostacolo}. Then $$\mathcal{H}^1(\pi_t(\Sigma))=0.$$  
		As a consequence, \,$\Gamma\cap\{t=t_0\}$ is an $(n-1)$-dimensional manifold of class $C^{1,\alpha}$ for almost every time $t_0\in(-1,0]$. Moreover, if $\varphi\equiv0$, the manifold is $C^\infty$. 
	\end{theorem}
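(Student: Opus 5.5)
The plan is to follow the strategy developed for the classical Stefan problem (Figalli--Ros-Oton--Serra) and adapt it to the thin setting. We stratify $\Sigma$ by frequency, and for each stratum prove either a dimension bound or a cleaning estimate strong enough to give $\mathcal H^1(\pi_t(\cdot))=0$ when $n\le 3$.

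\textbf{Step 1: stratification by frequency and blow-up type.} At a singular point $(x_0,t_0)$, \cref{theorem:structure-spacetime-intro} gives $\nabla\tau(x_0)=0$, and the blow-ups are parabolically homogeneous caloric polynomials (or Signorini solutions) of frequency $\kappa\ge 2$, with $\partial_t\ge0$. We split $\Sigma$ into:
\begin{itemize}
\item[(a)] points with frequency $\kappa\ge 3$ (or non-generic frequencies);
\item[(b)] points with frequency $2$ whose blow-up $p_2$ has a large spatial zero set, that is, $\dim\ker D^2_{x'}p_2=m$.
\end{itemize}
Time-monotonicity forces the $2$-homogeneous blow-ups to be of the form $p_2=\tfrac12 x^TAx+ct$ with $c>0$, or degenerate. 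The latter occurs only on a lower-dimensional set, which is handled by the $\dim_{\rm par}\le n$ argument of \cref{t:thm0} refined through Federer-type dimension reduction.

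\textbf{Step 2: higher-order expansions and dimension bounds.} For each stratum we prove a quantitative expansion
\[
u-\varphi=p_2+O(r^{2+\alpha}),
\]
and at points of the top strata a faster rate $O(r^{3-\epsilon})$. The tools are epiperimetric inequalities or frequency formulas applied to second blow-ups $(u-\varphi-p_2)/\|\cdot\|$. Via a Whitney-type extension argument, a decay of order $r^{\beta}$ at every point of a set $E$ implies that the time projection satisfies $\dim_{\mathcal H}\pi_t(E)\le \dim_{\rm par}(E)/\beta$. Parabolic scaling matters here: time differences are controlled by $|x-y|^{\beta}$ along $E$, with $\beta$ close to $3$ on generic strata. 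We then estimate $\dim_{\rm par}$ of each stratum by $m$ (spatial) and obtain $\mathcal H^1(\pi_t)=0$ provided $m<\beta$. For $n\le3$, the top stratum has $m\le n-1\le2<\beta$.

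\textbf{Step 3: cleaning for the worst stratum.} The main obstacle is the stratum where $p_2$ depends on only one spatial direction, or the frequency is exactly $2$ with maximal dimension $n-1$. Here the decay $r^{2+\alpha}$ alone is insufficient when $n=3$. We would first prove a cleaning lemma: if $(x_0,t_0)\in\Sigma$ then there is no singular point at $(x,t_0+s)$ with $s\gtrsim r^{2}$, uniformly, in a suitable sense. This follows from time-monotonicity via a barrier argument using $\partial_t u>0$ and the Hopf-type positivity of $\partial_t u$ near the thin space. Combining cleaning with the fact that singular times near $t_0$ come from points at spatial distance $r$ with $t-t_0\le Cr^{3-\epsilon}$ gives the improved exponent needed: a covering of this stratum by $r^{-(n-1)}$ cylinders, each contributing time-length $r^{3-\epsilon}$, yields zero measure for $n-1<3-\epsilon$, i.e.\ $n\le3$. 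Obtaining the $3-\epsilon$ rate requires ruling out frequencies in $(2,3)$ for second blow-ups, via a classification of homogeneous solutions of the linearized thin problem with $\partial_t\ge0$. This classification is the delicate new ingredient, and I expect it to be the hardest step.

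\textbf{Step 4: consequence for time slices.} Finally, for a.e.\ $t_0$ every point of $\Gamma\cap\{t=t_0\}$ is regular. Regular points have a $C^{1,\alpha}$ free boundary by \cite{dgpt17,petrosyan2019boundedness}, and for $\varphi\equiv0$ higher regularity follows from \cite{smooth-parabolic-signorini}. Slicing the space-time hypersurface at a regular time, where $\nabla\tau\neq0$, gives an $(n-1)$-manifold of the same regularity by the implicit function theorem.
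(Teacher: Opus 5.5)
There is a genuine gap, and it matters exactly in the borderline case $n+1=4$. In Step~2 you bound the spatial dimension of every stratum by $m\le n-1$. This fails for the time-like cubic points $\Gamma_3^n$, whose blow-up $a|x_{n+1}|(6t+x_{n+1}^2)$ has no tangential dependence, so their spatial projection can be genuinely $n$-dimensional. For example, a sheet that melts everywhere at the same instant has $\tau$ locally constant, and all of these points are time-like cubic. For $n=3$ such a set needs a cleaning exponent strictly larger than $3$, and your mechanism cannot supply one:
\begin{itemize}
\item A decay rate $r^\beta$ does not translate into cleaning exponent $\beta$. The cubic rate $\widetilde u=p_3+O(r^{3+\alpha})$ controls $\partial_{n+1}u$ only to order $r^{2+\alpha}$, and the paper extracts only $\Gamma_3\in\text{Clean}(2+\eta)$ from it.
\item An exponent ``close to $3$'' would give $3/\beta>1$ anyway.
\end{itemize}
The missing idea is a higher-order expansion at cubic points. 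Super-quadratic cleaning forces the thin trace of $\widetilde u$ to vanish in backward cylinders $K_r$, so the Signorini condition disappears there, and an improvement-of-flatness iteration gives an expansion up to order $k+\alpha$. Whitney extension of the resulting jets of $\partial_{n+1}u$, together with the implicit function theorem, then produces a $C^{k-1,\alpha}$ time graph $T$ with $\Gamma_3\subseteq\{t=T(x'),\ \nabla_{x'}T=0\}$. Sard's theorem, which needs $n\le k-1$ and hence $k\ge4$, gives $\mathcal H^1(\pi_t(\Gamma_3))=0$. You also never exclude frequencies above $3$; the paper does this using $\partial_tu\ge c|x_{n+1}|$, which forces growth at least $r^3$.

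Your treatment of quadratic points is also incorrect, on four counts.
\begin{itemize}
\item Time monotonicity and $p_2\ge0$ on the thin space force $p_2$ to be time-independent, not to have $c>0$: if $p_2=q(x)+ct$ with $c\ge0$, then $0\le p_2(0,t)=ct$ for $t<0$ gives $c=0$.
\item There is no expansion $\widetilde u=p_2+O(r^{2+\alpha})$. The barrier argument based on $\partial_tu>0$ shows that the second blow-up has frequency exactly $2$, so $\widetilde u-p_2$ decays no faster than $r^{2+\eps}$ for every $\eps>0$. Consequently there is no $r^{3-\eps}$ time control at quadratic points, and ruling out second-blow-up frequencies in $(2,3)$ is beside the point.
\item Cleaning with exponent $2$ does not follow from Hopf-type positivity, since $\partial_tu\ge c|x_{n+1}|$ degenerates under quadratic rescaling. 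The paper matches a lower bound $\fint_{Q_r\cap\{|x_{n+1}|\ge r/2\}}\partial_t\widetilde u\gtrsim r^{-2}H(r,v)^{1/2}$, with $v=\zeta\widetilde u-p_2$, against the error bound $\widetilde u\ge p_2-CH(r,v)^{1/2}$. The lower bound comes from proving that every second blow-up $q$ has $\partial_tq>0$. That in turn uses a modified Poon monotonicity formula, the orthogonality relations between first and second blow-ups, and the tangential semiconvexity $\partial_{ee}\widetilde u\ge-C\partial_t\widetilde u$.
\item For $n=3$, the combination $\dim\pi_x(\Gamma_2)\le2$ with exponent $2$ is exactly borderline. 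It is closed not by a better exponent but by the $2$-rectifiability of $\pi_x(\Gamma_2)$, which comes from the Whitney stratification by the spine of $p_2$. This is combined with the refined GMT lemma giving $\mathcal H^{m/\beta}(\pi_t(E))=0$.
\end{itemize}
Your Step~4 is fine.
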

	
	We refer to \cref{prop10} for a more precise result that quantifies the size of the set of singular times in low dimensions. In particular, if the obstacle is smooth, $\dim_{\mathcal{H}}(\pi_t(\Sigma)) = 0$ in dimension $n+1=2$, and $\dim_{\mathcal{H}}(\pi_t(\Sigma)) \le \frac{1}{2}$ in dimension $n+1=3$. Moreover, in dimensions $n+1 \geq 5$, we obtain that $\Sigma\cap\{t = t_0\}$ has Hausdorff dimension at most $n-3$ for almost every time $t_0\in (-1,0]$. 
	\smallskip
	
	Our second key contribution is the following splitting of the singular set into a highly regular part that is extremely rare in time, and a remainder of lower parabolic dimension.
	\begin{theorem}\label{thm:n-1andtime-like}
		Let $u$ be a solution of \eqref{eq:parabolic-signorini-problem}, with obstacle $\varphi\in C^\infty(B_1')$. Then, there exists a subset $\Sigma^\infty\subseteq \Sigma$ such that $$\text{dim}_{\text{par}}(\Sigma\setminus \Sigma^\infty)\le n-1,\qquad \text{dim}_{\mathcal{H}}(\pi_t(\Sigma^\infty))=0,$$ 
		and $\Sigma^\infty$ is locally contained in an $n$-dimensional manifold of class $C^\infty$.
	\end{theorem}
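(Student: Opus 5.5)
The plan is to stratify $\Sigma$ by the frequency $\kappa$ of the (parabolic) blow-up at each point, and within each frequency class by the dimension of the spine of the blow-up. By \cref{theorem:structure-spacetime-intro}, singular points are exactly the critical points of $\tau$. The time-monotonicity $\partial_t u>0$ forces the blow-ups at singular points to be time-independent or monotone in time. Hence they are caloric Signorini polynomials of frequency $\kappa\in\{2m\}\cup\{2m-\tfrac12\}\cup\{2m+1\}$ or limits thereof. Frequency gaps and the $C^\infty$ obstacle give a full asymptotic expansion at points of high frequency.

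\textbf{Step 1: define $\Sigma^\infty$.} Let $\Sigma^\infty$ be the set of singular points at which the frequency is $+\infty$, i.e.\ $u-\varphi$, suitably corrected by the obstacle's Taylor expansion, vanishes to infinite order. With $\varphi\in C^\infty$, we will show via higher order expansions that at every point of finite frequency $\kappa$ one has
\[
u-\varphi = p_\kappa + O(r^{\kappa+\alpha}),
\]
with $p_\kappa$ nonzero and homogeneous. We then split $\Sigma\setminus\Sigma^\infty$ according to $\kappa$ and to the dimension $d$ of the invariance space of $p_\kappa$.

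\textbf{Step 2: the finite-frequency strata.} Strata with $d\le n-1$ are handled by a Federer-type dimension reduction, using the Almgren–Poon monotonicity. Continuity of $x\mapsto p_\kappa$, obtained from epiperimetric or Monneau-type inequalities, makes these strata lie in $C^1$ manifolds of parabolic dimension at most $n-1$.

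The top strata with $d=n$ occur at even frequency, with $p_\kappa$ depending on one spatial direction and time. There are two cases.
\begin{itemize}
\item If the invariance space contains the time direction, the spatial spine has dimension $n-1$, so this part has parabolic dimension $\le n-1$.
\item If instead $p_\kappa$ is invariant in $n$ spatial directions but not in time, positivity of $\partial_t u$ forces $\partial_t p_\kappa$ to be a nonnegative polynomial vanishing at the point. A second blow-up, with its frequency formula and the cleaning estimates, then shows the blow-up frequency jumps by at least one. Counting frequencies shows that such points have parabolic dimension $\le n-1$ too, since the time direction costs parabolic weight $2$.
\end{itemize}
Countably many strata are involved, so $\dim_{\rm par}(\Sigma\setminus\Sigma^\infty)\le n-1$.

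\textbf{Step 3: properties of $\Sigma^\infty$.} We would show that $\Sigma^\infty$ lies in a $C^\infty$ hypersurface and is rare in time.
\begin{itemize}
\item \emph{Smooth hypersurface.} At points of $\Sigma^\infty$, the melting function $\tau$ has vanishing derivatives of every order in a Whitney sense. The expansions hold at all orders with uniform remainders by the Whitney extension theorem, so $\Sigma^\infty\subset\{\nabla\tau=0\}$ lies in the graph of a $C^\infty$ function $\tilde\tau$ extending the jets. This gives an $n$-dimensional $C^\infty$ manifold.
\item \emph{Rare in time.} For $(x_0,t_0)\in\Sigma^\infty$ and $(x,t)\in\Sigma^\infty$ nearby, we get $|t-t_0|\le C_N|x-x_0|^N$ for every $N$. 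The same Whitney jets vanishing give $|\tilde\tau(x)-\tilde\tau(x_0)|\le C_N|x-x_0|^N$. A covering argument in the $n$-dimensional spatial projection then gives $\mathcal H^{n/N}(\pi_t(\Sigma^\infty))=0$ for all $N$, hence $\dim_{\mathcal H}\pi_t(\Sigma^\infty)=0$.
\end{itemize}

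\textbf{Main obstacle.} The main difficulty is Step 2's treatment of time-dependent top-stratum blow-ups, together with the uniform higher-order expansions needed in Step 3. Both require a quantitative rate of convergence to blow-ups (epiperimetric inequality) at every frequency level, and the parabolic setting makes this delicate. I would first try a Weiss-type log-epiperimetric inequality for even frequencies and reduce the odd and half-integer cases to the known elliptic ones.
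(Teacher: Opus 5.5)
Your choice of $\Sigma^\infty$ fails, and the failure carries through the whole argument.

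\textbf{Why the definition is wrong.} Time monotonicity rules out high frequencies. By \cref{lemma:monotonicity-ass}, $\partial_t u\ge c|x_{n+1}|$, so $\widetilde u(0,r,0)-\widetilde u(0,r,-r^2)\ge cr^3$. Hence every singular point has frequency exactly $2$ or $3$ (\cref{prop:onlyle3}); your list of frequencies, and in particular ``frequency $+\infty$'', does not occur here. The set you call $\Sigma^\infty$ is therefore empty, and your Steps 1--2 would prove $\text{dim}_{\text{par}}(\Sigma)\le n-1$.

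That bound is false. Take $\varphi\equiv0$ and a uniformly melting sheet: $u=a|x_{n+1}|\bigl(6(t-t_0)+x_{n+1}^2\bigr)$ for $t\le t_0$, continued for $t>t_0$ by the even caloric evolution of $a|x_{n+1}|^3$. Its free boundary is all of $B_1'\times\{t_0\}$ and the melting time $\tau$ is constant. By \cref{theorem:structure-spacetime} every point is singular, so $\text{dim}_{\text{par}}(\Sigma)=n$.

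The step that breaks is the second bullet of your Step 2. The top stratum is not at even frequency: a nonzero $p\in\mathcal{P}_2^+$ always has spine of dimension $\le n-1$. It is the cubic time-like stratum $\Gamma_3^n$, with blow-up $a|x_{n+1}|(6t+x_{n+1}^2)$, invariant in all $n$ tangential directions. No second blow-up or ``frequency jump'' can push these points below parabolic dimension $n$; as the example shows, they can fill a whole time slice.

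\textbf{The missing idea.} You must define $\Sigma^\infty$ through a higher-order expansion at cubic points, not through the frequency. The paper obtains this expansion as follows.
\begin{enumerate}
\item An epiperimetric inequality gives uniqueness of blow-ups with a rate at cubic points; only frequency $3$ is involved. This yields $\Gamma_3\in\text{Clean}(2+\eta)$.
\item The superquadratic exponent makes the thin trace vanish in $\{|x|^{2+\eta/2}<N|t|\}$. There the Signorini constraint disappears, and improvement of flatness for the heat equation gives an expansion of any order (\cref{prop:high-order-blow-up}). In particular $\partial_{n+1}u=\sum_j Q_j^{x_0,t_0}+O(r^{k+\alpha-1})$ on the thin space.
\item Whitney-extending these jets, and using that cubic blow-ups depend on time ($\partial_t\Theta\neq0$), gives a smooth graph $t=T(x')$ containing $\Gamma_3$, with $\nabla_{x'}T=0$ on $\pi_x(\Gamma_3)$ (\cref{prop:t=T(x)}).
\end{enumerate}

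Then $\Sigma^\infty:=\Gamma_3^{n,\infty}$ is the set of time-like cubic points where $Q_j^{x_0,t_0}(\cdot,0)\equiv0$ for every $j$. At any other time-like point, take the first mode $m$ with $Q_m(\cdot,0)\not\equiv0$. Applying the implicit function theorem to $\partial_{x'}^\beta T$ with $|\beta|=m-1$ gives $\text{dim}_{\mathcal{H}}$ of the spatial projection at most $n-1$. On $\Sigma^\infty$ the graph $T$ is flat to all orders, which gives $\text{Clean}(\infty)$; from there the conclusions of your Step 3 follow essentially as you describe.

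\textbf{A further gap in Step 2.} Containment in $C^1$ manifolds does not by itself bound parabolic dimension: a $C^1$ curve tilted in time has parabolic dimension $2$. You need $\text{dim}_{\mathcal{H}}$ of the spatial projection at most $n-1$ together with a cleaning rate $\ge2$ (\cref{lemma:par-gmt}). Concretely, this means $\Gamma_2\in\text{Clean}(2)$, which the paper gets from the second blow-up and a barrier, and $\Gamma_3\in\text{Clean}(2+\eta)$.
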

	We refer to \cref{prop:general-kgamma} for a more general result in the case of $C^{k,\gamma}$ obstacles. 

    \smallskip
    
    Our main results have natural counterparts in the theory of the classical Stefan problem; see \cite[Theorems 1.1, 1.2, and 1.3]{frs24} and \cite[Theorems 1.1 and 1.2]{col2025}. Indeed, after the Duvaut transformation \cite{duvaut,figalli}, the classical Stefan problem reduces to the classical parabolic obstacle problem with a time-monotonicity assumption. A further connection arises in the analysis of singularities of the classical Stefan problem, where second blow-up limits at points of the top stratum solve the parabolic thin obstacle problem with zero obstacle; see \cite[Proposition 6.7]{frs24}.
	\subsection{A thin ice melting model.}\label{subsect:ice_melting}
	In this subsection, we introduce a simplified model for the melting of a thin layer of ice on the surface of a lake and show how, after a suitable transformation, it naturally leads to the parabolic Signorini problem \eqref{eq:parabolic-signorini-problem}.
	We identify the water region with an open set $\Omega \subseteq \R^{n+1}_+:=\R^{n+1}\cap \{x_{n+1}>0\}$,
	where the coordinate $x_{n+1}$ measures depth, so that the surface
	of the lake is $\Pi:= \partial \Omega \cap \{x_{n+1}=0\}$.
	We assume that the temperature of the water is everywhere at or above
	the freezing point. The state of the system at time $t \in [-1,0]$
	can be described by two functions:
	\begin{itemize}
		\item the temperature of the water
		$\theta:\overline\Omega\times [-1,0]\to [0,\infty)$;
        
        \smallskip
		\item the surface density of the ice
		$\mu:\Pi\times [-1,0]\to [0,\infty)$.
	\end{itemize}
	We assume that the water is calm, so that heat is transferred only by conduction, that the ice remains at the melting temperature, and that heat exchange with the air above the surface is negligible.
	All physical constants are normalized to $1$. See \cref{fig:thin-ice-model} for an illustration of the setup of our model.
	
	With these assumptions, the system evolves as follows. Denoting by $\mathbf{H}:=\Delta-\partial_t$ the heat operator, the temperature solves the heat equation
	\begin{equation*}
		\mathbf{H}\theta=0\qquad \text{in }\Omega \times (-1,0],
	\end{equation*}
	with initial and boundary conditions
	$$
		\begin{cases}
		    \theta(x,-1)= \theta_{0}(x)
		 &\qquad\forall x\in \overline{\Omega},\\
         \theta(x,t) = \theta_{b}(x,t)&\qquad
		 \forall x\in \partial \Omega \cap \R^{n+1}_{+},
		\quad \forall t\in (-1,0].
		\end{cases}$$
	On the surface, wherever ice is present, i.e.~$\mu>0$, the water in contact with it remains at the melting temperature, i.e.~$\theta=0$. Equivalently,
	\begin{equation}\label{eq:thin-ice-signorini-condition}
		\theta\mu=0\qquad \text{on }  \Pi \times [-1,0].
	\end{equation}
	Moreover, the heat flux from below is entirely used to melt the ice, giving the Stefan condition
	\begin{equation}\label{eq:phase-transition-conditions}
		-\partial_{t}\mu = \partial_{n+1}\theta
		\qquad \text{on } \Pi \times (-1,0].
	\end{equation}
	In particular, the ice density $\mu$ is non-increasing. Indeed, if
	$\theta = 0$ at a point in $\Pi$, then
	$\partial_t\mu = -\partial_{n+1}\theta \leq 0$ since
	$\theta \geq 0$ in $\Omega$. On the other hand, if $\theta > 0$, by continuity
	of $\theta$ we have $\theta > 0$ in a neighborhood, and
\eqref{eq:thin-ice-signorini-condition} implies $\mu = 0$ there. In particular, $\partial_t\mu = 0$. In other words,
	$$\min\{-\partial_t\mu,\theta\} = 0 \qquad \text{on } \Pi \times (-1,0].$$
	We complete the system with an initial condition for the ice density,
	$$\mu(x,-1) = \mu_{0}(x) \qquad \forall x\in \Pi,$$
	where $\mu_0\ge0$ is a given, sufficiently regular function,
	not necessarily constant, satisfying $\mu_0\theta_0 = 0$ on $\Pi$.
	
	We now consider the Duvaut transform \cite{duvaut} of $\theta$: we introduce
	the function $v:\overline\Omega \times [-1,0] \to [0,\infty)$ defined as
	\begin{equation*}
		v(x,t):= \int_{-1}^{t}\theta(x,s)\,ds.
	\end{equation*}
	Note that $v\ge 0$ and $\partial_{t}v=\theta\ge 0$.
	Since $\mathbf{H}\theta=0$, in $\Omega \times (-1,0]$ we deduce that
	\begin{equation*}
		\mathbf{H}v(x,t)
		= \int_{-1}^{t}\Delta\theta(x,s)\,ds-\theta(x,t)
		= \int_{-1}^{t}\partial_{s}\theta(x,s)\,ds-\theta(x,t)
		= -\theta_{0}(x).
	\end{equation*}
	For simplicity, we assume that the water is initially at the melting temperature, namely $\theta_{0}\equiv 0$.
	Consequently,
	\begin{equation*}
		\mathbf{H}v=0 \qquad \text{in }\Omega\times (-1,0].
	\end{equation*}
	Similarly, using \eqref{eq:phase-transition-conditions} on
	$\Pi \times (-1,0]$ we get
	\begin{equation*}
		\partial_{n+1}v(x,t)
		= \int_{-1}^{t}\partial_{n+1}\theta(x,s)\,ds
		= -\int_{-1}^{t}\partial_{s}\mu(x,s)\,ds
		= \mu_0(x)-\mu(x,t).
	\end{equation*}
	Moreover, $v\mu=0$ in $\Pi\times(-1,0]$. Indeed, if $v(x,t)>0$,
	then $\theta(x,s)>0$ for some $s<t$. By
	\eqref{eq:thin-ice-signorini-condition}, this implies that $\mu(x,s)=0$, and since $\mu\ge0$ is non-increasing in time, we also have $\mu(x,t)=0$. Therefore
	\begin{equation*}
		v(\mu_0-\partial_{n+1}v)=0 \qquad \text{on }\Pi\times (-1,0].
	\end{equation*}
	Finally, we choose a time-independent function
	$\psi:\overline\Omega\to\R$ such that
	\begin{equation*}
		\Delta\psi=0\quad \text{in } \Omega,\qquad
		\partial_{n+1}\psi=\mu_0\quad \text{on } \Pi,
	\end{equation*}
	and consider the transformation
	\begin{equation*}
		u(x,t):= v(x,t)-\psi(x),\qquad \varphi(x'):= -\psi(x',0).
	\end{equation*}
	Then, in $\Omega\times (-1,0]$, $u$ is caloric and
	$\partial_{t}u=\partial_{t}v\ge 0$. Moreover, on
	$\Pi\times (-1,0]$ we have $u-\varphi=v\ge0$ and
	$-\partial_{n+1}u=\mu_0-\partial_{n+1}v= \mu\ge0$.
	In particular, $u$ solves
	\begin{equation*}
		\begin{cases}
			\mathbf{H}u=0\qquad
			&\text{in $\Omega\times (-1,0]$},\\
			\min\{u-\varphi,-\partial_{n+1}u\}=0\qquad
			&\text{on $\Pi\times (-1,0]$},\\
			\partial_{t}u\ge 0\qquad
			&\text{in $\Omega\times (-1,0]$}.
		\end{cases}
	\end{equation*}
	Once non-trivial heating has started in a connected component of $\Omega$, the strong maximum principle gives $\partial_tu=\theta>0$ there.
	After localizing in space and time, and extending $u$ evenly across $\{x_{n+1}=0\}$,
	we recover precisely \eqref{eq:parabolic-signorini-problem}. 
    The contact set $\Lambda=\{u = \varphi\}\cap \Pi$
	corresponds to the points where ice has always been present or
	just melted, and hence the free boundary
	$\Gamma=\partial_{\Pi}\{u = \varphi\}$ coincides with the melting front
	on the surface of the lake.
	
	\begin{figure}[htbp]
		\centering
		\includegraphics[width=0.75\linewidth]{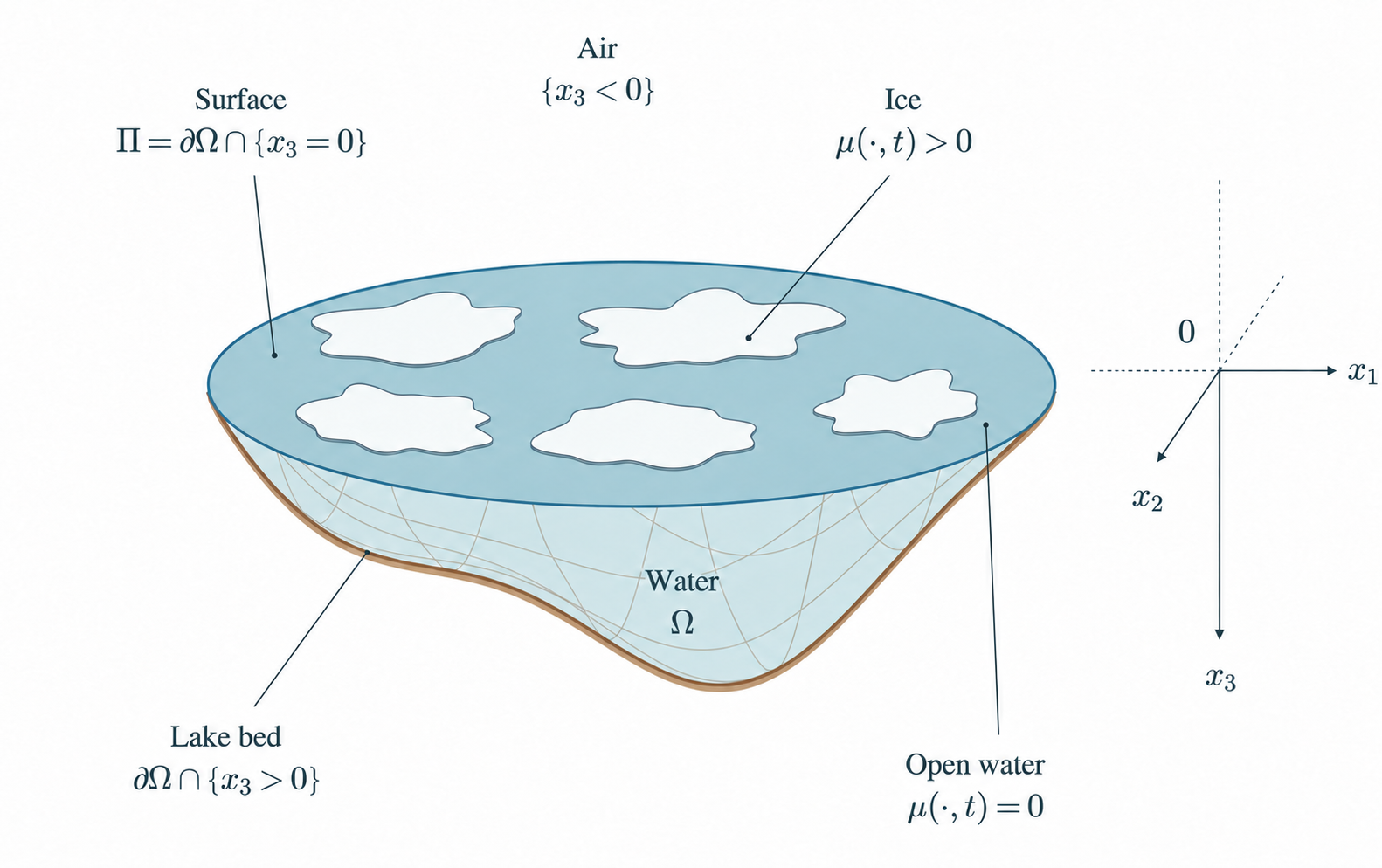}
		\caption{Schematic representation of the thin ice melting model.
			The water occupies the region $\Omega \subseteq \{x_3>0\}$,
			with the depth coordinate $x_3$ oriented downwards.
			On the flat surface $\Pi=\partial\Omega\cap\{x_3=0\}$,
			ice sheets of negligible thickness, where $\mu(\cdot,t)>0$,
			coexist with open water, where $\mu(\cdot,t)=0$.
			As the ice melts, its boundary may develop singularities.
			Our results show, however, that such singular behavior can occur only at a negligible set of times, while at almost every time the melting front is regular.}
		\label{fig:thin-ice-model}
	\end{figure}
	
	In the physical dimension $n+1=3$, if the initial ice density $\mu_0$ is of class $C^{3,\gamma}$, then $\varphi$ satisfies
	\eqref{eq:ostacolo} and our results show that the melting front is locally
	a $C^{1,\alpha}$-curve on the surface of the lake, for almost every time. 
	If $\mu_0$, and thus $\varphi$, is smooth, the exceptional set of times has Hausdorff
	dimension at most $1/2$. When the initial ice density $\mu_0$ is constant, we may choose $\psi(x)=\mu_0x_{n+1}$, so that $\varphi\equiv0$. In this case, our result implies that the melting front is in fact $C^\infty$ for almost every time. Finally, the local melting time $\tau(x',0)$ is a $C^1$ function of the position, and singular free boundary points correspond precisely to its critical points.
	
	\subsection{Strategy of the proofs}\label{subsect:strategy}
	Our proofs build on the general strategy introduced by Figalli, Ros-Oton and Serra for generic regularity in obstacle-type problems \cite{frs20, frs24}. The key idea of this strategy is to use the strict time-monotonicity to represent the singular set as a graph $t=\tau(x',0)$, and to reduce dimensional estimates for the singular set to two ingredients: a bound on the Hausdorff dimension of its spatial projection, and a quantitative control on how fast singular points can approach each other in the time direction. More precisely, for a subset $E$ of the free boundary, we introduce the notation (see \cref{def:cleaned}) $$E\in\text{Clean}(\beta),$$ which means, roughly speaking, that no point of $E$ can lie in a region of the form $t>t_0+C|x-x_0|^\beta$. Locally, this corresponds to a $\beta$-Hölder estimate of $\tau$ on $\pi_x(E)$. Combining this cleaning estimate with a bound on the spatial projection $$\dim_{\mathcal H}(\pi_x(E))\le \nu$$ we deduce estimates on the dimension of the time slices and of the set of exceptional times, through the geometric measure theory arguments of \cite{frs20,frs24} (see also \cite{ffr09}). Precisely, for almost every time, the singular set has dimension at most $\nu - \beta$ (or it is empty if $\nu < \beta$); see \cref{geometricmeastheory}. Thus, we need to find suitable decompositions of the singular set for which both the spatial dimension and the cleaning rate can be controlled.

We start by classifying singular points according to their frequency, i.e.~the homogeneity of their blow-ups. The general theory of the parabolic Signorini problem developed in \cite{dgpt17} gives the gap between the regular frequency $3/2$ and frequency $2$, while the gap between frequencies $2$ and $3$ was proved in \cite{col2025} (see also \cite{franceschini2025solutions} for the elliptic case). The strict time-monotonicity yields an additional restriction: a singular point cannot have frequency strictly larger than $3$ (see \cref{prop:onlyle3}). We therefore obtain
$$\Sigma=\Gamma_2\cup\Gamma_3,$$
where we denote by $\Gamma_\lambda$ the set of free boundary points of frequency $\lambda$; see \cref{subsec:frequency}. Consequently, the analysis naturally splits into quadratic and cubic points.
\subsubsection{Quadratic points}
	
	At quadratic points, we start from the uniqueness theory for even frequency developed in \cite{dgpt17}; see also \cite{shi18} for quantitative rates of convergence. Time monotonicity forces the quadratic blow-up to be independent of time, so that
    $$\widetilde u(x,t) = p_2(x) + o(|x|^2+|t|),$$
    where $\widetilde u$ is the translation of $u-\varphi$ centered at $(x_0,t_0)\in\Gamma_2$, with a correction term to deal with $\varphi$ not being caloric; see \eqref{eq:utilde}.
In \cref{sec:2nd_blowup_quadratic}, we develop a second blow-up analysis introducing a modified Poon frequency function for $\widetilde u-p_2$, and proving a new monotonicity formula, in the spirit of \cite{fj21,cc24}. A barrier argument similar to the one in \cite[Lemma 5.8]{frs24} then shows that the frequency of the second blow-up is again exactly $2$ (see Step 1 in \cref{prop:second-blow-up}). Consequently, every second blow-up is a parabolically $2$-homogeneous caloric polynomial, and can be explicitly characterized in terms of the first blow-up $p_2$ (see \cref{lem:characterization-second-blow-up}).
In \cref{sec:cleaning_quadratic}, adapting some ideas from \cite{col2025} to the thin setting, this yields the quadratic cleaning
$$\Gamma_2\in\operatorname{Clean}(2).$$
Finally, in \cref{sect:stratification_quadratic}, we combine these results with a Whitney-type stratification argument, as in \cite{dgpt17}. Precisely, if $\Gamma_2^j$ denotes the set of quadratic points whose blow-up has a $j$-dimensional invariant set with $j \in \{0,\ldots,n-1\}$, we prove that $\Gamma_2^j$ is locally contained in a $j$-dimensional $C^1$-manifold. In particular,
$$\pi_x(\Gamma_2)\text{ is $(n-1)$-rectifiable}
\qquad\text{and}\qquad
\dim_{\mathcal H}(\pi_x(\Gamma_2))\le n-1.$$ 
This concludes the analysis of quadratic points. 

\subsubsection{Cubic points}	
	The analysis of cubic points is more delicate and contains some of the main new ingredients of the paper. First, in \cref{sec:epiperimetric}, we establish uniqueness of blow-ups at cubic points through an epiperimetric inequality approach, in the spirit of \cite{shi18}. 
    Precisely, we prove an expansion of the form 
\be\label{eq:expans}\widetilde u(x,t) = p_3(x,t) + O\left(|x|^{3+\alpha}+|t|^{(3+\alpha)/2}\right),\ee
	where $p_3$ is the even reflection of a cubic polynomial vanishing on $\{x_{n+1} = 0\}$. 
    
    More generally, we prove uniqueness with a quantitative rate of convergence at every odd integer frequency in the parabolic Signorini problem (without assuming time-monotonicity). This provides a counterpart of the uniqueness theory developed in \cite{dgpt17,shi18} for even frequencies. We refer to \cref{prop:rate-for-w} and \cref{prop:odd-gap} for the precise statements. 
    Related uniqueness results at odd frequencies were previously obtained for the elliptic Signorini problem in \cite{sy23,cv24} and, in a parabolic setting, for cubic second blow-ups of the Stefan problem in \cite{col2025}.
    
    As a consequence of the expansion \eqref{eq:expans}, in \cref{sec-cleaning-cubic}, we prove a superquadratic cleaning estimate for cubic points. Precisely, we show that $$\Gamma_3\in\text{Clean}(2+\eta)$$ for some $\eta>0$. 
    The fact that the cleaning exponent is strictly larger than $2$ has an important consequence: the trace of $\widetilde u$ on $\{x_{n+1} = 0\}$ vanishes in a suitable backward cylinder. Hence, in this region the Signorini condition disappears and the equation becomes a standard heat equation. This allows us to bootstrap the cubic expansion and construct a polynomial approximation of any order allowed by the regularity of the obstacle.
Precisely, recalling that $\varphi\in C^{k,\gamma}(B_1')$, in \cref{sec-expansion-cubic} we prove an improvement of flatness lemma inspired by \cite{frs24},  which gives the higher order expansion
$$\widetilde u(x,t)=P_{\le k}(x,t)+O(|x|^{k+\alpha}+|t|^{(k+\alpha)/2})\qquad\text{for }|x|^2\lesssim|t|,$$
where $\alpha\in(0,\gamma)$ and $P_{\le k}$ is the even reflection of a caloric polynomial of degree $\le k$ vanishing on $\{x_{n+1}=0\}$.
    
    In \cref{sec-time-like-cubic}, we use the higher order expansion and Whitney extension theorem to stratify the set of cubic points as
$$\Gamma_3=\bigcup_{j=0}^n\Gamma_3^j,$$
according to the dimension $j$ of the kernel of the normal derivative of the cubic blow-up. The Whitney construction produces a $C^{k-1,\alpha}$ function $T$ such that, locally,
\be\label{eq:inclgamma3}\Gamma_3\subseteq\{t=T(x',0),\ \nabla_{x'} T(x',0)=0\},\ee
and whose Hessian encodes the geometry of the cubic blow-up. 
For $j<n$, the Hessian has positive rank in at least one tangential direction, and the implicit function theorem gives
$$\dim_{\mathcal H}(\pi_x(\Gamma_3^j))\le j\le n-1\qquad\text{for }j\in\{0,\ldots,n-1\}.$$
    The top stratum $j=n$ is more delicate. At these \emph{time-like cubic points}, the cubic blow-up has the form
$$p_3(x,t)=a|x_{n+1}|(6t+x_{n+1}^2),\quad a>0,$$
and is independent of the tangential variables. Hence the first blow-up does not provide any dimensional reduction in the spatial variables.

We therefore need to inspect the successive terms in the higher order expansion of the normal derivative. If any of the higher order terms has a non-trivial dependence on $x'$ at $t = 0$, its spine has dimension at most $n-1$ and a Whitney implicit function argument again gives a dimensional reduction. If no tangential dependence appears up to the order permitted by the obstacle, the associated time graph is flat up to that higher order, which yields an improved cleaning estimate. In this way we split the time-like stratum into a lower dimensional part, whose spatial projection has dimension at most $n-1$, and a part $\Gamma_3^{n,\ge k}\subseteq\Gamma_3^n$ satisfying
$$\Gamma_3^{n,\ge k}\in \text{Clean}(k+\alpha-1).$$
For a $C^\infty$ obstacle, the points that remain time-like at every order satisfy $$\Gamma_3^{n,\infty}\in\operatorname{Clean}(\infty).$$
The set $\Sigma^\infty$ in \cref{thm:n-1andtime-like} is precisely given by $\Sigma^\infty:=\Gamma_3^{n,\infty}$.
 \subsubsection{Conclusion of the proof.}
Once these results are established, the main results follow from the geometric measure theory lemmas of \cref{geometricmeastheory}. 
The following tables summarize, for $C^{k,\gamma}$ obstacles and $C^{\infty}$ obstacles respectively, the key dimensional bounds and cleaning results for each piece of the singular set, according to the splitting described above. 
	\renewcommand{\arraystretch}{1.2}
	\begin{table}[H]
		\centering
		\begin{tabular}{|c|c|c|c|}
			\hline
			Set & $\text{dim}_\HH(\pi_x(\cdot))$ & Cleaning exponent & $\text{dim}_{\text{par}}(\cdot)$ \\
			\hline
			$\Gamma_{2}$ & $n-1$ & $2$& $n-1$\\
			\hline
			$\Gamma_{3} \setminus \Gamma_3^{n,\ge k}$ & $n-1$ & $2+\eta$& $n-1$\\
			\hline
			$\Gamma_{3}^{n,\ge k}$ & $n$ & $k+\alpha - 1$ & $n$\\
			\hline
		\end{tabular}
		\caption{The splitting for $C^{k,\gamma}$ obstacles.}
		\vspace{0.2cm}
	\end{table}
	\renewcommand{\arraystretch}{1.2}
	\begin{table}[H]
		\centering
		\begin{tabular}{|c|c|c|c|}
			\hline
			Set & $\text{dim}_\HH(\pi_x(\cdot))$ & Cleaning exponent & $\text{dim}_{\text{par}}(\cdot)$ \\
			\hline
			$\Gamma_{2}$ & $n-1$ & $2$& $n-1$\\
			\hline
			$\Gamma_{3} \setminus \Gamma_3^{n,\infty}$ & $n-1$ & $2+\eta$& $n-1$ \\
			\hline
			$\Gamma_{3}^{n,\infty}$ & $n$ & $\infty$ & $n$\\
			\hline
		\end{tabular}
		\caption{The splitting for $C^{\infty}$ obstacles.}
		\vspace{0.2cm}
	\end{table}
Note that in the borderline case $n+1=4$, the quadratic stratum has both spatial dimensional bound and cleaning exponent equal to $2$, so the basic Sard-type estimate is not sufficient to conclude that it is generically empty. For that, we need to use additionally the $(n-1)$-rectifiability of $\pi_x(\Gamma_2)$. Thanks to the refined GMT argument of \cite{ffr09,col2025}, we thus obtain
$$\mathcal H^1(\pi_t(\Gamma_2))=0\qquad\text{for $n+1\le4$}.$$
Moreover, \eqref{eq:inclgamma3} allows us to apply Sard's theorem to the critical values of $T$, yielding generic regularity for cubic points in every dimension, whenever the obstacle $\varphi\in C^{k,\gamma}(B_1')$ is sufficiently regular. More precisely, in \cref{prop:sard}, we prove $$\mathcal H^1(\pi_t(\Gamma_3))=0\qquad\text{for $n+1\le k$}.$$ 
Thus, in higher dimensions, the main obstruction to generic regularity comes from the quadratic points. 
This is the counterpart of what happens in the classical Stefan problem: there, generic regularity can be proved for the top stratum $\Sigma_{n-1}$ in every dimension\footnote{It is sufficient to combine Step 1 in the proof of \cite[Theorem 1.2]{col2025} with Sard's argument used in the proof of \cref{prop:sard}.}, while the bottleneck is given by the lower-dimensional strata $\Sigma_{0},\ldots,\Sigma_{n-2}$.
	 \subsection{Organization of the paper} 
In \cref{sec:preliminaries}, we introduce the notation and preliminary results used throughout the paper. The analysis of quadratic points begins in \cref{sec:2nd_blowup_quadratic}, where we prove a Poon-type monotonicity formula for the second blow-up. The second blow-up analysis is then leveraged in \cref{sec:cleaning_quadratic} to establish quadratic cleaning for $\Gamma_2$. Finally, \cref{sect:stratification_quadratic} is devoted to the stratification of $\Gamma_2$, concluding the analysis at quadratic points. In \cref{sec:spacetime_reg}, we establish the space-time regularity of the free boundary proving \cref{theorem:structure-spacetime-intro}. The following four sections are then dedicated to the analysis of cubic points. In \cref{sec:epiperimetric}, we prove an epiperimetric inequality to show uniqueness of blow-ups and polynomial rate of convergence at cubic points. Building on this, in \cref{sec-cleaning-cubic} we establish superquadratic cleaning for $\Gamma_3$. The cleaning result is then used in \cref{sec-expansion-cubic} to obtain higher order expansions at cubic points in backward cylinders. This allows us, in \cref{sec-time-like-cubic}, to prove generic regularity for $\Gamma_3$ in every dimension, provided the obstacle is sufficiently regular, and establish a fine stratification of cubic points. Finally, in \cref{sec:proof_main}, we combine the results obtained in the previous sections to conclude the proof of our main theorems.
	  
	\section{Preliminaries}\label{sec:preliminaries}
	
	\subsection{Notation}\label{section-notations}
	Throughout the paper we will use the following notation.
	\begin{itemize}
		\item For any cylinder $\mathcal{C} = \Omega\times (t_{1},t_{2}]\subseteq \R^{n+1}\times \R$ we denote its parabolic boundary by $$\partial_{\text{par}}\mathcal{C}:= (\overline{\Omega}\times \{t_{1}\})\cup(\partial\Omega\times (t_{1},t_{2}]).$$

        \smallskip
		\item The parabolic dimension $\text{dim}_{\text{par}}(E)$ of a set $E\subseteq \R^{n+1}\times \R$ is its Hausdorff dimension with respect to the parabolic metric $\text{dist}_{\text{par}}((x_{1},t_{1}),(x_{2},t_{2})):= \sqrt{|x_1-x_2|^2+|t_1-t_2|}$.

        \smallskip
		\item We denote by $\pi_{x}$ and $\pi_{t}$ the projections on the spatial and temporal variables, respectively.

        \smallskip
		\item For $x\in \R^{n+1}$, we write $x=(x',x_{n+1})$. We let $\R^{n+1}_{\pm}:= \R^{n+1}\cap \{\pm x_{n+1}>0\}$, $\R^{n+1}_{0}:= \R^{n}\times \{0\}$, and we denote $E^{\pm}:=E\cap \{(x,t): \pm x_{n+1}>0\}$ and $E':=E\cap \{x_{n+1}=0\}$, for any $E\subseteq \R^{n+1}\times\R$. Moreover, when convenient, we identify the thin space-time with $\R^n\times\R$.

        \smallskip
		\item $Q_{r}(x_{0}, t_{0}):= B_{r}(x_{0})\times (t_{0}-r^{2},t_{0}]$ is the parabolic cylinder, $Q_{r}^{\pm}(x_{0}, t_{0}):= B_{r}^{\pm}(x_{0})\times (t_{0}-r^{2},t_{0}]$ are the parabolic half cylinders, and $Q_{r}'(x_{0}, t_{0}):= B_{r}'(x_{0})\times (t_{0}-r^{2},t_{0}]$ is the thin parabolic cylinder. 
		Similarly, we set $\mathscr{C}_\rho(x_0,t_0):=B_\rho(x_0)\times(t_0-\rho^2,t_0+\rho^2)$.
		When $(x_0,t_0)=(0,0)$, we drop the dependence on $(x_0,t_0)$.

        \smallskip
		\item $S_{r}:= \R^{n+1}\times (-r^{2},0]$ is the parabolic strip, $S_{r}^{\pm}:= \R^{n+1}_{\pm}\times (-r^{2},0]$ are the parabolic half strips, and $S_{r}':= (\R^{n}\times \{0\})\times (-r^{2},0]$ is the thin parabolic strip. The infinite strip is $S_{\infty}:= \R^{n+1}\times (-\infty,0]$, and $S_{\infty}^{\pm}, S_{\infty}'$ are defined accordingly. 

        \smallskip
		\item For any $m\in \N$, the parabolic Sobolev space $W^{2m,m}_{2}$ denotes the space
		of those functions in $L^2$, whose distributional derivative $\partial^\alpha_x\partial^j_t u$ belongs to $L^2$, for $|\alpha|+2j\le 2m$. Similarly, for any $\ell>0$, the parabolic H\"{o}lder space $H^{\ell,\ell/2}$ is the space of functions $u$ such that $\partial^\alpha_x\partial^j_t u$ is continuous for $|\alpha|+2j\le  \ell$, it is $(\ell-|\alpha|-2j)$-H\"{o}lder in space for $|\alpha|+2j\in (\ell-1,\ell)$, and $(\ell-|\alpha|-2j)/2$-H\"{o}lder in time for $|\alpha|+2j\in (\ell-2,\ell)$. We refer to \cite[Chapter 2.2]{dgpt17} for more details. We use $W^{1,2}$ for the ordinary Sobolev space in the space-time variables.

        \smallskip
		\item We write $\mathbf{H}:=\Delta-\partial_t$ for the heat operator.

        \smallskip
		\item We define the backward heat kernel $G(x,t):= (-4\pi t)^{-(n+1)/2}e^{|x|^{2}/4t}\mathbbm{1}_{(-\infty,0)}(t)$ on $\R^{n+1}\times \R$.

        \smallskip
		\item We define $Z:= x\cdot\nabla +2t\partial_{t}$, the generator of the parabolic scaling. 

        \smallskip
		\item We denote by $\mathcal{L}:=\Delta-\frac x2\cdot\nabla$ the Ornstein-Uhlenbeck operator. We also use $\mathcal{L}_\lambda:=\Delta-\frac x2\cdot\nabla +\frac\lambda2$ for its $\lambda/2$-shift.

        \smallskip
		\item For a function $v$, we write $\partial_{n+1}v(x',0,t):=\lim_{h\downarrow 0}h^{-1}(v(x',h,t)-v(x',0,t))$ for the one-sided normal derivative at the hyperplane, whenever this limit exists. We also denote by $\nabla_{x'}v$ the gradient of $v$ with respect to $x'=(x_1,\ldots,x_n)$, whenever it exists.

        \smallskip
		\item Unless otherwise specified, inequalities involving weak derivatives are understood almost
		everywhere, and differential inequalities are understood in the sense of distributions.
		Suprema and infima of quantities defined only almost everywhere mean essential suprema
		and infima.
        
        \smallskip
        \item For backward cylinders, $C_c^\infty(Q_r)$ denotes functions
smooth up to the terminal time whose support is a compact
subset of $Q_r$. No vanishing at the terminal time is required.
		
	\end{itemize}
	
	\subsection{Basic properties of solutions} 
	In this subsection we recall some properties of general solutions of the parabolic Signorini problem (i.e.~\eqref{eq:parabolic-signorini-problem} without the monotonicity assumption $\partial_tu>0$), referring the reader to \cite{dgpt17} for a detailed analysis and the proofs. In \cref{subsubsec:time-monotonicity} we then record a consequence of strict time monotonicity. 
	
	\subsubsection{Optimal regularity.}
	Throughout the paper, a solution $u:Q_{1}\to \R$ of \eqref{eq:parabolic-signorini-problem}, with obstacle $\varphi$ satisfying \eqref{eq:ostacolo} is
	assumed to belong to the parabolic regularity class
	\begin{equation}\label{eq:reg-sol}
		u\in W^{2,1}_{2}(Q_{1}^{+})\cap H^{3/2,3/4}(Q_{1}^{+}\cup Q_{1}'),
	\end{equation}
	that is, in the parabolic half cylinder, $u$ has two square integrable derivatives in space and one in time, and moreover, $u$ is $3/4$-H\"{o}lder in time and  $\nabla u$ is $1/2$-H\"{o}lder in space and $1/4$-H\"{o}lder in time, up to the hyperplane (see the notation in \cite[Chapter 2.2]{dgpt17}). General solutions of the parabolic thin obstacle problem according to \cite[Definition 4.1]{dgpt17} have this optimal regularity in a smaller cylinder (see \cite[Theorem 9.1]{dgpt17}). 
	
	The contact set and the free boundary, defined respectively as
	\begin{equation}\label{eq:def-freeboundary}
		\Lambda(u):= \left\{(x',0,t)\in Q_{1}': u(x',0,t)=\varphi(x')\right\},\qquad  \Gamma(u):=\partial_{Q_{1}'}\Lambda(u)
	\end{equation}
	are closed subsets of the thin parabolic cylinder $Q_{1}'$. When clear from the context, we will typically drop the dependence on $u$, writing $\Lambda$ and $\Gamma$ for $\Lambda(u)$ and $\Gamma(u)$. 
	
	\subsubsection{Reduction to zero obstacle.}\label{sec-subtraction-obstacle}
	Next, we describe how we shall reduce \eqref{eq:parabolic-signorini-problem} to a similar problem with zero obstacle and right-hand side suitably controlled in terms of the regularity of $\varphi$ (see \cite[Proposition 4.4]{dgpt17}). Given $(x_0,t_0)\in \Gamma$, we consider $q_{k}^{x_{0}}:\R^{n}\to \R$, the $k$-th order Taylor polynomial of $x'\mapsto \varphi(x_0'+x')$ at the origin,
	and we let $\widetilde q_k^{x_0}:\R^{n+1}\to \R$ be its even harmonic extension to $\R^{n+1}$:
	\begin{equation*}
		q_{k}^{x_{0}}(x'):= \sum_{|\alpha|\le k}\frac{D^{\alpha}\varphi(x_{0}')}{\alpha!}(x')^\alpha,\qquad \widetilde q_k^{x_0}(x):= \sum_{j=0}^{\lfloor k/2\rfloor}\frac{(-1)^{j}x_{n+1}^{2j}}{(2j)!}(\Delta_{x'})^{j}q_{k}^{x_{0}}(x').
	\end{equation*}
	We define
	\be\label{eq:utilde} \widetilde u^{x_0,t_0}(x,t):=u(x_0+x,t_0+t)-\varphi(x_0'+x')+q_k^{x_0}(x')-\widetilde q_k^{x_0}(x',x_{n+1}).
	\ee 
	
	Later on, $\widetilde{u}^{x_{0},t_{0}}$ will be multiplied by a suitable spatial cutoff function. This is needed because the functionals appearing in the monotonicity formulas are defined in the full parabolic strip $S_{1}$.
	Without loss of generality, assume $(x_{0},t_{0})=(0,0)\in\Gamma$, and let
	\begin{equation}\label{eq:subtraction-obstacle+cutoff}
		w(x,t):= \zeta(x)\widetilde{u}^{0,0}(x,t), 
	\end{equation}
	extended by zero for $x\in \R^{n+1}\setminus B_{1}$.
	Here $\zeta\in C^{\infty}_{c}(B_{15/16},[0,1])$ is a cutoff function, even in $x_{n+1}$, and $\zeta\equiv 1$ in $B_{7/8}$. 
	Then $w\in W^{2,1}_{2}(S_{1}^{+})\cap H^{3/2,3/4}(S_{1}^{+}\cup S_{1}')$ has compact spatial support and solves
	\begin{equation}\label{eq:w-solves}
		\begin{cases}
			\mathbf{H}w= f\qquad &\text{in $S_{1}^{+}$},\\
			\min\{-\partial_{n+1}w,w\}=0\qquad &\text{on $S_{1}'$},
		\end{cases}
	\end{equation}
	where $f=\zeta \mathbf{H}\widetilde{u}^{(0,0)}+2\nabla \zeta \cdot \nabla \widetilde{u}^{(0,0)}+\Delta \zeta \widetilde{u}^{(0,0)}$ which we extend evenly with respect to $x_{n+1}$. Note that $f$ does not depend on time in $Q_{7/8}$. If $\ell=k+\gamma$ is the regularity of the obstacle as in \eqref{eq:ostacolo}, one can show that $f$ is estimated by 
	\begin{equation}\label{controlf}
        \begin{aligned}
        |f(x,t)|
        &\le C_f (|x|+\sqrt{-t})^{\ell-2}
        && \forall(x,t)\in S_1,\\
        |\partial_x^\alpha\partial_t^j f(x,t)|
        &\le C_f (|x|+\sqrt{-t})^{\ell-2-|\alpha|-2j}
        && \forall(x,t)\in B_{7/8}\times(-1,0],\quad |\alpha|+2j\le k-2,
        \end{aligned}
    \end{equation}
	for some $C_{f}>0$ depending only on $n$, $k$, $\gamma$, $\lVert u\rVert_{L^{\infty}(Q_{1})}$, $\lVert \nabla u\rVert_{L^{\infty}(Q_{1})}$, and $\lVert \varphi\rVert_{C^{k,\gamma}(B_{1}')}$.
	
	Clearly, the same procedure can be applied around any free boundary point $(x_{0},t_{0})\in \Gamma$, noting that $u(x_{0}+r\cdot, t_{0}+r^{2}\cdot)$ is a solution of \eqref{eq:parabolic-signorini-problem} with obstacle $\varphi(x_{0}'+r\cdot)$ and $(0,0)$ belongs to its free boundary, for sufficiently small $r>0$.
	
	\subsubsection{Quantitative time monotonicity.}\label{subsubsec:time-monotonicity} To conclude this section, we record in the following lemma a straightforward consequence of the monotonicity assumption $\partial_{t}u>0$ in \eqref{eq:parabolic-signorini-problem}.
	\begin{lemma}[Monotonicity property]\label{lemma:monotonicity-ass}
		Let $u$ be a solution of \eqref{eq:parabolic-signorini-problem}, with obstacle $\varphi$ satisfying \eqref{eq:ostacolo}. Then, there exists $c>0$, depending also on $u$, such that 
		$$\partial_{t}u\ge c|x_{n+1}|\qquad\text{in }Q_{3/4}.$$
	\end{lemma}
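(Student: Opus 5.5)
The plan is to run a barrier/comparison argument for the caloric function $v:=\partial_t u$ in the upper half cylinder. By \eqref{eq:reg-sol} and interior parabolic regularity, $v$ is caloric and smooth in $Q_1^+$, and $v>0$ there by hypothesis. On the thin space we record two facts. First, on the interior of the contact set $\Lambda$ we have $u\equiv\varphi$, which is time independent, so $v=0$ there. Second, off $\Lambda$ the function $u$ is caloric across $\{x_{n+1}=0\}$ by evenness, because $\partial_{n+1}u=0$ there. In particular $v\ge0$ on $Q_1'$ in the appropriate weak sense, since it is a limit of positive values.

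The quantity to control is $v(x',x_{n+1},t)/x_{n+1}$ for $x_{n+1}>0$ small. Since $v$ is even in $x_{n+1}$, it suffices to prove $v\ge c\,x_{n+1}$ in $Q_{3/4}^+$. Fix a slightly larger half cylinder, say $B_{7/8}^+\times(-7/8^2,0]$. On its parabolic boundary, away from the thin part, let $\{x_{n+1}\ge 1/16\}$ denote the region at height at least $1/16$. By continuity and strict positivity of $v$ on the compact set
\[
\{x_{n+1}\ge 1/16\}\cap \overline{Q_{7/8}^+}\cap\{t\le -\delta\}
\]
(after shrinking times slightly so that we stay inside $Q_1$), we get a constant $m>0$ with $v\ge m$ there. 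The constant depends on $u$, consistent with the statement.

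Next build a barrier $h$ that is caloric in the half cylinder, vanishes on $\{x_{n+1}=0\}$, satisfies $h\le m$ on the lateral and bottom boundary where $x_{n+1}\ge 1/16$, satisfies $h\le 0$ on the remaining boundary pieces, and obeys $h\ge c\,x_{n+1}$ in $Q_{3/4}^+$. One choice is the solution of the heat equation in $B_{7/8}^+\times(-a,0]$ with boundary data $m\eta$, where $\eta$ is a cutoff supported where $x_{n+1}\ge1/16$ and which vanishes at the initial time. Then $h>0$ inside by the strong maximum principle. The Hopf lemma, or the parabolic boundary Harnack / linear growth near a flat boundary, gives $h\ge c\,x_{n+1}$ on compact subsets away from the initial time. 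An explicit alternative is $h=\varepsilon\,x_{n+1}\bigl(1-|x|^2\cdot C\bigr)+\dots$, but the PDE solution is cleaner. Then the comparison principle gives $v\ge h$.

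The key technical point, and the main obstacle, is justifying the comparison on the thin boundary, where $v$ is not known to be continuous and may behave badly at free boundary points, since $\partial_t u$ is only $H^{1/2}$-type regular. Two approaches are possible. The first is to compare on $\{x_{n+1}>\epsilon\}$, using $v(\cdot,\epsilon,\cdot)\ge h(\cdot,\epsilon,\cdot)-o(1)$, which needs continuity of $v$ up to the thin space. That continuity is not available in general, so instead I would use that $v$ is a weak supersolution in the full cylinder $Q_1$ after even reflection. Indeed, $\mathbf{H}u=2(\partial_{n+1}u)\mathcal{H}^n\llcorner\{x_{n+1}=0\}$, so $\mathbf{H}v=2\,\partial_t(\partial_{n+1}^+u)\,\mathcal{H}^n\llcorner\{x_{n+1}=0\}$. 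Here $\partial_t\partial_{n+1}^+u\le0$ on $\Lambda$, because the contact set is shrinking in time and $-\partial_{n+1}u$ decreases there, as reflected in the monotonicity $\mu$ nonincreasing. Justifying this sign (for instance through time difference quotients $u(\cdot,t+s)-u(\cdot,t)\ge0$, which are themselves supersolutions in the weak sense) lets us apply the weak maximum principle to $v-h$ on the half cylinder with $v\ge0=h$ on the thin part in the sense of traces.

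Working with difference quotients $(u(x,t)-u(x,t-s))/s\ge0$ avoids all regularity issues. These are nonnegative, caloric in $Q^+$, and have nonnegative continuous traces, so the comparison $\ge h$ holds for them directly, with $m$ uniform as $s\to0$ by uniform convergence in the interior. Letting $s\to0$ then yields $v\ge h\ge c\,x_{n+1}$ in $Q_{3/4}^+$, and evenness concludes.
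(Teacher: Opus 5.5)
Your final argument is essentially the paper's proof: backward time difference quotients $u_s\ge0$, caloric in the upper half cylinder with nonnegative continuous thin trace and bounded below uniformly in $s$ away from $\{x_{n+1}=0\}$, are compared with a caloric barrier vanishing on the thin space, followed by the parabolic Hopf lemma and $s\downarrow0$. The detour you discard has the sign reversed, since $-\partial_{n+1}u$ nonincreasing gives $\partial_t\partial_{n+1}u\ge0$ on the thin space, so $\partial_tu$ and its difference quotients are \emph{sub}caloric, not supercaloric, across $\{x_{n+1}=0\}$; this does no harm because your comparison only uses the Dirichlet inequality $u_s\ge0=h$ on the thin part of the half cylinder.
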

	\begin{proof}
		First, for $h>0$ sufficiently small, by the strong maximum principle applied to the nonnegative function $u_h(x,t):=h^{-1}(u(x,t)-u(x,t-h))$, which is caloric in $Q_{15/16}^+$, we obtain 
        $$u_h\ge c\qquad\text{in }Q_{7/8}\cap\{|x_{n+1}|>1/2\},$$ for some $ c>0$.
		Let $\psi: Q_{7/8}\to [0,\infty)$ be the barrier function satisfying 
		$$
		\begin{cases}
			\mathbf{H}\psi=0 &\qquad \text{in }Q_{7/8}\setminus Q_{7/8}',\\
			\psi=0&\qquad \text{on } Q_{7/8}'\cup(\partial_{\text{par}} Q_{7/8}\cap \{|x_{n+1}|\le 1/2\}),\\
			\psi=1&\qquad \text{on }\partial_{\text{par}} Q_{7/8}\cap \{|x_{n+1}|>1/2\}.
		\end{cases}
		$$
		Then, by the maximum principle, $u_h\ge c\psi$ in $Q_{7/8}$, and thus, the parabolic Hopf lemma yields
		$$u_h\ge c|x_{n+1}|\qquad\text{in }Q_{3/4},$$ for some smaller $c>0$. Passing to the limit as $h\downarrow 0$, we conclude the proof. 
	\end{proof}
	\subsection{Splitting of the free boundary} We next describe how points in the free boundary $\Gamma$ can be classified according to the value of their \lq\lq frequency'', which measures the asymptotic rate of detachment of $u$ from the obstacle around a given free boundary point. 
	
	\subsubsection{The frequency function.}\label{subsec:frequency} Here we recall the definition of the frequency function, its monotonicity formula, and the corresponding blow-up analysis. We refer the reader to \cite[Chapters 6 and 7]{dgpt17} for further details. Let $G$ be the backward heat kernel defined above. For any function $v:S_{r}^{+}\to \R$ we define the quantity 
	\begin{equation*}
		H(r,v):= \frac{1}{r^{2}}\int_{S_{r}^{+}}v^{2}G.
	\end{equation*}
	Assume $(0,0)\in \Gamma$, and let $w$ be as in \eqref{eq:subtraction-obstacle+cutoff}. For any $\sigma\in (0,1)$, one can find $C>0$ such that the generalized frequency function
	\begin{equation*}
		\Phi^{\sigma}(r,w):= \frac{1}{2}re^{Cr^{\sigma}}\frac{d}{dr}\log \max\left\{H(r,w), r^{2(\ell-\sigma)}\right\}+2\left(e^{Cr^{\sigma}}-1\right)
	\end{equation*}
	is non-decreasing for $r\in (0,1)$. Here $\ell=k+\gamma$ is the regularity of the obstacle, as in \eqref{eq:ostacolo}. In particular, there exists $\Phi^{\sigma}(0^+,w):= \lim_{r\downarrow 0}\Phi^{\sigma}(r,w)$, and in fact this does not depend on the choice of the cutoff function $\zeta$ in \eqref{eq:subtraction-obstacle+cutoff}. Moreover, $\Phi^{\sigma}(0^+,w)\le \ell-\sigma$, and $\Phi^{\sigma}(0^+,w)= \min\{\Phi^{\sigma_0}(0^+,w),\ell-\sigma\}$ for any $0<\sigma_0\le \sigma$. We may therefore define the frequency of $u$ at $(0,0)$ as the quantity
	\begin{equation*}
		\kappa_{u}(0,0):= \sup_{\sigma\in (0,1)}\Phi^{\sigma}(0^+, w)\le \ell.
	\end{equation*}
	The importance of this definition lies in the following fact: if $\kappa=\kappa_{u}(0,0)<\ell$, the rescalings
	\begin{equation*}
		w_{r}(x,t):= \frac{w(rx, r^{2}t)}{H(r,w)^{1/2}}
	\end{equation*}
	converge as $r\downarrow 0$, up to subsequences in $H^{\alpha,\alpha/2}(Q_{R}^{+}\cup Q_{R}')$ for any $R>0$ and $\alpha\in (0,3/2)$, to a non-trivial global solution of the parabolic Signorini problem $q:S_\infty\to\R$ with zero obstacle, namely a solution of
	\be\label{eq:parabolic-signorini-problem-global}
	\begin{cases}
		\mathbf{H}q=0 &\qquad \text{in }S_\infty^+,\\
		\min\{-\partial_{n+1}q,q\}=0&\qquad \text{on }S_\infty',\\
		q(x',x_{n+1},t)=q(x',-x_{n+1},t)&\qquad \text{in }S_\infty.
	\end{cases}\ee
	Moreover, $q$ is parabolically $\kappa$-homogeneous, namely $q(\lambda x,\lambda^2t)=\lambda^\kappa q(x,t)$. 
	
	Of course, the same blow-up analysis can be performed around any point $(x_{0},t_{0})\in \Gamma$, leading to the definition of the frequency $\kappa_{u}(x_{0},t_{0})$. In particular, we may introduce the set of free boundary points with a given frequency
	\begin{equation}\label{eq:def-gammalambda}
		\Gamma_{\lambda}(u):= \left\{(x_{0},t_{0})\in \Gamma(u): \kappa_{u}(x_{0},t_{0})=\lambda\right\}\qquad \forall \lambda\in[0,\ell),
	\end{equation}
	and the set of free boundary points with frequency larger than or equal to a given value
	\begin{equation*}
		\Gamma_{\ge \lambda}(u):= \left\{(x_{0},t_{0})\in \Gamma(u): \kappa_{u}(x_{0},t_{0})\ge \lambda\right\}\qquad \forall \lambda \in \R.
	\end{equation*}
	When clear from the context, we will drop the dependence on $u$, writing $\Gamma_{\lambda}$ and $\Gamma_{\ge\lambda}$ for $\Gamma_{\lambda}(u)$ and $\Gamma_{\ge \lambda}(u)$. 
	\subsubsection{Classification of admissible frequencies.}\label{subsubsect:frequencies} The analysis carried out in \cite[Chapter 8]{dgpt17} shows that not all frequencies are admissible for points in the free boundary of a solution to the parabolic Signorini problem. More precisely, 
	\begin{equation}\label{eq:basic-admissible-frequencies}
		\Gamma_{\lambda}=\emptyset\qquad \forall\lambda\not \in \{3/2\}\cup [2,\infty).
	\end{equation}
	The lowest admissible homogeneity of blow-ups $3/2$ determines the optimal $H^{3/2,3/4}$-regularity of solutions. Moreover, $\Gamma_{3/2}$ is a $C^{1,\alpha}$-hypersurface in the space-time variable (see \cite{dgpt17,petrosyan2019boundedness}), relatively open in $\Gamma$. For this reason, $\Gamma_{3/2}$ is called the set of regular free boundary points. We call its complement $\Sigma:= \Gamma\setminus \Gamma_{3/2}$ the singular set. 
	The free boundary $\Gamma$ can then be decomposed into the two disjoint subsets of regular and singular points: 
	\begin{equation*}
		\Gamma= \Gamma_{3/2}\cup \Sigma.
	\end{equation*}
	Later, based on the strategy of \cite{franceschini2025solutions} for the elliptic thin obstacle problem, in \cite[Theorem 3.1]{col2025} the author proved that 
	\begin{equation}\label{eq:giacomo-classification}
		\Gamma_{\lambda}=\emptyset\qquad \forall\lambda\in (2,3).
	\end{equation}
	Then, \eqref{eq:basic-admissible-frequencies} and \eqref{eq:giacomo-classification} together imply that $\Sigma\subseteq \Gamma_{2}\cup \Gamma_{\ge 3}$. Note that all the classification results of admissible frequencies discussed above hold true for general solutions of the parabolic thin obstacle problem, independently of the monotonicity assumption $\partial_{t}u> 0$ in \eqref{eq:parabolic-signorini-problem}. Their proof is based on the analysis of homogeneous solutions of the global system in \eqref{eq:parabolic-signorini-problem-global}.
	
	Based on the monotonicity assumption, the following lemma proves that the admissible frequencies for the singular set are actually only $2$ and $3$.
	\begin{lemma}\label{prop:onlyle3}
		Let $u:Q_{1}\to \R$ be a solution of \eqref{eq:parabolic-signorini-problem}, with obstacle $\varphi$ satisfying \eqref{eq:ostacolo}. Then
		\begin{equation*}
			\Sigma=\Gamma_2\cup \Gamma_3.
		\end{equation*}
	\end{lemma}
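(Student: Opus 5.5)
Since \eqref{eq:basic-admissible-frequencies} and \eqref{eq:giacomo-classification} already give $\Sigma\subseteq\Gamma_2\cup\Gamma_{\ge3}$, it suffices to show $\Gamma_\lambda=\emptyset$ for every $\lambda>3$. The plan is to compare the growth of $w$ (as in \eqref{eq:subtraction-obstacle+cutoff}, centered at a free boundary point $(0,0)$) with the lower bound on $\partial_t u$ from \cref{lemma:monotonicity-ass}.

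First, note that $\partial_t w=\zeta\,\partial_t u$, because the obstacle corrections in \eqref{eq:utilde} are time independent. Hence $\partial_t w\ge c|x_{n+1}|$ in $Q_{3/4}$ by \cref{lemma:monotonicity-ass}. Since $w(x,0)=0$ on the thin space at the free boundary point, and $w\ge 0$ there, we get for $t<0$ and $x=(0,x_{n+1})$:
$$w(0,x_{n+1},0)-w(0,x_{n+1},t)\ge c|x_{n+1}||t|.$$
At scale $r$ (with $|x_{n+1}|\sim r$ and $|t|\sim r^2$) this forces $\sup_{Q_r}|w|\ge c r^3$. One can obtain this more cleanly in an integrated form. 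Integrating $\partial_t w\ge c|x_{n+1}|$ over the region where the heat kernel is comparable to $r^{-(n+3)}$ gives a lower bound
$$H(r,w)\ge c\,r^{6}.$$
This holds because $\int_{Q_r^+\cap\{|x_{n+1}|>r/2\}}|w(\cdot,t)-w(\cdot,t-r^2/2)|^2\gtrsim r^6\cdot r^{n+3}$, so $w$ cannot be smaller than about $r^3$ in $L^2$ on both time slices.

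Second, if $\kappa:=\kappa_u(0,0)>3$, take $\sigma$ small with $3<\Phi^\sigma(0^+,w)$. The monotonicity of $\Phi^\sigma$ gives the standard upper growth bound $H(r,w)\le C r^{2(3+\delta)}$ for some $\delta>0$ and small $r$. This uses $\frac{d}{dr}\log\max\{H,r^{2(\ell-\sigma)}\}\ge 2(3+\delta)/r$ for small $r$, together with $\ell-\sigma>3$, which holds because $k\ge4$. This contradicts $H(r,w)\ge c r^6$.

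As an alternative route, one can argue by blow-up: $w_r\to q$ is $\kappa$-homogeneous, while $\partial_t w_r\ge c\,r^3 H(r,w)^{-1/2}|x_{n+1}|$, and the prefactor is bounded below if $H\lesssim r^6$. Both routes rest on the same comparison.

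The main obstacle is making the lower bound on $H(r,w)$ rigorous, since $H$ is a Gaussian-weighted $L^2$ norm over the whole strip rather than a sup. I would handle this by restricting to $t\in(-r^2,-r^2/2)$ and $|x|<r$, where $G\gtrsim r^{-(n+1)}$. By the triangle inequality in $L^2$ between two time slices separated by $r^2/4$, the difference is at least $c r^3$ pointwise on a set of measure $\sim r^{n+1}$ near $|x_{n+1}|\sim r$. That gives $\int_{-r^2}^{-r^2/2}\!\int w^2G\gtrsim r^{8}$, i.e.\ $H(r,w)\gtrsim r^6$, and the contradiction with the frequency bound concludes the proof.
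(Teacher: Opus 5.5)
Your proof is correct, and it rests on the same comparison as the paper's: time monotonicity (\cref{lemma:monotonicity-ass}) forces $w$ to grow at least cubically, while frequency larger than $3$ forces strictly faster decay. The difference is the norm in which the two bounds are compared. The paper upgrades the frequency decay to a sup bound. It uses the $L^2$--$L^\infty$ estimate $\|\widetilde u\|_{L^\infty(Q_r)}\le CH(2r,w)^{1/2}+Cr^{\ell}$ from \cite[Lemma 9.3]{dgpt17}, together with a case split $\lambda<\ell$ versus $\lambda\ge\ell$. It then contradicts the single pointwise increment $\widetilde u(0,r,0)-\widetilde u(0,r,-r^2)\ge cr^3$. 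You go the other way. You integrate the increment over $B_r\cap\{x_{n+1}>r/2\}$ and over pairs of time slices in $(-r^2,-r^2/2)$, where $G\gtrsim r^{-(n+1)}$, to get $H(r,w)\gtrsim r^6$. You then compare directly with $H(r,w)\le Cr^{2(3+\delta)}$, obtained from the monotonicity of $\Phi^\sigma$ for a $\sigma$ with $\Phi^\sigma(0^+,w)>3$. Your route is more self-contained: no $L^\infty$ estimate is needed, and the choice of $\sigma$ covers the case $\kappa_u(0,0)=\ell$ without a separate argument. The paper's route instead produces the sup-norm decay $\|\widetilde u\|_{L^\infty(Q_r)}\le Cr^{\kappa}$ and a pointwise contradiction, which is the form the paper reuses later (compare \cref{lemma:modulus-of-cont} and \cref{lemma:p-depend-time}). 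One small point: your first sketch, comparing slices $t$ and $t-r^2/2$ over all of $Q_r^+$, does not directly control $H$, because $G$ is not bounded below on $B_r$ for $t$ near $0$. The version you settle on in your last paragraph, restricted to $t\in(-r^2,-r^2/2)$, is the correct one.
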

	\begin{proof}
		By \eqref{eq:basic-admissible-frequencies} and \eqref{eq:giacomo-classification} it suffices to show that $\Gamma_{\ge\lambda}=\emptyset$ for every $\lambda>3$. Without loss of generality, we assume by contradiction that $(0,0)\in\Gamma_{\ge \lambda}$ for some $\lambda>3$.
		Then, by \cite[Lemma 9.3]{dgpt17}, for every $r$ small enough and for some $C>0$, we have 
		$$\|\widetilde u\|_{L^\infty(Q_r)}\le CH(2r,w)^{1/2}+Cr^{\ell},$$ 
		where $w=\zeta \widetilde u$ and $\widetilde u:=\widetilde u^{0,0}$ are defined in \eqref{eq:utilde}, \eqref{eq:subtraction-obstacle+cutoff}. 
		We consider the following two cases. If $\lambda<\ell$, then by using the monotonicity formula in \cite[Theorem 6.3]{dgpt17}, a standard argument (see \cite[Lemma 7.4]{dgpt17}) gives $H(2r,w)^{1/2}\le Cr^\lambda$. Similarly, if $\lambda\ge\ell$, then for every $\eps>0$ we can find $C_\eps>0$ such that $H(2r,w)^{1/2}\le C_\eps r^{\ell-\eps}$. Since $\ell>4$, in all cases, we have  \be\label{eq:controlu}\|\widetilde u\|_{L^\infty(Q_r)}\le Cr^{\kappa}\qquad \text{for some }\kappa>3.\ee
		Moreover, by \cref{lemma:monotonicity-ass},
		$$\partial_t\widetilde u=\partial_t u \ge c|x_{n+1}|\qquad\text{in }Q_{1/2}.$$
		As a consequence, for any sufficiently small $r>0$ we find
		$$\widetilde u(0,r,0)-\widetilde u(0,r,-r^2)=\int_{-r^2}^{0}\partial_t\widetilde u(0,r,s)\,ds\ge c r^3.$$
		On the other hand, \eqref{eq:controlu} gives $|\widetilde u(0,r,0)|\le Cr^\kappa$ and $|\widetilde u(0,r,-r^2)|\le Cr^\kappa$. This would give $2Cr^{\kappa}\ge cr^3$ for sufficiently small $r$, contradicting $\kappa>3$.
	\end{proof}

	\subsection{Geometric measure theory tools}\label{geometricmeastheory}
	In this section, we record a series of fundamental lemmas of geometric measure theory, mostly borrowed from \cite{ffr09,frs20,frs24,col2025}, which will allow us to estimate Euclidean and parabolic Hausdorff dimensions of different subsets of the free boundary. 
	
	We start by defining the notion of \lq\lq cleaning rate'' for a subset of space-time.
	\begin{definition}\label{def:cleaned}
		Consider a set $E\subseteq\R^{n}\times \R$,	and let $\beta >0$ be a given number. We say that $E$ is cleaned up with rate $\beta$, and write $$E\in\text{Clean}(\beta),$$ if, for all $(x_0,t_{0})\in E$, there are $\rho >0$ and $C>0$ such that
			\begin{equation*}
				\left\{(x,t)\in B_{\rho}(x_{0})\times \R:t>t_{0}+C|x-x_0|^{\beta}\right\}\cap E = \emptyset .
			\end{equation*}
			Moreover, we say that $E\in\text{Clean}(\infty)$, if $E\in\text{Clean}(\beta)$ for every $\beta>0$.
		\end{definition}
		Once we know that a set $E$ belongs to a given cleaning class, and that its spatial projection has a suitable Hausdorff dimension, we can then prove a generic bound on the size of its time slices.
		\begin{lemma}[\protect{\cite[Corollary 7.8]{frs20}}]
			\label{lemma:gmt}
			Consider a set $E\subseteq\R^n\times \R$ and let $E_{t}:=\{x\in \R^{n}:(x,t)\in E\}$ for all $t\in \R$. 
			Let \,$0\le \nu \le n$ and $\beta>0$, and assume that the following hold:
			\begin{itemize}
				\item[i)] $\text{dim}_{\mathcal{H}} \left(\pi_x(E) \right)\le \nu$;
                
				\smallskip
				\item[ii)] $E\in\text{Clean}(\beta).$
			\end{itemize}
			Then:
			\begin{itemize}
				\item [a)] if $\beta >\nu$, $\text{dim}_{\mathcal{H}}\left(\pi_t(E)\right)=\text{dim}_{\mathcal{H}}\left(\left\{t: E_{t}\neq \emptyset\right\}\right)\le \nu / \beta$;
                
				\smallskip
				\item [b)] if $\beta \le \nu$, $\text{dim}_{\mathcal{H}}\left(E_{t}\right)\le \nu - \beta$, for almost every $t\in \R$.
			\end{itemize}
		\end{lemma}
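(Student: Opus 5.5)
Since the statement is quoted from \cite[Corollary 7.8]{frs20}, the plan is to reproduce the covering argument behind it. The only information we use is that the cleaning property makes time a Hölder function of space along $E$. We first localize. By Definition~\ref{def:cleaned}, each point of $E$ has a radius $\rho$ and a constant $C$. Taking rational values of $\rho$, of $C$, and of the centre data, we write $E$ as a countable union of pieces $E^{(m)}$. On each piece the cleaning condition holds uniformly, with fixed constants $\rho_m$ and $C_m$, for every pair of its points. Hausdorff dimension bounds are stable under countable unions, and so are the ``for almost every $t$'' statements. Hence it suffices to treat a single piece with $\operatorname{diam}\pi_x(E)<\rho$ and a uniform constant $C$.

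On such a piece, take $(x_1,t_1),(x_2,t_2)\in E$ with $t_2\ge t_1$. Applying the cleaning at $(x_1,t_1)$ gives $t_2-t_1\le C|x_1-x_2|^\beta$. By symmetry, $|t_1-t_2|\le C|x_1-x_2|^\beta$ for all pairs. In particular, over each $x\in\pi_x(E)$ there is at most one time, so $E$ is the graph of a function $\tau:\pi_x(E)\to\R$ that is $\beta$-Hölder with constant $C$.

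For part a), let $\nu<s'<\beta$ with $\nu<s'$. Since $\mathcal H^{s'}(\pi_x(E))=0$, for each $\delta>0$ there is a cover of $\pi_x(E)$ by sets $A_i$ of diameter $r_i$ with $\sum r_i^{s'}<\delta$. Each image $\tau(A_i)$ has diameter at most $Cr_i^\beta$. Therefore $\mathcal H^{s'/\beta}_\infty(\pi_t(E))\le C\sum r_i^{s'}$, which is arbitrarily small. Letting $s'\downarrow\nu$ gives $\dim_{\mathcal H}\pi_t(E)\le\nu/\beta$, and this is $<1$ because $\beta>\nu$.

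For part b), the key step is a coarea or Eilenberg-type inequality adapted to Hölder maps. Fix $s>\nu$, so that $\mathcal H^{s}(\pi_x(E))=0$. The claim is that $\int^*\mathcal H^{s-\beta}(E_t)\,dt=0$, which gives $\dim E_t\le s-\beta$ for a.e.\ $t$; taking $s\downarrow\nu$ along a sequence gives $\nu-\beta$. To prove the claim, take a fine cover of $\pi_x(E)$ by balls $B_{r_i}$ with $\sum r_i^s<\delta$. For each $t$, the balls meeting $E_t$ cover $E_t$. Ball $i$ can meet $E_t$ only for $t$ in an interval of length $\le Cr_i^\beta$. Hence
\[
\int \mathcal H^{s-\beta}_{\infty}(E_t)\,dt\le \sum_i r_i^{s-\beta}\,Cr_i^\beta\le C\delta .
\]
We use upper integrals and a monotone-class argument to handle measurability, as in Eilenberg's inequality. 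Since the $\mathcal H^{s-\beta}_\infty$ content vanishes for a.e.\ $t$, the Hausdorff measure $\mathcal H^{s-\beta}$ vanishes as well. When $s-\beta<0$, the same estimate shows that $E_t=\emptyset$ for a.e.\ $t$.

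The main obstacle is this last step, namely the measurability and upper-integral bookkeeping in Eilenberg's inequality, together with passing from the content $\mathcal H_\infty$ to the measure. The cleanest route is to apply the estimate with covers of mesh $\to0$ and use Fatou's lemma.
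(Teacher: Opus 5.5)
Your proof is correct and follows the standard argument behind \cite[Corollary 7.8]{frs20}, which the paper imports without reproving: localizing to uniform cleaning constants (exactly as the paper does in the proof of \cref{lemma:colombo}) makes $E$ a $\beta$-H\"older graph over $\pi_x(E)$, after which a) is the H\"older image bound and b) is a H\"older version of Eilenberg's inequality, proved by the covering you describe. The measurability step you flag as the main obstacle is harmless: the function $t\mapsto\sum_i (2r_i)^{s-\beta}\chi_{I_i}(t)$, where $I_i$ is an interval of length at most $C(2r_i)^\beta$ containing $\tau(B_{r_i}\cap\pi_x(E))$, is Borel and dominates $\mathcal{H}^{s-\beta}_\infty(E_t)$, and $\mathcal{H}^{s-\beta}_\infty(E_t)=0$ already forces $\mathcal{H}^{s-\beta}(E_t)=0$, so you need neither mesh refinement nor Fatou's lemma.
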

		Under similar assumptions, we may also get a bound on the parabolic dimension of a given set.
		\begin{lemma}[\protect{\cite[Lemma 8.9]{frs24}}]
			\label{lemma:par-gmt}
			Consider a set $E\subseteq \R^{n}\times \R$, and let $0\le \nu \le n$. Assume that the following hold:
			\begin{itemize}
				\item[i)] $\text{dim}_{\mathcal{H}} \left(\pi_x(E)\right) \le \nu$;
                
				\smallskip
				\item[ii)] $E\in\text{Clean}(2).$
			\end{itemize}
			Then $\dim_{\rm{par}}(E)\le\nu.$
		\end{lemma}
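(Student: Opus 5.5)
We must prove Lemma (cite frs24 Lemma 8.9): if $\dim_{\mathcal H}(\pi_x(E))\le\nu$ and $E\in\mathrm{Clean}(2)$, then $\dim_{\rm par}(E)\le\nu$.

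The plan is a direct covering argument. First I localize the cleaning property. For each $(x_0,t_0)\in E$ there are $\rho,C$ as in \cref{def:cleaned}, and I may take $\rho=1/j$ and $C\le j$ for some $j\in\mathbb N$. Let
\[
E^j:=\{(x_0,t_0)\in E:\ \text{the cleaning holds with }\rho=1/j,\ C=j\},
\]
and then cut each $E^j$ into countably many pieces $E^{j,m}$ of spatial diameter $<1/j$. Since parabolic Hausdorff dimension is countably stable, it suffices to show $\dim_{\rm par}(E^{j,m})\le\nu$ for fixed $j,m$. Write $F:=E^{j,m}$.

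The key consequence of the cleaning on $F$ is a two-sided parabolic-Lipschitz bound in time. Take $(x_1,t_1),(x_2,t_2)\in F$ with, say, $t_2\ge t_1$. Then $|x_2-x_1|<1/j$, so the cleaning at $(x_1,t_1)$ gives
\[
t_2\le t_1+j|x_2-x_1|^2 .
\]
Hence $|t_1-t_2|\le j|x_1-x_2|^2$, and therefore
\[
\mathrm{dist}_{\rm par}\big((x_1,t_1),(x_2,t_2)\big)\le (1+j)^{1/2}|x_1-x_2|.
\]
In particular $\pi_x$ is injective on $F$. The inverse map $g:\pi_x(F)\to F$, with $g(x)=(x,t(x))$, is Lipschitz from the Euclidean metric into the parabolic metric.

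Lipschitz maps do not increase Hausdorff dimension. I would verify this directly: given $s>\nu$, cover $\pi_x(F)$ by sets $A_i$ with $\sum(\mathrm{diam}A_i)^s<\varepsilon$. The images $g(A_i)$ then cover $F$ and have parabolic diameters at most $(1+j)^{1/2}\mathrm{diam}A_i$. This gives $\mathcal H^s_{\rm par}(F)=0$, and since $\pi_x(F)\subseteq\pi_x(E)$ has dimension $\le\nu$, we get $\dim_{\rm par}(F)\le s$ for every $s>\nu$.

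There is no real obstacle here. The only delicate point is the uniformization of the constants $\rho,C$ through the countable decomposition, which is needed so that the cleaning comparison applies in both directions between any two points of the same piece. The comparison in both directions is automatic once I order the points by time, because I can always apply the cleaning at the earlier point.
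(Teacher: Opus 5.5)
Your proof is correct. You uniformize the cleaning constants by a countable decomposition, apply the cleaning at the earlier of the two points to get $|t_1-t_2|\le j|x_1-x_2|^2$, and deduce that $x\mapsto (x,t(x))$ is Lipschitz from $\pi_x(F)$ into the parabolic metric; this is the standard argument behind \cite[Lemma 8.9]{frs24}, which the paper quotes without giving a proof of its own.
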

		
		Finally, we state the following stronger version of \cref{lemma:gmt}, under the additional assumption that $\pi_{x}(E)$ is $m$-rectifiable (with respect to the Euclidean distance). The proof we give here for the reader's convenience is just a reformulation in our notation of the argument contained in \cite[Section 6, proof of Theorem 1.1]{col2025}, which is based on \cite[Lemma A.3]{ffr09}.
		\begin{lemma}\label{lemma:colombo}
			Consider a set $E\subseteq \R^{n}\times \R$, and let $m\in \{1,\dots,n\}$ and $\beta>1$. Suppose that $\pi_x(E)$ is $m$-rectifiable and $E\in\text{Clean}(\beta)$. Then
			$$\mathcal{H}^{\frac{m}{\beta}}(\pi_t(E))=0.$$
		\end{lemma}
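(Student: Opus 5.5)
Since $\pi_x(E)$ is $m$-rectifiable, I will first cover it, up to an $\mathcal H^m$-null set, by countably many Lipschitz images $f_i(K_i)$, with $K_i\subseteq\R^m$ compact and $f_i:\R^m\to\R^n$ Lipschitz. I will also split $E$ into countably many pieces $E_{j}$ on which the constants in Definition~\ref{def:cleaned} are uniform: $\rho\ge 1/j$ and $C\le j$. Because $\mathcal H^{m/\beta}$ is countably subadditive, it suffices to prove $\mathcal H^{m/\beta}(\pi_t(E_{j}\cap \pi_x^{-1}(A)))=0$ for each $j$ and each piece $A$ of the cover. There are two kinds of pieces: $A=f_i(K_i)$, and the exceptional $\mathcal H^m$-null set $N$.

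For a piece $E_j$ with uniform constants, the cleaning property gives a one-sided Hölder control. If $(x_1,t_1),(x_2,t_2)\in E_j$ with $|x_1-x_2|<1/j$, then $t_2\le t_1+j|x_1-x_2|^\beta$, and by symmetry $|t_1-t_2|\le j|x_1-x_2|^\beta$. Hence, after a further countable splitting into sets of small spatial diameter, $\pi_t$ restricted to $E_j$ factors through a map $\tau_j:\pi_x(E_j)\to\R$ that is $\beta$-Hölder with constant $j$. In particular, $\pi_t(E_j\cap\pi_x^{-1}(A))=\tau_j(A\cap\pi_x(E_j))$.

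The null piece is handled directly. A $\beta$-Hölder map sends $\mathcal H^m$-null sets to $\mathcal H^{m/\beta}$-null sets: a cover by balls of radii $r_k$ with $\sum r_k^m<\eps$ is mapped to a cover by intervals of length $\lesssim r_k^\beta$, and $\sum (r_k^\beta)^{m/\beta}<\eps$. So $\tau_j(N)$ is $\mathcal H^{m/\beta}$-null.

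The main obstacle is the rectifiable pieces. Here the generic Hölder bound only gives $\mathcal H^{m/\beta}(\tau_j(f_i(K_i)))<\infty$, and I need zero. I would use \cite[Lemma A.3]{ffr09} through the composition $g:=\tau_j\circ f_i$, defined on $f_i^{-1}(\pi_x(E_j))\subseteq\R^m$. This map satisfies $|g(y)-g(z)|\le C|y-z|^\beta$ with $\beta>1$. By the argument of that lemma, a function on a subset of $\R^m$ that is $\beta$-Hölder with $\beta>1$ has image of $\mathcal H^{m/\beta}$-measure zero. The proof proceeds as follows.

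First, at $\mathcal H^m$-a.e.\ point $y$ (a density point), the function $g$ is differentiable along the set with zero derivative.

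Next, for every $\delta>0$ the domain is covered, up to a small-measure set, by small cubes $Q$ of side $s$ on which $\operatorname{osc}_Q g\le \delta s^\beta$. This uses density points together with a Vitali covering argument.

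Summing over these cubes gives $\sum (\delta s^\beta)^{m/\beta}\lesssim \delta^{m/\beta}\mathcal L^m(K_i)$. The small exceptional set is controlled by the Hölder bound, with total contribution at most $C\eps$.

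Letting $\delta,\eps\to0$ gives $\mathcal H^{m/\beta}(g(\cdot))=0$. Putting the null piece and the rectifiable pieces together over all countable indices concludes the proof.
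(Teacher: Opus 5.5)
Your proposal follows the paper's proof, with one difference in how the last step is handled. The shared part is: localize the cleaning constants, observe that $\tau$ is well defined and satisfies $|\tau(x)-\tau(y)|\le N|x-y|^\beta$ at small scales, and then apply a Sard-type statement for $\beta$-Hölder maps with $\beta>1$. The difference is that the paper quotes \cite[Lemma 6.4]{col2025} directly on the rectifiable pieces. You instead pass to subsets of $\R^m$ through Lipschitz charts, dispose of the $\mathcal H^m$-null remainder by the trivial Hölder covering, and appeal to \cite[Lemma A.3]{ffr09}.

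If you write that last lemma out, the vanishing derivative of $g$ along its domain $D$ only gives oscillation $o(s)$ on a cube of side $s$, which is weaker than the $\delta s^\beta$ bound you need. That bound comes from a chaining argument instead. At a point of (outer) density one, $D\cap Q$ is $\varepsilon s$-dense in every sufficiently small cube $Q$ centered there. So any two of its points are joined by $O(\varepsilon^{-1})$ steps of length $O(\varepsilon s)$ inside $D$, which gives $\operatorname{osc}_{D\cap Q}g\lesssim\varepsilon^{-1}(\varepsilon s)^\beta=\varepsilon^{\beta-1}s^\beta$. This is precisely where $\beta>1$ is used.
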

		\begin{proof}
			For every $x_{0}\in\pi_x(E)$, we let $\tau(x_{0})\in \R$ be the unique number such that $(x_{0},\tau(x_{0}))\in E$. Note that the function $\tau:\pi_{x}(E)\to \R$ is well-defined thanks to the cleaning assumption on $E$, and that moreover, we have $\pi_t(E)=\tau(\pi_x(E))$.
			We also localize the choice of constants $\rho, C>0$ in \cref{def:cleaned}, by setting 
			$$E_N:=\left\{x_0\in\pi_x(E):\left\{(x,t):\ |x-x_0|\le N^{-\beta},\ t>\tau(x_0)+N|x-x_0|^\beta\right\}\cap E=\emptyset\right\},$$
			so that $\pi_x(E)=\cup_{N\in\N}E_N$. For every $N\in \N$,  $E_N$ is $m$-rectifiable, and, by the cleaning result, we get $$|\tau(x)-\tau(y)|\le N|x-y|^\beta\quad\forall x,y\in E_{N}, \,|x-y|< N^{-\beta}.$$ 
			We may then apply \cite[Lemma 6.4]{col2025} to every element of a countable partition of $E_{N}$ made of sets with diameter smaller than $N^{-\beta}$ and deduce that $\mathcal{H}^{\frac{m}{\beta}}(\tau(E_N))=0$. Since this holds for every $N$, the conclusion follows. 
		\end{proof}
		\subsection{Two useful estimates} Finally, we record here some  useful parabolic estimates that will be needed often in the sequel. We first recall the following $L^\eps$-$L^\infty$ estimate for nonnegative subsolutions. The result is a consequence of the more general \cite[Theorem 4.16]{wan92}; see also \cite[Lemma 3.6]{frs24}.
		\begin{lemma}\label{lemma:wang-start}
			Let $v\in L^{\eps}(Q_{1})$ be a nonnegative function such that $\mathbf{H}v\ge g$ in $Q_{1}$, where $g\in L^\infty(Q_1)$. Then, for every $\eps>0$, there is $C_{\eps}$ depending only on $n$ and $\eps$ such that 
			$$\sup_{Q_{1/2}} v\le C_\eps \Big(\|v\|_{L^\eps(Q_1)}+ \|g\|_{L^\infty(Q_1)}\Big).$$ 
			The analogous estimate holds when $\mathbf{H}$ is replaced by $\mathcal{L}_\lambda-\partial_{\tau}$, where $\mathcal{L}_{\lambda}=\Delta-\frac x2\cdot\nabla +\frac\lambda2$ is the shifted Ornstein-Uhlenbeck operator, with $C_{\eps}$ depending also on $\lambda$.
		\end{lemma}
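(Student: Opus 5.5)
The plan is to reduce to the homogeneous case $g\equiv 0$ with a simple explicit correction, and then invoke the $L^\eps$–$L^\infty$ estimate for nonnegative subsolutions of uniformly parabolic equations, which is \cite[Theorem 4.16]{wan92} (as used in \cite[Lemma 3.6]{frs24}). For such subsolutions, that result gives $\sup_{Q_{1/2}} w\le C_\eps\|w\|_{L^\eps(Q_1)}$ for every $\eps>0$.

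\emph{Heat operator.} Set $M:=\|g\|_{L^\infty(Q_1)}$ and consider
$$w(x,t):=v(x,t)+\frac{M}{2(n+1)}|x|^2 .$$
Since $\mathbf{H}|x|^2=\Delta|x|^2=2(n+1)$, we get $\mathbf{H}w\ge g+M\ge 0$ in $Q_1$ in the sense of distributions, and $w\ge 0$. Wang's estimate then gives $\sup_{Q_{1/2}}v\le \sup_{Q_{1/2}}w\le C_\eps\|w\|_{L^\eps(Q_1)}$. The quantity $\|\cdot\|_{L^\eps}$ is only a quasi-norm when $\eps<1$, but it still satisfies $\|a+b\|_{L^\eps}\le C_\eps(\|a\|_{L^\eps}+\|b\|_{L^\eps})$. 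Together with $\big\|\,|x|^2\big\|_{L^\eps(Q_1)}\le C(n,\eps)$, this yields $\|w\|_{L^\eps(Q_1)}\le C_\eps(\|v\|_{L^\eps(Q_1)}+M)$, which is the claim.

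If one prefers to derive the small-$\eps$ case of Wang's estimate directly, I would first take the standard Moser iteration bound $\sup_{Q_{1/2}}w\le C\|w\|_{L^2(Q_1)}$, applied on nested cylinders $Q_{\rho}\subset Q_{r}$ with constant $C(r-\rho)^{-(n+3)/2}$. Then I would interpolate via $\|w\|_{L^2(Q_r)}\le \|w\|_{L^\infty(Q_r)}^{1-\eps/2}\|w\|_{L^\eps(Q_r)}^{\eps/2}$ and apply Young's inequality, which absorbs half of the supremum. A standard iteration lemma on $r\in[1/2,1]$ then closes the argument. This interpolation–absorption step is the only nontrivial point, and it is exactly what \cite{wan92} provides.

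\emph{Shifted Ornstein–Uhlenbeck operator.} In $Q_1$ the operator $\mathcal{L}_\lambda-\partial_\tau$ is uniformly parabolic, with smooth bounded drift $-\frac x2\cdot\nabla$ and constant zeroth-order term $\frac\lambda2$.

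First remove the zeroth-order term by setting $z:=e^{-\lambda\tau/2}v\ge 0$. A direct computation gives
$$(\mathcal{L}-\partial_\tau)z=e^{-\lambda\tau/2}(\mathcal{L}_\lambda-\partial_\tau)v\ge e^{-\lambda\tau/2}g,$$
and the factor $e^{\pm\lambda\tau/2}$ is bounded above and below on $Q_1$ by constants depending only on $\lambda$.

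Next apply the same barrier as before, noting that $\mathcal{L}|x|^2=2(n+1)-|x|^2\ge 2n+1$ in $B_1$. So $z+\frac{M'}{2n+1}|x|^2$ is a nonnegative subsolution of $\mathcal{L}-\partial_\tau$, where $M'=e^{|\lambda|/2}M$. Wang's theorem applies because it covers bounded measurable lower-order coefficients. Undoing the exponential factor gives the estimate, with $C_\eps$ now depending also on $\lambda$.
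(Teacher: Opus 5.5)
Your proposal is correct and is essentially the paper's argument, since the paper's proof consists only of invoking Wang's local maximum principle \cite[Theorem 4.16]{wan92} (cf.\ \cite[Lemma 3.6]{frs24}); your barrier $\frac{M}{2(n+1)}|x|^2$, the conjugation $z=e^{-\lambda\tau/2}v$, and the bound $\mathcal{L}|x|^2=2(n+1)-|x|^2\ge 2n+1$ in $B_1$ are all correct reductions to that statement. The one step that depends on the cited theorem allowing first-order terms is the drift $-\frac x2\cdot\nabla$, which the paper likewise only calls analogous; if you prefer not to rely on that, undo the conformal change of variables $\mathcal{T}$, which turns the problem on $Q_1$ into one for $\mathbf{H}$ on the bounded region $\{-e<t\le -1,\ |x|<\sqrt{-t}\}$ with comparable right-hand side and norms, and then conclude from the heat case by a covering argument.
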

		We also recall the following parabolic version of the standard $L^1$-$L^1_{\text{weak}}$ estimate of
		Calderón and Zygmund for elliptic PDEs. The result follows from \cite[Theorem 3.5]{frs24}; see also \cite{jones1964class}.
		\begin{lemma}\label{lemma:calderon-zygmund}
			Let $v\in L^{1}(Q_{1})$ be a nonnegative function with $\partial_{t}v \in L^{1}(Q_1)$, such that $\mathbf{H}v\ge g$ in $Q_1$, where $g\in L^1(Q_1)$. Then $$\sup_{\theta>0}\,\theta\left|\{|\partial_t v|>\theta\}\cap Q_{1/2}\right|\le C\Big(\|v\|_{L^1(Q_1)}+\|g\|_{L^1(Q_1)}\Big),$$ for some $C>0$ depending only on $n$.
		\end{lemma}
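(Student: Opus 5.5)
The plan is to reduce the statement to the weak-type $(1,1)$ bound for a parabolic Calderón--Zygmund operator applied to a finite measure. This is essentially the content of \cite[Theorem 3.5]{frs24}, and we only need to localize.

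\textbf{Step 1: $\mathbf{H}v-g$ is a locally finite measure.} Since $v\ge 0$ and $\mathbf{H}v\ge g$, the distribution $\mu:=\mathbf{H}v-g$ is nonnegative, hence a Radon measure on $Q_1$. Take a cutoff $\eta\in C^\infty_c(Q_1)$ in the sense of the Notation section, with $0\le\eta\le1$, $\eta\equiv1$ on $B_{5/8}\times(-(5/8)^2,0]$, and support in $Q_{3/4}$. Testing against $\eta$ and integrating by parts gives
$$\mu(Q_{5/8})\le\int\eta\,d\mu\le C\Big(\|v\|_{L^1(Q_1)}+\|g\|_{L^1(Q_1)}\Big).$$
The terminal-time boundary term is harmless: the hypothesis $\partial_tv\in L^1$ lets us justify the integration by parts, and the boundary term has a favorable sign (it is $-\int v\eta\,dx$ at $t=0$, since $v\ge0$).

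\textbf{Step 2: representation formula.} With $\Gamma$ the forward heat kernel, we write
$$\eta v=-\Gamma*\big(\eta(\mu+g)\big)-\Gamma*R,\qquad R:=-2\nabla\eta\cdot\nabla v-v\Delta\eta+v\,\partial_t\eta.$$
To avoid derivatives of $v$, we rewrite $2\nabla\eta\cdot\nabla v=2\diver(v\nabla\eta)-2v\Delta\eta$ and move the divergence onto the kernel. Then $\Gamma*R$ becomes a convolution of $v$ times smooth cutoffs with $\Gamma$ or $\nabla\Gamma$.

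The key geometric point is that $R$ is supported where $\eta$ is not locally constant. This set lies at positive parabolic distance from $Q_{1/2}$, because $\eta\equiv1$ on $B_{5/8}\times(-(5/8)^2,0]$. Hence in $Q_{1/2}$ the function $\partial_t(\Gamma*R)$ is given by a kernel that is smooth and bounded on that support. It is therefore bounded pointwise by $C\|v\|_{L^1(Q_1)}$, and trivially satisfies the weak-$L^1$ bound on $Q_{1/2}$.

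\textbf{Step 3 (main step): weak $(1,1)$ for $\partial_t\Gamma*\nu$.} Set $\nu:=\eta(\mu+g)$. By Step 1 this is a finite signed measure with $|\nu|(\R^{n+2})\le C(\|v\|_{L^1}+\|g\|_{L^1})$. The kernel $\partial_t\Gamma$ is parabolically homogeneous of degree $-(n+3)$, which is the critical degree since the homogeneous dimension is $n+3$. It is smooth away from the origin and has vanishing integral on parabolic annuli, so it is a Calderón--Zygmund kernel on the space of homogeneous type $(\R^{n+2},\mathrm{dist}_{\rm par},dx\,dt)$. It is $L^2$-bounded because $\partial_t\mathbf{H}^{-1}$ has a bounded Fourier multiplier. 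The Calderón--Zygmund decomposition, applied to measures rather than $L^1$ functions, then gives
$$\sup_{\theta>0}\theta\,\big|\{|\partial_t(\Gamma*\nu)|>\theta\}\big|\le C|\nu|(\R^{n+2}).$$
This is exactly where I would invoke \cite[Theorem 3.5]{frs24} (or \cite{jones1964class}).

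The delicate point, which I expect to be the main obstacle, is identifying $\partial_t(\eta v)$ almost everywhere with the principal-value operator applied to $\nu$. For a measure, the distributional derivative $\partial_t(\Gamma*\nu)$ equals the principal value plus a constant multiple of $\nu$. Only the absolutely continuous part of $\nu$ affects the almost-everywhere values, and that part is controlled in $L^1$ by $|\nu|(\R^{n+2})$. The assumption $\partial_tv\in L^1$ guarantees that $\partial_t(\eta v)$ is a function, so that the level sets are meaningful.

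Combining Steps 2 and 3 on $Q_{1/2}$, where $\partial_tv=\partial_t(\eta v)$, yields the claimed estimate.
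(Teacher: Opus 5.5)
Steps 1 and 2 of your proposal are fine; Step 1 is exactly the paper's first estimate. The argument breaks at the point you flag yourself in Step 3.

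What you actually have is a distributional identity $\partial_t(\Gamma*\nu)=S+c\nu$, where $S=\mathrm{p.v.}(\partial_t\Gamma)*\nu$ is the limit in $\mathcal D'$ of the truncations $T_\varepsilon\nu$. The weak-$(1,1)$ theory for measures, by contrast, controls the pointwise a.e.\ limit $T\nu$ of those same truncations, and nothing you wrote connects the two objects:
\begin{itemize}
\item a distributional limit and an a.e.\ limit of the same family can differ even in their absolutely continuous parts;
\item for $\nu\in L^1$ the function $T\nu$ is in general not locally integrable, so it cannot simply be identified with the distribution $S$;
\item when $\nu$ has a singular part, $S$ need not even be a measure (take $\nu=\delta$).
\end{itemize}
The singular case is the relevant one: in \cref{lemma:semiconv-in-time}, the measure $\mathbf H(\delta_{t,h}u)$ charges the thin space. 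So the sentence ``only the absolutely continuous part of $\nu$ affects the almost-everywhere values'' is precisely the missing statement. You would need to show that on $Q_{1/2}$ the absolutely continuous part of $S$ equals $T\nu$ a.e. One way is a Lebesgue-point comparison of $\langle S,\phi_r\rangle$ with $T_r\nu(z_0)$, using maximal-function control of $\nu_s$. Nothing of this kind is supplied.

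The clean fix, which is the paper's route, is to never apply the singular integral to a measure. Mollify first: set $v_\varepsilon:=v*\rho_\varepsilon$, convolving only over past times. Then $\mathbf Hv_\varepsilon$ is a smooth function, and your Step 1 bounds $\|\mathbf Hv_\varepsilon\|_{L^1}$ on an inner cylinder by $C(\|v\|_{L^1(Q_1)}+\|g\|_{L^1(Q_1)})$. The smooth-data estimate then gives the weak-type bound for $\partial_tv_\varepsilon$ uniformly in $\varepsilon$. You can take this from \cite[Theorem 3.5]{frs24}, or from your Steps 2--3 with $\nu$ a smooth function, where $\partial_t(\Gamma*f)=Tf+cf$ holds pointwise classically.

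Next, the hypothesis $\partial_tv\in L^1(Q_1)$ gives $\partial_tv_\varepsilon\to\partial_tv$ in $L^1$ on inner cylinders, hence a.e.\ along a subsequence. For each fixed $\theta>0$ you have $\mathbf 1_{\{|\partial_tv|>\theta\}}\le\liminf_{\varepsilon}\mathbf 1_{\{|\partial_tv_\varepsilon|>\theta\}}$ a.e., and Fatou's lemma transfers the bound to $v$. In particular, the role of $\partial_tv\in L^1$ is this passage to the limit, not merely making the level sets meaningful.
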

		\begin{proof}
			Take a cutoff function $\eta\in C^{\infty}_{c}(Q_{1}, [0,1])$ such that $\eta\equiv 1$ in $Q_{3/4}$. Consider the measure $\mu:= \mathbf{H}v-g\ge 0$ and observe that 
			\begin{align*}
				\mu(Q_{3/4})&\le \int_{Q_{1}}\eta \,d\mu
				= \int_{Q_{1}}\Big(v(\Delta \eta +\partial_{t}\eta)-g\eta\Big)-\int_{B_{1}}v(\cdot,0)\eta(\cdot,0)\le C \left(\lVert v\rVert_{L^{1}(Q_{1})}+\lVert g\rVert_{L^{1}(Q_{1})}\right). 
			\end{align*}
			Then, $|\mathbf{H}v|(Q_{3/4})\le C( \lVert v\rVert_{L^{1}(Q_{1})}+\lVert g\rVert_{L^{1}(Q_{1})})$. Let $v_\eps:=v\ast \rho_\eps$, where $\rho_\eps$ is a standard mollifier in space-time, and the convolution is taken only over past times.
			By \cite[Theorem 3.5]{frs24}, we get
			$$\sup_{\theta>0}\,\theta \left|\{|\partial_t v_\eps|>\theta\}\cap Q_{1/2}\right|\le C\Big(\|v_\eps\|_{L^1(Q_{2/3})}+\|\mathbf{H}v_\eps\|_{L^1(Q_{2/3})}\Big)\le C\Big(\|v\|_{L^1(Q_1)}+\|g\|_{L^1(Q_1)}\Big),$$ for some $C>0$ depending only on $n$. Since $\partial_t v\in L^1(Q_1)$, the derivatives $\partial_t v_\eps$ converge to $\partial_t v$ in $L^1$ on inner cylinders and, along a subsequence, almost everywhere. Applying Fatou's lemma to the superlevel sets at each fixed $\theta>0$ gives the desired weak-type estimate.
		\end{proof}
		
		\section{The second blow-up at quadratic points}\label{sec:2nd_blowup_quadratic}
		In this section we consider the set of quadratic points $\Gamma_{2}$ in the free boundary of a solution $u$ of \eqref{eq:parabolic-signorini-problem}. The main result here is \cref{prop:second-blow-up}, which states that after subtracting the first quadratic blow-up, a sequence of rescalings converges to a second blow-up, which is a quadratic caloric polynomial. In order to pursue this second blow-up analysis, we will need a new monotonicity formula for a modified Poon frequency functional, which we obtain in \cref{prop:mon-formula-second-bu}.   
		
		\subsection{The first blow-up at quadratic points} First, we recall a result about the strong convergence of the solution to the unique first blow-up at quadratic points, together with the classification of quadratic blow-ups. Let $\mathcal{P}_2^+$ be the closed convex cone of admissible stationary quadratic polynomials
		\bea\label{eq:def-first-blow-ups}
		\mathcal{P}_2^+:=\left\{p:\R^{n+1}\to \R \ \mbox{polynomial}:\  \Delta p=0,\ \nabla p\cdot x=2p, \ p(x',0)\ge0,\ p(x',x_{n+1})=p(x',-x_{n+1})\right\}.
		\eea
		The following proposition follows from \cite[Theorem 12.6]{dgpt17} (which treats more generally even frequency points), after noting that, under the monotonicity assumption $\partial_{t}u> 0$, quadratic blow-ups must be independent of time.
		\begin{proposition}\label{prop:p2-indep-time}
			Let $u$ be a solution of \eqref{eq:parabolic-signorini-problem}, with obstacle $\varphi$ satisfying \eqref{eq:ostacolo}, and let $(0,0)\in \Gamma_{2}$. Then, there exists a unique non-zero $p\in \mathcal{P}_{2}^{+}$ such that
			\begin{equation*}
				\lVert \widetilde u-p\rVert_{L^{\infty}(Q_{r})}= o(r^{2})\qquad \text{as $r\downarrow 0$}, 
			\end{equation*} where $\widetilde u:=\widetilde u^{0,0}$ is defined in \eqref{eq:utilde}.
		\end{proposition}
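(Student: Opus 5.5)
The argument has two parts. First we import the general uniqueness theory at even-frequency points from \cite[Theorem 12.6]{dgpt17}; then we use \cref{lemma:monotonicity-ass} to show that the resulting blow-up cannot depend on time.

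\textbf{Step 1: uniqueness of the blow-up.} Let $(0,0)\in\Gamma_2$, so the frequency is $\kappa=2<\ell$. By \cite[Theorem 12.6]{dgpt17}, applied to $w=\zeta\widetilde u$ (and therefore to $\widetilde u$ in $Q_{7/8}$, where $\zeta\equiv1$), there is a unique non-zero polynomial $q$ with the following properties:
\begin{itemize}
\item $q$ is parabolically $2$-homogeneous and caloric;
\item $q$ is even in $x_{n+1}$;
\item $q\ge0$ on $\{x_{n+1}=0\}$;
\item $\lVert \widetilde u-q\rVert_{L^\infty(Q_r)}=o(r^2)$ as $r\downarrow0$.
\end{itemize}
The convergence in that theorem may be stated in $L^2$ against the Gaussian weight rather than in $L^\infty$. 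If so, I would upgrade it using the $L^\infty$–$L^2$ estimate \cite[Lemma 9.3]{dgpt17} applied to the difference, exactly as in the proof of \cref{prop:onlyle3}. This works because $\widetilde u-q$ solves a Signorini-type problem up to the error coming from $f$, which is $O(r^{\ell-2})$ with $\ell>4$. Since $q$ is a $2$-homogeneous caloric polynomial, it has the form $q(x,t)=p(x)+ct$ with $p$ quadratic and $\Delta p=c$.

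\textbf{Step 2: the blow-up is time-independent.} The plan is to show $c=0$. By \cref{lemma:monotonicity-ass}, $\partial_t\widetilde u=\partial_tu\ge0$ in $Q_{3/4}$. Fix $x$, and set $r=\max(|x|,1)$ for the scaling. For $s<s'\le0$, the monotonicity gives
\[
\widetilde u(\rho x,\rho^2 s')-\widetilde u(\rho x,\rho^2 s)\ge0 .
\]
Dividing by $\rho^2$ and letting $\rho\downarrow0$, the uniform $o(\rho^2)$ convergence yields
\[
q(x,s')-q(x,s)=c(s'-s)\ge0,
\]
so $c\ge0$.

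For the reverse inequality I would use the thin space. On $\{x_{n+1}=0\}\cap\{q>0\}$ we are away from contact, and $\widetilde u$ is close to $q$. It is cleaner, though, to use the global Signorini structure of the blow-up: $q$ solves \eqref{eq:parabolic-signorini-problem-global}. The complementarity condition then forces $\partial_{n+1}q(x',0^+,t)=0$ wherever $q(x',0,t)>0$. Because $q$ is even and smooth, $\partial_{n+1}q=0$ on the thin space automatically, so complementarity is trivially satisfied; the constraint that actually matters is $q(x',0,t)=p(x',0)+ct\ge0$ for all $t\le0$. Taking $x'=0$ gives $ct\ge0$ for every $t<0$, hence $c\le0$.

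Combining the two inequalities gives $c=0$. Therefore $q=p$ is harmonic, homogeneous of degree $2$, even in $x_{n+1}$, and nonnegative on the thin space, which is exactly the statement $p\in\mathcal P_2^+$. Non-triviality of $p$ is inherited from $q$.

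\textbf{Main obstacle.} The delicate point is Step 1: matching the precise form of \cite[Theorem 12.6]{dgpt17}, which is phrased for $w$, in weighted norms, and for obstacles of finite regularity, and then turning its rate into the $L^\infty$ statement on $Q_r$. I would handle this through the $L^\infty$ bound of \cite[Lemma 9.3]{dgpt17} together with the estimate \eqref{controlf} on $f$. Step 2, by contrast, is elementary once uniform convergence is available.
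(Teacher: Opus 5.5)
Your proposal is correct and follows the same route as the paper. You import the nonzero quadratic blow-up from \cite[Theorem 12.6]{dgpt17}, get $c\ge 0$ from $\partial_t\widetilde u\ge 0$, and get $c\le 0$ from nonnegativity on the thin space at $x'=0$, $t<0$. The paper reads the pointwise expansion $\widetilde u=p+o(|x|^2+|t|)$ directly off that theorem, so your $L^2$-to-$L^\infty$ upgrade is not needed.
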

		\begin{proof}
			By \cite[Theorem 12.6]{dgpt17}, there exists a quadratic parabolically homogeneous polynomial $p:S_{\infty}\to \R$ with  $\mathbf{H}p=0$, $p(x',0,t)\ge0$, and $p(x',x_{n+1},t)=p(x',-x_{n+1},t)$, such that 
			$$\widetilde u(x,t)=p(x,t)+ o(|x|^{2}+|t|).$$ 
			Since $\partial_{t}\widetilde{u}=\partial_{t}u\ge0$, we must have $\partial_{t}p\ge 0$. Writing $p(x,t)=at+q(x)$ for $a\ge 0$ and $q$ a $2$-homogeneous polynomial, since $p$ is nonnegative on the thin space, we must have $0\le p(0,t)=at$, for $t<0$. Therefore, $a=0$, so $p$ does not depend on $t$.
		\end{proof}
		
		We refer the reader to \cite[Theorem 1.2 and Section 5]{shi18} for a quantitative logarithmic convergence rate in \cref{prop:p2-indep-time} for the blow-up at even frequency points of the parabolic thin obstacle problem.

		\subsection{The monotonicity formula for the second blow-up}
		Before stating the monotonicity formula for the second blow-up, we introduce notation for a series of quantities that will appear in the computations below (see also the definitions in \cite[Chapter 6]{dgpt17}).
			
			For a sufficiently regular function $v:S_{1}^{+}\to \R$ and any $r\in (0,1)$ we let
			\begin{equation*}
				H(r,v):= \frac{1}{r^{2}}\int_{S_{r}^{+}}v^{2}G,\qquad I(r,v):= \frac{1}{r^{2}}\int_{S_{r}^{+}}|t||\nabla v|^{2}G,\qquad N(r,v):= \frac{2I(r,v)}{H(r,v)}.
			\end{equation*}
			For a fixed $\beta\in(0, \ell)$, we also define the regularized quantities 
			\begin{equation*}
				\widetilde{H}(r,v):= H(r,v)+r^{2\beta},\qquad \widetilde{I}(r,v):= I(r,v)+\frac{\beta}{2}r^{2\beta},\qquad \widetilde{N}(r,v):= \frac{2\widetilde{I}(r,v)}{\widetilde{H}(r,v)}.
			\end{equation*}
			For any $\kappa\in \R$, we let
			\begin{equation*}
				\mathcal{W}_{\kappa}(r,v):= \frac{1}{r^{2\kappa}}\left(I(r,v)-\frac{\kappa}{2}H(r,v)\right).
			\end{equation*}
			Finally, extending $v$ evenly with respect to $x_{n+1}$ to the full $S_{1}$, we let
			\begin{equation}\label{eq:def-F}
				F(r,v):= 
				\frac{1}{2r^{2}}\int_{S_{r}}|t|v\mathbf{H}vG =\frac{1}{r^{2}}\int_{S_{r}^{+}}|t|v\mathbf{H}v G +\frac{1}{r^{2}}\int_{S_{r}'}|t|v\partial_{n+1}v G.
			\end{equation}
			\begin{proposition}\label{prop:mon-formula-second-bu}
				Let $u$ be a solution of \eqref{eq:parabolic-signorini-problem}, with obstacle $\varphi$ satisfying \eqref{eq:ostacolo}.
				Suppose that $(0,0)\in \Gamma_{2}$, and let $p\in \mathcal{P}_{2}^+$. We denote by $v:=w-p$, where $w:=\zeta\widetilde u^{0,0}$ and $\widetilde u^{0,0}$ are defined in \eqref{eq:utilde}, \eqref{eq:subtraction-obstacle+cutoff}. Then, for every $\beta> 2$ such that $\ell+2-2\beta>0$, there exist constants $r_{0}\in (0,1)$ and $C>0$ for which the modified Poon frequency function
				\begin{equation*}
					\widehat{N}(r,v):= \left(\widetilde{N}(r,v)+Cr^{\ell+2-2\beta}\right)e^{Cr^{\ell-\beta}}
				\end{equation*}
				is non-decreasing in $(0,r_{0})$. More precisely, we have
				\begin{equation}\label{eq:mon-formula-second-bu}
					\frac{d}{dr}\widehat{N}(r,v)\ge \frac{1}{r}\frac{F(r,v)^{2}}{\widetilde{H}(r,v)^{2}}\qquad \text{for a.e. } r\in (0,r_{0}).
				\end{equation} 
				In particular, the limits $\widehat{N}(0^{+},v)=\widetilde{N}(0^{+},v)$ exist. 
			\end{proposition}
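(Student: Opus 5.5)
We follow the scheme of \cite[Chapter 6]{dgpt17} for Poon's frequency, applied to $v=w-p$ instead of $w$. The decisive structural fact is that $p\in\mathcal P_2^+$ is caloric, time-independent and even, so $\mathbf{H}v=f$ in $S_1^+$ away from the cutoff region. On the thin space the complementarity conditions for $w$ combined with $p\ge0$ give a signed boundary term. Indeed, on $S_1'$ we have $v\,\partial_{n+1}v=(w-p)(\partial_{n+1}w-\partial_{n+1}p)$ with $\partial_{n+1}p=0$ by evenness, hence $v\,\partial_{n+1}v=w\partial_{n+1}w-p\,\partial_{n+1}w=-p\,\partial_{n+1}w\ge0$. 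The inequality uses $w\partial_{n+1}w=0$, $-\partial_{n+1}w\ge0$ and $p\ge0$ there. This sign is exactly what replaces the vanishing of the boundary term in the classical argument.

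\textbf{Step 1: derivative identities.} With $Z=x\cdot\nabla+2t\partial_t$ and the usual rescaling $v_r(x,t)=v(rx,r^2t)$, we derive, in parallel with \cite[Lemma 6.1]{dgpt17},
\begin{equation*}
H'(r,v)=\frac{4}{r}I(r,v)\quad\text{up to the error }\frac{C}{r}F\text{-type terms},\qquad
I'(r,v)=\frac{1}{r^3}\int_{S_r^+}\frac{(Zv)^2}{2|t|}G+\text{(terms with }\mathbf{H}v,\ \partial_{n+1}v\text{)}.
\end{equation*}
More precisely, $H'=\tfrac4r I+\tfrac4r F$-like contributions, and $I'$ contains $\tfrac{1}{2r^3}\int (Zv)^2G/|t|$ minus the product of $Zv$ with $\mathbf{H}v$ and with the thin term $\partial_{n+1}v$. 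The thin contribution to $I'$ involves $Zv\,\partial_{n+1}v=(Zw-2p)\partial_{n+1}w$ on $S'$. Here $Zw\,\partial_{n+1}w=0$ a.e. on the thin set, since where $\partial_{n+1}w\neq0$ we have $w\equiv0$ in a relatively open set and so $Zw=0$. Also $-2p\,\partial_{n+1}w\ge0$. Both thin contributions therefore have favorable signs. Next we use Cauchy--Schwarz in the form
\begin{equation*}
\left(\int Zv\,\cdot v\,G\right)^2\le\int\frac{(Zv)^2}{|t|}G\int|t|v^2G.
\end{equation*}
This gives $\frac{d}{dr}N\ge\frac1r\frac{F^2}{H^2}$ up to errors coming only from $f$ and the cutoff.

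\textbf{Step 2: absorbing errors.} By \eqref{controlf}, $|f|\le C_f(|x|+\sqrt{-t})^{\ell-2}$, and the cutoff region contributes exponentially small terms in $G$. Hence the error terms in $H'$ and $I'$ are bounded by $Cr^{\ell-1}\sqrt{H}$, i.e.\ by $Cr^{\ell-1-\beta}\widetilde H$ after using $\widetilde H\ge r^{2\beta}$, together with cross terms of the same type. Replacing $H,I$ by $\widetilde H,\widetilde I$ does not change the main computation, since $\frac{d}{dr}r^{2\beta}=\frac{2\beta}{r}r^{2\beta}$ is exactly the exact-homogeneity identity. We arrive at
\begin{equation*}
\frac{d}{dr}\widetilde N\ge\frac1r\frac{F^2}{\widetilde H^2}-Cr^{\ell-\beta-1}\widetilde N-Cr^{\ell+1-2\beta}.
\end{equation*}
The linear term is absorbed by the factor $e^{Cr^{\ell-\beta}}$, and the additive term by $Cr^{\ell+2-2\beta}$. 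This requires $\ell+2-2\beta>0$ and $\ell>\beta$, which follows from $\beta>2$ together with $\ell+2-2\beta>0$. Integrability near $0$ then gives existence of the limit, and $\widehat N(0^+)=\widetilde N(0^+)$ because the correction factors tend to $1$ and $0$ respectively. The hypothesis $\beta>2$ is used to ensure that $\widetilde N$ is bounded, with $\widetilde N\le\beta+o(1)$ at worst, so that $C r^{\ell-\beta-1}\widetilde N$ is genuinely integrable.

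\textbf{Main obstacle.} The main difficulty is the regularity needed to justify the differentiation a.e. in $r$ and the pointwise identity $Zw\,\partial_{n+1}w=0$ on $S'$. I would handle this by working with the $W^{2,1}_2\cap H^{3/2,3/4}$ regularity \eqref{eq:reg-sol} and the Steklov-averaging/approximation argument of \cite[Lemma 6.1, Theorem 6.3]{dgpt17}. A second delicate point is the bookkeeping in Step 2: the error terms must be shown to be controlled by $\widetilde H$ rather than $H$, so that the estimate does not degenerate when $H(r,v)\ll r^{2\beta}$.
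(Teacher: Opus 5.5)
There is a genuine gap, and it sits exactly at the step you call decisive: the thin term does \emph{not} enter with a favorable sign.

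With $H$ and $I$ as defined, $I'$ carries no $|t|^{-1}$ weight, and the relevant Cauchy--Schwarz inequality is $(\int_{S_r^+}vZv\,G)^2\le\int_{S_r^+}(Zv)^2G\int_{S_r^+}v^2G$. The thin contribution to $I'$ is, as you say, minus the product of $Zv$ with $\partial_{n+1}v$, namely $-\frac{2}{r^3}\int_{S_r'}|t|\,Zv\,\partial_{n+1}v\,G$. Your own identity gives $Zv\,\partial_{n+1}v=-2p\,\partial_{n+1}w=2v\,\partial_{n+1}v\ge0$ on the thin space. So this contribution equals $-\frac{4}{r^3}\int_{S_r'}|t|\,v\,\partial_{n+1}v\,G\le0$, which is unfavorable.

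Carrying out $\widetilde I'\widetilde H-\widetilde I\widetilde H'$ and using $Zv-2v=Zw-2w$, one only obtains
\[
\widetilde N'(r,v)\ge\frac{8}{r}\frac{F^2}{\widetilde H^2}+\frac{8}{r}\frac{F\,(\widetilde I-\widetilde H)}{\widetilde H^2}-\frac{4}{r^3\widetilde H}\int_{S_r^+}|t|(Zw-2w)fG .
\]
The middle term is not an error coming from $f$: it is negative whenever $F>0$ and $\widetilde N<2$.

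Your argument never uses $(0,0)\in\Gamma_2$, and the conclusion genuinely fails without it. Take the global solution $w=\mathrm{Re}(x_n+i|x_{n+1}|)^{3/2}$ (so $f\equiv0$ and there is no cutoff) and $p=x_1^2-x_{n+1}^2\in\mathcal P_2^+$. Your Step 1 would give $N'(r,w-p)\ge0$. By homogeneity, however:
\begin{itemize}
\item $H=Ar^3+O(r^{7/2})$ with $A=H(1,w)>0$;
\item $F=b\,r^{7/2}$ with $b=\int_{S_1'}|t|(-p\,\partial_{n+1}w)G>0$;
\item $I-H=-\frac{A}{4}r^3+O(r^{7/2})$;
\item the Cauchy--Schwarz deficit and $\frac4rF^2$ are $O(r^6)$.
\end{itemize}
Hence $\frac12H^2N'(r,w-p)=\frac4rF(I-H)+O(r^6)=-Ab\,r^{11/2}+O(r^6)<0$ for small $r$.

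The missing ingredient is a lower bound on $\widetilde I-\widetilde H$ through Weiss' energy.
\begin{itemize}
\item Since $p\in\mathcal P_2^+$, one has $\mathcal W_2(r,v)=\mathcal W_2(r,w)$. Since $(0,0)\in\Gamma_2$, the Weiss monotonicity of \cite[Chapter 13]{dgpt17} says $\mathcal W_2(\cdot,w)+Cr^{2(\ell-2)}$ is non-decreasing with $\mathcal W_2(0^+,w)=0$. This yields $\widetilde I-\widetilde H=r^4\mathcal W_2(r,w)+(\frac\beta2-1)r^{2\beta}\ge-Cr^{2\ell}$. This, not a bound on $\widetilde N$, is where $\beta\ge2$ is used.
\item If $F\ge0$, Young's inequality then bounds the middle term below by $-\frac7r\frac{F^2}{\widetilde H^2}-Cr^{4(\ell-\beta)-1}$.
\item If $F<0$, the sign $v\,\partial_{n+1}v\ge0$ is what actually helps: it gives $F\ge\frac{1}{r^2}\int_{S_r^+}|t|vfG\ge-Cr^{\ell}H^{1/2}$. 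Writing the middle term as $\frac4r\frac{F}{\widetilde H}(\widetilde N-2)$ then bounds it below by $-Cr^{\ell-\beta-1}\widetilde N$.
\item The $f$-error in $I'$ involves $Zv$, which is not controlled by $H^{1/2}$. Via $Zv-2v=Zw-2w$, it is handled by the Gaussian Sobolev and growth estimate $\frac{1}{r^2}\int_{S_r^+}(Zw-2w)^2G\le Cr^4$ from \cite{dgpt17}, giving $-Cr^{\ell+1-2\beta}$.
\end{itemize}
With these bounds your integration step goes through. Absorbing $-Cr^{\ell-\beta-1}\widetilde N$ into the factor $e^{Cr^{\ell-\beta}}$ only needs $\widetilde N\ge0$, not boundedness of $\widetilde N$.
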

			\begin{proof}
				Recall that $w=\zeta \widetilde{u}^{(0,0)}$ solves \eqref{eq:w-solves}, with right-hand side $f$ satisfying \eqref{controlf}. Given $p\in \mathcal{P}_2^+$, we let $v=w-p$. We divide the proof into three steps.
				
				\smallskip
				\noindent \textit{Step 1: Preliminary estimates.}
				By \eqref{controlf} one gets the following integral estimate
				\begin{equation}\label{eq:bound-integral-f-rhs}
					\int_{S_{r}^{+}}t^{2}f^{2}G\le Cr^{2\ell+2}.
				\end{equation}
				Now, since $p\ge 0$ and $\partial_{n+1}w\le 0$ on $S_{1}'$, we have $v\partial_{n+1}v=-p\partial_{n+1}w \ge 0$. Therefore, \eqref{eq:def-F}, the Cauchy-Schwarz inequality and \eqref{eq:bound-integral-f-rhs} give
				\begin{equation}\label{eq:bound-below-Fv}
					F(r,v)\ge \frac{1}{r^{2}}\int_{S_{r}^{+}}|t|vfG \ge -Cr^{\ell}H(r,v)^{1/2}.
				\end{equation}

				We will use the following equations, understood for almost every $r$, which can be obtained by adapting the formulas derived in \cite[Chapter 6]{dgpt17}:
				\begin{equation}\label{eq:derivatives-H-and-I1}
					H'(r,v)= \frac{4}{r}I(r,v)+\frac{4}{r^{3}}\int_{S_{r}^{+}}|t|vfG+\frac{4}{r^{3}}\int_{S_{r}^{'}}|t|v\partial_{n+1}vG=\frac4 r\Big( I(r,v)+F(r,v)\Big)=\frac{2}{r^{3}}\int_{S_{r}^{+}}v(Zv)G,
				\end{equation}
				\begin{equation*}\label{eq:derivatives-H-and-I2}
					I'(r,v)\ge \frac{1}{r^{3}}\int_{S_{r}^{+}}(Zv)^{2}G-\frac{2}{r^{3}}\int_{S_{r}^{+}}|t|(Zv)fG-\frac{4}{r^3}\int_{S_r'}|t|v\partial_{n+1}vG.
				\end{equation*}
				From these, adding the regularization terms, we deduce
				\begin{equation}\label{eq:derivatives-H-and-I-tilde1}
					\widetilde{H}'(r,v)= \frac{4}{r}\left(\widetilde{I}(r,v)+F(r,v)\right)=\frac{2}{r^{3}}\int_{S_{r}^{+}}v(Zv)G+2\beta r^{2\beta-1},
				\end{equation}
				\begin{equation}\label{eq:derivatives-H-and-I-tilde2}
					\widetilde{I}'(r,v)\ge \frac{1}{r^{3}}\int_{S_{r}^{+}}(Zv)^{2}G-\frac{2}{r^{3}}\int_{S_{r}^{+}}|t|(Zv)fG-\frac{4}{r^3}\int_{S_r'}|t|v\partial_{n+1}vG+\beta^{2}r^{2\beta-1}.
				\end{equation}
				
				Finally, we recall the following facts for the quantity $\mathcal{W}_{2}$ (see \cite[Chapter 13]{dgpt17}). Firstly, since $p\in \mathcal{P}_{2}^{+}$, integration by parts yields
				\begin{equation*}
					\mathcal{W}_{2}(r,v)=\mathcal{W}_{2}(r,w).
				\end{equation*}
				Secondly, since $(0,0)\in \Gamma_{2}$, then $r\mapsto \mathcal{W}_{2}(r,w)+Cr^{2(\ell-2)}$ is non-decreasing, and $\mathcal{W}_{2}(0^{+}, w)=0$. 
				
				\smallskip
				\noindent \textit{Step 2: Computation of $\widetilde{N}'(r,v)$.}  Differentiating $\widetilde{N}(r,v)$ in $r$, and using the last equality in \eqref{eq:derivatives-H-and-I1}, we get
				\begin{align*}
					\frac{1}{2}\widetilde{H}(r,v)^{2}\widetilde{N}'(r,v)&=\widetilde{I}'(r,v)\widetilde{H}(r,v)-\widetilde{H}'(r,v)\widetilde{I}(r,v)\\
					&=\widetilde{I}'(r,v)\widetilde{H}(r,v)-\widetilde{H}'(r,v)\left(\frac1{2r^2}\int_{S_r^+}v(Zv)G+\frac{\beta}{2}{r^{2\beta}}\right)+\widetilde{H}'(r,v)F(r,v).
				\end{align*}
				Then, by \eqref{eq:derivatives-H-and-I-tilde1}, \eqref{eq:derivatives-H-and-I-tilde2} and the Cauchy-Schwarz inequality, we obtain
				\begin{equation}\label{eq:main-cancellation-mon-formula}
					\begin{aligned}
						\frac{1}{2}\widetilde{H}(r,v)^{2}\widetilde{N}'(r,v)&\ge \left(\frac{1}{r^{3}}\int_{S_{r}^{+}}(Zv)^{2}G+\beta^{2}r^{2\beta-1}\right)\left(\frac{1}{r^{2}}\int_{S_{r}^{+}}v^{2}G+r^{2\beta}\right)\\
						&\qquad-\left(\frac{2}{r^{3}}\int_{S_{r}^{+}}v(Zv)G+2\beta r^{2\beta-1}\right)\left(\frac{1}{2r^2}\int_{S_{r}^{+}}v(Zv)G+\frac{\beta}{2}r^{2\beta}\right)\\
						&\qquad\qquad-\left(\frac{2}{r^{3}}\int_{S_{r}^{+}}|t|(Zv)fG+\frac{4}{r^3}\int_{S_r'}|t|v\partial_{n+1}vG\right)\widetilde{H}(r,v)\\
						&\qquad\qquad\qquad +F(r,v)\left(\frac{4}{r}\widetilde{I}(r,v)+\frac{4}{r}F(r,v)\right) \\   
						&\ge \frac{4}{r}F(r,v)^{2}+\frac{2}{r}\left(2F(r,v)\widetilde{I}(r,v)-A(r,v)\widetilde{H}(r,v)\right),
					\end{aligned}
				\end{equation}
				where we defined
				$$A(r,v):=\frac{1}{r^2}\int_{S_r^+}|t|(Zv)fG+\frac{2}{r^2}\int_{S_r'}|t|v\partial_{n+1}vG.$$
				
				Since $Zp=2p$, we have $Zv-2v=Zw-2w$. Therefore, by \eqref{eq:def-F} and $\mathbf{H}v=f$ in $S_1^+$,
				we may rewrite
				$$A(r,v)= 2F(r,v)+\frac{1}{r^{2}}\int_{S_{r}^{+}}|t|(Zw-2w)fG.$$
				Inserting this equation in \eqref{eq:main-cancellation-mon-formula} we obtain
				\begin{equation}\label{eq:remainders-mon-formula}
					\begin{aligned}
						\widetilde{N}'(r,v)&\ge \frac{8}{r}\frac{F(r,v)^{2}}{\widetilde {H}(r,v)^{2}}+\frac{8}{r}\frac{F(r,v)(\widetilde{I}(r,v)-\widetilde{H}(r,v))}{\widetilde{H}(r,v)^{2}}-\frac{4}{r^{3}}\frac{1}{\widetilde{H}(r,v)}\int_{S_{r}^{+}}|t|(Zw-2w)fG\\
						&=: \frac{8}{r}\frac{F(r,v)^{2}}{\widetilde {H}(r,v)^{2}}+(I)+(II).
					\end{aligned}
				\end{equation}
				
				\smallskip
				\noindent \textit{Step 3: Estimates of the remainder terms.} We now bound from below the two remainder terms $(I)$, $(II)$ from \eqref{eq:remainders-mon-formula}.
				
				To bound $(I)$ we consider separately the two cases $F(r,v)\ge 0$ and $F(r,v)<0$. Suppose first that $F(r,v)\ge 0$.
				By the considerations made in \textit{Step 1}, we have $\mathcal{W}_{2}(r,v)=\mathcal{W}_2(r,w)\ge -Cr^{2(\ell-2)}$. Using also $\beta\ge 2$ we get
				$$\widetilde{I}(r,v)-\widetilde{H}(r,v)=I(r,v)-H(r,v)+\left(\frac{\beta}{2}-1\right)r^{2\beta}\ge r^{4}\mathcal{W}_{2}(r,v)\ge -Cr^{2\ell}.$$ Young's inequality and $\widetilde{H}(r,v)\ge r^{2\beta}$ then give
				$$(I)=\frac{8}{r}\frac{F(r,v)(\widetilde{I}(r,v)-\widetilde{H}(r,v))}{\widetilde{H}(r,v)^{2}}\ge -\frac{C}{r}\frac{F(r,v)}{\widetilde{H}(r,v)}\frac{r^{2\ell}}{\widetilde{H}(r,v)}\ge -\frac{7}{r}\frac{F(r,v)^{2}}{\widetilde{H}(r,v)^{2}}-Cr^{4(\ell-\beta)-1}.$$
				Suppose now that $F(r,v)<0$. By \eqref{eq:bound-below-Fv} we have 
				$$(I)= \frac{4}{r}\frac{F(r,v)}{\widetilde{H}(r,v)}\left(\widetilde{N}(r,v)-2\right)\ge -\frac{C}{r}\frac{r^{\ell}}{\widetilde{H}(r,v)^{1/2}}\widetilde{N}(r,v)\ge -Cr^{\ell-\beta-1}\widetilde{N}(r,v),$$
				where we used $\widetilde{H}(r,v)\ge r^{2\beta}$ again. In all,
				\begin{equation}\label{eq:bound-first-remainder-mon-formula}
					(I)\ge -Cr^{\ell-\beta-1}\widetilde{N}(r,v)-\frac{7}{r}\frac{F(r,v)^{2}}{\widetilde{H}(r,v)^{2}}-Cr^{4(\ell-\beta)-1}.
				\end{equation}
				
				Next, we estimate $(II)$. The Gaussian Sobolev estimates contained in \cite[Section 5]{dgpt17}, the growth estimate in \cite[Lemma 13.3]{dgpt17}, and $\ell\ge 4$ give
				\begin{equation}\label{eq:bound-Gaussian-Sobolev-mon-formula}
					\frac{1}{r^2}\int_{S_{r}^{+}}(Zw-2w)^{2}G\le C\left(H(2r,w)+r^{2\ell-4}\right)\le Cr^{4}.
				\end{equation}
				Then, the Cauchy-Schwarz inequality combined with \eqref{eq:bound-integral-f-rhs}, \eqref{eq:bound-Gaussian-Sobolev-mon-formula} and $\widetilde{H}(r,v)\ge r^{2\beta}$ yields
				\begin{equation}\label{eq:bound-second-remainder-mon-formula}
					(II)\ge -\frac{C}{r^{2}}\frac{1}{\widetilde{H}(r,v)}\left(\frac{1}{r^{2}}\int_{S_{r}^{+}}(Zw-2w)^{2}G\right)^{1/2}\left(\int_{S_{r}^{+}}t^{2}f^{2}G\right)^{1/2}\ge -Cr^{\ell+1-2\beta}.
				\end{equation}
				In conclusion, putting \eqref{eq:remainders-mon-formula} together with \eqref{eq:bound-first-remainder-mon-formula} and \eqref{eq:bound-second-remainder-mon-formula} we get
				\begin{equation}\label{eq:mon-formula-differential-form}
					\begin{aligned}
						\widetilde{N}'(r,v)&\ge \frac{1}{r}\frac{F(r,v)^{2}}{\widetilde{H}(r,v)^{2}}-Cr^{\ell-\beta-1}\widetilde{N}(r,v)-Cr^{4(\ell-\beta)-1}-Cr^{\ell+1-2\beta}\\
						&\ge \frac{1}{r}\frac{F(r,v)^{2}}{\widetilde{H}(r,v)^{2}}-Cr^{\ell-\beta-1}\widetilde{N}(r,v)-Cr^{\ell+1-2\beta}.
					\end{aligned}    
				\end{equation}
				Note that the absorption in the second inequality is legitimate, since under our hypotheses $2\beta\le\ell+2\le 3\ell-2$ and so $4(\ell-\beta)-1\ge \ell+1-2\beta$. Integrating \eqref{eq:mon-formula-differential-form} we get the desired monotonicity formula. 
			\end{proof}
			
			As a consequence of the monotonicity formula proved in \cref{prop:mon-formula-second-bu}, we obtain the following growth estimates for the second blow-up analysis.
			From now on, we fix a number $\beta$ that satisfies the assumptions 
			\begin{equation*}
				\beta>2,\qquad \ell+2-2\beta>0.
			\end{equation*}
			\begin{lemma}\label{lemma:asymptotic-second-bu}
				Let $u$ be a solution of \eqref{eq:parabolic-signorini-problem}, with obstacle $\varphi$ satisfying \eqref{eq:ostacolo}.
				Suppose that $(0,0)\in \Gamma_{2}$, and let $p\in \mathcal{P}_{2}^+$. We denote by $v:=w-p$, where $w:=\zeta\widetilde u^{0,0}$ and $\widetilde u^{0,0} $ are defined in \eqref{eq:utilde}, \eqref{eq:subtraction-obstacle+cutoff}. There is $r_{0}\in (0,1)$ for which the following hold.
				\begin{itemize}
					\item [i)] Suppose that $0<r<R<r_{0}$ are such that $\underline{\lambda}\le \widetilde{N}(\rho,v)\le \overline{\lambda}$ for every $\rho \in [r,R]$. Then, for every $\eps>0$  
					\begin{equation*}
						c\left(\frac{R}{r}\right)^{2\underline{\lambda}}\le \left(\frac{\widetilde{H}(R,v)}{\widetilde{H}(r,v)}\right)\le C_{\eps}\left(\frac{R}{r}\right)^{2\overline{\lambda}+\eps},
					\end{equation*} where $c>0$ and $C_\eps>0$ are independent of $r$ and $R$.

                    \smallskip
					\item [ii)] For any $\underline{\lambda}\le \widetilde{N}(0^{+},v)$, the following Monneau-type monotonicity formula holds
					\begin{equation*}
						\frac{d}{dr}\left(\frac{\widetilde{H}(r,v)}{r^{2\underline{\lambda}}}e^{\overline{C}r^{\ell+2-2\beta}}\right)\ge 0\qquad \text{for a.e. }r\in (0,r_{0}),
					\end{equation*}where $\overline C>0$ is independent of $r$.

                    \smallskip
					\item [iii)] We have $\widetilde{N}(0^{+},v)\in [2,\beta]$. Moreover, if $\widetilde{N}(0^{+},v)<\beta$, then $N(0^{+}, v)=\widetilde{N}(0^{+},v)$.
				\end{itemize}
			\end{lemma}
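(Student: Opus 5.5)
The three items all follow from the differential inequality \eqref{eq:mon-formula-differential-form} and the logarithmic-derivative identity \eqref{eq:derivatives-H-and-I-tilde1}. Rewritten in terms of $\widetilde N$, that identity reads
\[
\frac{d}{dr}\log\widetilde H(r,v)=\frac{2\widetilde N(r,v)}{r}+\frac{4F(r,v)}{r\widetilde H(r,v)}.
\]
The only term to control is the error $4F/(r\widetilde H)$. By \eqref{eq:bound-below-Fv} and $\widetilde H\ge r^{2\beta}$ it is bounded from below:
\[
\frac{4F}{r\widetilde H}\ge -Cr^{\ell-\beta-1}.
\]
This rate is integrable at $0$ because $\ell>\beta$. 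For the upper bound we use $F^2/\widetilde H^2$, whose weighted integral $\int_r^R \frac{1}{\rho}\frac{F^2}{\widetilde H^2}\,d\rho$ is controlled by $\widehat N(R)-\widehat N(r)$ via \eqref{eq:mon-formula-second-bu}.

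\emph{Item i).} Integrate the identity above on $[r,R]$. The lower bound follows from $\widetilde N\ge\underline\lambda$ and the integrable lower bound on the error, which gives
\[
\log\frac{\widetilde H(R)}{\widetilde H(r)}\ge 2\underline\lambda\log\frac Rr - C.
\]
For the upper bound, apply Cauchy--Schwarz:
\[
\int_r^R\frac{4F}{\rho\widetilde H}\,d\rho\le 4\Big(\int_r^R \frac{F^2}{\rho\widetilde H^2}\Big)^{1/2}\Big(\log\frac Rr\Big)^{1/2}.
\]
The first factor is at most $C(\overline\lambda+1)^{1/2}$, since $\widehat N$ is bounded on $(0,r_0)$: it is monotone, and $\widetilde N\le\overline\lambda$ on the interval. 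Then Young's inequality gives
\[
C\Big(\log\frac Rr\Big)^{1/2}\le \eps\log\frac Rr + C_\eps,
\]
which yields the exponent $2\overline\lambda+\eps$.

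\emph{Item ii).} Use monotonicity of $\widehat N$: $\widetilde N(r)\ge\widehat N(r)e^{-Cr^{\ell-\beta}}-Cr^{\ell+2-2\beta}\ge \underline\lambda - C r^{\ell+2-2\beta}$ for small $r$, where I use $\widehat N(r)\ge\widehat N(0^+)=\widetilde N(0^+)\ge\underline\lambda$ and that the multiplicative correction contributes $O(r^{\ell-\beta})$, which is dominated by $r^{\ell+2-2\beta}$ since $\beta>2$. Combined with the lower bound on the error term, this gives
\[
\frac{d}{dr}\log\frac{\widetilde H}{r^{2\underline\lambda}}\ge -C r^{\ell+1-2\beta}-Cr^{\ell-\beta-1}.
\]
The second term needs care. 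I would instead bound $F$ more sharply: since $\widetilde H\ge r^{2\beta}$, the bound $Cr^{\ell}\widetilde H^{1/2}/\widetilde H\le Cr^{\ell-\beta}$ is what we have, and $\ell-\beta-1\ge \ell+1-2\beta$ because $\beta\ge2$. So both terms are $\le C r^{\ell+1-2\beta}$, and choosing $\overline C$ suitably, the exponential factor absorbs the error.

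\emph{Item iii).} For the upper bound $\widetilde N(0^+)\le\beta$, argue as in \cite[Chapter 6]{dgpt17}. If $\widetilde N\ge\beta+\delta$ near $0$, then i) gives $\widetilde H(r)\le C r^{2\beta+2\delta-\eps}$, contradicting $\widetilde H\ge r^{2\beta}$.

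For the lower bound $\widetilde N(0^+)\ge 2$, use $\widetilde I-\widetilde H\ge -Cr^{2\ell}$ from Step 1 of \cref{prop:mon-formula-second-bu}. This gives $\widetilde N\ge 2-Cr^{2\ell}/\widetilde H\ge 2-Cr^{2\ell-2\beta}\to2$.

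For the last claim, suppose $\widetilde N(0^+)=\lambda<\beta$. By i), $\widetilde H(r)\ge c r^{2\lambda+\eps}\gg r^{2\beta}$, so $H(r)/\widetilde H(r)\to1$ and the regularizing terms in $\widetilde I$ are negligible relative to $H$. Hence $N(0^+)=\widetilde N(0^+)$.

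The main obstacle is handling the sign-indefinite error $F/\widetilde H$ in the upper bound of i). That is exactly where the $F^2$ term in \eqref{eq:mon-formula-second-bu} is needed, and where the $\eps$-loss comes from.
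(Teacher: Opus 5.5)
Your proposal is correct. Items i), ii), the bound $\widetilde N(0^+,v)\le\beta$, and the identification $N(0^+,v)=\widetilde N(0^+,v)$ follow the paper's argument essentially verbatim. In ii) you do not actually need a sharper bound on $F$: the existing estimate $-Cr^{\ell-\beta-1}$ is already dominated by $-Cr^{\ell+1-2\beta}$ because $\beta\ge 2$, which is exactly what the paper uses.

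The one genuinely different step is the lower bound $\widetilde N(0^+,v)\ge 2$.

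\begin{itemize}
\item The paper argues by contradiction. It uses the quadratic growth of $w$ and $p$ at the origin to get $\widetilde H(r,v)\le Cr^4$. If $\widetilde N\le\overline\lambda<2$ near $0$, the upper bound in i) would force $\widetilde H(r,v)\ge c\,r^{2\overline\lambda+\varepsilon}$ with $2\overline\lambda+\varepsilon<4$, which is impossible.
\item You instead use the Weiss-energy information recorded in the proof of the monotonicity formula, namely $\mathcal W_2(r,v)=\mathcal W_2(r,w)\ge -Cr^{2(\ell-2)}$. This gives $\widetilde I-\widetilde H\ge -Cr^{2\ell}$, and dividing by $\widetilde H\ge r^{2\beta}$ yields the pointwise estimate $\widetilde N(r,v)\ge 2-Cr^{2(\ell-\beta)}$ for every small $r$.
\end{itemize}

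Your route is quantitative and pointwise in $r$. It needs neither item i) nor the growth bound $\widetilde H(r,v)\le Cr^4$. The paper's route reuses the same mechanism (crude growth bounds combined with i)) for both halves of iii), which keeps the two endpoint arguments symmetric. It relies only on the a priori quadratic growth of $w$ rather than on the Weiss monotonicity of \cite[Chapter 13]{dgpt17}, although that fact is already needed in the proof of the monotonicity formula anyway.
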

			\begin{proof}
				We first prove point $i)$, starting from the first inequality. By \eqref{eq:bound-below-Fv} and \eqref{eq:derivatives-H-and-I-tilde1} we get
				\begin{align*}
					\frac{d}{d\rho}\log\left(\widetilde{H}(\rho,v)\right)= \frac{2}{\rho}\widetilde{N}(\rho,v)+\frac{4}{\rho}\frac{F(\rho,v)}{\widetilde{H}(\rho,v)}\ge \frac{2\underline{\lambda}}{\rho}-C\rho^{\ell-1}\frac{H(\rho,v)^{1/2}}{\widetilde{H}(\rho,v)}\ge  \frac{2\underline{\lambda}}{\rho}-C\rho^{\ell-1-\beta}\qquad \forall \rho\in (r,R).
				\end{align*}
				Integration and $\ell>\beta$ yield
				$$\log\left(\frac{\widetilde{H}(R,v)}{\widetilde{H}(r,v)}\right)\ge 2\underline{\lambda}\log\left(\frac{R}{r}\right)-\frac{C}{\ell-\beta}(R^{\ell-\beta}-r^{\ell-\beta})\ge 2\underline{\lambda}\log\left(\frac{R}{r}\right)-C.$$
				Exponentiating we get the desired lower bound.
				
				To obtain the second inequality, we  first note that by the Cauchy-Schwarz inequality and the monotonicity formula \eqref{eq:mon-formula-second-bu},
				\begin{align*}
					\int_{r}^{R}\frac{1}{\rho}\frac{F(\rho,v)}{\widetilde{H}(\rho,v)}d\rho&\le \left(\int_{r}^{R}\frac{1}{\rho}\frac{F(\rho,v)^{2}}{\widetilde{H}(\rho,v)^{2}}d\rho\right)^{1/2}\left(\int_{r}^{R}\frac{1}{\rho}d\rho\right)^{1/2}\\
					&\le \left(\widehat{N}(R,v)-\widehat{N}(r,v)\right)^{1/2}\log\left(\frac{R}{r}\right)^{1/2}\le C\log\left(\frac{R}{r}\right)^{1/2}.
				\end{align*}
				Therefore, as before, since
				\begin{equation*}
					\frac{d}{d\rho}\log\left(\widetilde{H}(\rho,v)\right)= \frac{2}{\rho}\widetilde{N}(\rho,v)+\frac{4}{\rho}\frac{F(\rho,v)}{\widetilde{H}(\rho,v)}\le \frac{2\overline{\lambda}}{\rho}+\frac{4}{\rho}\frac{F(\rho,v)}{\widetilde{H}(\rho,v)}\qquad \forall \rho \in (r,R),
				\end{equation*}
				after integration we get, for any $\eps>0$,
				$$\log\left(\frac{\widetilde{H}(R,v)}{\widetilde{H}(r,v)}\right)\le 2\overline{\lambda}\log\left(\frac{R}{r}\right)+C\log\left(\frac{R}{r}\right)^{1/2}\le (2\overline{\lambda}+\eps)\log\left(\frac{R}{r}\right)+C_{\eps},$$
				where $C_{\eps}>0$ is some constant depending on $\eps$. The desired upper bound is obtained after exponentiation.
				
				Next, we prove point $ii)$. By \Cref{prop:mon-formula-second-bu}, we get $\widehat{N}(r,v)\ge \widehat{N}(0^{+}, v)= \widetilde{N}(0^{+}, v)\ge \underline{\lambda}$ which rewrites as
				\begin{equation}\label{eq:bound-below-frequency-proof-monneau}
					\widetilde{N}(r,v)-\underline{\lambda}\ge \underline{\lambda}\left(e^{-Cr^{\ell-\beta}}-1\right)-Cr^{\ell+2-2\beta}\ge -C\left(r^{\ell-\beta}+r^{\ell+2-2\beta}\right)\ge -Cr^{\ell+2-2\beta},
				\end{equation}
				where in the last step we used the assumption $\beta> 2$. On the other hand, using \eqref{eq:bound-below-Fv} we get
				\begin{equation}\label{eq:bound-below-second-remainder-proof-monneau}
					\frac{4}{r}\frac{F(r,v)}{\widetilde{H}(r,v)}\ge -Cr^{\ell-1}\frac{H(r,v)^{1/2}}{\widetilde{H}(r,v)}\ge -Cr^{\ell-1-\beta}.
				\end{equation}
				Therefore, combining \eqref{eq:bound-below-frequency-proof-monneau} and \eqref{eq:bound-below-second-remainder-proof-monneau} we obtain
				\begin{align*}
					\frac{d}{dr}\left(\frac{\widetilde{H}(r,v)}{r^{2\underline{\lambda}}}e^{\overline{C}r^{\ell+2-2\beta}}\right)&=\frac{\widetilde{H}(r,v)}{r^{2\underline{\lambda}}}e^{\overline{C}r^{\ell+2-2\beta}}\left(\frac{\widetilde{H}'(r,v)}{\widetilde{H}(r,v)}-\frac{2\underline \lambda}{r}+C_{1}r^{\ell+1-2\beta}\right)\\
					&=\frac{\widetilde{H}(r,v)}{r^{2\underline{\lambda}}}e^{\overline{C}r^{\ell+2-2\beta}}\left(\frac{2}{r}(\widetilde{N}(r,v)-\underline \lambda)+\frac{4}{r}\frac{F(r,v)}{\widetilde{H}(r,v)}+C_{1}r^{\ell+1-2\beta}\right)\\
					&\ge \frac{\widetilde{H}(r,v)}{r^{2\underline{\lambda}}}e^{\overline{C}r^{\ell+2-2\beta}}\left((C_{1}-C)r^{\ell+1-2\beta}-Cr^{\ell-1-\beta}\right),
				\end{align*} where $C_{1}=(\ell+2-2\beta) \overline{C}$. If now we choose $\overline{C}$ sufficiently large, the right-hand side above is nonnegative. This concludes the proof of point $ii)$.
				
				Finally, we prove point $iii)$. Since both $w$ and $p$ have quadratic growth at the origin, we must have 
				\begin{equation}\label{eq:naive-bounds-Htildev}
					r^{2\beta}\le \widetilde{H}(r,v)\le Cr^{4}.
				\end{equation}
				Suppose by contradiction that $\widetilde{N}(0^{+}, v)<2$. Then we can find $R>0$, $\overline{\lambda}>0$ and $\eps>0$ such that $2\overline{\lambda}+\eps<4$ and $\widetilde{N}(\rho,v)\le \overline{\lambda}$ for all $\rho \in (0,R)$. The second inequality from point $i)$ then gives 
				\begin{equation*}
					\widetilde{H}(r,v)\ge c r^{2\overline{\lambda}+\eps}\qquad \forall r\in (0,R),
				\end{equation*}
				which contradicts the upper bound in \eqref{eq:naive-bounds-Htildev}. Similarly, if $\widetilde{N}(0^{+},v)>\beta$, we could choose $R>0$ and $\underline{\lambda}>\beta$ such that $\widetilde{N}(\rho,v)\ge \underline{\lambda}$ for all $\rho \in (0,R)$. The first inequality in point $i)$ would give
				\begin{equation*}
					\widetilde{H}(r,v)\le  C r^{2\underline{\lambda}}\qquad \forall r\in (0,R),
				\end{equation*}
				contradicting the lower bound in \eqref{eq:naive-bounds-Htildev}. This proves $\widetilde{N}(0^{+},v)\in [2,\beta]$.
				
				Suppose now that $\widetilde{N}(0^{+},v)<\beta$. Choosing $R>0$, $\overline{\lambda}$ and $\eps>0$ such that $2\overline{\lambda}+\eps<2\beta$ and $\widetilde{N}(\rho,v)\le \overline{\lambda}$ for all $\rho \in (0,R)$, by point $i)$ we get 
				\begin{equation*}
					\frac{r^{2\beta}}{\widetilde{H}(r,v)}\le C r^{2\beta-2\overline{\lambda}-\eps}\qquad \forall r\in (0,R),
				\end{equation*}
				where the right-hand side goes to zero as $r\downarrow 0$. Since $\widetilde{H}(r,v)=H(r,v)+r^{2\beta}$, we deduce that $\widetilde{H}(r,v)/H(r,v)$ converges to $ 1$ as $r\downarrow 0$. In particular, the second term in  
				\begin{equation*}
					N(r,v)= \widetilde{N}(r,v)-(\beta-\widetilde{N}(r,v))\frac{r^{2\beta}}{H(r,v)}
				\end{equation*}
				vanishes as $r\downarrow 0$, and we conclude $N(0^{+}, v)=\widetilde{N}(0^{+},v)$.
			\end{proof}
			
			\begin{remark}\label{rmk:doubling}
				Under the assumptions of \cref{lemma:asymptotic-second-bu}, in the proof above we obtained that if $\widetilde{N}(0^{+},v)<\beta$, then $\widetilde{H}(r,v)/H(r,v)\to 1$ as $r\downarrow 0$. One can then apply point $i)$ of \cref{lemma:asymptotic-second-bu} to deduce the doubling property
				\be\label{lemmafinal2}C^{-1}\le\frac{H(2r, v)}{H(r, v)}\le C\qquad \forall r\in (0,r_{0}).\ee
			\end{remark}

			\subsection{The second blow-up analysis}\label{subsection-second-blow-up}
			In this section, we prove existence and characterization of second blow-ups at quadratic points. To this end, we need a uniform Sobolev bound on the sequence of rescalings of $v:=w-p$. We then prove the following two lemmas.
			\begin{lemma}\label{lemma:uniform-bounds-second-blow-up}
				Let $u$ be a solution of \eqref{eq:parabolic-signorini-problem}, with obstacle $\varphi$ satisfying \eqref{eq:ostacolo}. Suppose that $(0,0)\in \Gamma_{2}$, and let $p\in \mathcal{P}_{2}^+$. We denote by $v:=w-p$, where $w:=\zeta\widetilde u^{0,0}$ and $\widetilde u^{0,0} $ are defined in \eqref{eq:utilde}, \eqref{eq:subtraction-obstacle+cutoff}.
				Then $$\|v\|_{L^{\infty}(Q_{1/2})}+\|\nabla v\|_{L^2(Q_{1/2})}+\|\partial_t v\|_{L^\infty(Q_{1/2})}\le C\left(\|v\|_{L^2(Q_1)}+\|f\|_{L^{\infty}(Q_1)}\right),$$ for some $C>0$ depending on $n$.  
			\end{lemma}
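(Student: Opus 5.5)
The plan is to work with the even extension of $v$ across $\{x_{n+1}=0\}$ and to read off the thin-space behaviour as a signed measure in the distributional heat operator. Then the three bounds follow from \cref{lemma:wang-start} and \cref{lemma:calderon-zygmund} applied to suitable nonnegative sub- or supersolutions, in the order $L^\infty$, gradient, $\partial_t v$. Since $\zeta\equiv1$ in $B_{7/8}$, we have $w=\widetilde u^{0,0}$ in $Q_{7/8}$, and I work there throughout. The smaller cylinders needed are obtained by rescaling and covering.

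\emph{Step 1: $L^\infty$ bound.} The polynomial $p\in\mathcal P_2^+$ is harmonic, time independent and even, so $\partial_{n+1}p=0$ on the thin space. For the even extension, distributionally
\[
\mathbf H v=f+2\,\partial_{n+1}w\,\mathcal H^n\res\{x_{n+1}=0\}.
\]
By the Signorini condition, the singular part is $\le0$ and is supported on $\{w=0\}$. On that set $v=-p\le0$ because $p\ge0$ on the thin space. Hence the singular measure does not charge $\{v>0\}$, and I expect Kato's inequality to give $\mathbf H v^+\ge -|f|$. Since $-\mathbf H v\ge -f$, we also get $\mathbf H v^-\ge-|f|$. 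Applying \cref{lemma:wang-start} with $\eps=2$ to $v^\pm$ then gives $\|v\|_{L^\infty(Q_{3/4})}\le C(\|v\|_{L^2(Q_1)}+\|f\|_{L^\infty(Q_1)})$.

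\emph{Step 2: gradient bound.} This is a Caccioppoli inequality. I test the equation in $Q^+$ with $v\eta^2$, where $\eta$ is a space-time cutoff. The boundary term on the thin space is $-\int v\,\partial_{n+1}v\,\eta^2$. Using $v\,\partial_{n+1}v=-p\,\partial_{n+1}w\ge0$, as in Step 1 of \cref{prop:mon-formula-second-bu}, this term has a favourable sign. So $\|\nabla v\|_{L^2}$ is controlled by $\|v\|_{L^2}+\|f\|_{L^\infty}$.

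\emph{Step 3: $\partial_t v$ bound.} This is the main step. Since $p$ does not depend on $t$, we have $\partial_t v=\partial_t u\ge0$.

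First I show that $\partial_t v$ is subcaloric in $Q_{7/8}$. To do this rigorously I use the difference quotients $u_h=h^{-1}(u(\cdot,t)-u(\cdot,t-h))\ge0$, as in the proof of \cref{lemma:monotonicity-ass}. Each $u_h$ is caloric off the thin space, since $f$ is time independent there. On the thin space, $\partial_{n+1}u_h=h^{-1}(\partial_{n+1}u(t)-\partial_{n+1}u(t-h))$ is nonnegative wherever $u_h>0$ fails to force $\partial_{n+1}u=0$. More precisely, the contact set shrinks in time by monotonicity, so where $u(t)=\varphi$ also $u(t-h)=\varphi$. There $-\partial_{n+1}u$ is nonincreasing in $t$: either $u(t-h)$ is in contact and $u(t)$ is not, or I use the time-monotonicity of the multiplier $\mu=-\partial_{n+1}u$. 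Thus the singular part of $\mathbf H u_h$ is $\ge0$. I expect this sign check to be the main obstacle, in particular at points where $u(t-h)=\varphi<u(t)$. At those points $\partial_{n+1}u(t)=0\ge\partial_{n+1}u(t-h)$, which is still the right sign, so I would verify all cases this way. Letting $h\downarrow0$ gives $\mathbf H(\partial_t v)\ge0$.

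Next I obtain a weak-$L^1$ bound. Put $M:=\|v\|_{L^\infty(Q_{3/4})}$ and $V:=M-v\ge0$. Then $\mathbf H V\ge -f$, so \cref{lemma:calderon-zygmund} gives
\[
\sup_\theta\theta\,|\{\partial_t v>\theta\}|\le C(\|v\|_{L^2}+\|f\|_{L^\infty}).
\]
This weak-$L^1$ bound controls $\|\partial_t v\|_{L^{1/2}}$. Applying \cref{lemma:wang-start} with $\eps=1/2$ to the nonnegative subsolution $\partial_t v$ yields the $L^\infty$ bound. A final rescaling and covering brings all three estimates to $Q_{1/2}$.
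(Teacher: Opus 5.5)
Your proposal is correct. Steps 1 and 2 match the paper's proof: the paper states the subsolution property of $v^\pm$ in the viscosity sense rather than via Kato. It gets the Caccioppoli bound from $v\mathbf{H}v=vf+p\mu\ge vf$, which is the same sign fact as your thin boundary term $v\,\partial_{n+1}v=-p\,\partial_{n+1}w\ge0$.

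The subcaloricity of $\partial_t v$ in Step 3 also follows the paper's difference-quotient argument, but your case analysis at contact points is muddled. Where $u(x',0,t)=\varphi$ (hence also $u(x',0,t-h)=\varphi$), you invoke the time-monotonicity of the multiplier $-\partial_{n+1}u$, which is exactly what has to be proved. The one-line justification, as in the paper, is this: there $u_h(x',0,t)=0$ while $u_h\ge0$ in the upper half cylinder, so the one-sided derivative satisfies $\partial_{n+1}u_h(x',0,t)\ge0$. At non-contact points, $\partial_{n+1}u(t)=0$ and the sign follows from $-\partial_{n+1}u(t-h)\ge0$, as you say.

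The genuine difference is how you control an integral norm of $\partial_t v$. You apply \cref{lemma:calderon-zygmund} to $M-v$ to get a weak-$L^1$ bound, hence an $L^{1/2}$ bound, and then use \cref{lemma:wang-start} with $\eps=1/2$. This is valid.

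The paper instead uses the sign $\partial_t v\ge0$ a second time, which makes this step elementary:
$\|\partial_t v\|_{L^1(Q_{2/3})}=\int_{B_{2/3}}\bigl(v(\cdot,0)-v(\cdot,-4/9)\bigr)\le C\|v\|_{L^\infty(Q_{3/4})}$.
After this, \cref{lemma:wang-start} with $\eps=1$ and Step 1 conclude.

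Your Calder\'on--Zygmund route does not need a sign on $\partial_t v$ for the integrability step. The paper uses that tool in \cref{lemma:semiconv-in-time}, where $\partial_{tt}u$ has no sign. Here, though, the sign of $\partial_t v$ is needed anyway for subcaloricity, so the heavier machinery gains nothing.
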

			\begin{proof}
				We divide the proof into three steps. 
				
				\smallskip
				\noindent \textit{Step 1: Estimate of \,$\lVert v\rVert_{L^{\infty}(Q_{1/2})}$.} Extending $w$ and $f$ evenly with respect to $x_{n+1}$, from $\mathbf{H}p=0$ and \eqref{eq:w-solves} we get
				\begin{equation}\label{eq:formula-Hv}
					\mathbf{H}v=\mathbf{H}w=f-\mu, \qquad \mu:= -2\partial_{n+1}w \mathcal{H}^{n+1}\res S_{1}',
				\end{equation}
				where $\mu\ge 0$ is a Radon measure concentrated on $S_{1}'\cap \{w=0\}$ due to the Signorini conditions. 
				In particular,
				$$\mathbf{H}v=f\ge -\|f\|_{L^\infty(Q_1)}\quad\text{in }Q_1\setminus\left(S_{1}'\cap\{w=0\}\right)\qquad\text{and}\qquad v=-p\le0\quad\text{on }S_{1}'\cap\{w=0\}.$$
				As a consequence, since $v^{+}=\max\{v,0\}$, we deduce $\mathbf{H}v^+\ge -\|f\|_{L^\infty(Q_1)}$ in the viscosity sense. Similarly, because $\mathbf{H}(-v)\ge -f\ge -\|f\|_{L^\infty(Q_1)}$ in $Q_{1}$ and $v^{-}=\max\{-v,0\}$, we obtain $\mathbf{H}v^-\ge -\|f\|_{L^\infty(Q_1)}$. Then, the $L^2$-$L^\infty$ estimate for subsolutions from \cref{lemma:wang-start} gives $$\|v\|_{L^\infty(Q_{1/2})}\le\|v^+\|_{L^\infty(Q_{1/2})}+\|v^-\|_{L^\infty(Q_{1/2})}\le
				C\left(\|v\|_{L^2(Q_{1})}+\|f\|_{L^\infty(Q_{1})}\right).
				$$
				
				\smallskip
				\noindent \textit{Step 2: Estimate of \,$\lVert \nabla v\rVert_{L^{2}(Q_{1/2})}$.} Since on the hyperplane $p$ is nonnegative and $\min\{-\partial_{n+1}w,w\}=0$, by \eqref{eq:formula-Hv} we have
				\begin{equation*}
					v\mathbf{H}v= vf+p\mu\ge vf\qquad \text{in $Q_{1}$}.
				\end{equation*}
				The bound on the gradient follows from a Caccioppoli-type inequality. Take $\psi\in C^{\infty}_{c}(Q_{1}, [0,1])$ such that $\psi\equiv 1$ on $Q_{1/2}$. Then, integrating by parts,
				\begin{align*}
					\int_{Q_{1}}\psi^{2}vf\le \int_{Q_{1}}\psi^{2}v\mathbf{H}v
					&=  -\int_{Q_{1}}\psi^{2}|\nabla v|^{2}-2\int_{Q_{1}}\psi v\nabla \psi \cdot \nabla v -\frac{1}{2}\int_{B_{1}}\psi(\cdot, 0)^{2}v(\cdot, 0)^{2}+\frac{1}{2}\int_{Q_{1}}v^{2}\partial_{t}\psi^{2}\\
					&\le -\frac{1}{2}\int_{Q_{1}}\psi^{2}|\nabla v|^{2}+ C\int_{Q_{1}}v^{2},
				\end{align*}
				where in the last step we used Young's inequality. Rearranging terms we get $$\|\nabla v\|_{L^2(Q_{1/2})}\le C\left(\| v\|_{L^2(Q_{1})}+\| f\|_{L^{\infty}(Q_{1})}\right).$$
				
				\smallskip
				\noindent \textit{Step 3: Estimate of \,$\lVert\partial_{t}v\rVert_{L^{\infty}(Q_{1/2})}$.} 
				By the monotonicity assumption $\partial_{t}u\ge 0$ and the fact that $p$ is time-independent we deduce $\partial_{t}v=\partial_{t}w\ge 0$. We claim that 
				\be\label{eq:time-derivative-subcaloric}\mathbf{H}(\partial_{t}w)\ge 0\qquad\text{in }Q_{3/4}.\ee
				Given \eqref{eq:time-derivative-subcaloric}, we can get the desired bound by combining the parabolic Krylov-Safonov $L^{1}$-$L^{\infty}$ estimate in \cref{lemma:wang-start} with \textit{Step 1}:
				\bea\begin{aligned}\|\partial_t v\|_{L^\infty(Q_{1/2})}&\le C\|\partial_t v\|_{L^{1}(Q_{2/3})}=C\int_{B_{2/3}}(v(\cdot, 0)-v(\cdot, -4/9))\le C\|v\|_{L^\infty(Q_{3/4})}\\&\le C\left(\|v\|_{L^2(Q_{1})}+\|f\|_{L^{\infty}(Q_{1})}\right).
				\end{aligned}\eea
				It only remains to prove \eqref{eq:time-derivative-subcaloric}. For sufficiently small $h>0$, define the backward difference quotient
				\begin{equation*}
					w_h(x,t):=\frac{w(x,t)-w(x,t-h)}h.
				\end{equation*}
				It is defined throughout $Q_{3/4}$. Since $f$ is independent of time in this cylinder, $\mathbf{H} w_h=0$ in $Q_{3/4}^+$. On the thin space, if $w(x',0,t)>0$, the Signorini conditions give
				\begin{equation*}
					\partial_{n+1}w_h(x',0,t)=-\frac{\partial_{n+1}w(x',0,t-h)}h\ge 0.
				\end{equation*}
				If instead $w(x',0,t)=0$, time monotonicity and nonnegativity of the thin trace give $w(x',0,t-h)=0$. Thus $w_h(x',0,t)=0$, and $w_h\ge 0$ in the upper half cylinder implies $\partial_{n+1}w_h(x',0,t)\ge 0$ in this case as well. The even extension therefore satisfies $\mathbf{H} w_h\geq0$ in $Q_{3/4}$. Passing to the limit as $h\downarrow 0$ proves the claim. This concludes the proof.
			\end{proof}
			\begin{lemma}\label{lemma:controlvconH}
				Let $u$ be a solution of \eqref{eq:parabolic-signorini-problem}, with obstacle $\varphi$ satisfying \eqref{eq:ostacolo}. Suppose that $(0,0)\in \Gamma_{2}$, and let $p\in \mathcal{P}_{2}^+$. We denote by $v:=w-p$, where $w:=\zeta\widetilde u^{0,0}$ and $\widetilde u^{0,0} $ are defined in \eqref{eq:utilde}, \eqref{eq:subtraction-obstacle+cutoff}. Assume that $\widetilde{N}(0^{+},v)<\beta$. Then, there are $C>0$ and $r_{0}\in (0,1)$ such that
				$$\frac{1}{C}H(r, v)^{1/2}\le \|v(r\cdot,r^2\cdot)\|_{L^2(Q_1)}\le CH(r, v)^{1/2}\qquad \forall r\in (0,r_{0}).$$
			\end{lemma}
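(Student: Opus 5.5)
By scaling, set $v_r(x,t):=v(rx,r^2t)$. Then $H(r,v)=\int_{S_1^+}v_r^2G$, while $\|v_r\|_{L^2(Q_1)}^2=r^{-n-3}\int_{Q_r}v^2$. I will prove the two inequalities separately, comparing the Gaussian-weighted $L^2$ norm on the strip with the unweighted $L^2$ norm on the unit cylinder. The key inputs are the doubling property \eqref{lemmafinal2} from \cref{rmk:doubling}, which is available because $\widetilde N(0^+,v)<\beta$, and the $L^\infty$ bound of \cref{lemma:uniform-bounds-second-blow-up}.

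\emph{Lower bound on $\|v_r\|_{L^2(Q_1)}$.} On $Q_1\cap\{t<-1/4\}$ the kernel $G$ is bounded above and below by positive constants. So $\|v_r\|^2_{L^2(Q_1)}\ge c\int_{Q_1^+\cap\{t<-1/4\}}v_r^2G$, and I must show that this portion controls $H(r,v)$.

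I split $S_1^+$ into three pieces.
\begin{itemize}
\item The piece $\{|x|\ge 1\}$. Here $G$ decays like $e^{-|x|^2/4|t|}$. Since $v$ grows at most polynomially (both $w$ and $p$ are bounded on compacts, $w$ has compact support, and $|p(x)|\le C|x|^2$), its contribution after rescaling is bounded by $Cr^4\cdot(\text{tail})$. I will argue this is better compared to $H(r/2,v)$: use the doubling property to write $H(r,v)\le C H(r/2,v)$, and bound the tail by iterating over dyadic layers.
\item The piece $\{t>-1/4\}$. Iterating doubling gives $H(r/2^j,v)\ge C^{-j}H(r,v)$. The region $\{t\in(-4^{-j},-4^{-j-1})\}$ corresponds to the scale $r2^{-j}$, so its contribution is of the same form as the first piece at a smaller scale.
\end{itemize}
The cleanest route is the following. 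Write $H(r,v)=\sum_j \int_{\{-4^{-j}<t<-4^{-j-1}\}}$ in the original variables, and rescale each layer by $2^{-j}$. By doubling, the layer at $j=0$, namely $\int_{S_1^+\cap\{t<-1/4\}}v_r^2G$, is comparable to $H(r,v)-H(r/2,v)$.

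This difference need not be comparable to $H(r,v)$ on its own. Instead I use \cref{lemma:asymptotic-second-bu}\,i): since $\widetilde N(\rho,v)\to\lambda:=\widetilde N(0^+,v)\ge 2$ and $\widetilde H\sim H$, we get $H(r/2^m,v)\le \tfrac12 H(r,v)$ for a fixed $m$. Hence some annulus in time among the first $m$ layers carries a fixed fraction of $H(r,v)$. The spatial tail $|x|\ge R$ in such a layer is handled by the exponential decay of $G$ against the polynomial bound $\|v\|_{L^\infty(Q_\rho)}\le C\rho^2$. This gives $H(r,v)\le C\int_{Q_{R}\cap\{t<-4^{-m}\}}v_r^2$ up to an absorbable error, and rescaling from $Q_R$ back to $Q_1$ costs only doubling constants.

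\emph{Upper bound on $\|v_r\|_{L^2(Q_1)}$.} Here I use \cref{lemma:uniform-bounds-second-blow-up} on $Q_{1}$ rescaled. Cover $Q_1$ by boundedly many cylinders $Q_{1/2}(z_i)$, or simply apply the lemma at scale $2r$: $\|v_r\|_{L^\infty(Q_1)}\le C(\|v_{2r}\|_{L^2(Q_1)}+r^2\|f\|_{L^\infty(Q_{2r})})$. This is circular, so instead I bound directly. On $Q_1\cap\{t<-\delta\}$ we have $G\ge c_\delta$, so the $L^2$ norm there is at most $C H$ of a comparable scale, and then doubling applies. The portion $\{t>-\delta\}$ is controlled by the $L^\infty$ bound times $\delta$, where the $L^\infty$ bound comes from \cref{lemma:uniform-bounds-second-blow-up} at scale $2r$ and is estimated again by $H$ plus the $f$-term.

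\emph{Error terms and the expected obstacle.} The $f$-terms are of size $r^{\ell}$. They are absorbed because $H(r,v)\ge c\,r^{2\lambda+\eps}$ with $\lambda<\beta<\ell$, which follows from \cref{lemma:asymptotic-second-bu}\,i) together with $\widetilde H/H\to1$. I expect the main difficulty to be the lower bound, specifically showing that the Gaussian mass is not concentrated near $t=0$ or at spatial infinity. This is resolved by the strict growth $H(r/2^m)\le\frac12H(r)$ together with doubling.
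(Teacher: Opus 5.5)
\textbf{The lower bound has a genuine gap at spatial infinity.} You control the tail $\int_{S_1^+\cap\{|x|\ge R\}}v_r^2G$ with the absolute bound $|v|\le C\rho^2$ on $Q_\rho$. That gives an error of order $r^4e^{-cR^2}$, which cannot be absorbed into $H(r,v)$ for fixed $R$, because $H(r,v)$ is in general much smaller than $r^4$.

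\begin{itemize}
\item The Monneau formula in \cref{lemma:asymptotic-second-bu}~ii) only gives $H(r,v)\le Cr^{2\widetilde N(0^+,v)}$. So $H(r,v)=o(r^4)$ whenever $\widetilde N(0^+,v)>2$.
\item In the case used later, where $p$ is the blow-up, $v=o(|x|^2+|t|)$ already forces $H(r,v)=o(r^4)$. Yet the only lower bound available is $H(r,v)\ge c_\eps r^{4+\eps}$.
\end{itemize}

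To absorb the error you would therefore need $R=R(r)\to\infty$, roughly like $\sqrt{\log(1/r)}$. Passing from $Q_R$ back to $Q_1$ then loses a factor that grows with $R$ (a power of $R$, via doubling or \cref{lemma:asymptotic-second-bu}~i)). The constant thus degenerates as $r\downarrow0$. Doubling and the decay $H(r/2^m,v)\le\frac12H(r,v)$ do prevent the Gaussian mass from concentrating near $t=0$, so that part of your argument is fine. They do not, however, control where the mass sits in $x$ inside a fixed time layer.

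\textbf{What is missing is a tail estimate relative to $H(r,v)$.} The paper obtains it from the frequency function.
\begin{itemize}
\item Since $N(0^+,v)=\widetilde N(0^+,v)<\beta$, one has $N(r,v)\le M$ for small $r$. Equivalently $I(1,v_r)\le\frac M2H(1,v_r)$, and both quantities are scale invariant, so this holds for your unnormalized $v_r$.
\item Integrate $\diver(xv_r^2G)=(n+1)v_r^2G+2v_r\,x\cdot\nabla v_r\,G+v_r^2\frac{|x|^2}{2t}G$ over $\R^{n+1}_+$ on each time slice. There is no boundary term, since $x\cdot e_{n+1}=0$ on the thin space.
\item Absorb the cross term by Young's inequality and integrate in $t$. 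This yields $\int_{S_1^+}|x|^2v_r^2G\le(4(n+1)+8M)H(r,v)$, hence a tail $\le CR^{-2}H(r,v)$ with $R$ fixed.
\end{itemize}
Together with your control of small times, and a rescaling from $Q_R$ to $Q_1$ that now costs only finitely many doubling constants, this closes the lower bound. Alternatively, you could prove the upper bound first. Combined with the $L^\infty$ estimate of \cref{lemma:uniform-bounds-second-blow-up} and the growth bound in \cref{lemma:asymptotic-second-bu}~i), it gives $|v_r|\le C(1+|x|)^KH(r,v)^{1/2}$ up to scale $r_0$; the remaining far region is negligible since $H(r,v)\ge cr^{2\beta}$.

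\textbf{The upper bound needs one more step.} As written, the circularity you noticed is not actually removed: estimating $\|v(2r\cdot,4r^2\cdot)\|_{L^2(Q_1)}$ ``again by $H$'' is the same statement one scale up. To close it:
\begin{itemize}
\item Set $a(r):=\|v(r\cdot,r^2\cdot)\|_{L^2(Q_1)}^2/H(r,v)$ and use doubling to get the recurrence $a(r)\le C_\theta+C\theta\,a(2r)$.
\item Choose $\theta$ with $C\theta\le\frac12$.
\item Iterate up to scales in $(r_0/2,r_0)$, where $a$ is trivially bounded.
\end{itemize}
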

			\begin{proof}
				Taking \cref{lemma:asymptotic-second-bu} and \cref{rmk:doubling} into account, we obtain that there is $r_{0}\in (0,1)$ sufficiently small such that 
				\begin{equation}\label{eq:unif-bounds-HvsL2}
					N(r,v)\le M\quad \text{and}\quad \frac{H(s,v)}{H(r,v)}\le C\left(\frac{s}{r}\right)^{4}\qquad \forall 0<s<r<r_{0}.
				\end{equation}
				Calling $v_{r}(x,t):= v(rx,r^{2}t)/H(r,v)^{1/2}$, we have 
				\begin{equation}\label{eq:normalization-HvsL2}
					H(1,v_{r})=1\quad \text{and}\quad I(1,v_{r})=\frac{N(r,v)}{2}\le \frac{M}{2}\qquad \forall r\in (0,r_{0}).
				\end{equation}
				Up to further reducing $r_{0}$, we wish to prove the following uniform two-sided bounds:
				\begin{equation*}
					\frac{1}{C}\le \lVert v_{r}\rVert_{L^{2}(Q_{1})}\le C\qquad \forall r\in (0,r_{0}).
				\end{equation*}
				
				\smallskip
				\noindent \textit{Step 1: The lower bound.} We know that the square integral of $v_{r}$ over $S_{1}^{+}$ with Gaussian weight is normalized. To prove a lower bound for $\lVert v_{r}\rVert_{L^{2}(Q_{1})}$ we need to show that a given fraction of such mass lives in a region where $G$ is bounded above. We start by proving the following uniform Gaussian bound:
				\begin{equation*}
					\int_{S_{1}^{+}}|x|^{2}v_{r}^{2}G\le C \qquad \forall r\in (0,r_{0}).
				\end{equation*}
				Using $\nabla G(x,t)= \frac{x}{2t} G(x,t)$ we find 
				\begin{equation*}
					\diver (x v_{r}^{2}G)=(n+1)v_{r}^{2}G+2v_{r}x\cdot \nabla v_{r}G+ v_{r}^{2}\frac{|x|^{2}}{2t}G. 
				\end{equation*}
				Integrating over $\R^{n+1}_{+}$, for a fixed $t<0$ we obtain 
				\begin{equation*}
					\int_{\R^{n+1}_{+}\times \{t\}}|x|^{2}v_{r}^{2}G= 2(n+1)|t|\int_{\R^{n+1}_{+}\times \{t\}}v_{r}^{2}G+ 4|t|\int_{\R^{n+1}_{+}\times \{t\}}v_{r} x\cdot \nabla v_{r}G.
				\end{equation*}
				Using Young's inequality on the second term in the right-hand side above and absorbing it into the left-hand side, we find
				\begin{equation*}
					\int_{\R^{n+1}_{+}\times \{t\}}|x|^{2}v_{r}^{2}G\le 4(n+1)|t|\int_{\R^{n+1}_{+}\times \{t\}}v_{r}^{2}G+16 t^{2}\int_{\R^{n+1}_{+}\times \{t\}}|\nabla v_{r}|^{2}G.
				\end{equation*}
				Integrating this bound for $t\in (-1,0)$ and using \eqref{eq:normalization-HvsL2} we get the desired upper bound
				\begin{equation*}
					\int_{S_{1}^{+}}|x|^{2}v_{r}^{2}G\le 4(n+1)+8M.
				\end{equation*}
				As a consequence, for any $R>0$ we have
				\begin{equation}\label{eq:compactness-x-HvsL2}
					\int_{S_{1}^{+}\cap \{|x|>R\}}v_{r}^{2}G\le \frac{C}{R^{2}}\qquad \forall r\in (0,r_{0}).
				\end{equation}
				
				Next, we show that the mass of $v_{r}^{2}$ cannot concentrate at small times. Indeed, by the second inequality in \eqref{eq:unif-bounds-HvsL2}, for any $\eta\in (0,1)$ we have 
				\begin{equation}\label{eq:compactness-t-HvsL2}
					\int_{S_{\eta}^{+}}v_{r}^{2}G= \eta^{2}\frac{H(\eta r,v)}{H(r,v)}\le C\eta^{6}\qquad \forall r\in (0,r_{0}).
				\end{equation}
				
				Now, since $H(1,v_{r})=1$, by \eqref{eq:compactness-x-HvsL2} and \eqref{eq:compactness-t-HvsL2}, we may choose $R$ sufficiently large and $\eta$ sufficiently small so that 
				\begin{equation*}
					\frac{1}{2}\le \int_{B_{R}\times (-1,-\eta^{2})}v_{r}^{2}G \le C\int_{Q_{R}}v_{r}^{2}\qquad \forall r\in (0,r_{0}),
				\end{equation*}
				where in the second step we used the fact that $G$ is uniformly bounded above in $B_{R}\times (-1,-\eta^{2})$. Finally, rescaling and using the doubling property from \eqref{lemmafinal2} proves $\lVert v_{r}\rVert_{L^{2}(Q_{1})}\ge c>0$ for all $r\in (0,r_{0})$, up to further decreasing $r_{0}$.
				
				\smallskip
				\noindent \textit{Step 2: The upper bound.} First of all, applying the $L^{\infty}$-bound from \cref{lemma:uniform-bounds-second-blow-up} to $v(2r\cdot,4r^{2}\cdot)$ and using the decay estimate for the right-hand side $f$ from \eqref{controlf} we deduce 
				\begin{equation}\label{eq:L2-Linfty-HvsL2}
					\lVert v\rVert_{L^{\infty}(Q_{r})}\le C \lVert v(2r\cdot, 4r^{2}\cdot)\rVert_{L^{2}(Q_{1})}+Cr^{\ell}\qquad \forall r\in (0,r_{0}).
				\end{equation}
				In particular, given a small number $\theta\in (0,1)$ to be chosen later, we may split the $L^{2}$-norm at scale $r$ into two pieces:
				\begin{align*}
					\lVert v(r\cdot,r^{2}\cdot)\rVert_{L^{2}(Q_{1})}^{2}&=\int_{B_{1}\times (-1,-\theta)}v(r\cdot,r^{2}\cdot)^{2}+\int_{B_{1}\times (-\theta,0)}v(r\cdot,r^{2}\cdot)^{2}\\
					&\le C_\theta\int_{B_{1}\times (-1,-\theta)}v(r\cdot,r^{2}\cdot)^{2}G+C\theta\lVert v\rVert_{L^{\infty}(Q_{r})}^{2}\\
					&\le C_\theta H(r,v)+C\theta  \lVert v(2r\cdot, 4r^{2}\cdot)\rVert_{L^{2}(Q_{1})}^{2}+C\theta r^{2\ell},
				\end{align*}
				where in the second step we used the fact that the heat kernel $G$ is uniformly bounded below in $B_{1}\times (-1,-\theta)$, while in the third we applied \eqref{eq:L2-Linfty-HvsL2}. Now, dividing both sides of the last inequality by $H(r,v)$, using $H(r,v)\ge cr^{2\beta}$, $\beta<\ell$, and the doubling property from \eqref{lemmafinal2} we obtain the recurrence
				\begin{equation*}
					\lVert v_{r}\rVert_{L^{2}(Q_{1})}^{2}\le C_\theta+C\theta \lVert v_{2r}\rVert_{L^{2}(Q_{1})}^{2}\qquad \forall r\in (0,r_{0}).
				\end{equation*}
				Finally, choosing $\theta$ so small that $C\theta \le 1/2$ and solving the recurrence we find
				\begin{equation*}
					\lVert v_{r}\rVert_{L^{2}(Q_{1})}^{2}\le C\left(1+\sup_{\rho\in (r_{0}/2, r_{0})}\lVert v_{\rho}\rVert_{L^{2}(Q_{1})}^{2}\right)\le C\qquad \forall r\in (0,r_{0}/2),
				\end{equation*}
				which gives the desired upper bound.  
			\end{proof}
			We now have all the ingredients to prove the existence of second blow-ups at quadratic points. We also show that any such second blow-up must be a caloric quadratic polynomial.
			\begin{proposition}\label{prop:second-blow-up}
				Let $u$ be a solution of \eqref{eq:parabolic-signorini-problem}, with obstacle $\varphi$ satisfying \eqref{eq:ostacolo}. Suppose that $(0,0)\in \Gamma_{2}$, and let $p\in \mathcal{P}_{2}^+$ be the blow-up of $u$ at $(0,0)$. We denote by $v:=w-p$, where $w:=\zeta\widetilde u^{0,0}$ and $\widetilde u^{0,0} $ are defined in \eqref{eq:utilde}, \eqref{eq:subtraction-obstacle+cutoff}. 
				Consider the rescalings
				$$v_{r}(x,t):= \frac{v(rx,r^{2}t)}{H(r,v)^{1/2}}\qquad \forall r\in (0,1).$$
				Then, for every sequence $r_{j}\downarrow 0$, there is a function $q:S_{\infty}\to \R$ such that, up to a subsequence, 
				$$v_{r_{j}}\to q,\quad \nabla v_{r_{j}}\rightharpoonup \nabla q,\quad\partial_t v_{r_{j}}\rightharpoonup \partial_t q\qquad\text{in }L^2_{\text{loc}}(S_{\infty}).$$ 
				Moreover, $q\not\equiv0$ is a parabolically $2$-homogeneous caloric polynomial, even with respect to $x_{n+1}$, and satisfies $\partial_tq\ge0$.
			\end{proposition}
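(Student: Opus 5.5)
The plan has four steps: pin the frequency of $v$ below $\beta$, extract a compact limit, show the limit is homogeneous, and then show it is a $2$-homogeneous caloric polynomial.

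\textbf{Choice of $\beta$ and frequency bound.} Since $\ell>4$, I will fix $\beta\in(3,(\ell+2)/2)$. This is admissible in \cref{prop:mon-formula-second-bu} and \cref{lemma:asymptotic-second-bu}. Next I bound $\widetilde N(0^+,v)$ from above by $3$.
\begin{itemize}
\item By \cref{lemma:monotonicity-ass} and the time-independence of $p$, we have $\partial_t v=\partial_t u\ge c|x_{n+1}|$ near the origin.
\item Exactly as in \cref{prop:onlyle3}, this gives $v(0,r,0)-v(0,r,-r^2)\ge cr^3$, so $\|v\|_{L^\infty(Q_r)}\gtrsim r^3$.
\item By the $L^\infty$ bound of \cref{lemma:uniform-bounds-second-blow-up} at scale $2r$ (with $\|f\|_{L^\infty(Q_{2r})}\lesssim r^{\ell-2}$ from \eqref{controlf}), this yields $\widetilde H(4r,v)\gtrsim r^6$ along all small scales.
\item If $\widetilde N(0^+,v)>3$, point i) of \cref{lemma:asymptotic-second-bu} would give $\widetilde H(r,v)\le Cr^{2\underline\lambda}$ with $\underline\lambda>3$, a contradiction.
\end{itemize}
Hence $\lambda:=\widetilde N(0^+,v)\in[2,3]$, which is strictly below $\beta$. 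Point iii) of \cref{lemma:asymptotic-second-bu} then gives $N(0^+,v)=\lambda$, and \cref{rmk:doubling} and \cref{lemma:controlvconH} become available.

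\textbf{Compactness.} By \cref{lemma:controlvconH}, $\|v_r\|_{L^2(Q_R)}\le C_R$ for every $R$; this uses the doubling property and $H(r,v)\ge cr^{2\beta}\gg r^{2\ell}$. Applying \cref{lemma:uniform-bounds-second-blow-up} to rescalings of $v_r$ gives local uniform bounds on $\|v_r\|_{L^\infty}$, $\|\nabla v_r\|_{L^2}$ and $\|\partial_t v_r\|_{L^\infty}$. Here the rescaled right-hand side is $f_r=r^2f(r\cdot,r^2\cdot)/H(r,v)^{1/2}$, which satisfies $|f_r|\lesssim r^{\ell-\beta}\to0$. An Aubin--Lions type argument then gives strong $L^2_{\rm loc}$ convergence of $v_{r_j}$ to some $q$ and weak convergence of the derivatives. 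The lower bound in \cref{lemma:controlvconH} gives $q\not\equiv0$, and $\partial_t q\ge0$ passes to the limit. The Gaussian tail estimates \eqref{eq:compactness-x-HvsL2} and \eqref{eq:compactness-t-HvsL2} also let me pass $H(1,v_{r_j})=1$ and $H(\rho,\cdot)$ to the limit.

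\textbf{Homogeneity.} Integrating \eqref{eq:mon-formula-second-bu} shows that $\int_0^{r_0}\rho^{-1}F(\rho,v)^2/\widetilde H(\rho,v)^2\,d\rho<\infty$. Hence $\int_{s}^{1}\rho^{-1}F(\rho,v_r)^2/H(\rho,v_r)^2\,d\rho\to0$ for each fixed $s$. From \eqref{eq:derivatives-H-and-I1}, the standard Poon argument gives $N(\rho,q)\equiv\lambda$, i.e.\ equality in the Cauchy--Schwarz step, and hence $Zq=\lambda q$: the limit $q$ is parabolically $\lambda$-homogeneous.

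\textbf{Identifying $q$ (main obstacle).} The remaining task is to show that $q$ is caloric across the thin space and that $\lambda=2$.
\begin{itemize}
\item We have $\mathbf H v_r=f_r-\mu_r$, where $\mu_r\ge0$ is supported on $\{w=0\}\cap S_1'$, and there $v_r=-p/H(r,v)^{1/2}$.
\item The bound $|v_r|\le C$ forces $p\lesssim H(r,v)^{1/2}\ll r^2$ on $\operatorname{supp}\mu_r$. Hence in the limit $\mathbf H q=-\mu_\infty\le0$, with $\mu_\infty$ a nonnegative measure supported in $\{p=0\}\cap\{x_{n+1}=0\}$, which is the kernel $L$ of the thin restriction of $p$. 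Moreover $q\le0$ on $\operatorname{supp}\mu_\infty$.
\item I would first try to kill $\mu_\infty$ by testing $\mathbf H q=-\mu_\infty$ against $qG$ and using homogeneity. This gives $I(1,q)-\tfrac{\lambda}2H(1,q)=\frac12\int_{S_1'}|t|\,q\,d\mu_\infty\,G\le0$. On the other hand, $\mathcal W_2(r,v)=\mathcal W_2(r,w)\ge -Cr^{2(\ell-2)}$ passes to the limit as $I(1,q)\ge H(1,q)$. When $\lambda=2$ these two facts together yield $\int q\,d\mu_\infty=0$, and then $\mu_\infty=0$.
\item To exclude $\lambda\in(2,3]$ I would use a barrier argument as in \cite[Lemma 5.8]{frs24}. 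The limit $q$ is a $\lambda$-homogeneous, time-nondecreasing supersolution vanishing on the thin part of $L$. I would compare $v_r$ with explicit quadratic caloric barriers built from the bound $\partial_t v\ge c|x_{n+1}|$ and from $p>0$ on the thin space off $L$. The aim is to show that $\lambda>2$ would force $\partial_t q\equiv0$ together with $\mathbf H q\le0$ and $\lambda$-homogeneity, hence $q\equiv0$ by Liouville, a contradiction.
\end{itemize}
Once $\lambda=2$ and $\mathbf H q=0$ hold, $q$ is a $2$-homogeneous caloric function with polynomial growth, hence a caloric polynomial; it is even in $x_{n+1}$ by construction and satisfies $\partial_t q\ge0$. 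I expect the barrier step excluding $\lambda>2$, coupled with the vanishing of $\mu_\infty$, to be the delicate part.
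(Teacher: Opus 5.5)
\textbf{The gap.} The bound $\lambda:=\widetilde N(0^+,v)\le 3$ (modulo the justification issue noted at the end) and your compactness step are fine. The genuine gap is the step you flag yourself: excluding $\lambda\in(2,3]$, which is exactly what makes $q$ quadratic. Your sketch does not do this, for two reasons. First, barriers built from $\partial_t v\ge c|x_{n+1}|$ only give $\partial_t u\gtrsim r$ at scale $r$, i.e.\ $H(r,v)\gtrsim r^6$, which is again $\lambda\le 3$. Second, the intended endpoint is false: a time-independent, $\lambda$-homogeneous function with $\mathbf H q\le 0$, even in $x_{n+1}$, need not vanish (e.g.\ $x_1^3-3x_1x_2^2$).

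The idea you are missing, used in the paper, is a capacity barrier for $\partial_t u$. After a rotation, $p(x',0)\ge c_1x_1^2$, so by \cref{prop:p2-indep-time} the contact set near the origin lies in the thin cone $\mathcal C_\delta=\{x_1^2+x_{n+1}^2\le\delta(|x'|^2+|t|)\}$ around the codimension-two set $\{x_1=x_{n+1}=0\}$. Hence $\partial_t u\ge 0$ is caloric outside $\mathcal C_{2\delta}$ and dominates $c\Phi$, where $\Phi$ is the positive $\kappa_\delta$-homogeneous caloric function vanishing on $\mathcal C_{2\delta}$. Moreover $\kappa_\delta\downarrow 0$, because codimension-two sets have zero capacity. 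Integrating in time gives $H(r,v)\ge c r^{4+2\kappa_\delta}$, so $\widetilde N(0^+,v)\le 2$ by point i) of \cref{lemma:asymptotic-second-bu}, before any compactness.

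Alternatively, your own framework can be closed without this barrier. Once $q$ is caloric and $\lambda$-homogeneous it is a polynomial, so $\lambda\in\{2,3\}$. If $\lambda=3$, then $\partial_t q\ge 0$ is a $1$-homogeneous polynomial $a\cdot x$, hence $\partial_t q\equiv 0$. On the other hand, point ii) of \cref{lemma:asymptotic-second-bu} with $\underline\lambda=3$ gives $H(r,v)\le Cr^6$, so $\partial_t v_r\ge c|x_{n+1}|$ and therefore $\partial_t q\ge c|x_{n+1}|$, a contradiction.

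\textbf{Other steps needing repair.} Your argument for $\mu_\infty=0$ fails. With only weak convergence of gradients you have $I(1,q)\le\liminf_j I(1,v_{r_j})$, so $\mathcal W_2(r,v)\ge -Cr^{2(\ell-2)}$ does not pass to $I(1,q)\ge H(1,q)$. In any case, $q=0$ $\mu_\infty$-a.e.\ would not give $\mu_\infty=0$. The argument is also unnecessary: you already know $\mu_\infty$ is carried by $\{p=0\}'\times(-\infty,0]$, whose spatial section has dimension at most $n-1$ in $\R^{n+1}$ and hence zero capacity. A measure $\mathbf H q$ with $q\in W^{1,2}_{\rm loc}$ cannot charge such a set, so $q$ is caloric; this is the paper's argument.

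Homogeneity also cannot precede caloricity. The formula \eqref{eq:mon-formula-second-bu} only retains the $F^2$ term, since the Cauchy--Schwarz deficit was discarded. So $F\to 0$ gives $H(\rho,q)=\rho^{2\lambda}$, but turning this into $N(\rho,q)\equiv\lambda$ and $Zq=\lambda q$ requires $H'(\rho,q)=\frac4\rho I(\rho,q)$, i.e.\ the equation for $q$.

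Finally, the bound $H(r,v)\gtrsim r^6$ should be obtained by integrating $\partial_t v\ge c|x_{n+1}|$ in time over a region such as $B_r\times(-r^2,-r^2/4)$, where $G$ is bounded below, as in the paper's barrier step. Going through an unweighted $L^2$ norm does not work, because its comparison with $H$ in \cref{lemma:controlvconH} presupposes the doubling you are trying to establish.
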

			
			\begin{proof}
				We divide the proof into two steps. 
				
				\smallskip
				\noindent \textit{Step 1: $\widetilde{N}(0^{+},v)=2$.} By point $iii)$ in \cref{lemma:asymptotic-second-bu}, we know that 
				\begin{equation*}
					\widetilde{N}(0^{+},v)\ge 2.
				\end{equation*}
				Therefore, we only need to show that $\widetilde{N}(0^{+},v)\le 2$. To prove this, we will use a barrier argument based on the monotonicity assumption $\partial_{t}u\ge 0$ to show that $r\mapsto H(r,v)^{1/2}$ cannot decay faster than $r^{2+\eps}$ for any $\eps>0$. The conclusion will then come by applying point $i)$ in \cref{lemma:asymptotic-second-bu}.
				
				We denote by $\widetilde u:=\widetilde u^{0,0}$.
				Since $p$ is the first blow-up of $u$ at $(0,0)$, by \cref{prop:p2-indep-time} we have 
				$$\widetilde u(x,t)=p(x)+o(|x|^2+|t|)\qquad \text{as $(x,t)\to (0,0)$.}$$ 
				Moreover, since $p\not\equiv  0$ is a nonnegative quadratic polynomial on $\{x_{n+1}=0\}$, after a rotation we may assume that $p(x',0)\ge c_1x_1^2$ for all $x'\in \R^{n}$, where $c_{1}>0$. As a consequence, for any given small $\delta>0$, we can find a sufficiently small $r_\delta\in (0,1)$ such that, for every $(x,t)\in \Lambda\cap Q_{r_{\delta}}$, 
				\begin{equation*}
					x_{1}^{2}\le \frac{p(x',0)}{c_{1}}=\frac{|p(x',0)-\widetilde{u}(x',0,t)|}{c_{1}}\le \delta(|x'|^{2}+|t|).
				\end{equation*}
				Equivalently, defining $\mathcal{C}_{\delta}:= \{(x,t)\in S_{\infty}: x_{1}^{2}+x_{n+1}^{2}\le \delta(|x'|^{2}+|t|)\}$, we just obtained that 
				\begin{equation}\label{eq:contact-set-in-small-harmonic-capacity}
					\Lambda\cap Q_{r_{\delta}}\subseteq \mathcal{C}_{\delta}.
				\end{equation}
				
				We now construct a suitable parabolically homogeneous positive caloric barrier in the set $Q_{r_{\delta}/2}\setminus \mathcal{C}_{2\delta}$. First, we let $\widetilde{\mathcal{C}}_{\delta}:= \{x\in \R^{n+1}: x_{1}^{2}+x_{n+1}^{2}\le \delta(|x'|^{2}+1)\}$ and we consider $\phi \in W^{1,2}_{0}(\R^{n+1}\setminus \widetilde{C}_{2\delta}, G(\cdot,-1))$ the first eigenfunction of the Ornstein-Uhlenbeck operator $-\mathcal{L}= -\Delta+\frac{x}{2}\cdot \nabla$ with homogeneous Dirichlet boundary conditions in the open set $\R^{n+1}\setminus \widetilde{C}_{2\delta}$, solving 
				$$\begin{cases}
					\mathcal{L}\phi+\frac{\kappa_\delta}2\phi=0&\qquad\text{in } \R^{n+1}\setminus \widetilde{\mathcal{C}}_{2\delta},\\
					\phi= 0&\qquad\text{in } \widetilde{\mathcal{C}}_{2\delta},\\
					\phi>0&\qquad\text{in }\R^{n+1}\setminus \widetilde{\mathcal{C}}_{2\delta},
				\end{cases}$$
				for some $\kappa_{\delta}>0$. We note that, if $\delta$ is small, then $\kappa_\delta$ is close to $\kappa_0$, which is twice the first eigenvalue in $\R^{n+1}\setminus \widetilde{\mathcal{C}}_{0}$ with zero Dirichlet boundary datum on $\widetilde{\mathcal{C}}_0:=\{x_1=x_{n+1}=0\}$. Since $\widetilde{\mathcal{C}}_0$ has zero harmonic capacity, then $\kappa_0=0$ and thus $\kappa_\delta\downarrow 0$ as $\delta\downarrow 0$. We then consider a parabolically $\kappa_\delta$-homogeneous function $\Phi:S_{\infty}\to \R$ defined as
				\begin{equation*}
					\Phi(x,t):=|t|^{\frac{\kappa_{\delta}}{2}}\phi\left(\frac{x}{\sqrt{-t}}\right).
				\end{equation*}
				One can readily check that $\Phi$ solves the following problem:
				$$\begin{cases}
					\mathbf{H}\Phi=0&\qquad\text{in } S_{\infty}\setminus \mathcal{C}_{2\delta},\\
					\Phi= 0&\qquad\text{in }{ \mathcal{C}}_{2\delta},\\
					\Phi>0&\qquad\text{in }S_{\infty}\setminus \mathcal{C}_{2\delta}.
				\end{cases}$$
				
				Next, we use the function $\Phi$ constructed above as a barrier to bound $\partial_{t}u$ from below. Since $\partial_{t}u$ is caloric away from the contact set $\Lambda$, \eqref{eq:contact-set-in-small-harmonic-capacity} and the strong maximum principle give 
				\begin{equation*}
					\mathbf{H}(\partial_{t}u)=0\quad \text{in $Q_{r_{\delta}/2}\setminus \mathcal{C}_{2\delta}$}\qquad \text{and}\qquad \partial_{t}u\ge c>0\quad \text{on $\left(\partial_{\text{par}}Q_{r_{\delta}/2}\right)\setminus \mathcal{C}_{2\delta}$}.
				\end{equation*}
				On the other hand, since the zero extension of $\Phi$ is subcaloric, by the maximum principle we get that $\Phi$ is bounded from above by a positive constant in $Q_{1/2}$ (see for instance \cite[Equation 4.17]{supercooled}), and vanishes on $\mathcal{C}_{2\delta}$. Therefore, there exists $c>0$ such that $\partial_{t}u\ge c\Phi$ on $\partial_{\text{par}}(Q_{r_{\delta}/2}\setminus \mathcal{C}_{2\delta})$. The maximum principle and the fact that $\partial_{t}u\ge 0=\Phi$ on $\mathcal{C}_{2\delta}$ then give
				\begin{equation}\label{eq:bound-below-detu-barrier-second-blowup}
					\partial_{t}u\ge c\Phi\qquad \text{in $Q_{r_{\delta}/2}$}.
				\end{equation}
				
				Now recall that the blow-up $p$ is independent of time. Therefore $\partial_{t}v=\partial_{t}w=\partial_{t}u$ in $Q_{3/4}$. In particular, integrating \eqref{eq:bound-below-detu-barrier-second-blowup} in $B_{r}\times (r^{2}t_{1},r^{2}t_{2})$ for some $r\in (0,r_{\delta}/2)$, $t_{1}\in (-1,-3/4)$ and $t_{2}\in (-1/2,-1/4)$ gives 
				\begin{align*}
					\fint_{B_{r}}\left(v(x,r^{2}t_{2})-v(x,r^{2}t_{1})\right)dx&=\int_{r^{2}t_{1}}^{r^{2}t_{2}}\fint_{B_{r}}\partial_{t}v(x,t)\,dx\,dt\\
					&\ge  c\int_{r^{2}t_{1}}^{r^{2}t_{2}}\fint_{B_{r}}\Phi(x,t)\,dx\,dt \ge cr^{2+\kappa_{\delta}}.
				\end{align*}
				After changing variables $y=x/r$ and averaging for $(t_{1},t_{2})\in (-1,-3/4)\times (-1/2,-1/4)$ we get 
				\begin{equation*}
					\left(\int_{-1}^{-1/4}\int_{B_{1}}v^{2}(ry,r^{2}s)\,dy \,ds\right)^{1/2}\ge c\int_{-1}^{-3/4}\int_{-1/2}^{-1/4} \fint_{B_{r}}\left(v(x,r^{2}t_{2})-v(x,r^{2}t_{1})\right)dx\,dt_{1}\,dt_{2}\ge cr^{2+\kappa_{\delta}}.
				\end{equation*}
				Finally, since $G$ is bounded below by a positive constant in $B_{1}\times (-1,-1/4)$ we conclude 
				\begin{equation*}
					\widetilde{H}(r,v)\ge H(r,v)\ge cr^{4+2\kappa_{\delta}} 
				\end{equation*}
				for every sufficiently small $r>0$. Thanks to point $i)$ in \cref{lemma:asymptotic-second-bu} we deduce $\widetilde{N}(0^{+},v)\le 2+\kappa_{\delta}$. Since $\kappa_{\delta}\downarrow 0$ as $\delta\downarrow 0$, we conclude $\widetilde{N}(0^{+},v)\le 2$, as desired.
				
				\smallskip
				\noindent \textit{Step 2: Existence and properties of the second blow-up.} Let $r_{j}\downarrow 0$ be a given sequence. First of all, we note that the rescalings $v_{r_{j}}$ solve 
				\begin{equation*}
					\mathbf{H}v_{r_{j}}= f_{r_{j}}-\mu_{r_{j}}\quad \text{in $Q_{1/r_{j}}$},\qquad f_{r}(x,t):=\frac{r^2f(rx,r^{2}t)}{H(r,v)^{1/2}},\quad \mu_{r}:= \frac{-2r\partial_{n+1}w(r\cdot,r^{2}\cdot)\mathcal{H}^{n+1}\res Q_{1/r}'}{H(r,v)^{1/2}}\ge 0.
				\end{equation*}
				By \textit{Step 1}, and point $i)$ in \cref{lemma:asymptotic-second-bu} we have $H(r_{j},v)^{1/2}\ge c_{\eps}r_{j}^{2+\eps}$, for any $\eps>0$. 
				Therefore, for any $R>0$ and $j$ sufficiently large, using also the uniform bound on $f$ from \eqref{controlf}, and taking $\eps$ small, we obtain
				\begin{equation}\label{eq:bound-fr-second-blow-up}
					\lVert f_{r_{j}}\rVert_{L^{\infty}(Q_{R})}\le C\frac{r_j^2(Rr_{j})^{\ell-2}}{c_{\eps}r_{j}^{2+\eps}}\le C_{\eps,R}\,r_{j}^{\ell-2-\eps}\to 0\qquad \text{as $j\to \infty$}.
				\end{equation}
				On the other hand, the second inequality in \cref{lemma:controlvconH} along with the doubling property in \cref{rmk:doubling} gives 
				\begin{equation}\label{eq:bound-L2-second-blow-up}
					\lVert v_{r_{j}}\rVert_{L^{2}(Q_{R})}= R^{(n+3)/2}\frac{\lVert v(Rr_{j}\cdot, R^{2}r_{j}^{2}\cdot)\rVert_{L^{2}(Q_{1})}}{H(r_{j},v)^{1/2}}\le C_{R}\frac{H(Rr_{j},v)^{1/2}}{H(r_{j},v)^{1/2}}\le C_{R}.
				\end{equation}
				Then, applying \cref{lemma:uniform-bounds-second-blow-up} to $v_{r_{j}}(R\cdot, R^{2}\cdot)$ and using \eqref{eq:bound-fr-second-blow-up}, \eqref{eq:bound-L2-second-blow-up}, we find a uniform $W^{1,2}$ bound for the sequence $v_{r_{j}}$ in $Q_{R/2}$. Since $R>0$ is arbitrary, we deduce that the sequence is bounded in $W^{1,2}_{\text{loc}}(S_{\infty})$. Therefore, there is a function $q$ such that, up to a subsequence, $v_{r_j}\to q$ weakly in $W^{1,2}_{\text{loc}}(S_{\infty})$ and strongly in $L^2_{\text{loc}}(S_{\infty})$. In particular, $q$ is even with respect to $x_{n+1}$ and $\partial_{t}q\ge0$.
				Moreover, by combining the local weak convergence in $W^{1,2}(S_{\infty})$ with the compactness arguments used in \textit{Step 1} of the proof of \cref{lemma:controlvconH} we deduce that 
				\begin{equation}\label{eq:Gaussian-convergence-proof-second-blow-up}
					\lim_{j\to \infty} H(r,v_{r_{j}})= H(r,q)\quad \text{and} \quad I(r,q)\le \liminf_{j\to \infty}I(r,v_{r_{j}})\qquad \forall r>0.
				\end{equation}
				In particular $H(1,q)=1$, and so $q\not\equiv 0$.
				
				To prove that $q$ is caloric, note that by the Signorini condition, the measures $\mu_{r_{j}}$ are concentrated on the sets $\{\widetilde{u}(r_{j}\cdot,r_{j}^{2}\cdot)=0\}'$, and that, by \cref{prop:p2-indep-time}, every limit point of sequences extracted from these sets must belong to $\{p=0\}'\times (-\infty,0]$. Using also \eqref{eq:bound-fr-second-blow-up} and the convergence of $v_{r_{j}}$ towards $q$, we deduce that $\mathbf{H}q$ is a measure concentrated on  $\{p=0\}'\times (-\infty,0]$. Since $\{p=0\}'$ is a linear space of dimension at most $n-1$, its harmonic capacity is $0$. Thus, $q$ is caloric.  
				
				It remains to prove that $q$ is parabolically $2$-homogeneous, as from this and the parabolic Liouville theorem we would also deduce that $q$ is a polynomial. 
				We first observe that by \textit{Step 1} and point $iii)$ in \cref{lemma:asymptotic-second-bu}, $N(0^{+},v)=\widetilde{N}(0^{+},v)=2$. Then, since $N(r,v_{r_j})=N(rr_j,v)$, using \eqref{eq:Gaussian-convergence-proof-second-blow-up} we get
				\begin{equation*}
					N(r,q)\le \liminf_{j\to \infty} N(r,v_{r_{j}})= \liminf_{j\to \infty}N(rr_{j},v)=N(0^{+},v)=2.
				\end{equation*}
				For the smooth caloric function $q$, direct differentiation and Gaussian integration by parts, using its evenness in $x_{n+1}$, give the identities
				\begin{equation*}
					H'(r,q)= \frac{4}{r}I(r,q)=\frac{2}{r^{3}}\int_{S_{r}^{+}}q(Zq)G,\qquad I'(r,q)= \frac{1}{r^{3}}\int_{S_{r}^{+}}(Zq)^{2}G.
				\end{equation*}
				Therefore, the Cauchy-Schwarz inequality gives
				\begin{equation}\label{eq:poon-caloric-second-blow-up}
					\begin{aligned}
						\frac{H(r,q)^{2}}{2}N'(r,q)&= I'(r,q)H(r,q)-I(r,q)H'(r,q)\\
						&= \frac{1}{r^{5}}\int_{S_{r}^{+}}(Zq)^{2}G\int_{S_{r}^{+}}q^{2}G-\frac{1}{r^{5}}\left(\int_{S_{r}^{+}}q(Zq)G\right)^{2}\ge 0,
					\end{aligned}
				\end{equation}
				that is, $r\mapsto N(r,q)$ is non-decreasing. We claim that 
				\begin{equation}\label{eq:constant-frequency-poon}
					N(r,q)=2\qquad\forall r>0.
				\end{equation}
				Suppose by contradiction that $N(r_{0},q)=\mu<2$ for some $r_{0}>0$. Then we would have $N(r,q)\le \mu$ for every $r\in (0,r_{0})$. Therefore,
				\begin{equation*}
					\frac{d}{dr}\log(H(r,q))= \frac{4I(r,q)}{rH(r,q)}=\frac{2}{r}N(r,q)\le \frac{2\mu}{r}\qquad \forall r\in (0,r_{0}).
				\end{equation*}
				Integrating and exponentiating we get
				\begin{equation*}
					H(r,q)\ge \frac{H(r_{0},q)}{r_{0}^{2\mu}}r^{2\mu}\qquad \forall r\in (0,r_{0}).
				\end{equation*}
				However, point $ii)$ in \cref{lemma:asymptotic-second-bu} and \cref{rmk:doubling} give, for sufficiently small $r$,
				\begin{equation*}
					H(r,q)=\lim_{j\to \infty}H(r,v_{r_{j}})=\lim_{j\to \infty}\frac{H(rr_{j},v)}{H(r_{j},v)}\le Cr^{4}.
				\end{equation*}
				Since $\mu<2$, we get the desired contradiction. This shows \eqref{eq:constant-frequency-poon}. Given this, imposing the equality in formula \eqref{eq:poon-caloric-second-blow-up} we obtain that $Zq$ is a constant multiple of $q$, which implies that $q$ is parabolically homogeneous. The quadratic growth of $H(r,q)^{1/2}$ forces the homogeneity to be $2$. This concludes the proof.
			\end{proof}

			\subsection{Orthogonality condition} To conclude this section we record the following useful orthogonality condition between first and second blow-ups at quadratic points. 
			\begin{lemma}\label{lemma:orthogonality-condition}
				Let $u$ be a solution of \eqref{eq:parabolic-signorini-problem}, with obstacle $\varphi$ satisfying \eqref{eq:ostacolo}. Suppose that $(0,0)\in \Gamma_{2}$, and let $p\in \mathcal{P}_{2}^+$ be the blow-up of $u$ at $(0,0)$. Let $q$ be obtained as a second blow-up at $(0,0)$ as in \cref{prop:second-blow-up}. Then,
				\begin{equation}\label{eq:orthogonality-conditions}
					\int_{\partial B_1\times \{-1\}}pq=0\qquad\text{and}\qquad\int_{\partial B_1\times \{-1\}}\widetilde pq\le0\quad\forall \widetilde p\in\mathcal{P}_2^+.
				\end{equation}
			\end{lemma}
			\begin{proof}
				One can argue exactly as in \cite[proof of Proposition 3.1, Step 4]{cc24} (see also \cite[Proposition 6.7]{frs24}), using the Monneau-type monotonicity formula from \cref{lemma:asymptotic-second-bu}, along with \cref{prop:second-blow-up} to deduce that
				\begin{equation*}
					\int_{S_{1}^{+}}pqG=0\qquad\text{and}\qquad \int_{S_{1}^{+}}\widetilde{p}qG \le 0 \quad\forall  \widetilde p \in \mathcal{P}_{2}^{+}.
				\end{equation*}
				Then, since $p,\widetilde{p}$ are harmonic quadratic polynomials independent of time, while $q(x,t)=At+\hat{q}(x)$, for some quadratic polynomial $\hat{q}$, we have 
				$$0\ge \int_{S_1^+}\widetilde pqG=\int_{S_1^+}\widetilde p\hat q G=c_n\int_{\partial B_1}\widetilde p\hat q =c_n\int_{\partial B_1\times\{-1\}} \widetilde p q ,$$ for some dimensional constant $c_n>0$. The same computation, replacing $\widetilde p$ with $p$, concludes the proof.
			\end{proof}
			\section{Cleaning at quadratic points}\label{sec:cleaning_quadratic}
			The goal of this section is to prove that the set of quadratic free boundary points $\Gamma_{2}$ for a solution of \eqref{eq:parabolic-signorini-problem} can be cleaned up with rate $2$ (see \cref{def:cleaned}).
			\begin{proposition}\label{prop:cleaning-gamma2}
				Let $u$ be a solution of \eqref{eq:parabolic-signorini-problem}, with obstacle $\varphi$ satisfying \eqref{eq:ostacolo}. Then $$\Gamma_{2}\in\text{Clean}(2).$$ 
			\end{proposition}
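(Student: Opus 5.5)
We argue by contradiction, following the strategy of \cite{col2025} (adapted from \cite{frs24}) and using the second blow-up analysis of \cref{sec:2nd_blowup_quadratic}. Fix $(0,0)\in\Gamma_2$ with first blow-up $p\in\mathcal P_2^+$, and suppose there are points $(x_j,t_j)\in\Gamma_2$ with $x_j\to 0$ and $t_j\ge C_j|x_j|^2$, $C_j\to\infty$. Set $r_j:=\max\{|x_j|,\sqrt{t_j}\}$; for $j$ large this gives $r_j=\sqrt{t_j}$, and $|x_j|\le C_j^{-1/2}r_j$.

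The basic mechanism is the following. Time monotonicity gives $u(\cdot,t)\ge u(\cdot,0)$ for $t\ge0$. Hence the contact set shrinks in time, and $x_j\in\pi_x(\Lambda\cap\{t=0\})$. Moreover $(x_j,t_j)$ is itself a singular point. By the uniform quadratic growth at points of $\Gamma_{\ge2}$ (\cite[Lemma 9.3]{dgpt17} combined with the frequency bound), we get
\[
|\widetilde u^{x_j,t_j}|\le C\rho^2 \quad\text{in } Q_\rho(x_j,t_j).
\]
We use this with $\rho\simeq r_j$. The point is then to show that, between time $0$ and time $t_j$, the solution gains strictly more than allowed by this quadratic growth around $(x_j,t_j)$ in a region of size $r_j$ around $x_j$. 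The region near the thin space contributes too little, since \cref{lemma:monotonicity-ass} alone only gives $\partial_tu\ge c|x_{n+1}|$, i.e.\ a gain of order $r_j^3$. So the gain must come from a quantitative lower bound on $\partial_t u$ at scale $r_j$.

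To get this lower bound we use the second blow-up. By \cref{prop:second-blow-up}, the rescalings $v_{r}=v(r\cdot,r^2\cdot)/H(r,v)^{1/2}$ of $v=w-p$ converge along subsequences to a $2$-homogeneous caloric polynomial $q=At+\hat q(x)$ with $\partial_t q=A\ge0$. Moreover $H(r,v)^{1/2}\ge c_\eps r^{2+\eps}$ and $H(r,v)^{1/2}=o(r^2)$.

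\emph{Step 1.} We show $A>0$ for every second blow-up. If $A=0$, then $q=\hat q$ is a harmonic quadratic polynomial, even in $x_{n+1}$. The orthogonality conditions of \cref{lemma:orthogonality-condition}, namely $\int_{\partial B_1}p\hat q=0$ and $\int_{\partial B_1}\widetilde p\hat q\le0$ for all $\widetilde p\in\mathcal P_2^+$, together with the sign information forced on $\hat q$ on $\{p=0\}'$ (where $v=w-p\ge0$ on the thin space), should force $\hat q\equiv0$ as in \cite{cc24,col2025}. This contradicts $H(1,q)=1$.

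\emph{Step 2.} Since $\partial_t v=\partial_t u\ge0$ is subcaloric by \eqref{eq:time-derivative-subcaloric} and $\partial_tv_{r}\rightharpoonup A$, we apply \cref{lemma:wang-start} (in its weak $L^\eps$ form) and the weak convergence. This should yield, for small $r$, the bound
\[
\partial_tu\ge c\,A\,H(r,v)^{1/2}r^{-2}
\]
on a fixed fraction of $Q_r$ away from $\{x_1=x_{n+1}=0\}$. Propagating this through the barrier $\Phi$ of \cref{prop:second-blow-up}, Step 1, we obtain a lower bound in the whole cylinder (up to the $\kappa_\delta$ loss).

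\emph{Step 3.} We integrate this bound in time on $(0,t_j)$, after moving to the forward cylinder around time $0$ by the same argument applied at slightly positive times. Comparing with the decomposition $\widetilde u=p+v$ at scale $r_j$, we get
\[
\widetilde u(\cdot,t_j)-\widetilde u(\cdot,0)\gtrsim t_j\, H(r_j,v)^{1/2}r_j^{-2},
\]
to be compared with $\widetilde u(\cdot,0)=p+v\approx p+O(H(r_j,v)^{1/2})$. Rescaled by $H(r_j,v)^{1/2}$ and centered at $(x_j,t_j)$, the limit must be a second-blow-up-type object vanishing to second order at the point $(x_j/r_j,t_j/r_j^2)\to(0,1)$ on the thin contact region. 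Its time part is $At$ with $t$ forced to be $\ge C_j\to\infty$ relative to $|x_j|^2$. This is incompatible with $(x_j,t_j)\in\Lambda$, since $t_j A$ would dominate the bounded spatial part. This gives the cleaning constant $C$ in \cref{def:cleaned}.

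The main obstacle is Step 1 together with its quantitative use in Step 2. We must exclude $\partial_tq\equiv0$ and convert the weak convergence $\partial_tv_{r_j}\rightharpoonup A$ into a pointwise lower bound on $\partial_tu$ at scale $r_j$ that is uniform along the sequence. This requires the orthogonality of \cref{lemma:orthogonality-condition} and the barrier near the thin space, where $\partial_tu$ degenerates.
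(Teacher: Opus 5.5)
There is a genuine gap, in two places. First, your Step 1 does not exclude $A=0$. Orthogonality alone cannot do it. It yields exactly the structure of \cref{lem:characterization-second-blow-up}: $q=At+\sum_{i\le m}a_ix_i^2+b\sum_{i>m}x_i^2$ with $a_i\le b$, $b>0$ and $A=2\sum_{i\le m}a_i+2(n+1-m)b$. This allows $A=0$ when the spine coefficients are negative, e.g.\ $m=1$, $a_1=-nb$. What you need is $a_i\ge0$ along the spine. You propose to get it from $v=w\ge0$ on $\{p=0\}'$, but that set has codimension $n+1-m\ge2$ in space. The only convergence available, strong $L^2_{\text{loc}}$ and weak $W^{1,2}_{\text{loc}}$, gives no control of traces there. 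The paper obtains exactly this sign information from the tangential semiconvexity estimate $|\partial_e\widetilde u|+(\partial_{ee}\widetilde u)_-\le C\partial_t\widetilde u$ of \cref{lemma:semiconvexity-estimate2}, which is proved by its own barrier argument. After rescaling it reads $\partial_{ii}v_{r_j}\ge-C\partial_tv_{r_j}$, and this passes to the limit distributionally. So $A=0$ forces $2a_i=\partial_{ii}q\ge0$, hence $A\ge2(n+1-m)b>0$, a contradiction. This estimate is the key ingredient you are missing.

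Second, your Steps 2--3 do not close. Comparing with the quadratic growth at $(x_j,t_j)$ is a scale mismatch. The gain your lower bound produces over the time $t_j=r_j^2$ is of order $AH(r_j,v)^{1/2}=o(r_j^2)$, so it can never beat $Cr_j^2$. The right competitor is the negative part $-CH(r,v)^{1/2}$ of $v$. But then you must control $\widetilde u$ \emph{on the thin space at forward times}, where $\partial_tu$ vanishes on $\Lambda$. Your final step (``$t_jA$ dominates the spatial part'') presupposes uniform convergence of the rescalings up to $\{x_{n+1}=0\}$ for $t>0$. The second blow-up analysis is backward in time and $L^2$-based, so it does not give this. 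Also, \cref{lemma:wang-start} only bounds subsolutions from above, so it cannot produce your lower bound on $\partial_t u$.

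The paper's argument is direct and never uses the other free boundary points. A compactness argument, using the semiconvexity point above, gives $\fint_{Q_r\cap\{|x_{n+1}|\ge r/2\}}\partial_t\widetilde u\ge c\,H(r,v)^{1/2}r^{-2}$. Harnack in the bulk makes this pointwise. The bound $\partial_{tt}u\ge-C$ from \cref{lemma:semiconv-in-time}, together with $H(r,v)^{1/2}r^{-2}\ge c_\eps r^\eps$, carries it forward to times of order $\overline Cr^2$. Combined with $\widetilde u\ge p-CH(r,v)^{1/2}$, which follows from the $L^2$--$L^\infty$ bound on $v^-$ and time monotonicity, this gives a gain $\gtrsim KH(r,v)^{1/2}$ in $\{|x_{n+1}|\ge r/4\}$. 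The explicit barrier $h$ and the maximum principle for the supercaloric $\widetilde u-\psi$ then transfer this gain to the thin space. The result is $\widetilde u>0$ on $B'_{r/2}\times((\overline C+1)r^2,1/2)$, so the entire contact set, not just $\Gamma_2$, is cleaned with rate $2$.
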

			In order to prove \cref{prop:cleaning-gamma2}, we will first need some technical semiconvexity estimates and a further characterization of second blow-ups, which we present in the following subsections.
			
			\subsection{Semiconvexity estimates}
			
			Let us start by recording the following classical semiconvexity estimate in the directions tangential to the thin space. Recall the $W^{2,1}_{2}(Q_{1}^{+})$-regularity of the solution in \eqref{eq:reg-sol}.
			\begin{lemma}\label{lemma:semiconvexity-estimate1}
				Let $w$ be a solution of \eqref{eq:w-solves}, with right-hand side $f$ satisfying \eqref{controlf}, and $\partial_t f\equiv0$ in $B_{7/8}\times(-1,0]$. Then, for every $e\in\{x_{n+1}=0\}\cap\partial B_1$, we have 
				$$\|\nabla_{x'}w\|_{L^\infty(Q_{3/4})}+\|(\partial_{ee}w)_-\|_{L^\infty(Q_{3/4})}\le C\Big(\|w\|_{L^\infty(Q_1)}+\|f\|_{C^2(B_{7/8})}\Big),$$ for some constant $C$ depending only on $n$.
			\end{lemma}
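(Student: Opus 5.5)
The plan is to run the classical tangential semiconvexity argument for the Signorini problem (as in \cite[Chapter 10]{dgpt17} and Athanasopoulos--Caffarelli), adapted to a time-independent right-hand side, in two stages: first the Lipschitz bound in $x'$, then the lower bound on second tangential derivatives. In both stages the idea is the same: take tangential difference quotients of $w$, show they are supersolutions (or subsolutions) of the heat equation modulo terms controlled by $\|f\|_{C^2}$, and conclude with the maximum principle or the $L^\eps$--$L^\infty$ estimate of \cref{lemma:wang-start}. Throughout I use the even extension of $w$, which solves $\mathbf{H}w=f-\mu$ with $\mu=-2\partial_{n+1}w\,\mathcal{H}^{n+1}\res S_1'\ge0$ supported on $\{w=0\}'$, exactly as in \eqref{eq:formula-Hv}.

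\emph{Step 1 (gradient bound).} Fix a tangential direction $e$ and $h$ small, and set $w^h(x,t):=(w(x+he,t)-w(x,t))/h$. Away from $\{w=0\}'$ one has $\mathbf{H}w^h=\delta_h f+(\mu\text{-shift})\ge \delta_hf$ wherever $w(\cdot,t)=0$ is not hit at $x$. The cleanest route is to consider $(w^h)^{\pm}$. At a contact point of $w$ at $x$ one has $w^h\ge0$ there since $w\ge0$ on the thin space; hence the negative part $(w^h)^-$ vanishes on the support of $\mu$ and is subcaloric up to $|\delta_h f|\le\|f\|_{C^1}$; symmetrically the positive part is controlled by using the shifted contact set. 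Then \cref{lemma:wang-start} with $\eps=1$ together with $\|w^h\|_{L^1}\lesssim\|\nabla_{x'}w\|_{L^1}\lesssim\|w\|_{L^\infty}$ (interior Caccioppoli as in Step 2 of \cref{lemma:uniform-bounds-second-blow-up}) gives the $L^\infty$ bound on $\nabla_{x'}w$ in $Q_{7/8-}$.

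\emph{Step 2 (semiconvexity).} Take the second difference quotient $\delta^2_{e,h}w(x,t):=(w(x+he,t)+w(x-he,t)-2w(x,t))/h^2$. On $\{w>0\}'$ the measure vanishes at $x$, so $\mathbf{H}\delta^2_{e,h}w\le \delta^2_{e,h}f$ there, because the shifted measures enter with a minus sign. On $\{w=0\}'$ we have $\delta^2_{e,h}w\ge0$ because $w\ge0$ on the thin space. Hence $(\delta^2_{e,h}w)^-$ is subcaloric up to the error $\|D^2_{x'}f\|_{L^\infty}$. This is where time independence of $f$ matters (only spatial $C^2$ norms appear), and we use it so the constant involves only $\|f\|_{C^2(B_{7/8})}$. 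Applying \cref{lemma:wang-start} with $\eps=1$ and bounding $\|\delta^2_{e,h}w\|_{L^1(Q_{7/8})}\lesssim\|\nabla_{x'}w\|_{L^\infty}$ via summation by parts against a cutoff, and then using Step 1, gives the desired bound uniformly in $h$; letting $h\to0$ yields the estimate on $(\partial_{ee}w)_-$, which is well defined a.e.\ thanks to $W^{2,1}_2$.

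\emph{Main obstacle.} The delicate point is justifying rigorously that $(\delta^2_{e,h}w)^-$ satisfies $\mathbf{H}(\cdot)\ge -C\|f\|_{C^2}$ in the distributional sense across the thin space. This requires a Kato-type inequality for the negative part when the Laplacian contains the singular measure $\mu$. I would first establish the claim in the viscosity sense, arguing pointwise as in Step 1 of \cref{lemma:uniform-bounds-second-blow-up}, which is enough for \cref{lemma:wang-start}; alternatively, I would approximate by the penalized problem with a smooth right-hand side and pass to the limit.
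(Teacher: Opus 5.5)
Your Step 1 is fine. On the support of $\mu$ one has $w^h=w(x+he,t)/h\ge0$, and on the support of the shifted measure $w^h=-w(x,t)/h\le0$, so both $(w^h)^\pm$ are subcaloric up to $\|\nabla_{x'}f\|_{L^\infty}$. The Caccioppoli bound for $w$ (valid since $w\,\partial_{n+1}w=0$ on the thin space) then supplies the $L^1$ input for \cref{lemma:wang-start}. The sign argument in Step 2 giving $\mathbf{H}(\delta^2_{e,h}w)^-\ge-\|D^2_{x'}f\|_{L^\infty}$ is also correct. The Kato-type issue you single out is harmless: the viscosity formulation, exactly as in Step 1 of the proof of \cref{lemma:uniform-bounds-second-blow-up}, suffices. (The time-independence of $f$ plays no role for spatial difference quotients; it only makes the norm $\|f\|_{C^2(B_{7/8})}$ meaningful.) For comparison, the paper does not prove the lemma itself but quotes \cite[Propositions 3.1 and 3.2]{Hu}.

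The real gap is the integrability input in Step 2. You assert $\|\delta^2_{e,h}w\|_{L^1(Q_{7/8})}\lesssim\|\nabla_{x'}w\|_{L^\infty}$ ``via summation by parts against a cutoff''. Summation by parts only controls the signed integral $\int\delta^2_{e,h}w\,\psi=\int w\,\delta^2_{e,h}\psi$, and this says nothing about $\int(\delta^2_{e,h}w)^-\psi$. The inequality is in fact false for Lipschitz functions: $w=s\sin(x_1/s)$ has Lipschitz constant $1$, but $\|(\partial_{11}w)^-\|_{L^1(B_1)}\sim s^{-1}$. Step 1 only yields $|\delta^2_{e,h}w|\le 2L/h$, which degenerates as $h\downarrow0$, so a uniform second-order tangential bound must come from the equation.

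The standard fix is a Caccioppoli inequality for $w^h$ itself. On the thin space, $\partial_{n+1}w(x+he,t)\neq0$ only where $w(x+he,t)=0$, i.e.\ where $w^h\le0$. Likewise $\partial_{n+1}w(x,t)\neq0$ only where $w(x,t)=0$, i.e.\ where $w^h\ge0$. Hence $w^h\,\partial_{n+1}w^h\ge0$ there, and the thin-space boundary term has the good sign when you test $\mathbf{H}w^h=\delta_hf$ in the upper half with $\psi^2w^h$. This gives $\|\nabla w^h\|_{L^2}\le C(\|w^h\|_{L^2}+\|\nabla_{x'}f\|_{L^\infty})$ on a smaller cylinder. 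Combined with Step 1, it yields $\|\delta^2_{e,h}w\|_{L^2}\le\|\partial_e w^h\|_{L^2}\le C(\|w\|_{L^\infty(Q_1)}+\|f\|_{C^2(B_{7/8})})$ uniformly in $h$, which is what \cref{lemma:wang-start} needs. Alternatively, you can invoke the quantitative $W^{2,1}_2$ estimates of \cite{dgpt17} for the tangential second derivatives. With this step supplied, your argument becomes a complete, self-contained proof.
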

            \begin{proof}
                The result follows from \cite[Proposition 3.1]{Hu} and \cite[Proposition 3.2]{Hu}.
            \end{proof}
		Next, we leverage \cref{lemma:semiconvexity-estimate1} to show another useful semiconvexity estimate, relating second derivatives in tangential directions to the time derivative. This estimate will be used in the proof of \cref{prop:cleaning-gamma2} to show that any time-independent second blow-up is convex in the tangential directions belonging to the spine of the first blow-up.
		\begin{lemma}\label{lemma:semiconvexity-estimate2}
			Let $u$ be a solution of \eqref{eq:parabolic-signorini-problem}, with obstacle $\varphi$ satisfying \eqref{eq:ostacolo}, and suppose that $(0,0)\in\Gamma$. Then, there exists a constant $C>0$ such that for every $e\in\{x_{n+1}=0\}\cap\partial B_1$, we have \be\label{eq:constantC}|\partial_e \widetilde u|+(\partial_{ee}\widetilde u)_-\le C\partial_t\widetilde u\qquad\text{in }Q_{1/2},\ee where $\widetilde u:=\widetilde u^{0,0}$ is defined in \eqref{eq:utilde}. 
		\end{lemma}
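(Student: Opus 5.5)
The plan is a comparison argument: use \cref{lemma:monotonicity-ass} to bound $\partial_t\widetilde u$ from below, and \cref{lemma:semiconvexity-estimate1}, applied to suitable auxiliary functions, to bound $\partial_e\widetilde u$ and $(\partial_{ee}\widetilde u)_-$ from above. Near the thin space both sides degenerate, so a pure $L^\infty$ comparison is not enough there. Instead I would show that the functions $C\partial_t\widetilde u\pm\partial_e\widetilde u$ and $C\partial_t\widetilde u-\partial_{ee}\widetilde u$ are supercaloric (in the even, distributional sense) in $Q_{3/4}$, and nonnegative on a region away from the thin space. The minimum principle then propagates nonnegativity to $Q_{1/2}$.

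\textbf{Step 1: equations on the thin space.}
Since $\partial_t f\equiv0$ in $B_{7/8}\times(-1,0]$, the time difference quotient $w_h$ satisfies $\mathbf{H}w_h\ge0$ in $Q_{3/4}$, as shown in Step 3 of \cref{lemma:uniform-bounds-second-blow-up}. Passing to the limit, $\mathbf{H}(\partial_t\widetilde u)\le0$ fails in general. The correct sign is the following.
\begin{itemize}
\item Off $\Lambda$, the even extension of $\partial_t\widetilde u$ is caloric.
\item On $\Lambda$, $\partial_t\widetilde u=0$ while $\partial_t\widetilde u\ge0$ elsewhere, so $\partial_t\widetilde u$ is a nonnegative function vanishing on the contact set and caloric off it.
\end{itemize}
For the tangential quantities I would use the standard thin obstacle computations, for instance as in \cite{Hu}: $\partial_e\widetilde u$ and $\partial_{ee}\widetilde u$ satisfy $\mathbf{H}(\cdot)=\partial_e f$, resp.\ $\partial_{ee}f$, off $\Lambda$. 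On the thin space, the measure $\mathbf{H}\widetilde u=f-\mu$ with $\mu\ge0$ supported on $\Lambda$ gives, via the difference quotients $\widetilde u(x+he)+\widetilde u(x-he)-2\widetilde u(x)$, that $\mathbf{H}\partial_{ee}\widetilde u\le\partial_{ee}f$ away from $\partial\Lambda$ in the distributional sense. This is the usual semiconvexity mechanism.

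Rather than comparing on all of $Q_{3/4}$, I would compare on the open set $U:=Q_{3/4}\setminus\Lambda$. On $\partial\Lambda\cup\Lambda$ we have $\widetilde u=0$ on the thin space, so $\partial_e\widetilde u=0$ there, being tangential derivatives of a function vanishing on the set (at least at interior contact points, and by continuity on its boundary). For $\partial_{ee}\widetilde u$ on $\Lambda$, the value is $\ge0$ wherever it makes sense, because $\widetilde u|_{x_{n+1}=0}\ge0$ attains its minimum there.

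\textbf{Step 2: comparison in $U$.}
On $U$, the function $\Psi:=C\partial_t\widetilde u\pm\partial_e\widetilde u$ satisfies $\mathbf{H}\Psi=\pm\partial_e f=O(1)$. To absorb this right-hand side I would add a small multiple of a barrier, e.g.\ use $\Psi-K(|x|^2-2(n+1)t)\cdot$(small) or simply $+K t$. Since $f\in C^{k-2}$ with $k\ge4$, the right-hand side $\partial_{ee}f$ is bounded.

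On $\partial_{\rm par}Q_{3/4}$, nonnegativity comes from two inputs.
\begin{itemize}
\item \cref{lemma:monotonicity-ass} gives $\partial_t\widetilde u\ge c|x_{n+1}|$, and $\partial_t u>0$ away from the thin space gives a positive lower bound there.
\item \cref{lemma:semiconvexity-estimate1} gives $L^\infty$ bounds on $\nabla_{x'}\widetilde u$ and $(\partial_{ee}\widetilde u)_-$.
\end{itemize}
On $\Lambda$ itself, $\Psi\ge0$.

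\textbf{Main obstacle.}
The hard part is the thin strip $\{|x_{n+1}|\le\delta\}$ on the lateral boundary, where $\partial_t\widetilde u\sim|x_{n+1}|$ is small but $\partial_e\widetilde u$ need not be. There I would instead compare on the smaller region $\{|x_{n+1}|>0\}$ away from $Q'$. On the thin space outside $\Lambda$, the even extension enters, and I would use a Hopf-type lower bound combined with the fact that $\partial_{n+1}\partial_t\widetilde u\ge0$ there while $\partial_{n+1}\partial_e\widetilde u=0$ off $\Lambda$ (since $\partial_{n+1}\widetilde u=0$ off $\Lambda$). This makes $\Psi$ a supersolution across the thin space off $\Lambda$.

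The remaining issue is the lateral boundary near the thin space. I expect to handle it by shrinking to $Q_{1/2}$ with a cutoff barrier $\eta$ that is large on $\partial_{\rm par}Q_{3/4}$ relative to $\|\nabla_{x'}\widetilde u\|_\infty$. This works because $\partial_t\widetilde u$ has a uniform positive lower bound on compact subsets away from $\Lambda$ in the interior; that bound follows from the strong maximum principle applied to the supercaloric function $\partial_t\widetilde u$ vanishing only on $\Lambda$. Making this boundary control uniform near $\Gamma\cap\partial_{\rm par}$ is the step I expect to be the main difficulty.
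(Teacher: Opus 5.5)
There is a genuine gap, and it sits exactly at the step you call the main difficulty. Comparing on the fixed domain $Q_{3/4}\setminus\Lambda$ requires $C\partial_t\widetilde u\pm\partial_e\widetilde u\ge0$ on the part of $\partial_{\text{par}}Q_{3/4}$ near the thin space. That is essentially the inequality to be proved, and neither of your remedies supplies it.

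\begin{itemize}
\item The lower bound you invoke for $\partial_t\widetilde u$, on compact sets away from $\Lambda$, degenerates precisely near $\Gamma\cap\partial_{\text{par}}Q_{3/4}$. The only uniform lower bound available holds away from the \emph{thin space}, by \cref{lemma:monotonicity-ass}.
\item By the minimum principle, a (super)caloric barrier that is large on all of $\partial_{\text{par}}Q_{3/4}$ is large throughout $Q_{3/4}$. Adding it only yields the useless bound $\Psi\ge-\eta$.
\item A cutoff that vanishes on $Q_{1/2}$ is not supercaloric, so adding it destroys the supersolution property of $\Psi$.
\end{itemize}

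What you need is a barrier with a specific sign pattern. It must be nonnegative on the thin space, in particular on $\Lambda$, which is part of the parabolic boundary of the comparison domain. It must be nonpositive at the evaluation point. It must be large on the portion of the parabolic boundary within distance $\rho$ of the thin space. It may be negative away from the thin space, where $\partial_t\widetilde u\ge c_\rho$ pays for it.

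The paper builds such a barrier separately at each point. Points with $|(x_0)_{n+1}|>\rho$ follow directly from interior estimates and \cref{lemma:monotonicity-ass}. For $|(x_0)_{n+1}|\le\rho$ and $t_0<0$, one applies the maximum principle in $\Omega\setminus\Lambda$, with $\Omega:=B_{1/4}(x_0)\times(t_0-1/4,t_0]$, to
\[
\psi=\delta_{t,h}\widetilde u+c_1\Phi-c_2\,\partial_e\widetilde u-c_2(\delta^2_{e,h}\widetilde u)_-+2c_2\widetilde CM(t-t_0),\qquad \Phi=\tfrac{|x'-x_0'|^2}{4n}-\tfrac{t-t_0}{2}-\tfrac12\big(x_{n+1}^2-(x_0)_{n+1}^2\big).
\]
Here $\delta_{t,h}\widetilde u$ is the forward time difference quotient and $\delta^2_{e,h}\widetilde u$ the second tangential difference quotient. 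Also $M:=\|\widetilde u\|_{L^\infty(Q_1)}+\|f\|_{C^2(B_{7/8})}$, and $\widetilde CM$ bounds the right-hand sides $\partial_e f$ and $\delta^2_{e,h}f$. The function $\Phi$ has the following properties.
\begin{itemize}
\item It is caloric and vanishes at $(x_0,t_0)$.
\item It is at least $\frac1{64n}-C\rho^2$ on $\partial_{\text{par}}\Omega\cap\{|x_{n+1}|\le\rho\}$.
\item It is at least $(t_0-t)/2$ on $\Lambda$, where $x_{n+1}=0$; this absorbs the linear term $2c_2\widetilde CM(t-t_0)$.
\item Where $\Phi<0$, the term $\delta_{t,h}\widetilde u\ge c_\rho$ dominates.
\end{itemize}
Choosing $\rho$, then $c_1$, then $c_2$ small makes the $L^\infty$ bounds of \cref{lemma:semiconvexity-estimate1} harmless. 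Then $\psi(x_0,t_0)\ge0$ is exactly the estimate, with $C\sim1/c_2$.

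Two technical corrections. First, the second-order quantity must be $C\partial_t\widetilde u+\partial_{ee}\widetilde u$, not $C\partial_t\widetilde u-\partial_{ee}\widetilde u$. The latter is subcaloric off $\Lambda$ and would bound $\partial_{ee}\widetilde u$ from above, which is not the claim. Second, $\partial_{ee}\widetilde u$ is not continuous up to the thin space; it blows up near regular free boundary points. So ``$\ge0$ on $\Lambda$ wherever it makes sense'' cannot feed a maximum principle. The paper instead works with the continuous quotients $\delta_{t,h}\widetilde u\ge0$ and $\delta^2_{e,h}\widetilde u$, and lets $h\downarrow0$ only at the end. The quotient $\delta^2_{e,h}\widetilde u$ is $\ge0$ on $\Lambda$, because $\widetilde u\ge0$ on the thin space and vanishes on $\Lambda$, and it satisfies $\mathbf H(\delta^2_{e,h}\widetilde u)_-\ge-\widetilde CM$ off $\Lambda$.
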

		\begin{proof}
			Let $e\in\{x_{n+1}=0\}\cap\partial B_1$ be given. We want to prove that for almost every $(x_0,t_0)\in Q_{1/2}$, 
			\be\label{eq:desired-estimate}\partial_e \widetilde u(x_0,t_0)+(\partial_{ee}\widetilde u(x_0,t_0))_{-}\le C\partial_t \widetilde u(x_0,t_0).\ee
			Indeed, this implies the desired estimate, by applying the previous bound also to $-e$.
			We first reduce to the case in which $x_{0}$ is sufficiently close to the thin space. Let $\rho>0$ be a small number to be chosen later. We observe that if $|(x_0)_{n+1}|> \rho$, then, by standard parabolic estimates for $\widetilde u$ away from $\{x_{n+1}=0\}$, and by \cref{lemma:monotonicity-ass}, we have 
			$$|\partial_{e}\widetilde u(x_0,t_0)|+|\partial_{ee}\widetilde u(x_0,t_0)|\le C_\rho\qquad\text{and}\qquad\partial_t \widetilde u(x_0,t_0)=\partial_t u(x_0,t_0)\ge c_\rho,$$ where $C_\rho>0$ and $c_\rho>0$ depend on $\rho$. Thus, in this case we obtain \eqref{eq:desired-estimate} with $C:=C_{\rho}/c_{\rho}$. 
			
			Henceforth, we will assume that $|(x_0)_{n+1}|\le \rho$, and also $t_{0}<0$. 
			Since $u\in H^{3/2,3/4}(Q_{1}^{+}\cup Q_{1}')$, the tangential derivative $\partial_e\widetilde u$ is continuous up to the thin space. The gradient estimate in \cref{lemma:semiconvexity-estimate1} gives \be\label{eq:gradietn-estimate}|\partial_e\widetilde u|\le CM\qquad\text{in }Q_{3/4},\ee where $M:=\|\widetilde u\|_{L^\infty(Q_1)}+\|f\|_{C^{2}(B_{7/8})}$. For the other quantities in \eqref{eq:desired-estimate}, we introduce the incremental quotients.
			For any sufficiently small $h>0$ we define
			\begin{equation*}
				\delta_{t,h}\widetilde{u}(x,t):= \frac{\widetilde{u}(x,t+h)-\widetilde{u}(x,t)}{h},\qquad \delta^{2}_{e,h}\widetilde{u}(x,t):= \frac{\widetilde{u}(x+he, t)+\widetilde{u}(x-he, t)-2\widetilde{u}(x, t)}{h^{2}}.
			\end{equation*}
			We will show that 
			\begin{equation}\label{eq:semiconvexity-claim-difference-quotients}
				\partial_e \widetilde u(x_0,t_0)+(\delta^{2}_{e,h}\widetilde{u}(x_{0},t_{0}))_{-}\le C\delta_{t,h}\widetilde{u}(x_{0},t_{0}),
			\end{equation}
			uniformly for $h$ sufficiently small. 
			\eqref{eq:desired-estimate} will then hold almost everywhere in $Q_{1/2}$, by the strong $L^{2}$-convergence of difference quotients due to the regularity $\widetilde{u}\in W^{2,1}_{2}(Q_{1}^{+})$. 
			
			Let $\Omega:=B_{1/4}(x_0)\times (t_0-1/4,t_0]$ and $\Lambda=\{\widetilde{u} =0\}'$ be the contact set. First of all, we note that $\delta_{t,h}\widetilde{u}=\delta_{t,h}u$ because $\varphi$ is independent of time. Therefore, since $u$ is non-decreasing in time and caloric away from the contact set, 
			\begin{equation}\label{eq:heat-time-diff-quotient-semiconvexity}
				\mathbf{H}(\delta_{t,h}\widetilde{u})=0\qquad \text{in $\Omega\setminus \Lambda$.}
			\end{equation}
			On the other hand, since $\mathbf{H}u\le 0$ in $Q_{1}$, $\mathbf{H}(\delta^{2}_{e,h}\widetilde{u})\le \delta^2_{e,h}f$ in $\Omega\setminus \Lambda$. 
			In particular,
			\begin{equation}\label{eq:heat-twoderivatives-diff-quotient-semiconvexity}
				\mathbf{H}((\delta^{2}_{e,h}\widetilde{u})_{-})\ge -\widetilde{C}M\qquad \text{in $\Omega\setminus \Lambda$.} 
			\end{equation}
			Similarly, since $\mathbf{H}(\partial_e\widetilde u)=\partial_e f$,
			\begin{equation}\label{eq:heat-twoderivatives-diff-quotient-semiconvexity-gradient}
				\mathbf{H}(\partial_e\widetilde{u})\ge -\widetilde{C}M\qquad \text{in $\Omega\setminus \Lambda$.} 
			\end{equation}
			Given $c_1,c_2>0$ to be chosen later,
			we consider the function 
			$$\psi_{x_0,t_0}:=\delta_{t,h}\widetilde{u}+c_1\Phi-c_2(\partial_{e}\widetilde u)-c_2(\delta^{2}_{e,h}\widetilde u)_-+2c_2 \widetilde{C}M(t-t_0),$$
			where
			\begin{equation*}
				\Phi:=\frac{|x'-x'_0|^2}{4n}-\frac{t-t_0}2-\frac12 (x_{n+1}^2-(x_0)_{n+1}^2).
			\end{equation*}
			Note that $\mathbf{H}\Phi=0$. Then, \eqref{eq:heat-time-diff-quotient-semiconvexity}, \eqref{eq:heat-twoderivatives-diff-quotient-semiconvexity}, and \eqref{eq:heat-twoderivatives-diff-quotient-semiconvexity-gradient} give \bea\mathbf{H}\psi_{x_0,t_0}\le0\qquad\text{in }\Omega\setminus\Lambda.\eea
			
			We will choose $\rho$, $c_{1}$ and $c_{2}$ in such a way that 
			\begin{equation}\label{eq:psi-positive-par-boundary-semiconvexity}
				\psi_{x_{0},t_{0}}\ge 0\qquad \text{on $\partial_{\text{par}}(\Omega\setminus \Lambda)$}.
			\end{equation}
			Given that, the maximum principle implies $\psi_{x_{0},t_{0}}\ge 0$ in $\Omega\setminus \Lambda$. Evaluating $\psi_{x_{0},t_{0}}$ at $(x_{0},t_{0})$ we obtain the desired \eqref{eq:semiconvexity-claim-difference-quotients}. First, on the parabolic boundary $\partial_{\text{par}}\Omega$, we have 
			$$|x'-x_0'|^2+(x_{n+1}-(x_0)_{n+1})^2=\frac1{16}\quad \text{or}\quad t=t_0-\frac14.$$
			Then, using $\delta_{t,h}\widetilde u\ge0$, \eqref{eq:gradietn-estimate} and \cref{lemma:semiconvexity-estimate1}, we get
			\begin{equation}\label{eq:psi-positive-par-boundary-semiconvexity-1}
				\psi_{x_0,t_0}\ge \frac{c_1}{64n}-{c_1}C_1\rho^2-c_2C_2M\qquad\text{on }\partial_{\text{par}}\Omega
				\cap \{|x_{n+1}|\le \rho\}.
			\end{equation}
			On the other hand, by \cref{lemma:monotonicity-ass}, we can find $c_\rho$ such that 
			$\delta_{t,h} \widetilde{u}\ge c_\rho$ in $\Omega\setminus\{|x_{n+1}|\le \rho\}$ for every $h$ sufficiently small.
			Then 
			\begin{equation}\label{eq:psi-positive-par-boundary-semiconvexity-2}
				\psi_{x_0,t_0}\ge c_\rho-c_1C_3- c_2C_2M\qquad\text{in }\Omega\setminus\{|x_{n+1}|\le \rho\}.
			\end{equation} 
			Finally, since $\widetilde{u}\ge 0$ on $Q_{1}'$, we have $\partial_e\widetilde u=0$ and $\delta^{2}_{e,h}\widetilde u\ge0$ on $\Omega\cap \Lambda$. Then,
			\begin{equation}\label{eq:psi-positive-par-boundary-semiconvexity-3}
				\psi_{x_0,t_0}\ge c_1\Phi+2c_2\widetilde CM(t-t_0)\ge\left(\frac{c_1}2-2c_2\widetilde CM\right)(t_0-t)\qquad\text{on }\Omega\cap\Lambda.
			\end{equation}
			To conclude, we first choose $\rho$ such that $C_1\rho^2\le \frac{1}{128n}$, then $c_1$ such that $c_1C_3\le c_{\rho}/2$, and finally $c_2$ such that $c_2C_2M\le \min\{c_{\rho}/2, c_{1}/(128 n)\}$, and $c_2\widetilde CM\le c_{1}/4$. In this way, the right-hand sides of \eqref{eq:psi-positive-par-boundary-semiconvexity-1}, \eqref{eq:psi-positive-par-boundary-semiconvexity-2}, and \eqref{eq:psi-positive-par-boundary-semiconvexity-3} are all nonnegative, and \eqref{eq:psi-positive-par-boundary-semiconvexity} follows.    
		\end{proof}
		
		\begin{remark}\label{remark:faciledopo} 
        The same proof applies to an even solution $w$ of \eqref{eq:w-solves}, $\partial_tw\ge0$, with right-hand side $f$ satisfying \eqref{controlf}, and $\partial_t f\equiv0$ in $B_{7/8}\times(-1,0]$, provided
        $\mu:=\inf_{Q_{3/4}\cap\{|x_{n+1}|\ge\rho\}}\partial_tw>0$.
        In this case, setting
        $M:=\|w\|_{L^\infty(Q_1)}+\|f\|_{C^2(B_{7/8})}$,
        the constant $C$ in \eqref{eq:constantC} satisfies $C\le C'(1+M/\mu)$, where $C'$ depends only on $n$.
\end{remark}

		We will also use the following lower bound for the second derivative in time. 
		\begin{lemma}\label{lemma:semiconv-in-time}
			Let $u$ be a solution of \eqref{eq:parabolic-signorini-problem}, with obstacle $\varphi$ satisfying \eqref{eq:ostacolo}. Then
			\begin{equation}\label{eq:semiconvexity-in-time}
				\partial_{tt}u\ge -C\qquad\text{in }B_{1/2}\times(-3/4,-1/4),
			\end{equation} for some constant $C>0$ depending only on $n$ and $\|u\|_{L^\infty(Q_1)}$.
		\end{lemma}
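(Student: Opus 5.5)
The plan is to work with second incremental quotients in time, show that their negative part is a nonnegative subcaloric function, and then use the $L^\eps$–$L^\infty$ estimate of \cref{lemma:wang-start}. The needed $L^\eps$ control comes from the weak-type estimate of \cref{lemma:calderon-zygmund}, applied to first time quotients.

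For small $h>0$ set
$$\delta_h u(x,t):=\frac{u(x,t)-u(x,t-h)}{h},\qquad \delta^2_h u(x,t):=\frac{u(x,t+h)+u(x,t-h)-2u(x,t)}{h^2}.$$

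\emph{Step 1: $(\delta^2_h u)_-$ is subcaloric.} The key structural fact is that the contact set shrinks in time. Since $\varphi$ is time-independent, $\partial_t u\ge0$ and $u\ge\varphi$ on the thin space, if $(x',0,t)\in\Lambda$ then $u(x',0,s)=\varphi(x')$ for all $s\le t$. Hence on $\Lambda$,
$$\delta^2_h u=\frac{u(x',0,t+h)-\varphi(x')}{h^2}\ge0 .$$
Off $\Lambda$, the Signorini condition gives $\partial_{n+1}u=0$ on the thin space, so the even extension of $u$ is caloric across $\{x_{n+1}=0\}$. The same holds for its time translates at the relevant times, because points off $\Lambda$ at time $t$ stay off $\Lambda$ at later times. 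Therefore $\delta^2_h u$ is caloric in the open set $Q\setminus\Lambda$ (away from the times where the translates are undefined). Its negative part is continuous and vanishes on a neighbourhood (relative to its support) of $\Lambda$, so the usual gluing argument gives $\mathbf{H}(\delta^2_h u)_-\ge0$ in a cylinder containing $B_{1/2}\times(-3/4,-1/4)$. \cref{lemma:wang-start}, after covering and scaling, then yields, for any $\eps\in(0,1)$,
$$\sup_{B_{1/2}\times(-3/4,-1/4)}(\delta^2_h u)_-\le C_\eps\,\|(\delta^2_h u)_-\|_{L^\eps(B_{3/4}\times(-7/8,-1/8))}.$$

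\emph{Step 2: uniform $L^\eps$ bound.} By the argument of Step 3 of \cref{lemma:uniform-bounds-second-blow-up}, which also works with the obstacle since the difference quotient kills $\varphi$, $\delta_h u\ge0$ satisfies $\mathbf{H}(\delta_h u)\ge0$. By \cref{lemma:wang-start} and telescoping its time integral, this gives
$$\|\delta_h u\|_{L^\infty}\le C\|\delta_h u\|_{L^1}\le C\|u\|_{L^\infty(Q_1)}$$
on interior cylinders. Now apply \cref{lemma:calderon-zygmund} to $v=\delta_h u$ with $g=0$. This is legitimate because $\partial_t\delta_h u\in L^1$ from $u\in W^{2,1}_2$, and if needed one can also mollify in time. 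We get
$$\theta\,\bigl|\{|\partial_t\delta_h u|>\theta\}\bigr|\le C\|u\|_{L^\infty(Q_1)}$$
uniformly in $h$. Since $\delta^2_h u(\cdot,t)=h^{-1}\int_t^{t+h}\partial_t\delta_h u(\cdot,s)\,ds$ is a time average of $\partial_t \delta_h u$, and weak-$L^1$ bounds on bounded domains control $L^\eps$ norms for $\eps<1$ (with averaging preserving $L^\eps$ bounds up to constants, via the Kolmogorov inequality, or alternatively by applying the estimate to $\partial_{tt}u$ directly and then passing to quotients), we obtain $\|(\delta^2_h u)_-\|_{L^\eps}\le C\|u\|_{L^\infty(Q_1)}$ uniformly in $h$.

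\emph{Step 3: limit.} Combining Steps 1 and 2 gives $\delta^2_h u\ge -C$ in $B_{1/2}\times(-3/4,-1/4)$ uniformly in $h$. Letting $h\downarrow0$, using the $W^{2,1}_2$ regularity (so that $\partial_{tt}u$ exists at least distributionally and as a limit of quotients), we conclude \eqref{eq:semiconvexity-in-time}.

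I expect the main obstacle to be Step 2. One has to handle the averaging of a weak-$L^1$ function carefully, since weak $L^1$ is not a normed space, which is why I pass to $L^\eps$ with $\eps<1$. One also has to justify the hypotheses of \cref{lemma:calderon-zygmund} for $\delta_h u$ near the thin space, where $\mathbf{H}\delta_h u$ is only a nonnegative measure. A secondary delicate point is the gluing in Step 1 across $\partial\Lambda$, where continuity of $u$ up to the thin space is what makes it work.
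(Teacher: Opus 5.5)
Your overall architecture matches the paper's: subcaloricity of the negative part of a second time quotient, the $L^{\eps}$--$L^\infty$ bound of \cref{lemma:wang-start}, and a weak-$L^1$ bound from \cref{lemma:calderon-zygmund} applied to the nonnegative subcaloric function $\delta_h u$. However, Step 2 as written does not go through, and this is a genuine gap.

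\cref{lemma:calderon-zygmund} controls $\partial_t\delta_h u$ in weak $L^1$. The function you feed into \cref{lemma:wang-start} is instead its time average $\delta_h^2u(\cdot,t)=h^{-1}\int_t^{t+h}\partial_t\delta_hu(\cdot,s)\,ds$. Averaging is bounded neither on $L^{\eps}$ for $\eps<1$ nor on weak $L^1$, so Kolmogorov's inequality cannot transfer the bound. For example, $f=\min\{|t-t_0|^{-1},M\}$ has $\sup_\theta\theta|\{f>\theta\}|$ bounded independently of $M$. Its $h$-averages, however, are $\gtrsim h^{-1}\log(Mh)$ on a time interval of length comparable to $h$, so their $L^{\eps}$ norms blow up as $M\to\infty$. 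The alternative you mention, a weak-type bound for $\partial_{tt}u$ followed by passing to quotients, has the same averaging problem. It also needs $\partial_{tt}u\in L^1$ a priori to apply \cref{lemma:calderon-zygmund}.

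The paper avoids this by working with $v_h:=\partial_t\delta_{t,h}u$, which is exactly the quantity controlled by \cref{lemma:calderon-zygmund}. The price is that subcaloricity of $v_h^-$ must be proved through the two-step quotients $z_{h,\tau}=\delta_{t,\tau}\delta_{t,h}u$ and a limit $\tau\downarrow0$. Your symmetric quotient can also be rescued, more cheaply. Write $\delta_h^2u=\partial_tW_h$ with $W_h(\cdot,t):=h^{-1}\int_t^{t+h}\delta_hu(\cdot,s)\,ds$. This function is nonnegative, is subcaloric as an average of time translates of $\delta_hu$, and satisfies $\|W_h\|_{L^1}\le C\|u\|_{L^\infty(Q_1)}$. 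Moreover $\partial_tW_h=\delta_h^2u$ is bounded for fixed $h$. So \cref{lemma:calderon-zygmund} applied to $W_h$ gives the uniform weak-$L^1$ bound on $\delta_h^2u$ itself.

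Step 1 also contains an incorrect intermediate claim, although its conclusion survives. On $\{\delta_h^2u<0\}$ you correctly get no contact at times $t$ and $t+h$. Time monotonicity says nothing about time $t-h$, however, so the translate $u(\cdot,\cdot-h)$ may touch the obstacle there, and $\delta_h^2u$ need not be caloric. What saves the argument is the sign: this earliest translate enters with coefficient $+1$, and the even extension satisfies $\mathbf{H}u\le0$. Hence $\mathbf{H}\delta_h^2u\le0$ on $\{\delta_h^2u<0\}$, which is exactly what makes $(\delta_h^2u)_-$ subcaloric. This is the same mechanism the paper uses to verify $\partial_{n+1}z_{h,\tau}^-\ge0$.
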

		\begin{proof}
			For any function $w:Q_1\to\R$ and any $h>0$ sufficiently small, we denote by
			$\delta_{t,h} w(x,t):=h^{-1}(w(x,t+h)-w(x,t))$ the time difference quotient of $w$, defined for $t+h\le 0$.
			Let $\Omega:=B_{3/4}\times (-7/8,-1/8)$, and set
			$$v_h(x,t):=\delta_{t,h} \partial_tu=\frac{\partial_tu(x,t+h)-\partial_tu(x,t)}{h}.$$ 
			We claim that $v_{h}^-$ is subcaloric in $\Omega$, namely 
			\be\label{eq:z-subcaloric}\mathbf{H}v_{h}^-\ge0 \qquad\text{in } \Omega.
			\ee
			Let us see how to conclude once \eqref{eq:z-subcaloric} is known. First, since $v_{h}^{-}\ge 0$ and $\mathbf{H}(v_{h}^{-})\ge 0$ in $\Omega$, we may use \cref{lemma:wang-start} and a covering argument to get 
			\begin{equation}\label{eq:Leps-Linfty-proof-second-time-derivative}
				\lVert v_{h}^{-}\rVert_{L^{\infty}(B_{1/2}\times (-3/4,-1/4))}\le C \lVert v_{h}^{-}\rVert_{L^{1/2}(\Omega)}\le C\sup_{\theta>0}\,\theta \left|\{|v_{h}|>\theta\}\cap \Omega\right|.
			\end{equation}
			Then, note that we have $v_{h}=\partial_{t}\delta_{t,h}u$ and $\delta_{t,h}u\ge 0$. Moreover, $\mathbf{H}(\delta_{t,h}u)\ge 0$ in $B_{1}\times (-1,-1/16)$. Indeed, in $\{\delta_{t,h}u> 0\}$, we have $\mathbf{H}(\delta_{t,h}u)=-h^{-1}\mathbf{H}u\ge 0$. On the other hand, on the thin space, if $\delta_{t,h}u = 0$, $\partial_{n+1}\delta_{t,h}u\ge 0$. Take $\psi\in C_c^\infty(B_1\times(-1,0))$ such that
			$0\leq\psi\leq1$ and $\psi\equiv1$ on $\widetilde\Omega:=B_{7/8}\times (-15/16, -1/16)$.
			For all sufficiently small $h>0$, discrete integration by parts gives
			\begin{align*}
				\|\delta_{t,h}u\|_{L^1(\widetilde\Omega)}&\le \int_{Q_1}\delta_{t,h}u(x,t)\,\psi(x,t)\,dx\,dt\\
				&=\int_{Q_1}u(x,t)\frac{\psi(x,t-h)-\psi(x,t)}{h}\,dx\,dt\le C\|u\|_{L^\infty(Q_1)}.
			\end{align*}
			Then, we may apply \cref{lemma:calderon-zygmund} to $\delta_{t,h}u$ and combine the result with \eqref{eq:Leps-Linfty-proof-second-time-derivative} to get
			\begin{equation*}
				\lVert v_{h}^{-}\rVert_{L^{\infty}(B_{1/2}\times (-3/4,-1/4))}\le C \sup_{\theta>0}\,\theta \left|\{|v_{h}|>\theta\}\cap \Omega\right|\le C\lVert \delta_{t,h}u\rVert_{L^{1}(\widetilde{\Omega})}\le C\lVert u\rVert_{L^{\infty}(Q_{1})}
			\end{equation*}
			for every $h>0$ sufficiently small. Passing to the limit as $h\downarrow 0$ in the sense of distributions we get \eqref{eq:semiconvexity-in-time}.
			
			It only remains to show \eqref{eq:z-subcaloric}. We set, for $h,\tau>0$ small enough, 
			$$z_{h,\tau}(x,t):= \delta_{t,\tau}\delta_{t,h}u=\frac{\delta_{t,\tau} u(x,t+h)-\delta_{t,\tau} u(x,t)}h=\frac{u(x,t+h+\tau)-u(x,t+h)-u(x,t+\tau)+u(x,t)}{h\tau}.$$ 
			It suffices to show that 
			\be\label{eq:zhtau-proof}\mathbf{H}z_{h,\tau}^-\ge0 \qquad\text{in }\Omega.\ee
			Indeed, passing to the limit as $\tau\downarrow 0$ in \eqref{eq:zhtau-proof} and using that $z_{h,\tau}\to v_h$ in $L^1_{\text{loc}}$, we infer \eqref{eq:z-subcaloric}. 
			
			To prove \eqref{eq:zhtau-proof}, we first observe that $\mathbf{H}(z_{h,\tau})=0$ in $\Omega^+$ because $u$ is caloric away from the thin space. Then, since the maximum of subcaloric functions is subcaloric, we get $\mathbf{H}(z_{h,\tau}^-)\ge0$ in $\Omega^+$. 
			
			Thus, it only remains to show that $\partial_{n+1}z_{h,\tau}^{-}\ge 0$ on $\Omega'$.
			First let $(x,t)\in\{z_{h,\tau}<0\}\cap\Omega'$. Then we have 
			\begin{equation}\label{eq:second-quotient-time-explicit}
				\overline u(x,t+h+\tau)-\overline u(x,t+h)<\overline u(x,t+\tau)-\overline u(x,t),
			\end{equation}
			where $\overline u(x,t):=u(x,t)-\varphi(x')$ is nonnegative on $\Omega'$. We claim that $\overline{u}(x,t+h),\overline{u}(x,t+\tau)$ and $\overline{u}(x,t+h+\tau)$ are all strictly positive. Indeed, since $\partial_t\overline u=\partial_tu\ge0$, the quantities on the two sides of \eqref{eq:second-quotient-time-explicit} are both nonnegative. In particular, $\overline u(x,t+\tau)>0$. Using again $\partial_t\overline u\ge0$, we get $\overline u(x,t+h+\tau)>0$. Finally, if by contradiction we had $\overline u(x,t+h)=0$, then we would have $\overline u(x,t)=0$ as well, and \eqref{eq:second-quotient-time-explicit} would reduce to $\overline u(x,t+h+\tau)<\overline u(x,t+\tau)$, which is a contradiction, by the monotonicity again. 
			By the Signorini condition we then find 
			$$\partial_{n+1}z_{h,\tau}^-=-\frac{\partial_{n+1}u}{h\tau}\ge0\qquad\text{on }\{z_{h,\tau}<0\}\cap\Omega'.$$ 
			On the other hand, for $(x,t)\in\{z_{h,\tau}\ge0\}\cap\Omega'$, we have $z_{h,\tau}^-(x,t)=0$. Then, since $z_{h,\tau}^-\ge0$ in $\Omega^+$, we must have 
			$$\partial_{n+1} z_{h,\tau}^-\ge0\qquad\text{on }\{z_{h,\tau}\ge0\}\cap\Omega'.$$
			Combining the two inequalities above, we get $\partial_{n+1} z_{h,\tau}^-\ge0$ in $\Omega'$, concluding the proof.
		\end{proof}

		\subsection{Structure of the second blow-up}
		In the following lemma, we combine \cref{prop:p2-indep-time}, \cref{prop:second-blow-up} and \cref{lemma:orthogonality-condition} to derive a precise characterization of the pairs $(p,q)$ that may arise as first and second blow-ups at quadratic free boundary points for solutions of \eqref{eq:parabolic-signorini-problem}. 
		
		\begin{lemma}\label{lem:characterization-second-blow-up}
			Let $u$ be a solution of \eqref{eq:parabolic-signorini-problem}, with obstacle $\varphi$ satisfying \eqref{eq:ostacolo}. Suppose that $(0,0)\in \Gamma_{2}$, and let $p\in \mathcal{P}_{2}^+$ be the blow-up of $u$ at $(0,0)$. Let $q$ be obtained as a second blow-up at $(0,0)$ as in \cref{prop:second-blow-up}. Suppose that the spine $\{p=0\}'$ has dimension $m\in \{0,\dots,n-1\}$. Then, up to a rotation in the thin space, $p$ and $q$ have the following form:
			\begin{equation}\label{eq:shape-p-q-quadratic}
				p(x)= \sum_{i=m+1}^{n}\nu_{i}x_{i}^{2}-\nu x_{n+1}^{2},\qquad q(x,t)= At+ \sum_{i=1}^{m}a_{i}x_{i}^{2}+b\sum_{i=m+1}^{n+1}x_{i}^{2},
			\end{equation}
			where 
			\begin{equation}\label{eq:relations-coefficients-p-q-quadratic}
				\begin{gathered}
					\nu_i>0\quad \forall i\in\{m+1,\ldots,n\},\qquad \nu=\sum_{i=m+1}^{n}\nu_{i},\\
					a_{i}\le b\quad \forall i\in \{1,\dots,m\},\qquad A=2\sum_{i=1}^{m}a_{i}+2(n+1-m)b\ge 0,\qquad b>0.
				\end{gathered}
			\end{equation}
		\end{lemma}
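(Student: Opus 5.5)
The plan is to first pin down $p$ from the structure of $\mathcal{P}_2^+$, and then to constrain $q$ using three facts: that $q$ is a parabolically $2$-homogeneous caloric polynomial with $\partial_t q\ge0$ (\cref{prop:second-blow-up}), the orthogonality relations \eqref{eq:orthogonality-conditions} of \cref{lemma:orthogonality-condition}, and the sign information that $q$ inherits from the Signorini structure.

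\emph{Step 1: the form of $p$.} Since $p\in\mathcal{P}_2^+$ is even in $x_{n+1}$, harmonic and $2$-homogeneous, we can write $p(x)=Q(x')-\frac{\operatorname{tr}Q}{1}\cdot\frac{x_{n+1}^2}{1}$ with the appropriate normalization, where $Q$ is a nonnegative quadratic form on $\R^n$. Diagonalizing $Q$ by a rotation of the thin space gives the stated form. The number of zero eigenvalues equals $\dim\{p=0\}'=m$, so $\nu_i>0$ for $i>m$, and harmonicity gives $\nu=\sum\nu_i$.

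\emph{Step 2: evenness, homogeneity and the constraint on $A$.} Since $q$ is even and $2$-homogeneous, $q=At+\hat q(x)$ with $\hat q$ a quadratic form even in $x_{n+1}$. Being caloric means $A=\Delta\hat q$, and $\partial_t q\ge0$ gives $A\ge0$. I next test the orthogonality relations against the elements of $\mathcal{P}_2^+$. Choosing $\widetilde p=(x'\cdot\xi)^2-|\xi|^2x_{n+1}^2$ for unit $\xi$ with $\xi$ ranging over all directions, the inequality $\int_{\partial B_1}\widetilde p\,\hat q\le0$ reduces, via the spherical moments $\int x_i^2x_j^2$, to $\hat q(\xi)\le\hat q(e_{n+1})$ up to trace terms. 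Precisely, one obtains $\langle D^2\hat q\,\xi,\xi\rangle\le \partial_{n+1}^2\hat q$ for every unit $\xi$ in the thin space. A similar test with $\widetilde p=x_i x_j$-type combinations plus a compensating $x_{n+1}^2$ term is not allowed, since these are not nonnegative on the thin space. For off-diagonal terms I would instead use $(x'\cdot\xi)^2$ for several $\xi$.

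\emph{Step 3: the equality $\int pq=0$ and its consequences.} Since $p$ is a positive combination of the extremal elements $\nu_i(x_i^2-x_{n+1}^2)$ with $i>m$, each of which has a nonpositive pairing with $q$, the equality forces every one of these pairings to vanish. This yields $\partial_{ii}\hat q=\partial_{n+1,n+1}\hat q=:2b$ for $i>m$. The pairings at $\xi=(e_i\pm e_j)/\sqrt2$ with $i,j>m$ are also maximal, and this kills the off-diagonal entries among the indices $>m$. The mixed entries with $x_{n+1}$ vanish by evenness. For mixed entries between $i\le m$ and $j>m$, the quadratic function $s\mapsto\langle D^2\hat q\,\xi_s,\xi_s\rangle$ with $\xi_s=\cos s\,e_j+\sin s\,e_i$ attains its maximum $2b$ at $s=0$, so its derivative there vanishes and these entries are zero. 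After rotating within the spine we may diagonalize the block on $x_1,\dots,x_m$, and maximality then gives $a_i\le b$. Finally, the relation for $A$ follows from $A=\Delta\hat q$.

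\emph{Step 4 (the main obstacle): showing $b>0$.} Here I would use time monotonicity and the semiconvexity estimate \cref{lemma:semiconvexity-estimate2}. Suppose $b\le0$. Then $A\le 2(n+1)b\le0$, hence $A=0$ and $q$ is time independent. The semiconvexity estimate rescales to $(\partial_{ee}v_r)_-\le C\,\partial_t v_r\cdot(\text{scale factor})$. Since $\partial_t v_r=\partial_t u(r\cdot)\,r^2/H^{1/2}$ and $\partial_{ee}v_r$ carry the same scaling, passing to the limit in the sense of distributions gives $(\partial_{ee}q)_-\le C\partial_tq=0$. So $q$ is convex in the thin directions, which makes every $a_i$ and $b$ nonnegative. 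Combined with $A=0$, this forces $q\equiv0$, contradicting $q\not\equiv0$. The delicate point is that $\partial_{ee}v=\partial_{ee}w-\partial_{ee}p$ and $\partial_{ee}p\ne0$ for $e$ outside the spine. I would therefore apply the convexity only in the spine directions $i\le m$, where $\partial_{ee}p=0$, which gives $a_i\ge0$. Then $0=A=2\sum a_i+2(n+1-m)b$ with all $a_i\ge0$ and $b\le0$ forces $b=0=a_i$, so $q\equiv0$, again a contradiction. Hence $b>0$.
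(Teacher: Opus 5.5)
Your proof is correct, and Steps 1--3 are essentially the paper's argument. The paper tests the second relation in \eqref{eq:orthogonality-conditions} with $x_i^2-x_{n+1}^2$ to get $a_i\le b$. It then writes $p=\sum_{i>m}\nu_i(x_i^2-x_{n+1}^2)$ and uses $\int pq=0$ to get $a_i=b$ for $i>m$. Finally it kills the cross terms with $(x_j+sx_i)^2-(1+s^2)x_{n+1}^2$, which is your family $\xi_s$ up to normalization.

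The only real difference is Step~4, and there the semiconvexity estimate is not needed. From $a_i\le b$ for $i\le m$ you already have $A\le 2(n+1)b$, so $b\ge A/(2(n+1))\ge 0$. If $b=0$, then $a_i\le 0$ for all $i\le m$, while $\sum_{i=1}^m a_i=A/2\ge 0$. This forces $a_i=0$, hence $A=0$ and $q\equiv 0$, a contradiction. This is how the paper concludes, using only the orthogonality relations, caloricity, $\partial_t q\ge 0$ and $q\not\equiv 0$. So the step you flagged as the main obstacle is purely algebraic.

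Your detour through \cref{lemma:semiconvexity-estimate2} is valid. It is the argument the paper uses afterwards, in Step~1 of the proof of \cref{prop:cleaning-gamma2}, to upgrade the conclusion to $A>0$. Here, however, it imports time-monotonicity information the lemma does not need.

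Also, your final deduction that ``all $a_i\ge 0$ and $b\le 0$ force $b=0=a_i$'' is false as written. A positive $a_i$ could balance a negative $b$ in $0=2\sum a_i+2(n+1-m)b$. It only goes through once you invoke $a_i\le b$ again, which you do have from Step~3.
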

		\begin{proof}
			We know that $p\in \mathcal{P}_{2}^{+}$, and that $q$ is a parabolically $2$-homogeneous polynomial, even in $x_{n+1}$. Since the spine $\{p=0\}'$ has dimension $m$, we may choose a system of coordinates $x_{1},\ldots,x_{n}$ in which
			\begin{equation*} 
				p(x)=\sum_{i=m+1}^{n}\nu_{i}x_{i}^{2}-\nu x_{n+1}^{2},\qquad q(x,t)=At+\sum_{i=1}^{n}a_{i}x_{i}^{2}+2\sum_{1\le i<j\le n}\alpha_{ij}x_{i}x_{j}+bx_{n+1}^{2},
			\end{equation*}
			where $\nu_{i}>0$ for all $i\in \{m+1,\dots,n\}$ and $\nu=\sum_{i=m+1}^{n}\nu_{i}$ by harmonicity of $p$. Up to a further rotation in the vector space spanned by the first $m$ variables, we may also assume that 
			\begin{equation*}
				\alpha_{ij}=0\qquad \forall 1\le i<j\le m.
			\end{equation*}
			Imposing caloricity of $q$ and $\partial_{t}q\ge 0$ we deduce the following additional constraint:
			\begin{equation}\label{eq:3condtions}
				A=2\sum_{i=1}^{n}a_{i}+2b\ge 0. 
			\end{equation}
			We will now test the orthogonality properties from \cref{lemma:orthogonality-condition} with some suitably chosen competitors, in order to derive the remaining identities on the coefficients. 
			
			By explicit computations, one can prove that there is a positive constant $\gamma=\gamma(n)>0$ such that 
			\begin{equation*}
				\int_{\partial B_{1}}x_{i}^{2}x_{j}^{2}= \gamma (1+2\delta_{ij})\qquad \forall  i,j\in \{1,\dots, n+1\},
			\end{equation*} 
			where $\delta_{ij}=1$ if $i=j$ and is zero otherwise. 
			For any $i\in \{1,\dots,n\}$ we let $p^{(i)}:=x_{i}^{2}-x_{n+1}^{2}\in \mathcal{P}_{2}^+$. 
			Testing the second condition in \eqref{eq:orthogonality-conditions} with $\widetilde{p}=p^{(i)}$ and using the symmetry of the integrals involved, we get
			\begin{equation}\label{eq:conditions1}
				0\ge \int_{\partial B_{1}\times \{-1\}}qp^{(i)}=2\gamma(a_{i}-b)\implies a_{i}\le b \qquad\forall i\in \{1,\dots,n\}.
			\end{equation}
			Observing that $p=\sum_{i=m+1}^{n}\nu_{i}p^{(i)}$, from the first condition in \eqref{eq:orthogonality-conditions} we also deduce 
			\begin{equation*}
				0=\int_{ \partial B_{1}\times \{-1\}}qp =2\gamma\sum_{i=m+1}^n\nu_i(a_{i}-b).
			\end{equation*}
			Combining this with \eqref{eq:conditions1}, since $\nu_{i}>0$ for every $i\in \{m+1,\dots,n\}$, we obtain
			\be\label{eq:condtions2}a_{i}=b\qquad\forall i\in \{m+1,\dots,n\}.\ee
			
			Now, for any choice of $1\le i<j\le n$ with $j\in \{m+1,\dots,n\}$, and any $s\in \R$, we call $p^{(i,j)}_{s}:= (x_{j}+sx_{i})^{2}-(1+s^{2})x_{n+1}^{2}\in \mathcal{P}_{2}^+$. Imposing the second condition in \eqref{eq:orthogonality-conditions} with $\widetilde{p}=p^{(i,j)}_{s}$ we get
			$$0\ge \int_{ \partial B_{1}\times \{-1\}}qp^{(i,j)}_{s}=2\gamma(a_{j}+s^{2}a_{i}+2\alpha_{ij}s-b-s^{2}b)=2\gamma s^{2}(a_{i}-b)+4\gamma \alpha_{ij}s,$$
			where we used \eqref{eq:condtions2}. Since the inequality above must hold for all $s\in \R$ we conclude 
			\begin{equation*}
				\alpha_{ij}=0\qquad \forall  1\le i<j\le n,\ j\in \{m+1,\dots,n\}.
			\end{equation*}
			
			We derived the form in \eqref{eq:shape-p-q-quadratic} along with all the relations in \eqref{eq:relations-coefficients-p-q-quadratic} apart from $b>0$. However, since $a_{i}\le b$ for $i\in \{1,\dots,m\}$, we have 
			\begin{equation*}
				b\ge \frac{A}{2(n+1)}\ge 0.
			\end{equation*}
			If $b=0$, then $a_{i}\le 0$ for all $i\in \{1,\dots,n\}$, and \eqref{eq:3condtions} gives $a_{i}=0$ for all $i\in \{1,\dots,m\}$, which is impossible because $q\not\equiv 0$. This shows $b>0$ and concludes the proof. 
		\end{proof}
		
		\subsection{Proof of quadratic cleaning for $\Gamma_{2}$} 
		We now have all the ingredients to prove the main result of this section.
		\begin{proof}[Proof of \cref{prop:cleaning-gamma2}]
			We need to prove that for every $(x_0,t_{0})\in \Gamma_2$, there are $\rho >0$ and $C>0$ such that
			\begin{equation}\label{eq:proof-clean2}
				\left\{(x,t)\in B_{\rho}(x_{0})\times(-1,0]:t>t_{0}+C|x-x_0|^{2}\right\}\cap \Gamma_2 = \emptyset .
			\end{equation}
			If $t_{0}=0$, \eqref{eq:proof-clean2} holds trivially. Otherwise, after translating and rescaling, we may assume that $(x_0,t_0)=(0,{-1/2})$. We denote by $p\in \mathcal{P}_{2}^{+}$ the blow-up of $u$ at $(0,-1/2)$, and we set $v:=w-p$, where $w:=\zeta\widetilde u$ and $\widetilde{u}:=\widetilde u^{0,-1/2}$ are defined in \eqref{eq:utilde}, \eqref{eq:subtraction-obstacle+cutoff}.
			We divide the proof into two steps.
			
			\smallskip
			\noindent\textit{Step 1.} We first prove that there exist $c_1>0$ and $r_{0}\in (0,1/2)$ such that
			\be\label{eq:boundH-clean}\fint_{Q_r\cap\{|x_{n+1}|\ge r/2\}}\partial_t\widetilde u\ge c_1\frac{H(r,v)^{1/2}}{r^{2}}\qquad \forall r\in(0,r_{0}).\ee
			Note that $v=\widetilde{u}-p$ and so $\partial_{t}\widetilde{u}=\partial_{t}v$ in $Q_{r}$ for $r$ small enough. Therefore, if \eqref{eq:boundH-clean} fails for every choice of $c_{1}$ and $r_{0}$, there must be a sequence $r_{j}\downarrow 0$ for which
			\be\label{eq:cond-ass-clean2}\int_{Q_1\cap\{|x_{n+1}|\ge 1/2\}}\partial_{t}v_{r_{j}}\to0\qquad \text{as $j\to \infty$},\ee
			where 
			\begin{equation*}
				v_{r}:= \frac{v(r\cdot,r^2\cdot)}{H(r,v)^{1/2}}.
			\end{equation*}
			By \cref{prop:second-blow-up}, up to a subsequence, $v_{r_{j}}$ converges locally weakly in $W^{1,2}(S_{\infty})$ to some $q$, a second blow-up of $u$ at $(0,-1/2)$. In addition, since $q$ is a parabolically $2$-homogeneous caloric polynomial, $\partial_{t}q \equiv A$ is a constant. \eqref{eq:cond-ass-clean2} and the local weak convergence $\partial_{t}v_{r_{j}}\rightharpoonup \partial_{t}q$ yield $A=0$. 
			
			Let $m\in\{0,\ldots,n-1\}$ be the dimension of the spine $\{p=0\}'$, and let us take coordinates such that the representation in \eqref{eq:shape-p-q-quadratic} for $p$ and $q$ holds. Then, $p$ is invariant in $x_{i}$ for all $i\in \{1,\ldots,m\}$.
			Thanks to \cref{lemma:semiconvexity-estimate2}, in a neighborhood of $(0,0)$ it holds 
			$$\partial_{ii}\widetilde{u}\ge -C\partial_t \widetilde{u}\qquad\forall i\in\{1,\ldots,m\}.$$
			Then, since $\partial_{ii}p=\partial_{t}p=0$, after rescaling we get 
			\begin{equation*}
				\partial_{ii}v_{r_{j}}\ge -C\partial_{t}v_{r_{j}}\qquad \text{in $Q_{1}$},
			\end{equation*}
			for $j$ large enough. Passing to the limit in this inequality in the sense of distributions, using the local weak $W^{1,2}$-convergence of $v_{r_{j}}$ to $q$ and $\partial_{t}q\equiv 0$, gives $0\le \partial_{ii}q\equiv 2a_{i}$ for all $i\in \{1,\dots,m\}$, where $a_{i}$ are the coefficients from \eqref{eq:shape-p-q-quadratic}. By \eqref{eq:relations-coefficients-p-q-quadratic}, we deduce $A>0$, a contradiction. This concludes \textit{Step 1}. 
			
			\smallskip
			\noindent\textit{Step 2.} Next, we use the bound \eqref{eq:boundH-clean} obtained in \textit{Step 1} to prove \eqref{eq:proof-clean2}. First, for a sufficiently small $r>0$ we show 
			\be\label{eq:estimare1}\widetilde u\ge p-CH(r,v)^{1/2}\qquad\text{in }B_r\times(-r^2,1/2).\ee
			The function $v_{2r}$ satisfies $\mathbf{H}v_{2r}\le (2r)^{2}f(2r\cdot, 4r^{2}\cdot)/H(2r,v)^{1/2}$ in $Q_{1}$. By \eqref{controlf}, $\lVert f\rVert_{L^{\infty}(Q_{2r})}\le Cr^{\ell-2}$, and by point $i)$ in \cref{lemma:asymptotic-second-bu}, $H(2r,v)^{1/2}\ge c_{\eps}r^{2+\eps}$, for any $\eps>0$. Therefore, choosing $\eps<\ell-2$, we find
			\begin{equation*}
				\mathbf{H}v_{2r}^{-}\ge -C_{\eps}r^{\ell-2-\eps}\ge -C\qquad \text{in $Q_{1}$}. 
			\end{equation*}
			Moreover, by \cref{lemma:controlvconH} we have $\lVert v_{2r}\rVert_{L^{2}(Q_{1})}\le C$. Then, applying the $L^{2}-L^{\infty}$ estimate from \cref{lemma:wang-start} to $v_{2r}^{-}$ we get $\lVert v_{2r}^{-}\rVert_{L^{\infty}(Q_{1/2})}\le C$. Undoing the scaling we deduce
			\begin{equation*}
				\widetilde{u}\ge p-CH(2r,v)^{1/2}\ge p-CH(r,v)^{1/2}\qquad \text{in $Q_{r}$},
			\end{equation*}
			where we also exploited the doubling property from \eqref{lemmafinal2}. By the monotonicity in time of $\widetilde{u}$ we finally conclude \eqref{eq:estimare1}.
			
			Now, since $\partial_{t}\widetilde u\ge 0$ and $\mathbf{H}(\partial_t\widetilde u)=0$ in $(B_{3r}\cap\{|x_{n+1}|\ge r/8\})\times(-9r^2,1/2)$, by the parabolic Harnack inequality, from \eqref{eq:boundH-clean} at scale $2r$ we deduce the pointwise lower bound
			$$\partial_t\widetilde u(\cdot,r^{2})\ge 2c\frac{H(r,v)^{1/2}}{r^{2}}\qquad\text{in }B_r\cap\{|x_{n+1}|\ge r/4\}.$$ 
			Combining this estimate with $\partial_{tt}\widetilde u\ge-C$ in $B_{1/2}\times (-1/4,1/4)$ from \cref{lemma:semiconv-in-time}, since $r^{-2}H(r,v)^{1/2}\ge c_\eps r^{\eps}$ for some $\eps\in (0,1)$, we get, for some $c_{*}>0$ and $r$ small enough
			$$\partial_t\widetilde u\ge 2c\frac{H(r,v)^{1/2}}{r^{2}}-Cr\ge c\frac{H(r,v)^{1/2}}{r^{2}}\qquad\text{in }\left(B_r\cap\{|x_{n+1}|\ge r/4\}\right)\times(r^{2},c_{*} r).$$
			Integrating the last inequality starting from \eqref{eq:estimare1} we obtain
			$$\widetilde u(\cdot,\tau)\ge p-CH(r,v)^{1/2}+c\frac{H(r,v)^{1/2}}{r^{2}}(\tau-r^{2})\qquad\text{in }B_r\cap\{|x_{n+1}|\ge r/4\},\quad \forall \tau\in[r^{2},c_{*} r].$$ 
			Let $K>0$ be a constant to be chosen later, and take $\overline{C}:=1+Cc^{-1}K$ and $\tau=\overline{C}r^{2}$. Then for $r$ sufficiently small, by the previous inequality we get
			\begin{equation*}
				\widetilde u(\cdot,\overline{C}r^{2})\ge p+CH(r,v)^{1/2}(K-1)\qquad \text{in } B_r\cap\{|x_{n+1}|\ge r/4\},
			\end{equation*}
			which combined with \eqref{eq:estimare1} and time monotonicity of $\widetilde{u}$ yields
			\be\label{eq:estimare2}\widetilde u\ge p+CH(r,v)^{1/2}(-1+K\chi_{\{|x_{n+1}|\ge r/4\}})\qquad\text{in }B_r\times(\overline C r^{2},1/2).\ee
			We now consider a barrier $h$ solving 
			$$\begin{cases}
				\mathbf{H}h=\sigma&\qquad\text{in } B_1\times(0,\infty),\\
				h=\chi_{\{|x_{n+1}|\ge 1/4\}}&\qquad\text{on }\partial B_1\times(0,\infty),\\
				h=0&\qquad\text{on }B_1\times\{0\},
			\end{cases}$$
			where $\sigma>0$ is chosen sufficiently small depending only on $n$ so that 
			\begin{equation}\label{eq:lower-bound-barrier-cleaning-quadratic}
				\widetilde c:=\inf_{B_{1/2}\times[1,\infty)}h>0.
			\end{equation}
			We then consider 
			$$\psi(x,t):=p(x)+CH(r,v)^{1/2}\left(-1+Kh\left(\frac xr,\frac{t-\overline Cr^2}{r^2}\right)\right).$$
			Let $\Omega_r:=B_r\times (\overline Cr^2,1/2)$. By \eqref{eq:estimare2} we get
			$$\widetilde u\ge\psi\qquad\text{on }\partial_{\text{par}}\Omega_r.$$
			On the other hand, $\mathbf{H}\psi$ is of order $r^{-2}H(r,v)^{1/2}\ge cr^{\eps}$, while $\mathbf{H}\widetilde u$ is less than $r^{\ell-2}$ in $\Omega_r$. Choosing $\eps<\ell-2$ and $r$ small enough we get that $\widetilde u-\psi$ is supercaloric in $\Omega_{r}$, and by the maximum principle, $\widetilde u\ge\psi$ in $\Omega_r$. 
			Evaluating the inequality at time $t=(\overline C+1)r^2$, taking \eqref{eq:lower-bound-barrier-cleaning-quadratic} into account, and using again the time monotonicity of $\widetilde{u}$ we get
			$$\widetilde u\ge p+CH(r,v)^{1/2}(-1+K\widetilde c)\qquad\text{in }B_{r/2}\times ((\overline C+1)r^2,1/2).$$ 
			Fix $K>\widetilde c^{-1}$. Then, for $r$ sufficiently small depending on this choice, we obtain $\widetilde u>0$ in $B'_{r/2}\times ((\overline C+1)r^2,1/2)$. This implies the desired cleaning condition from \eqref{eq:proof-clean2}, concluding the proof. 
		\end{proof}
        \begin{remark}
            A detailed inspection of the proofs reveals that the convergence rate to the first blow-up in \cref{prop:p2-indep-time}, as well as the constants from the second blow-up analysis in \cref{subsection-second-blow-up} and the constants of the cleaning from \eqref{eq:proof-clean2} are uniform for nearby quadratic points. This fact will be used without further reference in the following section.  
        \end{remark}
		
		\section{Stratification of quadratic points}\label{sect:stratification_quadratic}
		In this section we prove a stratification result for quadratic points. As key ingredients, we use \cref{prop:p2-indep-time} and the cleaning result in \cref{prop:cleaning-gamma2}. We start by introducing some notation. The spine of a non-zero $p\in \mathcal{P}_{2}^{+}$ is defined as the null space of $p(\cdot,0)$, which also coincides with the vector space of its invariant directions:
		\begin{equation}\label{eq:spine-of-p-quadratic}
			S(p):= \left\{\xi\in \R^{n}: p(\xi,0)=0\right\}=\left\{\xi\in \R^{n}: \partial_{\xi}p(\cdot,0) \equiv 0\right\}.
		\end{equation}
		The dimension of the spine is denoted by 
		\begin{equation*}
			d(p):= \dim S(p)\in \{0,\dots,n-1\}.
		\end{equation*}
		Note that $S(p)$ cannot be the full $\R^{n}$, otherwise, since $p$ is harmonic and $\partial_{n+1}p=0$ on $\R^{n}\times \{0\}$ we would have $p\equiv 0$.
		Let $u$ be a solution of \eqref{eq:parabolic-signorini-problem}, with obstacle $\varphi$ satisfying \eqref{eq:ostacolo}. By \cref{prop:p2-indep-time}, every point $(x_{0},t_{0})\in \Gamma_{2}$ admits a unique non-zero blow-up $p^{x_0,t_0}\in\mathcal{P}_{2}^{+}$. We may then split $\Gamma_{2}$ according to the value of the dimension of the spine of its blow-up:
		\begin{equation}\label{eq:slitting-gamma2} \Gamma_{2}=\bigcup_{j=0}^{n-1}\Gamma_{2}^{j},\qquad \Gamma_{2}^{j}:= \left\{(x_{0},t_{0})\in \Gamma_{2}: d(p^{x_0,t_0})=j\right\}\quad \forall j\in \{0,\dots,n-1\}.
		\end{equation}
		
		\begin{proposition}\label{corollary:stratification-even}
			Let $u$ be a solution of \eqref{eq:parabolic-signorini-problem}, with obstacle $\varphi$ satisfying \eqref{eq:ostacolo}. Consider the splitting of \,$\Gamma_{2}$ in \eqref{eq:slitting-gamma2}. Then, for every $j\in\{0,\ldots,n-1\}$,
			$\Gamma_{2}^j$ is locally contained in a $j$-dimensional manifold of class $C^{1}$ of the form $(x_{j+1},\ldots,x_{n},t)=g(x_1,\ldots,x_j)$, after a suitable rotation in the thin space. In particular, $\pi_x(\Gamma_2)$  is $(n-1)$-rectifiable and 
			\bea\label{eq:dimensional-bound-Gamma2}\dim_{\mathcal{H}}(\pi_x(\Gamma_2))\le n-1.\eea
		\end{proposition}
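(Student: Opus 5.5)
The plan is a Whitney extension argument in the thin space-time $\R^n\times\R$, in the spirit of the stratification of even frequency points in \cite{dgpt17} and of the singular set in the classical obstacle problem. Time becomes one of the graph coordinates: the manifold is a graph over $j$ spatial variables with values in $(x_{j+1},\dots,x_n,t)$. Fix $j$ and a compact piece $K\subseteq\Gamma_2^j$ on which the rate of convergence in \cref{prop:p2-indep-time} and the cleaning constants from \cref{prop:cleaning-gamma2} are uniform; by the preceding remark, $\Gamma_2^j$ is a countable union of such pieces.

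\emph{Step 1.} I first prove continuity of the blow-up map $(x_0,t_0)\mapsto p^{x_0,t_0}$ on $K$, together with a uniform modulus:
\[
\widetilde u^{x_0,t_0}(x,t)=p^{x_0,t_0}(x)+o(|x|^2+|t|),
\]
uniformly in $(x_0,t_0)\in K$. Continuity follows by comparing the expansions at two nearby points at a scale comparable to their parabolic distance.

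\emph{Step 2.} Next I show that $\pi_x$ is injective on $K$ and that $t$ varies little. Clean$(2)$ at both points gives $|t_1-t_2|\le C|x_1-x_2|^2$ for nearby $(x_i,t_i)\in K$: apply the cleaning at the earlier point to exclude the later one. Hence $K$ is the graph of a function $\tau$ on $\pi_x(K)$, and parabolic and Euclidean distances between points of $K$ are comparable to $|x_1-x_2|$. In particular $\tau$ has zero derivative in the Whitney sense.

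\emph{Step 3.} Now I set up Whitney data on the closed set $\pi_x(K)\subseteq\R^n$ for the function $x'\mapsto 0$ with the $2$-jet $p^{x_0,t_0}(\cdot,0)$, that is, $F(x_0')=0$, $\nabla F(x_0')=0$, $D^2F(x_0')=D^2 p^{x_0,t_0}(\cdot,0)$. The compatibility condition I need is
\[
|p^{x_0,t_0}(x_1'-x_0',0)|=o(|x_1'-x_0'|^2),
\]
together with continuity of the Hessians, which is Step 1. For the compatibility condition, evaluate $\widetilde u^{x_0,t_0}$ at the contact point $(x_1,t_1)$, where $u=\varphi$, so $\widetilde u^{x_0,t_0}$ vanishes up to an $O(|x_1'-x_0'|^{\ell})$ obstacle-Taylor error. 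Using $|t_1-t_0|\lesssim|x_1-x_0|^2$ and the uniform $o(\cdot)$ expansion then gives $p^{x_0,t_0}(x_1'-x_0',0)=o(|x_1-x_0|^2)$. Whitney's extension theorem produces $F\in C^2(\R^n)$ with $K\subseteq\{F=0,\nabla F=0\}$ and $D^2F(x_0')=D^2p^{x_0,t_0}(\cdot,0)$, which has rank $n-j$ at points of $\Gamma_2^j$. The implicit function theorem applied to $\nabla F$ (a $C^1$ map whose Jacobian has rank $n-j$) then gives that $\pi_x(K)$ is locally contained in a $j$-dimensional $C^1$ graph $(x_{j+1},\dots,x_n)=g'(x_1,\dots,x_j)$.

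\emph{Step 4.} It remains to include $t$. Extend $\tau$ by Whitney as a $C^1$ function with zero gradient on $\pi_x(K)$, which is possible since $|\tau(x_1)-\tau(x_2)|\le C|x_1-x_2|^2$, and set $g=(g',\tau\circ(\mathrm{id},g'))$. The rectifiability and dimension bound follow from the countable decomposition. The main obstacle is the uniformity in Step 1 and the comparison at the two nearby points in Step 3 with a time shift. I would handle this by using time monotonicity to replace $t_1$ with $t_0$ on the thin space, and by using the uniform cleaning to control $|t_1-t_0|$ in the $o(|x|^2+|t|)$ error.
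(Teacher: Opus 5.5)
Your proposal is correct and is essentially the paper's argument. The paper also uses a Whitney extension of the $2$-jets $D^2p^{x_1,t_1}(\cdot,0)$, whose compatibility comes from the uniform expansion in \cref{prop:p2-indep-time} together with the $\mathrm{Clean}(2)$ bound $|t_1-t_2|\le C|x_1-x_2|^2$, a separate $C^1$ Whitney extension with vanishing spatial gradient to encode the time coordinate (the paper's $\Theta$ with $\partial_t\Theta=1$, your extended $\tau$), and the implicit function theorem. The only difference is cosmetic: the paper extends $F$ in the thin space-time and solves $(F_{i_1},\dots,F_{i_{n-j}},\Theta)=0$ directly, whereas you first use cleaning to work over $\pi_x(\Gamma_2)$. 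One small point: you do not list the first-order Whitney condition $|D^2p^{x_0,t_0}(\cdot,0)\,(x_1'-x_0')|=o(|x_1-x_0|)$. It follows from your zeroth-order condition because $p^{x_0,t_0}(\cdot,0)$ is a positive semidefinite quadratic form $y\cdot Ay$, so $|Ay|^2\le\|A\|\,y\cdot Ay$.
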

		
		\begin{proof} 
			The proof follows by combining the results in \cite[Section 14]{dgpt17} with the cleaning result of $\Gamma_2$ in \cref{prop:cleaning-gamma2}. Precisely, let $(x_0,t_0)\in\Gamma_2$ and $\rho>0$ sufficiently small to be chosen later. For every $(x_1,t_1)\in\Gamma_2\cap \mathscr{C}'_\rho(x_0,t_0)$, we denote by $p^{x_1,t_1}\in\mathcal{P}_2^+$ the blow-up of $u$ at $(x_1,t_1)$. 
			For every $(\beta, j)\in\N^n \times \N$ such that $|\beta|+j\le 2$, we let $f_{\beta,j}:\Gamma_2\to\R$ be 
			$$ f_{\beta,j}(x_1,t_1):=\begin{cases}
				0&\qquad\text{if }|\beta|\in\{0,1\}\text{ or }j\ge1,\\
				D_{x'}^\beta p^{x_1,t_1}(0,0)&\qquad\text{if }|\beta|=2\text{ and } j=0.
			\end{cases}$$
			For $(x_1,t_1),(x_2,t_2)\in\Gamma_2\cap \mathscr{C}'_{\rho}(x_{0},t_{0})$, we set $r:=2\dist_{\text{par}}((x_1,t_1),(x_2,t_2))$. 
			We define $$g_{\beta,j}(x_1,x_2,t_1,t_2):=f_{\beta,j}(x_1,t_1)-\sum_{|\sigma|+i\le 2-|\beta|-j}\frac{f_{\beta+\sigma,j+i}(x_2,t_2)}{\sigma!i!}(x_1'-x_2')^\sigma(t_1-t_2)^i.$$
            By the cleaning result in \cref{prop:cleaning-gamma2}, we have $r\le C|x_1-x_2|$. 
			Then, arguing as in \cite[Lemma 14.5]{dgpt17} (see also \cref{lemma:key-time-like} below), there exists a modulus of continuity $\omega$, such that 
			$$|g_{\beta,0}(x_1,x_2,t_1,t_2)|\le r^{2-|\beta|}\omega(r)\le C\Big(|x_1-x_2|+|t_1-t_2|\Big)^{2-|\beta|}\omega\Big(C(|x_1-x_2|+|t_1-t_2|)\Big) \qquad \text{for } |\beta|\le2,$$ and $g_{\beta,j}(x_1,x_2,t_1,t_2)=0$ for $j\ge1$.
            The Whitney extension theorem gives $F\in C^2(\mathscr{C}'_{\rho}(x_0,t_0))$ such that, for $|\beta|+j\le 2$, we have $\partial_{x'}^\beta \partial_{t}^j F=f_{\beta,j}$ in $\Gamma_2\cap \mathscr{C}'_{\rho}(x_{0},t_{0})$. Note that here $C^2$ denotes the space of functions with two continuous space-time derivatives with respect to the usual Euclidean distance.
			
			On the other hand, for every $(\beta,j)\in\mathbb N^n\times\mathbb N$ with $|\beta|+j\le1$, we define $\widetilde f_{\beta,j}:\Gamma_2\to\mathbb R$ by $$
                \widetilde f_{\beta,j}(x_1,t_1):=
                \begin{cases}1 &\qquad \text{if }(\beta,j)=(0,1),\\
                0 & \qquad \text{otherwise}.
                \end{cases}$$
            Then, by the quadratic cleaning for $\Gamma_2$ proved in \cref{prop:cleaning-gamma2}, the function
            $$\widetilde g_{\beta,j}(x_1,x_2,t_1,t_2):=\widetilde f_{\beta,j}(x_1,t_1)-\sum_{|\sigma|+i\le1-|\beta|-j}\frac{\widetilde f_{\beta+\sigma,j+i}(x_2,t_2)}{\sigma!i!}(x_1'-x_2')^\sigma(t_1-t_2)^i$$
            satisfies the Whitney compatibility conditions, since
            $$|\widetilde g_{0,0}(x_1,x_2,t_1,t_2)|=|t_1-t_2|\le C|x_1-x_2|^2,$$
and $\widetilde g_{\beta,j}(x_1,x_2,t_1,t_2)=0$ for $|\beta|+j=1$.
The classical Whitney extension theorem therefore yields a function $\Theta\in C^1(\mathscr{C}'_\rho(x_0,t_0))$ satisfying $$\Theta(x_1,t_1)=|\nabla_{x'}\Theta(x_1,t_1)|=0\quad\text{and}\quad\partial_t\Theta(x_1,t_1)=1\qquad \forall (x_1,t_1)\in\Gamma_2\cap \mathscr{C}'_{\rho}(x_{0},t_{0}).$$
			Arguing as in \cite[proof of Theorem 12.12, Step 2]{dgpt17} we get
			$$\ker\left(D^2_{x'}F(x_0',t_0)\right)=S(p^{x_0,t_0}),$$ where $S(p^{x_0,t_0})$ is the spine defined in \eqref{eq:spine-of-p-quadratic}. If $(x_0,t_0)\in\Gamma_2^j$ for some $j\in\{0,\ldots,n-1\}$, since $\dim S(p^{x_0,t_0})=j$, we have $${\rm rank}\left(D^2_{x'}F(x_0',t_0)\right)=n-j.$$ As a consequence, there exist distinct indices $i_1,\ldots, i_{n-j}\in\{1,\ldots,n\}$ such that, if we set $F_i:=\partial_{i}F\in C^1(\mathscr{C}'_\rho(x_0,t_0))$,
			the space-time gradients $\nabla_{x',t} F_{{i_1}}(x_0',t_0),\ldots,\nabla_{x',t} F_{{i_{n-j}}}(x_0',t_0),\nabla_{x',t} \Theta(x'_0,0,t_0)$ are linearly independent.
			Therefore, $\Gamma_2^j\cap \mathscr{C}'_\rho(x_0,t_0)$ is locally contained in the zero set of $$\Psi(x',t):=\left(F_{{i_1}}(x',t),\ldots,F_{{i_{n-j}}}(x',t),\Theta(x',0,t)\right)\in C^1(\mathscr{C}'_\rho(x_0,t_0)).$$ By the implicit function theorem, up to a rotation in the thin space this zero set can be written locally as a $j$-dimensional $C^{1}$-manifold of the form $(x_{j+1},\dots,x_{n},t)=g(x_{1},\dots,x_{j})$. This concludes the proof. 
		\end{proof}

		\section{Space-time regularity of the free boundary}\label{sec:spacetime_reg}
		In this section, we study the space-time structure of the free boundary, proving \cref{theorem:structure-spacetime-intro}.
		We show the following more precise statement. 
		\begin{proposition}\label{theorem:structure-spacetime}
			Let $u$ be a solution of \eqref{eq:parabolic-signorini-problem}, with obstacle $\varphi$ satisfying \eqref{eq:ostacolo}. Let $(x_0,t_0)\in\Gamma$ with $t_0<0$. Then, there exists $\rho>0$ and $\tau: B'_\rho(x_0)\to(-1,0)$, with $\tau\in C^1(B'_\rho(x_0))$, such that 
            \begin{align*}
                \Lambda\cap \mathscr{C}'_\rho(x_0,t_0)&=\left\{(x',0,t)\in \mathscr{C}'_\rho(x_0,t_0): \ t\le \tau(x',0)\right\},\\
                \Gamma\cap \mathscr{C}'_\rho(x_0,t_0)&=\left\{(x',0,t)\in \mathscr{C}'_\rho(x_0,t_0): \ t=\tau(x',0)\right\}.
            \end{align*}
			In particular, $\Gamma\cap \{t<0\}$ is locally an $n$-dimensional manifold of class $C^1$.
			Moreover, the singular set $\Sigma$ can be characterized as $$\Sigma\cap \mathscr{C}'_\rho(x_0,t_0)=\left\{(x',0,\tau(x',0))\in \mathscr{C}'_\rho(x_0,t_0): \ \nabla_{x'} \tau (x',0)=0\right\}.$$
		\end{proposition}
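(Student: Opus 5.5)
We first produce the graph function $\tau$ from time monotonicity alone. On the thin space, $\bar u:=u-\varphi\ge 0$ and $\partial_t\bar u=\partial_t u\ge 0$, so for fixed $x'$ the set $\{t:\bar u(x',0,t)=0\}$ is an interval $(-1,\tau(x')]$, possibly empty or all of $(-1,0]$. We set $\tau(x'):=\sup\{t:\bar u(x',0,t)=0\}$. Continuity of $u$ shows that $\tau$ is upper semicontinuous. To see that $\Gamma$ is exactly the graph near $(x_0,t_0)$, with $\Lambda$ lying below it, we need two facts: every point of $\Gamma$ lies on $\{t=\tau\}$, and $\tau$ is finite and continuous near $x_0'$.

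For the first fact, suppose a point of $\Lambda$ had $t<\tau(x')$. Then $\bar u$ vanishes on a time segment at $x'$. The Signorini problem gives $\partial_{n+1}u\le0$ there, and the strict interior monotonicity of \cref{lemma:monotonicity-ass}, $\partial_tu\ge c|x_{n+1}|$, should force a positive amount of contact in a spatial neighbourhood at earlier times. We intend to deduce that such a point is interior to $\Lambda$, by arguing as in \cite{ath84}, where the Lipschitz graph property in the time-monotone case is established. We will invoke Athanasopoulos' result directly: $\Gamma$ is locally a Lipschitz graph $t=\tau(x')$ with $\Lambda=\{t\le\tau\}$. We do need $t_0<0$, so that a neighbourhood in time is available.

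The main work is upgrading $\tau$ from Lipschitz to $C^1$ and identifying critical points. At a regular point, $\Gamma_{3/2}$ is a $C^{1,\alpha}$ space-time hypersurface by \cite{dgpt17,petrosyan2019boundedness}. Its normal is not horizontal in time, because the blow-up $\mathrm{Re}(x_e+i|x_{n+1}|)^{3/2}$ after a parabolic tilt has a nonzero $x'$-normal. A regular blow-up is time independent, and the monotonicity $\partial_tu>0$ rules out normals with only a $t$-component. Hence $\tau$ is $C^{1,\alpha}$ near regular points, with $\nabla\tau\neq0$. We still have to justify that the normal is non-vertical, that is, not in the pure $t$ direction. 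The point is that the $3/2$-homogeneous blow-ups are $t$-independent functions of $x'\cdot e$, so the tangent plane to $\Gamma$ contains the $t$-axis direction and not the $e$ direction. The slope $\nabla\tau$ is then finite. Indeed it is nonzero, since the tangent plane is $\{x'\cdot e=0\}$ viewed in $(x',t)$ space, which in principle corresponds to an infinite time-slope. We must therefore orient things correctly: the graph $t=\tau(x')$ has tangent plane $t-t_1=\nabla\tau\cdot(x'-x_1')$. We will obtain $\nabla\tau\neq0$ using the Hopf-type lower bound $\partial_t u\gtrsim$ distance to the contact set, which says the free boundary moves with finite positive speed.

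At singular points we will show $\nabla\tau=0$ with a modulus. By \cref{prop:onlyle3}, $\Sigma=\Gamma_2\cup\Gamma_3$. At $\Gamma_2$ points, cleaning \cref{prop:cleaning-gamma2} gives $\tau(x')\le\tau(x_1')+C|x'-x_1'|^2$. The reverse inequality $\tau(x')\ge\tau(x_1')-o(|x'-x_1'|)$ follows because the quadratic blow-up $p$ is time independent and $\widetilde u=p+o(|x|^2+|t|)$ by \cref{prop:p2-indep-time}. Indeed, if $\Lambda$ were absent at time $t_1-\delta|x'|$, then by monotonicity it is absent at all later times, and the $H^{3/2}$ growth gives a contradiction with the $o(r^2)$ expansion along the cylinder. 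We expect a similar argument at cubic points, using the cubic blow-up and the growth $\|\widetilde u\|_{L^\infty(Q_r)}\lesssim r^{3}$. It should show that $\Lambda$ and its complement both meet every parabolic ball of radius $r$ around the point within time-window $o(r)$, so $\tau$ is differentiable with zero gradient there. Uniformity of these estimates near nearby points, combined with $\nabla\tau\to0$ approaching $\Sigma$ from the regular side, will give continuity of $\nabla\tau$. We expect the main obstacle to be this last step: controlling $\nabla\tau$ at regular points close to singular ones. We would try to bound $|\nabla\tau|$ at a regular point by the ratio $|\partial_e\widetilde u|/\partial_t\widetilde u$ near the free boundary, via \cref{lemma:semiconvexity-estimate2} and its rescaled version \cref{remark:faciledopo}, which should decay near singular points.
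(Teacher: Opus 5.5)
Your overall plan is the right one: a graph from time monotonicity, $C^{1,\alpha}$ regularity at regular points, $\nabla\tau=0$ at singular points, and continuity via the ratio $|\nabla_{x'}\widetilde u|/\partial_t\widetilde u$. However, the decisive step at singular points is missing.

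\textbf{The gap at quadratic points.} Your argument for $\tau(x')\ge\tau(x_1')-o(|x'-x_1'|)$ at $\Gamma_2$ points fails. The expansion $\widetilde u=p+o(|x|^2+|t|)$ and the $H^{3/2}$ regularity are only upper bounds, and there is no nondegeneracy to contradict. A front receding with slope $\delta$ means $\widetilde u(x',0,t_1-\delta r)>0$ with $|x'-x_1'|=r$. That point is at parabolic distance $\approx\sqrt{\delta r}\gg r$ from $(x_1,t_1)$, where the expansion only says $\widetilde u=o(\delta r)$. A small positive value is perfectly compatible with this. Indeed, $p$ is time independent and $\partial_{n+1}p\equiv0$ on the thin space, so the first blow-up carries no information about how fast the contact set moves.

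The same missing ingredient sits behind your last step, where the ratio ``should decay''. Near a quadratic point $|\nabla_{x'}\widetilde u|\lesssim r$ in $\mathscr C_r$. But \cref{lemma:monotonicity-ass} only gives $\partial_t\widetilde u\gtrsim r$ on $\{|x_{n+1}|\ge\sigma r\}$, so \cref{remark:faciledopo} yields a bounded ratio, not a vanishing one. What is needed is $\partial_t\widetilde u\ge c_\sigma r^{1/2}$ on $\mathscr C_r\cap\{|x_{n+1}|\ge\sigma r\}$.

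\textbf{How the paper closes it.} The paper obtains this lower bound from the second blow-up: Step 1 of the proof of \cref{prop:cleaning-gamma2}, the bound $H(r,v)^{1/2}\ge cr^{5/2}$ from \cref{lemma:asymptotic-second-bu}, and Harnack. Applying \cref{remark:faciledopo} to $r^{-2}\widetilde u(r\cdot,r^2\cdot)$ then gives $|\nabla_{x'}\widetilde u|\le Cr^{1/2}\partial_t\widetilde u$ in $\mathscr C_{r/2}$. At cubic points, the cubic rescaling of \cref{lemma:monotonicity-ass} already gives $|\nabla_{x'}\widetilde u|\le Cr\,\partial_t\widetilde u$; together these are \cref{lemma:modulus-of-cont}. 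One then approximates $\tau$ by the $C^1$ graphs $\tau_\eps$ of $\{\widetilde u=\eps\}'$, for which $\nabla\tau_\eps=-\nabla_{x'}\widetilde u/\partial_t\widetilde u$. This gives $|\nabla\tau|\le\omega(Cr^{1/2})$ on all of $B_r'(x_0)$. That yields $\nabla\tau(x_0)=0$ and continuity of $\nabla\tau$ at $x_0$, including from nearby regular points, in one stroke; the cleaning-based one-sided bounds you propose are not needed.

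\textbf{Two smaller points.} First, you need not outsource the Lipschitz graph to \cite{ath84}. \cref{lemma:semiconvexity-estimate2} gives $|\nabla_{x'}\widetilde u|\le L\partial_t\widetilde u$, so $\widetilde u$ is non-increasing along $s\mapsto(x_1'-s(x_1'-x_2'),\eta,t_1-sL|x_1'-x_2'|)$. Letting $\eta\downarrow0$ gives $t_1-t_2\le L|x_1'-x_2'|$ whenever $(x_1,t_1)\in\Lambda$ and $(x_2,t_2)\in\overline{\{\widetilde u>0\}'}$ with $t_1>t_2$. Hence $\Lambda=\{t\le\tau\}$ locally, with $\tau$ Lipschitz.

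Second, your regular-point paragraph contradicts itself. The blow-up free boundary $\{x'\cdot e=0\}$ contains the $t$-direction only because parabolic scaling suppresses time dependence; it says nothing about the Euclidean tangent plane of $\Gamma$. The correct argument writes $\Gamma_{3/2}$ as $x_n=g(x'',t)$ with $g\in C^{1,\alpha}$. The cone condition above forces $|\partial_t g|\ge L^{-1}$. The implicit function theorem then gives $\tau\in C^{1,\alpha}$ with $\partial_{x_n}\tau=1/\partial_t g\neq0$. So $\nabla\tau\neq0$ is automatic, and no Hopf-type lower bound is required.
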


		\begin{lemma}\label{lemma:modulus-of-cont}
			Let $u$ be a solution of \eqref{eq:parabolic-signorini-problem}, with obstacle $\varphi$ satisfying \eqref{eq:ostacolo}. Let $(0,-1/2)\in\Sigma$ be a singular point. Then, there exists a modulus of continuity $\omega(r)\downarrow 0$ as $r\downarrow 0$ such that, for $r>0$ sufficiently small, we have
			\be\label{eq:eqeqeq2}|\nabla_{x'}\widetilde u|\le \omega(r)\partial_t\widetilde u\qquad\text{in } \mathscr C_{r},\ee where $\widetilde u:=\widetilde u^{0,-1/2}$ is defined in \eqref{eq:utilde}.
		\end{lemma}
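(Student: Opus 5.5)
The idea is to run a quantitative, scale-invariant version of the barrier argument in \cref{lemma:semiconvexity-estimate2}, at every scale $r$ around the singular point $(0,-1/2)$. The point is that at a singular point $\widetilde u$ is much flatter than $\partial_t\widetilde u$ in the tangential directions, and the ratio between the two degenerates.

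\textbf{Rescaling and reduction to a ratio estimate.} For small $r$, set
$$w_r(x,t):=\frac{\widetilde u(rx,r^2t)}{M_r},\qquad M_r:=\|\widetilde u\|_{L^\infty(\mathscr C_{2r})}.$$
The target \eqref{eq:eqeqeq2} in $\mathscr C_r$ is equivalent to
$$|\nabla_{x'}w_r|\le \omega(r)\,r^{-1}\,\partial_t w_r \qquad\text{in } \mathscr C_1 .$$
Since $\widetilde u$ has at least quadratic decay at singular points (frequency $\ge 2$), we have $M_r\lesssim r^{2}$. Hence the tangential gradient is of size $|\nabla_{x'}w_r|\lesssim 1$ by \cref{lemma:semiconvexity-estimate1}. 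What must be shown is that
$$\mu_r:=\inf_{\mathscr C_{3/4}\cap\{|x_{n+1}|\ge\rho\}}\partial_t w_r \gg r .$$

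\textbf{Lower bound on $\mu_r$.} We have $\partial_t w_r=r^2\partial_t\widetilde u(r\cdot,r^2\cdot)/M_r$. Moreover $\partial_t\widetilde u\ge c|x_{n+1}|$ by \cref{lemma:monotonicity-ass}, so on $\{|x_{n+1}|\ge\rho\}$ at scale $r$ we get $\partial_t\widetilde u\ge c\rho r$. This gives
$$\mu_r\ge c\rho\, r^{3}/M_r\ge c\rho\, r .$$
This bound only yields a constant in place of $\omega(r)$, so it is not enough. The improvement comes from a contradiction and compactness argument. Suppose $r\partial_t w_r$ stays comparable to $|\nabla_{x'} w_r|$ along a sequence $r_j\downarrow0$. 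Pass to the blow-up limit, which is a singular blow-up: by \cref{prop:p2-indep-time}, a time-independent $p\in\mathcal P_2^+$ in the quadratic case, or a cubic blow-up if $(0,-1/2)\in\Gamma_3$. In the cubic case, \cref{lemma:monotonicity-ass} already forces $\partial_t p_3\ne 0$, because $\partial_t\widetilde u\ge c|x_{n+1}|$ at scale $r^3$ is consistent with frequency $3$. This gives $\mu_r\gtrsim 1\gg r$, while $|\nabla_{x'}w_r|\to|\nabla_{x'}p_3|$.

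\textbf{Applying \cref{remark:faciledopo}.} Once the ratio bounds are in place, we apply \cref{remark:faciledopo} to $w_r$, with $M$ bounded and $\mu=\mu_r$. This gives
$$|\nabla_{x'}w_r|\le C'(1+1/\mu_r)\,\partial_t w_r ,$$
which rescales to
$$|\nabla_{x'}\widetilde u|\le C'\,r^{-1}\bigl(1+1/\mu_r\bigr)\,\frac{M_r}{r^2}\,r\cdot\frac{\partial_t\widetilde u}{\cdots}.$$
Tracking the scalings, this reduces to $\omega(r)\approx r/\mu_r+ r\cdot(\text{tangential smallness})$.

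\textbf{Main obstacle.} To get $\omega\to0$ we need a tangential factor $\varepsilon_r\to0$ in the gradient bound. I would try to obtain it by replacing $w_r$ with $w_r-p$ where possible, or equivalently by using that $\nabla_{x'}$ of the blow-up vanishes to higher order on the relevant region. Concretely, I would run the barrier $\psi$ of \cref{lemma:semiconvexity-estimate2} with $c_2$ of size $\varepsilon_r^{-1}$. Here $\varepsilon_r:=\|\nabla_{x'}w_r\|_{L^\infty}/\mu_r$, and $\varepsilon_r\to0$ is to be shown by compactness, using uniqueness of blow-ups (\cref{prop:p2-indep-time}, and \eqref{eq:expans} in the cubic case) together with the fact that the second blow-up satisfies $\partial_tq=A>0$, which comes from the argument of Step 1 in \cref{prop:cleaning-gamma2}. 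Securing this strict positivity of the time derivative relative to tangential derivatives, uniformly in $r$, is the key difficulty. In the quadratic case it is exactly estimate \eqref{eq:boundH-clean}, which controls $\partial_t$ from below by $H(r,v)^{1/2}/r^2$, while $\nabla_{x'}v=o$ of this quantity times $r$.
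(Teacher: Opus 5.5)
Your reduction is correct. Normalize so that $M$ in \cref{remark:faciledopo} stays bounded; that remark then gives $|\nabla_{x'}\widetilde u|\le C' r(1+1/\mu_r)\,\partial_t\widetilde u$, so the lemma is equivalent to $\mu_r/r\to\infty$. The cubic case is also essentially fine: the growth bound $\|\widetilde u\|_{L^\infty(\mathscr C_{2r})}\le Cr^3$ and \cref{lemma:monotonicity-ass} give $\mu_r\gtrsim 1$, hence $\omega(r)\approx r$, which is the paper's Step 2. Your remarks about $\partial_t p_3\neq 0$ play no role there.

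The gap is the quadratic case, where you never establish $\mu_r\gg r$. The compactness argument at the level of the first blow-up cannot produce it. The limit $p$ is time-independent (\cref{prop:p2-indep-time}), so $\partial_t w_r\to 0$ and no lower bound survives. The ``tangential smallness'' $\varepsilon_r\to 0$ you then look for is a misdiagnosis: none is needed, since any bound $\mu_r\ge c\,r^{\theta}$ with $\theta<1$ closes the argument through \cref{remark:faciledopo} exactly as you wrote it. Moreover, the claim $\nabla_{x'}v=o\bigl(r\cdot H(r,v)^{1/2}/r^2\bigr)$ is unjustified. At scale $r$ both sides are naturally of order $H(r,v)^{1/2}/r$, because the second blow-up $q$ depends on the tangential variables. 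In any case, what enters the estimate is $\nabla_{x'}\widetilde u$, not $\nabla_{x'}v$.

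You name \eqref{eq:boundH-clean} as the key input, but two steps are missing, and without them the quadratic case is not proved.
\begin{itemize}
\item \emph{Lower bound on $H$.} \eqref{eq:boundH-clean} only bounds an \emph{average} of $\partial_t\widetilde u$ from below by $c\,H(r,v)^{1/2}/r^2$. This is useful only together with $H(r,v)^{1/2}\ge c_\varepsilon r^{2+\varepsilon}$. That bound comes from $\widetilde N(0^+,v)=2$ (the barrier argument in Step 1 of \cref{prop:second-blow-up}) combined with point $i)$ of \cref{lemma:asymptotic-second-bu}. Taking $\varepsilon=1/2$, the average of $\partial_t\widetilde u$ is $\gtrsim r^{1/2}$.
\item \emph{From average to infimum.} $\mu$ in \cref{remark:faciledopo} is an infimum, so the average must be upgraded to a pointwise bound. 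Since $f$ is time-independent, $\partial_t\widetilde u\ge 0$ is caloric away from the thin space. Run the same argument on a backward region such as $(B_{2r}\times(-4r^2,-2r^2))\cap\{|x_{n+1}|\ge\sigma r\}$ and apply the parabolic Harnack inequality. This gives $\partial_t\widetilde u\ge c_\sigma r^{1/2}$ on $\mathscr C_r\cap\{|x_{n+1}|\ge\sigma r\}$.
\end{itemize}
Now use the quadratic rescaling $r^{-2}\widetilde u(r\cdot,r^2\cdot)$. It stays bounded by the growth estimate $\|\widetilde u\|_{L^\infty(\mathscr C_r)}\le Cr^2$; note that on the two-sided cylinder this needs the comparison argument of \cite[Lemma 9.4]{dgpt17}, not just the frequency. \cref{remark:faciledopo} then gives a constant $C\lesssim r^{-1/2}$, hence $\omega(r)\simeq r^{1/2}$.
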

		\begin{proof}
			By \cref{prop:onlyle3}, we have that $\Sigma=\Gamma_2\cup \Gamma_3$.
            Let $\kappa\in\{2,3\}$ be the frequency at $(0,-1/2)$. The backward growth estimate, combined with the heat comparison argument in the proof of \cite[Lemma 9.4]{dgpt17} and the higher-order vanishing of the source term, yields $$\|\widetilde u\|_{L^\infty(\mathscr C_r)}\le Cr^\kappa,$$
            for every sufficiently small $r$.
            Now we distinguish two cases according to the frequency at $(0,-1/2)$.

			\smallskip
			\noindent\textit{Step 1: $(0,-1/2)\in\Gamma_2$.} In this case, we first claim that for every $\sigma\in(0,1)$ there exists $c_\sigma>0$ such that \be\label{eq:claim3-for-quadratic}\partial_t\widetilde u\ge c_\sigma r^{1/2}\qquad\text{in }\mathscr C_{r}\cap \{|x_{n+1}|\ge \sigma r\}.\ee
			To prove the claim, we argue as in \textit{Step 1} of the proof of \cref{prop:cleaning-gamma2}. Precisely, 
			the same argument in \cref{prop:cleaning-gamma2} shows that for every $\sigma\in(0,1)$ there exists $c_\sigma>0$ such that \bea\fint_{(B_{2r}\times(-4r^2,-2r^2))\cap\{|x_{n+1}|\ge \sigma r\}}\partial_t\widetilde u\ge c_\sigma\frac{H(r,v)^{1/2}}{r^{2}}\qquad \forall r\in(0,r_{0}),\eea 
			where $p\in \mathcal{P}_{2}^{+}$ is the blow-up at $(0,-1/2)$ and $v:=\zeta\widetilde u-p$.
			We also observe that, by point $i)$ in \cref{lemma:asymptotic-second-bu} with $\eps=1/2$, we have $r^{-2}H(r,v)^{1/2}\ge c r^{1/2}$.
			Then, arguing as in \textit{Step 2} of the proof of \cref{prop:cleaning-gamma2}, Harnack inequality implies the claim \eqref{eq:claim3-for-quadratic}.
			
			Now we would like to apply \cref{lemma:semiconvexity-estimate2} to the rescaled function $\widetilde u_r(x,t):=r^{-2}\widetilde u(rx,r^2t)$.
			We also use the explicit dependence of the constant $C$ given by \cref{remark:faciledopo}. More precisely, by rescaling \eqref{eq:claim3-for-quadratic}, we get 
			$$\partial_t\widetilde u_r\ge c_\sigma r^{1/2}\qquad\text{in }\mathscr C_{1}\cap \{|x_{n+1}|\ge \sigma \}.$$
			We choose $\sigma:=\rho$, where $\rho$ is the constant in the proof of \cref{lemma:semiconvexity-estimate2}. In particular, $\sigma$ depends only on $n$. By \cref{remark:faciledopo}, $C$ can be chosen so that $C\le C'(1+M_r c^{-1}_\sigma r^{-1/2})$, where $M_r:=\|\widetilde u_r\|_{L^\infty(\mathscr{C}_1)}+\|f_r\|_{C^{2}(B_{7/8})}$ and $f_r(x,t):=f(rx,r^2t)$. Note that $M_r$ is bounded by a constant which does not depend on $r$. 
			Then, by \cref{lemma:semiconvexity-estimate2}, we have
			\bea|\nabla_{x'}\widetilde u_r|\le C(1+r^{-1/2})\partial_t\widetilde u_r\qquad\text{in } \mathscr C_{1/2}.\eea
			Rescaling back, we get 
			\bea|\nabla_{x'}\widetilde u|\le C(r+r^{1/2})\partial_t\widetilde u\le Cr^{1/2}\partial_t\widetilde u\qquad\text{in } \mathscr C_{r/2}.\eea
			Then, we conclude \eqref{eq:eqeqeq2}, for $\omega(r)\simeq r^{1/2}$, concluding the proof of \textit{Step 1}.

			\smallskip
			\noindent\textit{Step 2: $(0,-1/2)\in\Gamma_3$.} In this case we again apply \cref{lemma:semiconvexity-estimate2} to the function $\widetilde u_r(x,t):=r^{-3}\widetilde u(rx,r^2t)$, but the cubic rescaling simplifies the proof. Precisely, let $f_r(x,t):=r^{-1}f(rx,r^2t)$, then, arguing as in \textit{Step 1}, $M_r:=\|\widetilde u_r\|_{L^\infty(\mathscr{C}_1)}+\|f_r\|_{C^{2}(B_{7/8})}$ is bounded by a constant which does not depend on $r$. Moreover, $\widetilde u_r$ satisfies the estimate in \cref{lemma:monotonicity-ass} for some constant $c$ which does not depend on $r$. 
			Indeed, by the cubic rescaling, $$\partial_{t}\widetilde u_r=r^{-1}\partial_t \widetilde u(rx,r^2t)\ge c|x_{n+1}|\qquad\text{in } \mathscr C_{3/4}.$$
			Then, by a covering argument, \cref{lemma:semiconvexity-estimate2} gives a constant $C>0$ such that $$|\nabla_{x'}\widetilde u_r|\le C\partial_t\widetilde u_r\qquad\text{in }\mathscr C_{1/2},$$ for some constant $C>0$ independent of $r$. Rescaling back, we get $$|\nabla_{x'}\widetilde u|\le Cr\partial_t\widetilde u\qquad\text{in }\mathscr C_{r/2}.$$ The last estimate gives \eqref{eq:eqeqeq2}, for $\omega(r)\approx r$, concluding the proof.
		\end{proof}
		\begin{proof}[Proof of \cref{theorem:structure-spacetime}]
			We assume without loss of generality that $(x_0,t_0)=(0,-1/2)$. Let $\widetilde u:=\widetilde u^{0,-1/2}$ be defined in \eqref{eq:utilde} and recall that $\Lambda(\widetilde u)=\{\widetilde{u} =0\}'$.
			We divide the proof into several steps.
			
			\smallskip
			\noindent\textit{Step 1.} First, we claim that there exist $R>0$ and $L>0$ such that for every $(x_1,t_1)\in\Lambda(\widetilde u)\cap \mathscr{C}'_{R}$ and $(x_2,t_2)\in \overline{\{\widetilde u>0\}'}\cap \mathscr{C}'_{R}$, with $t_1> t_2$, we have \be\label{eq:claim1-strati} 0\le t_1-t_2\le L|x_1'-x_2'|.
			\ee
			In order to prove \eqref{eq:claim1-strati}, we first recall that, by \cref{lemma:semiconvexity-estimate2}, \be\label{eq:est1}|\nabla_{x'} \widetilde u|\le L\partial_t\widetilde u\qquad\text{in }\mathscr{C}_{1/4},\ee for some constant $L>0$.
			For $\eta>0$ small, we set $$\gamma_\eta(s):=\left(x_1'-s(x_1'-x_2'),\eta,t_1-sL|x_1'-x_2'|\right),$$ and we observe that $\gamma_\eta(s)\in \mathscr{C}_{1/4}$ for every $s\in[0,1]$, provided $R$ is chosen sufficiently small. Therefore, by \eqref{eq:est1}, we get $$\frac{d}{ds}\widetilde u(\gamma_\eta(s))=\nabla_{x'}\widetilde u\cdot (x_2'-x_1')-L\partial_t\widetilde u|x'_1-x'_2|\le0.$$
			Passing to the limit as $\eta\downarrow0$, we obtain that $\widetilde u\circ\gamma_0$ is non-increasing.
			It is enough to prove the claim when $\widetilde u(x_2,t_2)>0$. The general case follows by approximating $(x_2,t_2)$ with points in the positivity set of $\widetilde u$. Suppose by contradiction that $t_1-t_2>L|x_2'-x_1'|$, then, by the monotonicity of $\widetilde u$ and $\widetilde u\circ \gamma$, we obtain $$0<\widetilde u(x_2,t_2)\le \widetilde u(x_2,t_1-L|x_2'-x_1'|)=\widetilde u(\gamma_0(1))\le \widetilde u(\gamma_0(0))=\widetilde u(x_1,t_1)=0,$$ a contradiction. This concludes the proof of the claim \eqref{eq:claim1-strati}.
			
			\smallskip
			\noindent\textit{Step 2.} Next we prove that there exists $\rho>0$ and there exist two times $T_+>0$ and $T_-<0$ such that \be\label{eq:claim2-strati}\widetilde u(x,T_+)>0\quad\text{and}\quad \widetilde u(x,T_-)=0\qquad\forall x\in B'_\rho.\ee 
			Moreover, we will choose $\rho$ such that $L\rho\le R^2/4$.
			
			To prove \eqref{eq:claim2-strati}, we use \eqref{eq:claim1-strati}.
			First, since $(0,0)$ is a free boundary point for $\widetilde u$, then there exists $(y,s)\in \mathscr{C}_R'$ close to $(0,0)$ such that $\widetilde u(y,s)>0$. We can also take $s>0$, by monotonicity of $\widetilde u$. We set $T_+:=s + L(\rho+|y|)>0$ and note that, if $(y,s)$ are sufficiently close to $(0,0)$, then $T_+<R^2$. We prove that $\widetilde u(x,T_+)>0$ for every $x\in B_\rho'$, provided $\rho>0$ is chosen sufficiently small. By contradiction, suppose that $\widetilde u(\overline x,T_+)=0$ for some $\overline x\in B_\rho'$. Since $T_+>s$, we can apply \eqref{eq:claim1-strati} to deduce that $$L(\rho+|y|)= T_+-s\le L|\overline x-y|< L(\rho+|y|),$$ a contradiction. This implies that $\widetilde u(x,T_+)>0$ for every $x\in B_\rho'$.
			
			Similarly, we set $T_-:=-L\rho<0$ and note that, if $\rho$ is chosen small enough, then $T_->-R^2$. We prove that $\widetilde u(x,T_-)=0$ for every $x\in B_\rho'$. By contradiction, suppose that $\widetilde u(\overline x,T_-)>0$ for some $\overline x\in B_\rho'$. Since $\widetilde u(0,0)=0$ and $T_-<0$, by \eqref{eq:claim1-strati}, $$L\rho= 0-T_-\le L|\overline x|< L\rho,$$ a contradiction. This implies that $\widetilde u(x,T_-)=0$ for every $x\in B_\rho'$, concluding the proof of \eqref{eq:claim2-strati}.
			
			\smallskip
			\noindent\textit{Step 3.} We consider the function $\tau:B'_\rho\to\R$ defined as $$\tau(x',0):=\sup\{t\in[T_-,T_+]:\ \widetilde u(x',0,t)=0\}.$$ By continuity and time monotonicity of $\widetilde u$, the supremum is attained, and $$\widetilde u(x',0,t)=0 \quad\text{for }t\le\tau(x',0)\qquad\text{and}\qquad \widetilde u(x',0,t)>0 \quad\text{for }t>\tau(x',0).$$ 
			  By applying \eqref{eq:claim1-strati} first to the points $(x_1',0,\tau(x_1',0))$, $(x_2',0,\tau(x_2',0))$, and then reversing the order and reapplying \eqref{eq:claim1-strati}, we obtain that the function $\tau$ is $L$-Lipschitz \bea\label{eq:lip-reg-tau}|\tau(x_1',0)-\tau(x_2',0)|\le L|x_1'-x_2'|.\eea
			By decreasing $\rho$ so that $\mathscr{C}_\rho=B_\rho\times(-\rho^2,\rho^2)\subseteq B_\rho\times(T_-,T_+)$, by \eqref{eq:claim2-strati} we have $$\Lambda(\widetilde u)\cap \mathscr{C}'_\rho=\left\{(x',0,t)\in \mathscr{C}'_\rho: \ t\le \tau(x',0)\right\},\qquad\Gamma(\widetilde u)\cap \mathscr{C}'_\rho=\left\{(x',0,t)\in \mathscr{C}'_\rho: \ t=\tau(x',0)\right\}.$$ 
			It remains to prove that $\tau\in C^1(B_\rho')$ and that its gradient vanishes precisely at singular
			points. Then, we pick a point $x_0\in B_\rho'$ and set $t_0=\tau(x_0',0)$. We need to prove that $\nabla_{x'} \tau$ is continuous at $x_0$.
			
			\smallskip
			\noindent\textit{Step 4.}
			First suppose that $(x_0,t_0)$ is a regular point. Therefore, by \cite[Corollary 3.3]{petrosyan2019boundedness}, up to a rotation, the free boundary of $\widetilde u$ has the form $x_n=g(x'',t)$, $x''=(x_1,\ldots,x_{n-1})$, for some $g$ of class $C^{1,\alpha}$ in the space-time variables. 
			We write $(x_0,t_0)=( x_0'',g( x_0'', t_0),0,t_0)$. For $\overline t$ sufficiently close to $t_0$, we take the free boundary point $( x_0'',g( x_0'',\overline t),0,\overline t)$. By \eqref{eq:claim1-strati}, we get $$|\overline t-t_0|\le L|g( x_0'', \overline t)-g( x_0'', t_0)|.$$ As a consequence, since $g$ is differentiable in the $t$ variable, then $|\partial_t g(x_0'',t_0)|\ge L^{-1}>0$. 
			In particular, differentiating $g(x'',\tau(x',0))=x_n$ in the $e_n$ direction, we have $$\partial_{x_n}\tau(x_0',0)=\frac{1}{\partial_t g(x_0'',t_0)}\not=0.$$
			This implies that $\nabla_{x'}\tau(x_0',0)\not=0$ at regular points.
			Moreover, using again that $\partial_t g(x_0'',t_0)\not=0$, in a sufficiently small neighborhood of $(x_0,t_0)$, by the implicit function theorem, we have that $x_n=g(x'',t)$ if and only if $t=\widetilde \tau(x'',x_n)$, for some $\widetilde \tau$ of class $C^{1,\alpha}$. Since the free boundary is given by $t=\tau(x',0)$ and also by
			$t=\widetilde \tau(x'',x_n)$, then $\tau(x',0)=\widetilde \tau(x')$ and thus $\tau(x',0)$ is $C^{1,\alpha}$ in a neighborhood of $(x_0,t_0)$. This concludes the continuity of $\nabla_{x'}\tau$ around regular points.
			
			\smallskip
			\noindent\textit{Step 5.} Now we suppose that $(x_0,t_0)$ is a singular point.
			We proceed by approximation. After slightly decreasing $\rho$, we observe that $$\overline\eps:=\min_{x\in \overline {B_\rho'}} \widetilde u(x,T_+)>0.$$ Moreover, in $\big( B_\rho'\times(T_-,T_+)\big)\setminus\Lambda(\widetilde u)$, $\widetilde u$ is regular and $\partial_t\widetilde u>0$. Then, for $\eps\in(0,\overline\eps)$, the level sets $\{\widetilde u=\eps\}$ in $B_\rho'\times(T_-,T_+)$ can be written as $t=\tau_\eps(x',0)$, where $\tau_\eps$ is a $C^1$ function. Differentiating $\widetilde u(x',0,\tau_\eps(x',0))=\eps$ with respect to $x'$, we get $$\nabla \tau_\eps(x',0)=-\frac{\nabla_{x'}\widetilde u(x',0,\tau_\eps(x',0))}{\partial_t\widetilde u(x',0,\tau_\eps(x',0))}.$$
			Notice that, by \cref{lemma:semiconvexity-estimate2}, the functions $\tau_\eps$ are uniformly $L$-Lipschitz.
			Then, for $(x',0)\in B_r(x_0)$, we have $|\tau_\eps(x',0)-t_0|\le (L+1)r$, for $\eps$ sufficiently small depending on $r$. Therefore, by \cref{lemma:modulus-of-cont} applied at scale comparable with $r^{1/2}$, we obtain for every $\eps$ sufficiently small $$|\nabla\tau_\eps(x',0)|\le \omega(Cr^{1/2})\qquad\forall (x',0)\in B_{r}'(x_0),$$ where $\omega(Cr^{1/2})\downarrow0$ as $r\downarrow0$. Therefore $\tau_\eps \to \tau$ along a subsequence as $\eps\downarrow 0$, and so $$|\nabla\tau(x',0)|\le \omega(Cr^{1/2})\qquad\forall (x',0)\in B_{r}'(x_0).$$ Passing to the limit as $r\downarrow0$, we get $\nabla \tau(x_0',0)=0$. Moreover, the previous inequality also implies that if $x'_j\to x'_0$, then $|\nabla \tau(x_j)|\le \omega(Cr^{1/2})$ for $j$ sufficiently large. By the arbitrariness of $r$, we get $\nabla \tau(x_j)\to 0$. This is the desired continuity of $\nabla\tau$ at $x_0$. Finally, in \textit{Step 4} we proved that at regular points the gradient of $\tau$ is not zero, while it vanishes at singular points. This concludes the proof.
		\end{proof}
		
		\begin{proof}[Proof of \cref{theorem:structure-spacetime-intro}]
			It is a direct consequence of the more detailed statement in \cref{theorem:structure-spacetime}.
		\end{proof}

		\section{Uniqueness of blow-ups at cubic points}\label{sec:epiperimetric}
		In this section we prove the uniqueness of blow-up limits with a polynomial rate of convergence for solutions of \eqref{eq:parabolic-signorini-problem} at cubic free boundary points. This will be used crucially later in \cref{sec-cleaning-cubic} to show that $\Gamma_{3}$ is cleaned up with a superquadratic rate. We let 
		\bea\mathcal{P}_{3}^+:=\Big\{p:S_\infty\to\R :\ &\mathbf{H}p\le 0,\ \mathbf{H}p=0\text{ in } S_\infty^+,\  p\equiv 0 \text{ on }S_\infty',\\ 
			&Zp=3p,\  p(x',x_{n+1},t)=p(x',-x_{n+1},t)\Big\}
		\eea
		be the set of admissible cubic blow-ups. The main result of this section is the following.
		\begin{theorem}\label{thm:rate-for-u}
			Let $u$ be a solution of \eqref{eq:parabolic-signorini-problem}, with obstacle $\varphi$ satisfying \eqref{eq:ostacolo}, and assume that $(0,0)\in\Gamma_3$. Let $\widetilde u:=\widetilde u^{0,0}$ be defined as in \eqref{eq:utilde}. Then, there are $r_{0}>0$ and a unique non-zero $p\in\mathcal{P}_{3}^+$ such that
			$$\|\widetilde {u}-p\|_{L^\infty(Q_r)}\le Cr^{3+\alpha}\qquad\forall  r\in(0,r_{0}),$$ where $C>0$ depends only on $n$, $k$, $\gamma$, $\|\varphi\|_{C^{k,\gamma}(B_1')}$, $\|u\|_{L^\infty(Q_1)}$, while $\alpha \in (0,1)$ depends only on $n$.
		\end{theorem}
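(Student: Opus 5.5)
We will follow the epiperimetric strategy of \cite{shi18}, adapted to odd frequency $3$. The argument is carried out on the Gaussian-weighted energy of $w=\zeta\widetilde u$, with $w$ as in \eqref{eq:subtraction-obstacle+cutoff}. The natural functional is the Weiss-type energy $\mathcal{W}_3(r,w)=r^{-6}\big(I(r,w)-\tfrac32 H(r,w)\big)$. Following \cite[Chapter 13]{dgpt17}, we first check that $r\mapsto \mathcal{W}_3(r,w)+Cr^{2(\ell-3)}$ is non-decreasing, with derivative bounded below by $\frac{c}{r^{7}}\int_{S_r^+}(Zw-3w)^2G$ up to errors of order $r^{2\ell-7}$. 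Since $(0,0)\in\Gamma_3$ and the frequency is exactly $3$, we also get $\mathcal{W}_3(0^+,w)=0$ and $\mathcal{W}_3(r,w)\ge -Cr^{2(\ell-3)}$.

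The core step is a parabolic epiperimetric inequality at frequency $3$. Work in self-similar variables $\tau=-\log r$ and $y=x/\sqrt{-t}$, so that the rescaled $w$ solves an Ornstein--Uhlenbeck Signorini problem in the Gaussian space $L^2(G(\cdot,-1))$. The Weiss energy then becomes $\int|\nabla \cdot|^2G-\tfrac32\int(\cdot)^2G$ on the slice, up to the usual space-time averaging. We aim to prove the following: for any trace $c$ close in $L^2$ to $\mathcal{P}_3^+$, there is an admissible competitor (the solution of a suitable linear flow with a constraint) whose energy is at most $(1-\kappa)$ times that of the $3$-homogeneous extension of $c$. The key structural fact is that $3$ is an eigenvalue of $-\mathcal{L}$ on even-in-$x_{n+1}$ functions vanishing on the thin space. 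In the odd-frequency regime the contact set of blow-ups is the whole thin space. Hence near $\mathcal{P}_3^+$ the Signorini condition is essentially ``$w=0$ on the thin space'' together with a sign on $\partial_{n+1}$. Decompose the trace in the spectral basis of the Dirichlet problem on $\R^{n+1}\setminus\{x_{n+1}=0\}$, viewed as odd eigenfunctions $|x_{n+1}|$-reflected. Modes of homogeneity above $3$ are damped by a factor $<1$, and the frequency-$3$ modes cost zero energy. Modes below $3$ must be excluded or controlled using the gap \eqref{eq:giacomo-classification} together with the thin obstacle constraint. The component of the trace on the thin space, where $w>0$, is handled using the nonnegativity of $w$ on $S'$ and $\partial_{n+1}p\le 0$. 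This produces a favorable sign in the cross term, as in the odd-frequency elliptic results \cite{sy23,cv24}.

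We expect the main obstacle to be precisely this spectral/epiperimetric step. Two difficulties arise. First, the frequency-$2$ and lower modes, which could appear in the nonzero thin trace, must be shown to be negligible. Second, the boundary term on the thin space must have the right sign. We would first try a direct ``linearize and absorb'' argument. Since $w\ge 0$ on the thin space and $p$ vanishes there, the thin trace $w(\cdot,0)$ contributes a nonnegative quantity. We would then try to bound its Gaussian $L^2$ mass by the energy excess, through $\partial_{n+1}w\le0$ and the Signorini complementarity.

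With the epiperimetric inequality in hand, the standard ODE argument applies. Combined with the monotonicity formula, it gives $\mathcal{W}_3(r,w)\le Cr^{2\alpha}$ for some $\alpha>0$ depending only on $n$, after absorbing the $r^{2(\ell-3)}$ errors since $\ell>4$. The derivative bound then yields
\[
\int_{\rho}^{r}\Big(\frac{1}{s^{7}}\int_{S_s^+}(Zw-3w)^2G\Big)^{1/2}ds\le Cr^{\alpha}.
\]
Consequently, the rescalings $w(r\cdot,r^2\cdot)/r^3$ are Cauchy in $L^2(G)$ with rate $r^{\alpha}$. This gives a unique limit $p$, which lies in $\mathcal{P}_3^+$ by the blow-up classification, and $p\ne0$ because the frequency is $3$. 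Finally, we upgrade the Gaussian $L^2$ rate to the $L^\infty(Q_r)$ bound $Cr^{3+\alpha}$. For this we apply the $L^2$-$L^\infty$ estimate of \cref{lemma:wang-start} to $(w-p)^{\pm}$, which are subcaloric up to the error $f$ controlled by \eqref{controlf}, together with the localization argument of \cref{lemma:controlvconH}.
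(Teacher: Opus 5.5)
Your outer architecture matches the paper: Weiss monotonicity, a decay estimate for the Weiss energy, exponential decay in $\tau$, Cauchy property of the rescalings, nondegeneracy from frequency $3$, and the $L^2$--$L^\infty$ upgrade. The gap is the epiperimetric step, which carries all the content. You do not supply it, and the route you sketch does not work in this setting.

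An inequality of the form ``some admissible competitor has energy at most $(1-\kappa)$ times that of the $3$-homogeneous extension of the trace'' yields decay only through minimality of the solution among such competitors, as in the elliptic problem. Solutions of the parabolic Signorini problem are not energy minimizers with respect to their own data, so such a statement says nothing about $W_3$ along the flow. What is needed is a statement about the solution itself, $\widehat W_3(\widetilde w,\tau+1)\le(1-\eps_0)\widehat W_3(\widetilde w,\tau)$. This amounts to bounding $\int_\tau^{\tau+1}W_3(\widetilde w,s)\,ds$ by the dissipation $\int_\tau^{\tau+1}\|\partial_\tau\widetilde w\|_{L^2_\mu}^2$.

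Your sign observation points the wrong way for this. With $\widetilde w=\widetilde p+\widetilde v$ one has $W_3(\widetilde w)=W_3(\widetilde v)+2\int_{\{y_{n+1}=0\}}\widetilde w\,\widetilde T[\widetilde p]\,G(\cdot,-1)\ge W_3(\widetilde v)$. That is a lower bound on $W_3(\widetilde w)$. It helps an elliptic competitor that may drop the thin trace, but not an upper bound by the dissipation. Likewise, the gap $(2,3)$ concerns homogeneous global solutions and gives no control on lower modes of a perturbation.

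The missing mechanism is the paper's contradiction/linearization argument.
\begin{itemize}
\item If decay fails with $\eps_j\downarrow0$, the Weiss identity forces $\int_{I_j}\|\partial_\tau\widetilde w_j\|_{L^2_\mu}^2\le2\eps_j\int_{I_j}W_3(\widetilde w_j)$, with the forcing terms absorbed.
\item One subtracts the projection $\widetilde q_j$ onto the \emph{linear} eigenspace $\mathcal{E}_3$. Complementarity $\widetilde w_j\partial_{n+1}\widetilde w_j=0$ together with $\widetilde q_j=0$ on the thin space kills the boundary term. This gives $\int_{I_j}W_3(\widetilde w_j)\le C\int_{I_j}\|\widetilde v_j\|^2_{L^2_\mu}$.
\item Nonnegativity $\widehat W_3\ge0$ then gives $\int_{I_j}\|\nabla\widetilde v_j\|^2_{L^2_\mu}\le C\int_{I_j}\|\widetilde v_j\|^2_{L^2_\mu}$.
\item The normalized differences converge to a $\tau$-independent $\hat v\perp\mathcal{E}_3$ with $\|\hat v\|_{L^2_\mu}=1$ and $\mathcal{L}_3\hat v=0$ in $\R^{n+1}_+$. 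It is $\partial_\tau\hat v=0$, not a frequency gap, that disposes of lower modes.
\item It remains to show $\hat v=0$ on the thin space, so that $\hat v\in\mathcal{E}_3$ and hence $\hat v=0$, a contradiction.
\end{itemize}

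The paper does the last step with the localization lemma and a capacity argument. Closeness to $p$, upgraded to $L^\infty$, forces $\widetilde w_j=0$ on $\{\widetilde T[\widetilde p]\ge\sigma\}$ via a barrier, and $\{\widetilde T[\widetilde p]=0\}$ has zero capacity.

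Your nonnegativity idea can in fact be used at exactly this point instead. Since $\widetilde q_j=0$ and $\widetilde w_j\ge0$ on the thin space, $\hat v\ge0$ there. Test against $e(y)=\frac16|y_{n+1}|(y_{n+1}^2-6)\in\mathcal{E}_3$, for which $-\partial_{n+1}e\equiv1$ on the thin space. Green's identity then gives $\int_{\{y_{n+1}=0\}}\hat v\,G(\cdot,-1)=0$, so $\hat v$ vanishes there. But this only becomes effective inside the compactness scheme above, which your proposal does not contain.
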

		This result is based on an epiperimetric inequality approach. 
		More precisely, after a conformal change of coordinates, in \cref{prop:epiperimetric-inequality} we prove a discrete decay estimate of the Weiss' energy, which can be viewed as a parabolic epiperimetric inequality. Iterating this decay, we obtain the uniqueness result in \cref{thm:rate-for-u}.
		
		Similar strategies are used to prove uniqueness of blow-ups at even frequencies, with a logarithmic rate of convergence \cite{shi18}. Regarding the odd frequencies, we refer to \cite{cv24,col2025}, where the uniqueness result was proved respectively in the elliptic Signorini problem and for the cubic second blow-up of the parabolic obstacle problem, in the top stratum. 
		\subsection{Odd frequencies in the parabolic Signorini problem}
		More generally, in this section we will study the blow-ups at points with any odd integer frequency $2m+1$, for $m\in \N\setminus\{0\}$. Moreover, the analysis will be carried out for general solutions of the parabolic Signorini problem, even without the monotonicity assumption $\partial_{t}u> 0$, that is, in the framework of \cite{dgpt17}. After subtracting the obstacle and multiplying by a cutoff according to the procedure described in \cref{sec-subtraction-obstacle}, one is led to study solutions $w$ of \eqref{eq:w-solves} with right-hand side $f$ satisfying \eqref{controlf}. 
		We define $\Gamma(w)$ as the set of free boundary points, as in \eqref{eq:def-freeboundary}.
		Similarly, as in \eqref{eq:def-gammalambda}, for $\lambda<k+\gamma$, we denote by $\Gamma_\lambda(w)$  the free boundary points of frequency $\lambda$.
		For any $m\in\N$, we let
		\bea\mathcal{P}_{2m+1}^+:=\Big\{p:S_\infty\to\R :\ &\mathbf{H}p\le 0,\ \mathbf{H}p=0\text{ in } S_\infty^+,\  p\equiv 0 \text{ on }S_\infty',\\ 
			&Zp=(2m+1)p,\  p(x',x_{n+1},t)=p(x',-x_{n+1},t)\Big\}
		\eea
		be the set of admissible $(2m+1)$-homogeneous blow-ups. \cref{thm:rate-for-u} is an immediate consequence of the following more general statement.

		\begin{proposition}\label{prop:rate-for-w}
			Let $\lambda:= 2m+1$ for some $m\in \N\setminus \{0\}$. Let $w$ be a solution of \eqref{eq:w-solves}, with right-hand side $f$ satisfying \eqref{controlf} for some $\ell>\lambda$, and assume that $(0,0) \in\Gamma_{\lambda}(w)$. Then, there are $r_{0}>0$ and a unique non-zero $p\in\mathcal P_{\lambda}^+$ such that 
			$$\|w-p\|_{L^\infty(Q_r)}\le Cr^{\lambda+\alpha}\qquad\forall  r\in(0,r_{0}),$$ where $C>0$ and $\alpha\in (0,1)$ depend only on $n$, $m$, $C_{f}$, $\ell-\lambda$, $\|w\|_{L^\infty(Q_1)}$.
		\end{proposition}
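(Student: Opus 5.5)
We plan to prove \cref{prop:rate-for-w} by an epiperimetric-inequality argument for a Weiss-type energy in Gaussian (self-similar) variables. We pass to the conformal coordinates $y=x/\sqrt{-t}$, $s=-\log(-t)$, and set
\[
W(y,s):=e^{\lambda s/2}w\bigl(e^{-s/2}y,-e^{-s}\bigr).
\]
Then $W$ solves $\partial_s W-\mathcal{L}_\lambda W=$ (rescaled source) in $\{y_{n+1}\neq0\}$, together with the Signorini conditions on the thin space. The $\lambda$-homogeneous elements of $\mathcal{P}^+_\lambda$ become stationary solutions. We use the Weiss energy $\mathcal W_\lambda(r,w)=r^{-2\lambda}(I-\tfrac\lambda2H)$, which in the new variables is the Gaussian integral of $|\nabla W|^2-\tfrac\lambda2W^2$ on a unit time slab. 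Its derivative controls $\int(\partial_sW)^2\,d\gamma$, up to an error of size $e^{-(\ell-\lambda)s/2}$ coming from $f$ by \eqref{controlf}. From $(0,0)\in\Gamma_\lambda(w)$ and the frequency monotonicity we get $\mathcal W_\lambda(0^+,w)=0$ and $H(r,w)\lesssim r^{2\lambda-\eps}$.

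The core step is a discrete epiperimetric inequality. We decompose $W(\cdot,s)$ in the Hermite/Ornstein--Uhlenbeck eigenbasis on $\R^{n+1}$, restricted to even functions in $y_{n+1}$. The key structural fact for odd $\lambda$ is that the admissible profiles $\mathcal P_\lambda^+$ are exactly the even reflections of $|y_{n+1}|$ times caloric polynomials vanishing appropriately. Near the thin space these behave like the odd caloric polynomials of degree $\lambda$. Moreover, functions in $\mathcal{P}^+_\lambda$ satisfy $p\equiv0$ on the thin space, so that $\mathbf Hp\le0$ concentrates on the hyperplane. We want to show that there is $\delta>0$ such that, on each unit interval $[s,s+1]$,
\[
\mathcal W(s+1)\le(1-\delta)\mathcal W(s)+Ce^{-cs}.
\]
Equivalently, $\mathcal W(s)-\mathcal W(s+1)\ge\delta\mathcal W(s)$, which we compare with $\int_s^{s+1}\|\partial_sW\|^2$. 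We argue by splitting $W=P+R$, where $P$ is the projection onto the $\lambda/2$-eigenspace of $\mathcal{L}$ (parabolic $\lambda$-homogeneous caloric polynomials, even in $y_{n+1}$) and $R$ is the rest. Modes above $\lambda$ contribute positively to $\mathcal W$ and decay exponentially. Modes below $\lambda$ give negative energy, but they are controlled by the Signorini constraint, using the sign conditions $W\ge0$, $-\partial_{n+1}W\ge0$ on the thin space. Since $\lambda$ is odd, an even-in-$y_{n+1}$ $\lambda$-homogeneous caloric polynomial with nonnegative trace must have a controlled structure. The gap to neighboring admissible frequencies, namely the classification \eqref{eq:basic-admissible-frequencies} and the fact that no homogeneous global solutions have frequency in $(\lambda-1,\lambda+1)\setminus\{\lambda\}$, will be proved by the same ODE/eigenfunction analysis as in \cite{col2025}. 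This is what yields $\delta>0$. We then follow \cite{shi18,cv24}: argue by contradiction and compactness, taking normalized sequences with $\delta\to0$ and deriving a nonzero limit that would be a homogeneous solution of frequency $\ne\lambda$ near $\lambda$, which contradicts the gap.

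From the discrete decay we get $\mathcal W_\lambda(r,w)\le Cr^{2\alpha'}$ by iteration; the error term is absorbed since $\ell>\lambda$. Integrating the energy identity $\tfrac{d}{dr}\mathcal W_\lambda\gtrsim r^{-1-2\lambda}\int (Zw-\lambda w)^2G/r^2$ minus errors, and applying Cauchy--Schwarz on dyadic scales, gives a Dini-type estimate
\[
\Bigl(\int_{S_1^+}\bigl(w_r-w_{r/2}\bigr)^2G\Bigr)^{1/2}\le Cr^{\alpha},
\qquad w_r:=r^{-\lambda}w(r\cdot,r^2\cdot).
\]
Hence $w_r$ is Cauchy in Gaussian $L^2$ and converges to a unique $p$ with rate $r^\alpha$. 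This $p$ belongs to $\mathcal P^+_\lambda$ by the blow-up classification; since $p$ is nonzero, nondegeneracy follows from $\mathcal W\to0$ and the frequency being exactly $\lambda$. We upgrade from Gaussian $L^2$ to $L^\infty(Q_r)$ using \cref{lemma:wang-start} applied to $(w-p)^\pm$. Both are subsolutions up to $f$, because $p\equiv0$ and $w\ge0$ on the thin space, exactly as in Step 1 of \cref{lemma:uniform-bounds-second-blow-up}. A covering/doubling argument then gives $\|w-p\|_{L^\infty(Q_r)}\le Cr^{\lambda+\alpha}$.

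We expect the epiperimetric step to be the main obstacle, specifically showing that the negative low-mode part cannot persist under the thin-space constraints without some odd-frequency-specific mechanism. The first thing we would try is a direct spectral computation. For an even function vanishing on the thin space in the $\lambda$-mode, the energy restricted to the lower modes satisfies a coercivity bound coming from $\int_{\{y_{n+1}=0\}}W\partial_{n+1}W\,d\gamma\le0$. If that fails, we fall back on the compactness-contradiction version using the frequency gap.
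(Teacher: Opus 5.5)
Your outer framework matches the paper: conformal variables, the Weiss monotonicity with an $e^{-(\ell-\lambda)\tau}$ error, discrete decay giving a Cauchy estimate and a unique limit, and the $L^2$--$L^\infty$ upgrade via \cref{lemma:wang-start} applied to $(w-p)^\pm$. But the step that carries all the content, the epiperimetric inequality, is not supplied, and both routes you sketch for it fail.

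Your splitting uses the $\lambda/2$-eigenspace of $\mathcal L$ on even functions on all of $\R^{n+1}$, i.e.\ $\lambda$-homogeneous caloric polynomials even in $x_{n+1}$. By \cref{lem:classification-odd-blow-ups}, however, every $p\in\mathcal P^+_\lambda$ equals $-|x_{n+1}|(q_0+x_{n+1}^2q_1)$ with $q_0\ge0$, $q_0\not\equiv0$. It is the even reflection of an \emph{odd} caloric polynomial, it has a kink across the thin space, and $\mathbf Hp$ is the nonzero measure $-2q_0\,d\mathcal H^{n+1}$ on $\{x_{n+1}=0\}$. So the blow-ups are not in your eigenspace. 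They have non-decaying components on infinitely many of your modes, whose positive and negative contributions cancel to give $W_\lambda(p)=0$. Hence the picture ``higher modes decay, lower modes are controlled'' already fails for the stationary profile. The sign information you want to exploit is also empty, since $W\,\partial_{n+1}W\equiv0$ on the thin space by complementarity.

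Your fallback rests on a false claim: $(\lambda-1,\lambda+1)\setminus\{\lambda\}$ is not frequency-free. Indeed $\lambda+\tfrac12=2(m+1)-\tfrac12$ is the frequency of the time-independent global solution $\operatorname{Re}(x_1+i|x_{n+1}|)^{\lambda+1/2}$. Even a true gap would not produce a decay rate: in a contradiction proof of an epiperimetric inequality the normalized limit solves a linearized problem, not the nonlinear one. In the paper, the small gap above $2m+1$ (\cref{prop:odd-gap}) is derived \emph{from} the epiperimetric inequality, not used to prove it.

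Two ideas are missing.
\begin{enumerate}
\item The comparison space must be the \emph{Dirichlet} eigenspace $\mathcal E_\lambda$ of $\mathcal L_\lambda$ on the half-space, i.e.\ conformal images of even reflections of odd $\lambda$-homogeneous caloric polynomials. For $\widetilde q\in\mathcal E_\lambda$ one has $W_\lambda(\widetilde q)=0$. Moreover, in the expansion of $W_\lambda(\widetilde w)$ around $\widetilde q$, the thin-space term $\int\widetilde v\,\partial_{n+1}\widetilde w$ vanishes, because $\widetilde w\,\partial_{n+1}\widetilde w=0=\widetilde q$ there.
\item Crucially, you need the localization of the contact set (\cref{lemma:localization}). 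If $\widetilde w$ is uniformly close to a normalized $\widetilde p\in\mathcal E^+_\lambda$ on a unit time interval, a quadratic barrier shows $\widetilde w=0$ on the part of the thin space where $-\partial_{n+1}\widetilde p\ge\sigma$.
\end{enumerate}

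With these, the contradiction argument runs as follows. Project the time average of $\widetilde w_j$ onto $\mathcal E_\lambda$. The failed decay together with the expansion above gives
$\int\|\partial_\tau\widetilde w_j\|_{L^2_\mu}^2\le C\eps_j\int\|\widetilde v_j\|_{L^2_\mu}^2$ and $\int\|\nabla\widetilde v_j\|_{L^2_\mu}^2\le C\int\|\widetilde v_j\|_{L^2_\mu}^2$.
Pass to a normalized limit $\hat v$. It is $\tau$-independent, $\mathcal L_\lambda$-harmonic in the half-space, and orthogonal to $\mathcal E_\lambda$. By localization, and since $\{-\partial_{n+1}\widetilde p=0\}$ has zero capacity, it also vanishes on the thin space. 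Thus $\hat v\in\mathcal E_\lambda$, hence $\hat v=0$, contradicting $\|\hat v\|_{L^2_\mu}=1$. Lower modes need no separate treatment, since stationarity of the limit already places it in the $\lambda$-eigenspace.

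Finally, this inequality holds only near normalized elements of $\mathcal E^+_\lambda$ and when $0\le\widehat W_\lambda\le1$, not on every unit interval as you state. Your iteration therefore also needs a bootstrap showing that closeness persists. This uses $\|\widetilde w(\cdot,\tau_2)-\widetilde w(\cdot,\tau_1)\|_{L^2_\mu}\le\bigl(\widehat W_\lambda(\widetilde w,\tau_1)(\tau_2-\tau_1)\bigr)^{1/2}$ together with the decay. It must be started from a rescaling normalized by $H(\rho,w)^{1/2}$ along a blow-up sequence, which is also what guarantees $p\neq0$.
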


		As a byproduct of our analysis, we obtain the following result of independent interest, a frequency gap above odd frequencies for the parabolic Signorini problem.
		\begin{proposition}
			\label{prop:odd-gap}
			For any $m\in \N\setminus \{0\}$, there is a constant $c_{n,m}>0$ depending only on $n$ and $m$ such that the following holds. If $q$ is a parabolically $\lambda$-homogeneous solution of the parabolic Signorini problem with zero obstacle \eqref{eq:parabolic-signorini-problem-global} for some $\lambda\in(2m+1,2m+1+c_{n,m})$, then $q\equiv0$.
		\end{proposition}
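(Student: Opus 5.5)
The plan is to derive the gap from the same discrete Weiss-energy decay (parabolic epiperimetric inequality) that drives \cref{prop:rate-for-w}, combined with a compactness argument. Set $\lambda_0:=2m+1$ and use the Weiss energy
$$\mathcal W_{\lambda_0}(r,q)=r^{-2\lambda_0}\Big(I(r,q)-\tfrac{\lambda_0}{2}H(r,q)\Big).$$
Let $q$ be a parabolically $\lambda$-homogeneous solution of \eqref{eq:parabolic-signorini-problem-global} with $\delta:=\lambda-\lambda_0>0$. Since $N(r,q)\equiv\lambda$ and $H(r,q)=r^{2\lambda}H(1,q)$, this energy is explicit:
$$\mathcal W_{\lambda_0}(r,q)=\frac{\delta}{2}\,r^{2\delta}H(1,q).$$
In particular it is strictly positive unless $q\equiv0$, and $\mathcal W_{\lambda_0}(r/2,q)=4^{-\delta}\,\mathcal W_{\lambda_0}(r,q)$. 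So homogeneous solutions with frequency slightly above $\lambda_0$ decay by a factor arbitrarily close to $1$ per dyadic scale. An epiperimetric-type inequality
$$\mathcal W_{\lambda_0}(r/2,q)\le\theta\,\mathcal W_{\lambda_0}(r,q),\qquad\theta=\theta(n,m)<1,$$
valid for solutions close to $\mathcal P^+_{\lambda_0}$ and with small nonnegative energy, would then force $4^{-\delta}\le\theta$. This gives $\delta\ge c_{n,m}:=\log(1/\theta)/\log 4$, the desired gap.

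\textbf{Step 1: compactness.} Argue by contradiction: suppose $q_j$ are normalized with $H(1,q_j)=1$ and have homogeneities $\lambda_j\downarrow\lambda_0$. I would show that, up to subsequences, $q_j\to q_\infty$ in Gaussian $L^2$ and in $H^{\alpha,\alpha/2}_{\rm loc}$. The tools are the uniform Gaussian bounds from the proof of \cref{lemma:controlvconH} (the $|x|^2$-moment bound and the non-concentration at small times, which here follow from exact homogeneity), together with the optimal regularity of \cite{dgpt17}. The limit $q_\infty$ is a nontrivial $\lambda_0$-homogeneous global solution.

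\textbf{Step 2: identify the limit.} Next I would show that $q_\infty\in\mathcal P^+_{\lambda_0}$, i.e.\ that it vanishes on the thin space. For this I would use the classification of homogeneous solutions at integer frequencies from \cite{dgpt17,col2025}. In self-similar variables, $q_\infty=|t|^{\lambda_0/2}\phi(x/\sqrt{-t})$ with $\phi$ an Ornstein--Uhlenbeck Signorini eigenfunction at an integer eigenvalue. A Gaussian Green identity against the even extensions of Hermite polynomials of degree $\lambda_0$ then forces $\phi\,\partial_{n+1}\phi$-type boundary terms to vanish, which gives $q_\infty=0$ on $S'_\infty$. Consequently, for $j$ large, $q_j$ lies in an arbitrarily small Gaussian neighbourhood of the cone $\mathcal P^+_{\lambda_0}$. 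Moreover $\mathcal W_{\lambda_0}(1,q_j)=\delta_j/2\to0^+$.

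\textbf{Step 3: the epiperimetric inequality (main obstacle).} This is the central step, and it is exactly what \cref{prop:epiperimetric-inequality} is meant to provide. I would work in conformal (self-similar) variables, where $\mathcal W_{\lambda_0}$ becomes a Gaussian energy $\int\big(|\nabla\phi|^2-\tfrac{\lambda_0}{2}\phi^2\big)G(\cdot,-1)$ along the flow $\tau=-\log|t|$. The competitor is built from $\phi=P+\psi$, with $P\in\mathcal P^+_{\lambda_0}$ a Gaussian-$L^2$ projection and $\psi$ expanded in Ornstein--Uhlenbeck eigenmodes. Modes with eigenvalue above $\lambda_0$ are damped by a definite factor. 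Modes at or below $\lambda_0$ must be controlled using the sign information $\phi\ge0$ and $\partial_{n+1}\phi\le0$ on the thin space against $\partial_{n+1}P\le0$. The main difficulty is that the cone $\mathcal P^+_{\lambda_0}$ is not a linear space: tangential directions along the cone must be absorbed by re-projecting. Since odd integer frequencies carry no logarithmic degeneracy (unlike the even case of \cite{shi18}), I expect a strict factor $\theta<1$, following the elliptic strategy of \cite{cv24}.

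Applying Step 3 to $q_j$ (with $r=1$) and using the exact scaling computed above then gives $4^{-\delta_j}\le\theta$, which contradicts $\delta_j\to0$. The constant $c_{n,m}$ depends only on $\theta$, and hence only on $n$ and $m$.
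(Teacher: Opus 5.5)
Your argument is essentially the paper's. The paper also takes a contradiction sequence of normalized homogeneous solutions with $\lambda_j\downarrow 2m+1$ and passes by compactness to a limit in $\mathcal P^+_{2m+1}$ (this is \cref{lem:classification-odd-blow-ups}, so your Green-identity sketch can be replaced by that citation). It then uses that the Weiss energy is exactly proportional to $\delta_j e^{-\delta_j\tau}$, and applies \cref{prop:epiperimetric-inequality} with zero right-hand side (\cref{rmk:epi-zero-rhs}) to force $e^{-\delta_j}\le 1-\eps_0$; your $4^{-\delta_j}\le\theta$ differs only in the choice of scale step. Consequently Step 3 needs no competitor construction: the paper proves the decay by linearization and contradiction, which is the right form in the parabolic setting, where the Weiss energy decreases along the flow rather than being minimized.
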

		\subsection{Preliminary constructions} Before entering into the proofs of the main results of this section, we introduce some notation and we set up some constructions. We fix once and for all $m\in \N\setminus \{0\}$ and we call $\lambda:= 2m+1$.
		\subsubsection{Gaussian spaces} We consider the Gaussian measure in $\R^{n+1}$
		\begin{equation*}
			d\mu(y):= G(y,-1)dy= \frac{e^{-\frac{|y|^{2}}{4}}}{(4\pi)^{\frac{n+1}{2}}}dy.
		\end{equation*}
		Then, for any open set $\Omega\subseteq \R^{n+1}$ we denote by $L^{2}_{\mu}(\Omega)$ the space of functions $g:\Omega\to \R$ for which
		\begin{equation*}
			\lVert g\rVert_{L^{2}_{\mu}(\Omega)}:=\left(\int_{\Omega}g(y)^2d\mu(y)\right)^{1/2}<\infty.
		\end{equation*}
		Similarly, the Gaussian Sobolev space $W^{1,2}_{\mu}(\Omega)$ is the set of functions for which
		\begin{equation*}
			\lVert g\rVert_{W^{1,2}_{\mu}(\Omega)}:=\lVert g\rVert_{L^{2}_{\mu}(\Omega)}+\lVert \nabla g\rVert_{L^{2}_{\mu}(\Omega)}<\infty.
		\end{equation*}
		When $\Omega=\R^{n+1}_{+}$ we typically write $L^{2}_{\mu}$ and $W^{1,2}_{\mu}$ for $L^{2}_{\mu}(\R^{n+1}_{+})$ and $W^{1,2}_{\mu}(\R^{n+1}_{+})$.
		\subsubsection{Conformal coordinates}
		The conformal change of coordinates $\mathcal{T}:\R^{n+1}\times(-1,0)\to\R^{n+1}\times(0,\infty)$ is given by 
		
		$$\mathcal{T}(x,t):=\left(\frac{x}{\sqrt{-t}},-\log(-t)\right)=(y,\tau),\qquad \mathcal{T}^{-1}(y,\tau)=(e^{-\tau/2}y,-e^{-\tau})=(x,t).$$ 
		For any function $w:S_1\to\R$, we let $\widetilde{w}:\R^{n+1}\times (0,\infty)\to \R$ be
		\be\label{eq:widetildev}\widetilde w(y,\tau):=\frac{w(x,t)}{(\sqrt{-t})^\lambda}\Big|_{(x,t)=\mathcal{T}^{-1}(y,\tau)}=e^{\lambda\tau/2}w(e^{-\tau/2}y,-e^{-\tau}).\ee 
		If $w$ is a solution of \eqref{eq:w-solves} with right-hand side $f$ satisfying \eqref{controlf} for some $\ell>\lambda$, then an explicit computation shows that $\widetilde w$ solves 
		\be\label{eq:parabolic-signorini-problem-for-widetildew}\begin{cases}
			\mathcal{L}_\lambda \widetilde w-\partial_\tau \widetilde w=e^{-\tau}\widetilde f &\qquad \text{in }\R^{n+1}_{+}\times (0,\infty),\\
			\min\{-\partial_{n+1}\widetilde w,\widetilde w\}=0&\qquad \text{on }(\R^{n}\times \{0\})\times (0,\infty),
		\end{cases}\ee 
		where $\mathcal{L}_{\lambda}:= \Delta-\frac{y}{2}\cdot\nabla +\frac\lambda2$ is the shifted Ornstein-Uhlenbeck operator, and $\widetilde f$ is obtained from $f$ via the same change of variables as in \eqref{eq:widetildev}:
		\begin{equation*}
			\widetilde{f}(y,\tau):= e^{\lambda\tau/2}f(e^{-\tau/2}y,-e^{-\tau}).
		\end{equation*}
		By \eqref{controlf}, it follows that
		\be\label{controlftilde}
		|\widetilde f(y,\tau)|\le CC_{f}e^{-\frac{\tau}2(\ell-\lambda-2)}(|y|^{\ell-2}+1)\qquad\forall (y,\tau)\in\R^{n+1}\times (0,\infty),\ee
		and in particular 
		\be\label{controlftilde-L2}
		\|\widetilde f(\cdot, \tau)\|_{L^2_\mu}\le CC_{f}e^{-\frac{\tau}2(\ell-\lambda-2)}\qquad\forall \tau\in(0,\infty).\ee 
		
		\subsubsection{The Weiss' energy}
		For any sufficiently regular function $w:S_{1}^{+}\to \R$ and any $t\in (-1,0)$, we introduce the Weiss' energy for the $\lambda$-frequency
		\be\label{eq:weiss-original-coordinates}\mathcal{M}_\lambda(w,t):=\frac{1}{(\sqrt{-t})^{2\lambda-2}}\int_{\R^{n+1}_+}|\nabla w(x,t)|^2G(x,t)\,dx-\frac{\lambda}{2(\sqrt{-t})^{2\lambda}}\int_{\R^{n+1}_+}w(x,t)^2G(x,t)\,dx.\ee 
		Then, in conformal coordinates, for any $\tau\in (0,\infty)$ we write 
		$$W_\lambda(\widetilde w,\tau):=\mathcal{M}_{\lambda}(w, -e^{-\tau})=\int_{\R^{n+1}_+}|\nabla \widetilde w(y,\tau)|^2\,d\mu-\frac{\lambda}2\int_{\R^{n+1}_+}\widetilde w(y,\tau)^2\,d\mu.$$
		We recall the following monotonicity formula for the Weiss' energy. See \cite[Lemma 2.2 and Section 5]{shi18} for a proof.
		\begin{lemma}\label{lemma:shi}
			Let $w$ be a solution of \eqref{eq:w-solves}, with right-hand side $f$ satisfying \eqref{controlf} for some $\ell>\lambda$. Let $\widetilde w$ be defined as in \eqref{eq:widetildev}. Then, for every $0<\tau_1<\tau_2<\infty$ we have
			\be\label{eq:Weiss-difference-derivative}
			W_\lambda(\widetilde w,\tau_2)-W_\lambda(\widetilde w,\tau_1)\le -\int_{\tau_1}^{\tau_2}\|\partial_\tau \widetilde w(\cdot,\tau)\|^2_{L^2_\mu}\,d\tau+C_0e^{-\tau_1(\ell-\lambda)}-C_0e^{-\tau_2(\ell-\lambda)},
			\ee where $C_0:=\frac{CC_{f}^{2}}{\ell-\lambda}>0$. 
			In particular,
			\be\label{eq:Weiss-monotonicity-formula}
			\tau\mapsto W_\lambda(\widetilde w,\tau)+C_0e^{-\tau(\ell-\lambda)}
			\ee
             is monotone non-increasing.
		\end{lemma}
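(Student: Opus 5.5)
The plan is to prove the Weiss monotonicity by direct differentiation in conformal coordinates. I would first argue formally for smooth $\widetilde w$ and then justify the computation by approximation, as in \cite{shi18} and \cite[Chapter 6]{dgpt17}.

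\emph{Step 1: differentiate and integrate by parts.} Differentiating $W_\lambda(\widetilde w,\tau)$ in $\tau$ gives
\[
\frac{d}{d\tau}W_\lambda=2\int_{\R^{n+1}_+}\nabla\widetilde w\cdot\nabla\partial_\tau\widetilde w\,d\mu-\lambda\int_{\R^{n+1}_+}\widetilde w\,\partial_\tau\widetilde w\,d\mu .
\]
I would integrate by parts against the Gaussian weight, using $\diver(\mu\nabla g)=\mu(\Delta g-\frac y2\cdot\nabla g)$. This turns the first term into
\[
-2\int_{\R^{n+1}_+}\mathcal L\widetilde w\,\partial_\tau\widetilde w\,d\mu-2\int_{\R^n\times\{0\}}\partial_{n+1}\widetilde w\,\partial_\tau\widetilde w\,d\mu ,
\]
where the sign of the boundary term comes from the outward normal $-e_{n+1}$. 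Hence
\[
\frac{d}{d\tau}W_\lambda=-2\int_{\R^{n+1}_+}\mathcal L_\lambda\widetilde w\,\partial_\tau\widetilde w\,d\mu-2\int_{\R^n\times\{0\}}\partial_{n+1}\widetilde w\,\partial_\tau\widetilde w\,d\mu .
\]

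\emph{Step 2: use the equation and the Signorini condition.} Substituting $\mathcal L_\lambda\widetilde w=\partial_\tau\widetilde w+e^{-\tau}\widetilde f$ from \eqref{eq:parabolic-signorini-problem-for-widetildew}, the bulk term becomes
\[
-2\|\partial_\tau\widetilde w\|^2_{L^2_\mu}-2e^{-\tau}\int\widetilde f\,\partial_\tau\widetilde w\,d\mu .
\]
The thin-space term vanishes by complementarity. On $\{\widetilde w>0\}$ we have $\partial_{n+1}\widetilde w=0$. On the interior of the contact set $\partial_\tau\widetilde w=0$. At points where $\widetilde w=0$ and $\widetilde w$ is a.e.\ differentiable in $\tau$, we have $\partial_\tau\widetilde w=0$ because $\widetilde w\ge0$ attains a minimum there. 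So $\partial_{n+1}\widetilde w\,\partial_\tau\widetilde w=0$ a.e.\ on the thin space. Making this rigorous uses the regularity \eqref{eq:reg-sol}: $\partial_t w$ and $\nabla w$ have traces, which transfer to $\widetilde w$.

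\emph{Step 3: absorb the source.} By Young's inequality,
\[
\Big|2e^{-\tau}\int\widetilde f\,\partial_\tau\widetilde w\,d\mu\Big|\le\|\partial_\tau\widetilde w\|^2_{L^2_\mu}+e^{-2\tau}\|\widetilde f\|^2_{L^2_\mu}.
\]
By \eqref{controlftilde-L2}, $e^{-2\tau}\|\widetilde f\|^2_{L^2_\mu}\le CC_f^2e^{-\tau(\ell-\lambda)}$. Therefore
\[
\frac{d}{d\tau}W_\lambda\le-\|\partial_\tau\widetilde w\|^2_{L^2_\mu}+CC_f^2e^{-\tau(\ell-\lambda)} .
\]
Integrating from $\tau_1$ to $\tau_2$, and using $\int_{\tau_1}^{\tau_2}e^{-\tau(\ell-\lambda)}\,d\tau=\frac{1}{\ell-\lambda}(e^{-\tau_1(\ell-\lambda)}-e^{-\tau_2(\ell-\lambda)})$, yields \eqref{eq:Weiss-difference-derivative} with $C_0=CC_f^2/(\ell-\lambda)$. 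The monotonicity \eqref{eq:Weiss-monotonicity-formula} follows immediately.

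\emph{Main obstacle.} The computation is routine; the difficulty is justifying it. The Gaussian integrations by parts need enough decay at infinity, which is guaranteed because $w$ has compact spatial support. One also needs $\tau\mapsto W_\lambda$ to be absolutely continuous and the boundary term to be handled with $w\in W^{2,1}_2$ only. To address this, I would mollify in time or use Steklov averages, exactly as in the derivation of the $H,I$ formulas in \cite[Chapter 6]{dgpt17}. Alternatively, I would simply cite \cite[Lemma 2.2]{shi18}, whose setting matches ours after the change of variables.
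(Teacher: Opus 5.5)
Your proposal is correct and is the same argument the paper relies on: the paper's proof is only a citation of \cite[Lemma 2.2 and Section 5]{shi18}. Your steps are exactly the computation carried out there: differentiate $W_\lambda$, integrate by parts against $\mu$, insert $\mathcal L_\lambda\widetilde w=\partial_\tau\widetilde w+e^{-\tau}\widetilde f$, kill the thin term by complementarity, absorb the source with Young's inequality, and justify the formal steps by time regularization. Your exponent bookkeeping, $e^{-2\tau}\|\widetilde f\|^2_{L^2_\mu}\le CC_f^2e^{-\tau(\ell-\lambda)}$, correctly produces $C_0=CC_f^2/(\ell-\lambda)$ and the factor $-1$ in front of $\int\|\partial_\tau\widetilde w\|^2_{L^2_\mu}$.
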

		\subsubsection{Classification of blow-ups with odd frequency.}
		Let us record the following characterization of blow-ups at odd frequency points.
		\begin{lemma}\label{lem:classification-odd-blow-ups}
			$p:S_{\infty}\to \R$ is a parabolically $2m+1$-homogeneous solution of the global problem \eqref{eq:parabolic-signorini-problem-global} if and only if $p\in \mathcal{P}_{2m+1}^{+}$. Moreover, in this case, $p$ can be written uniquely as
			\be\label{eq:expr-p-odd}p(x,t)=-|x_{n+1}|(q_0(x',t)+x_{n+1}^2q_1(x',x_{n+1},t)),\ee
			where $q_{0}:\R^{n}\times (-\infty,0]\to \R$ is a nonnegative parabolically $(2m)$-homogeneous polynomial and $q_{1}:S_{\infty}\to \R$ is a parabolically $(2m-2)$-homogeneous polynomial, even in $x_{n+1}$. 
		\end{lemma}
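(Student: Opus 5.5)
We prove the two implications separately, and then extract the factorization \eqref{eq:expr-p-odd}.

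\emph{From $\mathcal{P}_{2m+1}^+$ to solutions.} Let $p\in\mathcal{P}_{2m+1}^+$. The first two lines of \eqref{eq:parabolic-signorini-problem-global}, and evenness, are immediate: $p$ is caloric in $S_\infty^+$, $p\equiv0$ on $S_\infty'$, and $p$ is even in $x_{n+1}$. For the Signorini condition, the distributional identity for even functions gives $\mathbf{H}p=2\,\partial_{n+1}p\,\mathcal{H}^{n+1}\res S_\infty'$, where $\partial_{n+1}p$ is the one-sided normal derivative. Hence $\mathbf{H}p\le0$ is equivalent to $-\partial_{n+1}p\ge0$ on the thin space, and $\min\{-\partial_{n+1}p,p\}=0$ holds. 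Homogeneity is part of the definition.

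\emph{From solutions to $\mathcal{P}_{2m+1}^+$.} Let $p$ be a $(2m+1)$-homogeneous global solution. The key step, and the main obstacle, is to show that $p$ vanishes identically on the thin space; equivalently, that the restriction of $p$ to $\overline{S_\infty^+}$ is a polynomial vanishing on $\{x_{n+1}=0\}$.

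Plan for this step:
\begin{enumerate}
\item Pass to the conformal variables $(y,\tau)$ and use homogeneity. Then $p(y,-1)=:\phi(y)$ solves the stationary thin obstacle problem $\mathcal{L}_\lambda\phi=0$ in $\R^{n+1}_+$ with Signorini conditions on $\{y_{n+1}=0\}$, where $\lambda=2m+1$.
\item Invoke the known structure of such homogeneous solutions. This is the analysis in \cite[Chapter 8]{dgpt17} (together with \cite{col2025}, following \cite{franceschini2025solutions}). Take $\psi$ to be the odd extension of $\phi|_{\R^{n+1}_+}$ across $\{y_{n+1}=0\}$.
\item Test the equation against eigenfunctions of $\mathcal{L}$, namely Hermite polynomials that are odd in $y_{n+1}$, whose eigenvalues are exactly of the form $k/2$. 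When $\lambda$ is an odd integer, the boundary term $\int\phi\,\partial_{n+1}H\,d\mu$ is forced to vanish for the relevant Hermite polynomial $H$.
\item Combine this with $\phi\ge0$, $-\partial_{n+1}\phi\ge0$ and $\phi\,\partial_{n+1}\phi=0$ to get $\phi=0$ on the thin space. Concretely: pair $\mathcal{L}_\lambda\phi=0$ with the odd function $y_{n+1}$ times a suitable nonnegative polynomial, and integrate by parts. The result is a sign identity
\[
\int_{\{y_{n+1}=0\}}\phi\,\cdot(\text{positive weight})\,d\mu'=0,
\]
which forces $\phi\equiv0$ there.
\end{enumerate}

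\emph{Consequences of thin vanishing.} Once $p\equiv0$ on $S_\infty'$, the odd reflection of $p|_{S_\infty^+}$ is caloric across the hyperplane. Removability follows since $p$ is continuous and vanishes there. It is also homogeneous of polynomial growth, so by the parabolic Liouville theorem it is a caloric polynomial $P$, odd in $x_{n+1}$. Consequently $p=P(x',|x_{n+1}|,t)$, and $\mathbf{H}p=2\,\partial_{n+1}P\,\mathcal{H}^{n+1}\res S_\infty'\le0$ by the Signorini condition. This shows $p\in\mathcal{P}_{2m+1}^+$.

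\emph{The factorization.} Since $P$ is odd in $x_{n+1}$, write $P=-x_{n+1}\bigl(q_0(x',t)+x_{n+1}^2q_1\bigr)$ with $q_1$ even in $x_{n+1}$. By parabolic homogeneity, $q_0$ and $q_1$ have degrees $2m$ and $2m-2$. On the thin space $-\partial_{n+1}P=q_0\ge0$. Uniqueness of $(q_0,q_1)$ follows from the uniqueness of the Taylor decomposition of a polynomial in powers of $x_{n+1}$.
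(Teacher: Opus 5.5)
Your route differs from the paper's only in the key step. The paper handles the converse implication in one line, citing \cite[Lemma 9.2]{frs24} for the fact that a $(2m+1)$-homogeneous solution vanishes on the thin space. It then applies the parabolic Liouville theorem to the odd reflection and reads off \eqref{eq:expr-p-odd} from the Signorini condition, exactly as you do.

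You instead prove thin vanishing directly, via a Gaussian Green identity on the slice $t=-1$. If $H$ is odd in $y_{n+1}$ with $\mathcal{L}_\lambda H=0$, then $\phi$ and $H$ have the same eigenvalue and $H=0$ on $\{y_{n+1}=0\}$, so
\[
\int_{\R^n}\phi(y',0)\,\partial_{n+1}H(y',0)\,e^{-|y'|^2/4}\,dy'=0 .
\]
This is self-contained, and it shows why $\lambda$ must be an odd integer: for even $\lambda$, $\partial_{n+1}H(\cdot,0)$ is odd in $y'$ and cannot be positive. The citation is simply shorter. Only $\phi\ge0$ on the thin space is used. Your step 2 and the conditions $-\partial_{n+1}\phi\ge0$ and $\phi\,\partial_{n+1}\phi=0$ play no role.

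The choice of test function in step 4 is the whole content of the argument, and as written it is wrong. It cannot be ``$y_{n+1}$ times a nonnegative polynomial'':
\begin{itemize}
\item If $\mathcal{L}_\lambda H\neq0$, an interior term $\int_{\R^{n+1}_+}\phi\,\mathcal{L}_\lambda H\,d\mu$ appears, and its sign is unknown because $\phi$ has no sign off the thin space.
\item If $\mathcal{L}_\lambda H=0$, a nonzero $H\ge0$ in $\R^{n+1}_+$ is impossible. It would be a nonnegative Dirichlet eigenfunction with eigenvalue $\lambda/2>1/2$, hence orthogonal in $L^2_\mu(\R^{n+1}_+)$ to the principal eigenfunction $y_{n+1}$.
\end{itemize}

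What you actually need is positivity of $\partial_{n+1}H$ only on the thin space. The time-like heat polynomial provides this:
\[
P(x,t):=\sum_{j=0}^{m}\frac{x_{n+1}^{2j+1}}{(2j+1)!}\,\partial_t^{j}\big[(-t)^m\big].
\]
It is caloric, odd in $x_{n+1}$, parabolically $(2m+1)$-homogeneous, and satisfies $\partial_{n+1}P(x',0,t)=(-t)^m$. For $m=1$ it equals $-\frac16x_{n+1}(6t+x_{n+1}^2)$, the odd version of the paper's time-like cubic profile.

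With $H:=P(\cdot,-1)$ you get $\partial_{n+1}H(\cdot,0)\equiv1$. Hence $\int_{\R^n}\phi(y',0)\,e^{-|y'|^2/4}\,dy'=0$, and $\phi(\cdot,0)\equiv0$ by continuity. With this $H$, and $\phi\in W^{1,2}_\mu$ to justify the integration by parts, your proposal is a complete proof.
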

		\begin{proof}
			Any $p\in \mathcal{P}_{2m+1}^{+}$ is by definition a solution of \eqref{eq:parabolic-signorini-problem-global}. The converse implication is a direct consequence of \cite[Lemma 9.2]{frs24}. Now, given $p\in \mathcal{P}_{2m+1}^{+}$, since $\mathbf{H}p=0$ in $S_{\infty}^{+}$ and $p\equiv 0$ on $S_{\infty}'$, the odd extension $\widehat{p}(x,t):=\text{sign}(x_{n+1}) p(x,t)$ is caloric in the full $S_{\infty}$. Then, by the parabolic Liouville theorem $\widehat{p}$ is a polynomial. Imposing the Signorini conditions the unique representation in \eqref{eq:expr-p-odd} follows. 
		\end{proof}
		\noindent In view of \cref{lem:classification-odd-blow-ups}, for any $p\in\mathcal{P}_{2m+1}^+$ we define
		\bea\label{eq:operatorT}T[p](x',t):=-\partial_{n+1}p(x',0,t)=q_{0}(x',t),\eea
		where $q_{0}\ge 0$ is the parabolically $2m$-homogeneous polynomial appearing in \eqref{eq:expr-p-odd}.
		
		Since $p\in\mathcal{P}_{2m+1}^+$ is parabolically homogeneous of degree $\lambda=2m+1$, its transformation in conformal coordinates as in \eqref{eq:widetildev} is independent of the variable $\tau$, and can be identified with an element of 
		\begin{equation*}
			\begin{aligned}\mathcal{E}_{2m+1}^+:=\Big\{\widetilde p\in W^{1,2}_{\mu}(\mathbb R^{n+1}) :\ &\mathcal{L}_{2m+1}\widetilde{p}\le 0,\   \mathcal{L}_{2m+1}\widetilde{p}=0\text{ in } \R^{n+1}_{+},\\
				&\widetilde{p}\equiv 0 \text{ on } \R^{n}\times \{0\},\  \widetilde{p}(y',y_{n+1})=\widetilde{p}(y',-y_{n+1})\Big\},
			\end{aligned}
		\end{equation*}
		where $\mathcal{L}_{2m+1}= \Delta-\frac{y}{2}\cdot \nabla+ \frac{2m+1}{2}$ is the shifted Ornstein-Uhlenbeck operator. 
		Elements of $\mathcal{E}_{2m+1}^{+}$ will be called blow-ups in conformal coordinates.
		For any $\widetilde{p}\in \mathcal{E}_{2m+1}^+$, we then define the normal derivative operator as
		\begin{equation*}
			\widetilde{T}[\widetilde{p}](y'):= -\partial_{n+1}\widetilde{p}(y',0).
		\end{equation*}
		Removing the condition $\mathcal{L}_{2m+1}\widetilde{p}\le 0$ from the definition of $\mathcal{E}_{2m+1}^{+}$ we obtain the null eigenspace of $\mathcal{L}_{2m+1}$ in $\R^{n+1}_{+}$ with Dirichlet boundary conditions on $\{y_{n+1}=0\}$:
		\be\label{eq:eigenspacemathcalE}  \mathcal{E}_{2m+1}:=\left\{\widetilde p\in W^{1,2}_{\mu}(\mathbb R^{n+1}): \ \mathcal{L}_{2m+1} \widetilde p=0\text{ on }\R^{n+1}_+,\  \widetilde p\equiv 0 \text{ in }\R^{n}\times\{0\},\ \widetilde p(y',y_{n+1})= \widetilde p(y',-y_{n+1})\right\}.\ee
		The space $\mathcal E_{2m+1}$ is finite-dimensional, and $\mathcal E^+_{2m+1}$ is a closed convex cone in $\mathcal E_{2m+1}$.
		
		\subsection{An $L^2$-$L^\infty$ estimate}
		In the following lemma we prove an $L^2$-$L^\infty$ estimate in conformal coordinates for the discrepancy between a solution of \eqref{eq:w-solves} and an admissible blow-up with odd frequency.
		\begin{lemma}\label{lemma:wang}
			Let $w$ be a solution of \eqref{eq:w-solves}, with right-hand side $f$ satisfying \eqref{controlf} for some $\ell>\lambda:=2m+1$. Let $\widetilde w$ be the corresponding function in conformal coordinates as defined in \eqref{eq:widetildev}, and let $\widetilde p\in\mathcal{E}_{\lambda}^+$ be a blow-up in conformal coordinates. For any $\tau_0\in(0,\infty)$ and any $\sigma\in (0,1/2)$ we have
			$$\|\widetilde w(\cdot,\tau)-\widetilde p\|_{L^\infty(B_{\frac{1}{\sigma}})}\le C_\sigma\left(\int_{\tau_0}^{\tau_0+1}\|\widetilde w(\cdot,\tau')-\widetilde p\|^2_{L^2_\mu}\,d\tau'\right)^\frac12+C_\sigma e^{-\frac{\tau_0}2(\ell-\lambda)}\qquad\forall \tau\in(\tau_0+\sigma,\tau_0+1),$$ where $C_\sigma>0$ depends only on $n$, $m$, $C_{f}$, $\ell-\lambda$ and $\sigma$.
		\end{lemma}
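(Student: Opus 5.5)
We set $\widetilde v:=\widetilde w-\widetilde p$, extended evenly in $y_{n+1}$, and derive a differential inequality for its positive and negative parts in the spirit of Step 1 of \cref{lemma:uniform-bounds-second-blow-up}. Then we apply the $L^\eps$-$L^\infty$ estimate of \cref{lemma:wang-start} for the operator $\mathcal{L}_\lambda-\partial_\tau$ on unit-size cylinders.

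First, the equation. Since $\widetilde p\in\mathcal{E}_\lambda^+$, it satisfies $\mathcal{L}_\lambda\widetilde p=0$ in $\R^{n+1}_+$ and $\widetilde p\equiv0$ on the thin space. In the even extension, $(\mathcal{L}_\lambda-\partial_\tau)\widetilde p=-\nu_p$, where $\nu_p:=2\widetilde T[\widetilde p]\,\mathcal{H}^n\res\{y_{n+1}=0\}\ge0$. From \eqref{eq:parabolic-signorini-problem-for-widetildew},
$$(\mathcal{L}_\lambda-\partial_\tau)\widetilde w=e^{-\tau}\widetilde f-\mu_w,\qquad \mu_w:=-2\partial_{n+1}\widetilde w\,\mathcal{H}^n\res\{y_{n+1}=0\}\ge0.$$
The measure $\mu_w$ is supported on $\{\widetilde w=0\}'$, where $\widetilde v=-\widetilde p=0$. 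Hence
$$(\mathcal{L}_\lambda-\partial_\tau)\widetilde v=e^{-\tau}\widetilde f-\mu_w+\nu_p .$$

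Next, the signs of the two parts. The measure $\nu_p\ge0$ helps $\widetilde v^+$. The measure $-\mu_w$ is harmless for $\widetilde v^+$: it is concentrated on $\{\widetilde v=0\}$, so in the Kato-type inequality for $\widetilde v^+$ only the part of the measure on $\{\widetilde v>0\}$ matters, and that part vanishes. Therefore
$$(\mathcal{L}_\lambda-\partial_\tau)\widetilde v^+\ge -e^{-\tau}|\widetilde f|.$$
For $\widetilde v^-$, the good measure is $\mu_w\ge0$, while $\nu_p$ lives on the thin space where $\widetilde w\ge0=\widetilde p$, i.e.\ where $\widetilde v\ge0$. We would like the same reasoning to discard $\nu_p$, but $\widetilde v$ may vanish there, so this is the delicate point. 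We handle it directly on the thin space. At thin points with $\widetilde w>0$ we have $\widetilde v^-=0$ near the thin space, so there is no contribution. At thin points with $\widetilde w=0$ we have $\widetilde v^-=0$ on the thin space and $\widetilde v^-\ge0$ nearby, so the normal-derivative jump of $\widetilde v^-$ has the favourable sign. This is exactly the boundary argument used in the proof of \eqref{eq:time-derivative-subcaloric}. In both cases
$$(\mathcal{L}_\lambda-\partial_\tau)\widetilde v^-\ge-e^{-\tau}|\widetilde f|$$
in the sense of distributions. I expect this justification, done at the level of distributions with the Signorini conditions, to be the main obstacle. If needed, we would regularize via $\max$ with smooth approximations of $|\cdot|$, as in the viscosity argument of \cref{lemma:uniform-bounds-second-blow-up}.

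Then the estimate. Fix $y_0\in B_{1/\sigma}$ and $\tau\in(\tau_0+\sigma,\tau_0+1)$. Apply \cref{lemma:wang-start}, with $\eps=2$, to $\widetilde v^\pm$ on a backward cylinder $B_{r}(y_0)\times(\tau-r^2,\tau]\subset B_{2/\sigma}\times(\tau_0,\tau_0+1)$, with $r\simeq\sqrt\sigma$. Rescale and cover; the drift $\frac y2\cdot\nabla$ and the zeroth-order term are bounded on $B_{2/\sigma}$, so all constants depend only on $\sigma,n,m$. On $B_{2/\sigma}$ the Gaussian weight is bounded below by $c_\sigma>0$, hence
$$\|\widetilde v\|_{L^2(B_{2/\sigma}\times(\tau_0,\tau_0+1))}\le C_\sigma\left(\int_{\tau_0}^{\tau_0+1}\|\widetilde v(\cdot,\tau')\|_{L^2_\mu}^2\,d\tau'\right)^{1/2}.$$
Finally, \eqref{controlftilde} gives
$$\sup_{B_{2/\sigma}\times(\tau_0,\tau_0+1)}e^{-\tau}|\widetilde f|\le C_\sigma C_fe^{-\frac{\tau_0}{2}(\ell-\lambda)}.$$
Summing the bounds for $\widetilde v^+$ and $\widetilde v^-$ gives the claimed estimate.
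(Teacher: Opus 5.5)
Your proposal is correct and follows essentially the same route as the paper. The paper shows that $(\widetilde w-\widetilde p)_+$ and $(\widetilde w-\widetilde p)_-$ are subsolutions of $\mathcal{L}_\lambda-\partial_\tau$ with right-hand side $-C_\sigma e^{-\frac{\tau_0}{2}(\ell-\lambda)}$ on $B_{2/\sigma}\times(\tau_0,\infty)$, then applies \cref{lemma:wang-start} with a covering and the lower bound on the Gaussian weight on $B_{2/\sigma}$. The only differences are that the paper argues in the viscosity sense, using $\widetilde w-\widetilde p=0$ on the contact set and $\widetilde p-\widetilde w\le 0$ on the thin space, where you use a distributional Kato and normal-jump argument; and the step you flag as delicate is immediate, since $\widetilde v=\widetilde w\ge 0$ on the whole thin space, so $\widetilde v^-$ vanishes identically there.
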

		\begin{proof} 
			Since $\widetilde{w}$ solves \eqref{eq:parabolic-signorini-problem-for-widetildew}, then 
			\begin{equation*}
				(\mathcal{L}_{\lambda}-\partial_{\tau})\widetilde{w}= e^{-\tau}\widetilde{f}-\widetilde{\nu}\quad \text{in $\R^{n+1}\times (0,\infty)$},\qquad \widetilde{\nu}:= -2\partial_{n+1}\widetilde{w}\mathcal{H}^{n+1}\res \left(\{y_{n+1}=0\}\times (0,\infty)\right),
			\end{equation*}
			where $\widetilde{\nu}$ is a nonnegative measure concentrated on the contact set $\Lambda(\widetilde{w}):=\{\widetilde{w}=0\}\cap \left(\{y_{n+1}=0\}\times (0,\infty)\right)$ by the Signorini condition.
			Since $\mathcal{L}_{\lambda}\widetilde{p}\le 0$ and the bound on $e^{-\tau}\widetilde f$ obtained from \eqref{controlftilde}, we then get
			$$(\mathcal{L}_\lambda-\partial_\tau)(\widetilde w-\widetilde p)\ge -C_\sigma e^{-\frac{\tau_0}2(\ell-\lambda)}\qquad\text{in }\left(B_{\frac{2}{\sigma}}\times (\tau_{0},\infty)\right)\setminus \Lambda(\widetilde{w}).$$
			On the other hand, since $\widetilde{p}= 0$ on $\{y_{n+1}=0\}$, then
			$$\widetilde w-\widetilde p=0 \qquad\text{on }\Lambda(\widetilde{w}).$$ 
			It follows that
			\be\label{eq:positivepart-dim}(\mathcal{L}_\lambda-\partial_\tau)(\widetilde w-\widetilde p)_+\ge-C_\sigma e^{-\frac{\tau_0}2(\ell-\lambda)}\qquad\text{in }B_{\frac{2}{\sigma}}\times (\tau_{0},\infty)\ee
			in the viscosity sense. Similarly, since $\mathcal{L}_{\lambda}\widetilde{p}=0$ away from $\{y_{n+1}=0\}$,
			$$(\mathcal{L}_\lambda-\partial_\tau)(\widetilde p-\widetilde w)\ge -C_\sigma e^{-\frac{\tau_0}2(\ell-\lambda)}\qquad\text{in }\left(B_{\frac{2}{\sigma}}\setminus\{y_{n+1}=0\}\right)\times (\tau_{0},\infty).$$
			On the other hand, since $\widetilde{p}= 0$ on $\{y_{n+1}=0\}$, then
			$$\widetilde p-\widetilde w\le-\widetilde w\le0 \qquad\text{on }\{y_{n+1}=0\}\times (0,\infty).$$ 
			It follows that 
			\be\label{eq:negativepart-dim}(\mathcal{L}_\lambda-\partial_\tau)(\widetilde w-\widetilde p)_-=(\mathcal{L}_\lambda-\partial_\tau)(\widetilde p-\widetilde w)_+\ge-C_\sigma e^{-\frac{\tau_0}2(\ell-\lambda)}\qquad\text{in }B_{\frac{2}{\sigma}}\times (\tau_{0},\infty)\ee
			in the viscosity sense. Combining \eqref{eq:positivepart-dim} and \eqref{eq:negativepart-dim} we get 
			$$(\mathcal{L}_\lambda-\partial_\tau)|\widetilde w-\widetilde p|\ge-C_\sigma e^{-\frac{\tau_0}2(\ell-\lambda)}\qquad\text{in }B_{\frac{2}{\sigma}}\times (\tau_{0},\infty).$$
			By applying \cref{lemma:wang-start} together with a standard covering argument, we obtain \bea\sup_{B_{\frac{1}{\sigma}}\times(\tau_{0}+\sigma,\tau_{0}+1)}|\widetilde w-\widetilde p|&\le C_\sigma\left(\int_{\tau_{0}}^{\tau_{0}+1}\int_{B_{\frac{2}{\sigma}}}(\widetilde w(y,\tau)-\widetilde p(y))^2\,dy\,d\tau\right)^\frac12+C_\sigma e^{-\frac{\tau_0}2(\ell-\lambda)}\\
			&\le C_\sigma\left(\int_{\tau_{0}}^{\tau_{0}+1}\|\widetilde w(\cdot,\tau)-\widetilde p\|^2_{L^2_\mu(B_{\frac{2}{\sigma}})}\,d\tau\right)^\frac12+C_\sigma e^{-\frac{\tau_0}2(\ell-\lambda)},
			\eea 
			where we used that the Gaussian density is bounded below by some constant $c_\sigma$ in $B_{\frac{2}{\sigma}}$. This concludes the proof.
		\end{proof}
		\subsection{Localization of the contact set}
		In the following lemma we prove that if $w$ is sufficiently close to a blow-up profile $p\in \mathcal{P}^{+}_{\lambda}$, then the contact set of $w$ contains regions where $-\partial_{n+1}p$ is uniformly positive. 
		The result is again formulated in conformal coordinates.
		
		\begin{lemma}\label{lemma:localization} For every $\sigma\in(0,1/2)$ there are $\delta>0$ and $\overline \tau_0>0$ 
			depending on $n$, $m$, $C_{f}$, $\ell-\lambda$ and $\sigma$ such that the following holds. Let $w$ be a solution of \eqref{eq:w-solves}, with right-hand side $f$ satisfying \eqref{controlf} for some $\ell>\lambda:=2m+1$. Let $\widetilde w$ be the corresponding function in conformal coordinates as defined in \eqref{eq:widetildev}, and let $\widetilde p\in\mathcal{E}_{\lambda}^+$ be a blow-up in conformal coordinates satisfying $\|\widetilde p\|_{L^2_\mu}=1$. Suppose that, for some $\tau_0\in(\overline\tau_0,\infty)$ we have
			\be\label{eq:closeness-assumption-in-lemma} |\widetilde w-\widetilde p|\le \delta\qquad\text{in }B_{\frac1\sigma}\times (\tau_0+\sigma,\tau_0+1).
			\ee Then
			$$\widetilde w(y',0,\tau)=0\qquad\forall  y'\in \left\{\widetilde T[\widetilde p\,]\ge\sigma\right\},\ |y'|<\frac{1}{2\sigma}, \quad \forall \tau\in (\tau_0+2\sigma,\tau_0+1).$$
		\end{lemma}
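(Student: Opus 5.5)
The plan is to argue by contradiction, combining the uniform optimal regularity of $\widetilde w$ with interpolation. Closeness in $L^\infty$ then becomes closeness of the normal derivatives up to the thin space. That is incompatible with $\widetilde w$ being positive at a thin point where $-\partial_{n+1}\widetilde p=\widetilde T[\widetilde p\,]\ge\sigma$, because at non-contact thin points the Signorini condition forces $\partial_{n+1}\widetilde w=0$.

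\textbf{Step 1 (uniform $C^{1,1/2}$ bounds).} Work in the cylinder $\mathcal{D}:=B_{\frac1\sigma}\times(\tau_0+\sigma,\tau_0+1)$. By \eqref{eq:closeness-assumption-in-lemma} and $\|\widetilde p\|_{L^2_\mu}=1$, we have $\|\widetilde w\|_{L^\infty(\mathcal{D})}\le C_\sigma$. Here we use that $\mathcal{E}_\lambda$ is finite dimensional, so all norms of $\widetilde p$ on bounded sets are controlled by its $L^2_\mu$ norm. The source term is also controlled: by \eqref{controlftilde} and $\tau_0>\overline\tau_0$, $e^{-\tau}\widetilde f$ is bounded, indeed small, on $\mathcal{D}$. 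The operator $\mathcal{L}_\lambda-\partial_\tau$ differs from the heat operator only by a smooth, bounded (on $B_{1/\sigma}$) drift and zero-order term. Hence the optimal regularity theory for the parabolic Signorini problem \cite[Theorem 9.1]{dgpt17}, applied locally and covered by finitely many cylinders, gives
\[
\|\nabla \widetilde w\|_{H^{1/2,1/4}\left(\left(B^+_{\frac{3}{4\sigma}}\cup B'_{\frac{3}{4\sigma}}\right)\times(\tau_0+\frac32\sigma,\tau_0+1)\right)}\le C_\sigma .
\]
The same bound holds trivially for $\widetilde p$, which is a polynomial on each side of $\{y_{n+1}=0\}$. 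I expect this step to be the main technical point. One has to check that \cite{dgpt17} indeed yields constants depending only on $n,m,C_f,\ell-\lambda,\sigma$ once the lower-order terms of $\mathcal{L}_\lambda$ are included. If that is problematic, the alternative is to undo the conformal change locally: on a bounded window $\mathcal{T}^{-1}$ is a smooth diffeomorphism with bounded distortion, so one can apply the estimate directly to $w$.

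\textbf{Step 2 (interpolation).} Set $z:=\widetilde w-\widetilde p$ on the closed upper half $\overline{B^+_{3/(4\sigma)}}$. Then $\|z\|_{L^\infty}\le\delta$ and $[\nabla_y z]_{C^{1/2}}\le C_\sigma$ uniformly in time. The standard interpolation inequality, valid up to the flat boundary, gives
\[
|\nabla z|\le C_\sigma\,\delta^{1/3}\qquad\text{on }\overline{B^+_{\frac{1}{2\sigma}}}\times(\tau_0+2\sigma,\tau_0+1).
\]
In particular $\partial_{n+1}\widetilde w(y',0^+,\tau)\le -\widetilde T[\widetilde p\,](y')+C_\sigma\delta^{1/3}$.

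\textbf{Step 3 (conclusion).} Choose $\delta$ so small that $C_\sigma\delta^{1/3}<\sigma/2$. Fix $y'$ with $\widetilde T[\widetilde p\,](y')\ge\sigma$, $|y'|<\frac1{2\sigma}$, and $\tau\in(\tau_0+2\sigma,\tau_0+1)$. By Step 2, $\partial_{n+1}\widetilde w(y',0^+,\tau)\le-\sigma/2<0$. If we had $\widetilde w(y',0,\tau)>0$, the Signorini condition in \eqref{eq:parabolic-signorini-problem-for-widetildew} would force $\partial_{n+1}\widetilde w(y',0^+,\tau)=0$, a contradiction. Since $\widetilde w\ge0$ on the thin space, we conclude $\widetilde w(y',0,\tau)=0$.

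The choice of $\overline\tau_0$ enters only to keep the source term uniformly bounded, which is automatic from \eqref{controlftilde}. The choice of $\delta$ depends only on $C_\sigma$, hence only on $n,m,C_f,\ell-\lambda,\sigma$.
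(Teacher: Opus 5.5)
Your argument is correct, but it takes a different route from the paper's.

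\textbf{The paper's route.} The paper never looks at derivatives of $\widetilde w$. It undoes the conformal change and rescales to $w_r(x,t)=r^{-\lambda}w(rx,r^2t)$. From \eqref{eq:expr-p-odd} it gets $p\le-\frac\sigma2|x_{n+1}|+C|x_{n+1}|^3$ in a small box $U$ around $(z',0,-1)$. It then slides down the paraboloid $\phi_c=-(n+2)x_{n+1}^2+|x'-z'|^2-2(1+t)+c$, which satisfies $\mathbf{H}\phi_c=-2$, until $\phi_c$ touches $w_r$ from above. For $c>0$ one has $\phi_c>0$ on the contact set, so the touching point is a non-contact point. There $\mathbf{H}w_r\ge -C_\sigma r^{\ell-\lambda}>-2$, which is impossible. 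Hence $c_\ast=0$ and $w_r(z',0,-1)=0$. This is exactly where $\overline\tau_0$ large (that is, $r$ small) enters.

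\textbf{Your route.} You replace the barrier by the quantitative optimal regularity of \cite[Theorem 9.1]{dgpt17} combined with interpolation. This turns $L^\infty$-closeness into closeness of the one-sided normal derivatives, $|\partial_{n+1}(\widetilde w-\widetilde p)|\le C_\sigma\delta^{1/3}$ on the thin space. Complementarity then finishes the argument; note that it holds pointwise, not only a.e., because $\widetilde w$ and $\partial_{n+1}\widetilde w$ are continuous up to $\{y_{n+1}=0\}$ from above.

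\textbf{What each approach buys.} Your mechanism is transparent, gives a quantitative derivative estimate, and does not need $\overline\tau_0$ at all. The price is importing the deep $H^{3/2,3/4}$ estimate with uniform constants, which requires some care:
\begin{itemize}
\item undo the conformal change, as you suggest;
\item cover the non-cylindrical window $\{|x|<\sqrt{-t}/\sigma\}$ by backward cylinders of size depending on $\sigma$;
\item depending on how you quote the estimate, first bound the energy norm of $\widetilde w$ by its local $L^\infty$ norm via a Caccioppoli inequality, which is available since $\widetilde w\,\partial_{n+1}\widetilde w=0$ on the thin space.
\end{itemize}
The paper's barrier needs only continuity and the sign structure of $p$. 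It is also reused almost verbatim in \cref{prop:cleaning-odd}. There the closeness is only $Cr^\alpha$, and the needed negativity of $\widetilde u_r$ near the thin space comes from time monotonicity instead of from $\widetilde T[\widetilde p\,]\ge\sigma$.
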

		\begin{proof}
			Let $p\in \mathcal{P}_{\lambda}^{+}$, defined as $p(x,t):= (-t)^{\lambda/2}\widetilde{p}(x/\sqrt{-t})$ be the corresponding blow-up in physical coordinates. We also call $w_{r}(x,t):= r^{-\lambda}w(rx, r^{2}t)$ the parabolic rescaling of $w$, for $r\in (0,1)$. We divide the proof into two steps.
			
			\smallskip
			\noindent\textit{Step 1.} We first prove that there are $\overline\delta>0$, $\overline r>0$ and $\kappa\in(0,1)$ such that if $$|w_r(x,t)-p(x,t)|\le \overline\delta\qquad\forall (x,t)\in B_{\frac1\sigma}\times (-e^{\kappa\sigma},-1),$$ for some $r\in(0,\overline r)$, then 
			\be\label{eq:conclusion-partial}w_r(x',0,-1)=0\qquad\forall x'\in \{T[p](\cdot,-1)\ge\sigma\}, \ |x'|<\frac{1}{2\sigma}.\ee
			
			Let $z'\in \{T[p](\cdot,-1)\ge\sigma\}$ with $|z'|<\frac{1}{2\sigma}$, and consider the set 
			$$U:=\left\{(x,t)\in \R^{n+1}\times \R: \,|x'-z'|< \rho,\ |x_{n+1}|<\rho,\ t\in(-e^{\kappa\sigma},-1]\right\},$$
			for some $\rho>0$ and $\kappa>0$ sufficiently small. Since $\mathcal{P}_{\lambda}^{+}$ is contained in a finite-dimensional polynomial space (see \cref{lem:classification-odd-blow-ups}) and $\widetilde{p}$ is normalized, there is $C>0$ such that
			$$|T[p](x',t)-T[p](z',-1)|\le C(\rho+\kappa\sigma)\qquad \forall  (x,t)\in U.$$ 
			Then, by the representation \eqref{eq:expr-p-odd}, for any $(x,t)\in U$ we have
			\bea p(x,t)&\le -|x_{n+1}|T[p](x',t)+C|x_{n+1}|^3\\
			&\le -|x_{n+1}|T[p](z',-1)+C|x_{n+1}|(\rho+\kappa\sigma)+C|x_{n+1}|^3\\
			&\le |x_{n+1}|(-\sigma +C\rho+C\kappa\sigma)+C|x_{n+1}|^3.\eea 
			We first choose $\kappa$ sufficiently small depending on $C$ and then $\rho$ sufficiently small depending on $\sigma$, such that $-\sigma +C\rho+C\kappa\sigma\le -\sigma/2$.  Then, after further reducing $\rho$  and choosing $\overline\delta$ small enough depending on $\rho$, for every $(x,t)\in \partial_{\text{par}}U$ we get
			\bea w_r(x,t)&\le p(x,t)+\overline\delta\le -\frac\sigma 2|x_{n+1}|+C|x_{n+1}|^3+\overline\delta\\
			&\le\begin{cases}
				-\frac\sigma 2 \rho +C\rho^3+\overline\delta\le -(n+2)\rho^2\qquad &\text{if $|x_{n+1}|=\rho$},\\
				-\frac\sigma 2 |x_{n+1}|+C\rho^3+\overline\delta\le -(n+2)x_{n+1}^2+\rho^2\qquad &\text{if $|x_{n+1}|<\rho$ and $|x'-z'|=\rho$},\\
				C\rho^3+\overline\delta\le -(n+2)\rho^2+2(e^{\kappa\sigma}-1)\qquad &\text{if $|x_{n+1}|<\rho$ and $t=-e^{\kappa\sigma}$}.\\
			\end{cases} \eea
			It follows that for any $c\ge 0$, 
			$$w_r(x,t)\le -(n+2)x_{n+1}^2+|x'-z'|^2-2(1+t)+c=:\phi_c(x,t)\qquad\forall (x,t)\in\partial_{\text{par}}U.$$
			
			Now, for $c$ large enough, we have $w_r(x,t)< \phi_c(x,t)$ in $\overline U$. Then, we can decrease $c$ until a contact point occurs in $\overline U$. Since $w_{r}(z',0,-1)\ge 0$, we see that such a contact point must occur for some value $c_\ast\ge0$. We claim that $c_{\ast}=0$. Given the claim one gets $w_r\le\phi_0$ in $\overline U$, which in turn implies \eqref{eq:conclusion-partial} at the point $(z',0,-1)$ since $\phi_0(z',0,-1)=0$. This would conclude \textit{Step 1} by the arbitrariness of $z'$.
			
			If $c_{\ast}>0$, then, by what we proved above, the contact point $(x_0,t_0)$ lies in $\overline U\setminus \partial_{\text{par}}U$. Note that $\phi_{c_\ast}>0$ in the points of $U$ belonging to the contact set of $w_{r}$, so $(x_{0},t_{0})$ lives outside this contact set. Then, by the bound on the right-hand side $f$ in \eqref{controlf}, up to choosing $\overline r$ small enough depending on $\sigma$ we have 
			$$\mathbf{H}w_r(x_{0},t_{0})=r^{2-\lambda}f(rx_{0}, r^{2}t_{0})\ge -C_\sigma r^{\ell-\lambda}>- 2=\mathbf{H}\phi_{c_\ast}(x_{0},t_{0})\qquad\forall r\in (0,\overline r).$$ This contradicts the fact that $\phi_{c_{\ast}}$ touches $w_{r}$ from above at $(x_{0},t_{0})$. Therefore, the case $c_\ast>0$ is impossible.

			\smallskip
			\noindent\textit{Step 2.} To conclude, we transfer the result proved in \textit{Step 1} to conformal coordinates. Note that we have 
			\be\label{eq:wr-and-widetildew}w_r(x,t)={(\sqrt{-t})^\lambda} \widetilde w\left(\frac{x}{\sqrt{-t}},-\log(-t r^2)\right),\qquad p(x,t)= \sqrt{-t}^{\lambda}\widetilde{p}\left(\frac{x}{\sqrt{-t}}\right).\ee
			Moreover, for any 
			\begin{equation}\label{eq:choice-r-localization-contact-set}
				r\in(e^{-\frac{\tau_0+1}2},e^{-\frac{\tau_0+(1+\kappa)\sigma}2}),    
			\end{equation}
			we get 
			$$\left(\frac{x}{\sqrt{-t}}, -\log(-t r^2)\right) \in B_{\frac1\sigma}\times (\tau_0+\sigma, \tau_0+1)\qquad \forall  (x,t)\in B_{\frac{1}{\sigma}}\times (-e^{\kappa\sigma}, -1).$$ 
			Therefore, combining hypothesis \eqref{eq:closeness-assumption-in-lemma} with \eqref{eq:wr-and-widetildew}, we obtain
			$$|w_r(x,t)-p(x,t)|\le (\sqrt{-t})^\lambda\delta\le e^{\frac{\kappa\sigma\lambda}{2}}\delta\qquad\forall (x,t)\in B_{\frac1\sigma}\times (-e^{\kappa\sigma},-1).$$
			Now we choose $\delta$ sufficiently small so that $e^{\frac{\kappa\sigma\lambda}{2}}\delta\le \overline\delta$, and $\overline \tau_0$ sufficiently large so that $e^{-\frac{\overline{\tau}_{0}}{2}}\le \overline r$. Then, we may apply \textit{Step 1} and get \eqref{eq:conclusion-partial} for any $r$ as in \eqref{eq:choice-r-localization-contact-set}. By \eqref{eq:wr-and-widetildew}, this implies
			$$\widetilde w(y',0,\tau)=0\qquad\forall y'\in \{\widetilde T[\widetilde p\,]\ge\sigma\}, \ |y'|<\frac{1}{2\sigma}, \quad \forall \tau \in (\tau_0+(1+\kappa)\sigma,\tau_0+1).$$
			Since $\kappa\in (0,1)$, we have $(\tau_0+2\sigma,\tau_0+1)\subseteq(\tau_0+(1+\kappa)\sigma,\tau_0+1)$ and the conclusion follows.
		\end{proof}
		
		\subsection{Epiperimetric inequality for odd frequencies}
		In this subsection, we prove the main technical result of this part, an epiperimetric inequality for odd frequencies, for the modified Weiss' energy 
		\be\label{eq:widetildeWeiss}\widehat W_\lambda(\widetilde w,\tau):=W_\lambda(\widetilde w,\tau)+2C_0e^{-\tau(\ell-\lambda)},\ee where $C_0$ is the constant in \cref{lemma:shi} and $\ell$ is the constant in \eqref{controlf}.
		Our main results of this section, \cref{prop:rate-for-w} and \cref{prop:odd-gap}, will follow from the next proposition.
		\begin{proposition}\label{prop:epiperimetric-inequality}
			There are constants $\eps_0>0$ and $\delta_0>0$ depending only on $n$, $m$, $C_{f}$ and $\ell-\lambda$ such that the following holds.
			Let $w$ be a solution of \eqref{eq:w-solves}, with right-hand side $f$ satisfying \eqref{controlf} for some $\ell>\lambda:=2m+1$. Let $\widetilde w$ be the corresponding function in conformal coordinates as defined in \eqref{eq:widetildev}, and let $\widetilde p\in\mathcal{E}_{\lambda}^+$ be a blow-up in conformal coordinates such that $\|\widetilde p\|_{L^2_\mu}=1$.
			Suppose that, for some $\tau\in(0,\infty)$, we have
			\be\label{eq:closeness-assumption} \|\widetilde w(\cdot,\tau')-\widetilde p\|_{L^2_\mu}\le \delta_0 \quad \text{and}\quad 0\le \widehat W_{\lambda}(\widetilde w,\tau')\le 1\qquad\forall \tau'\in(\tau,\tau+1).
			\ee 
			Then
			$$\widehat W_{\lambda}(\widetilde w,\tau+1)\le (1-\eps_0)\widehat W_{\lambda}(\widetilde w,\tau),$$ where $\widehat W_{\lambda}$ is the modified Weiss' energy, defined in \eqref{eq:widetildeWeiss}.
		\end{proposition}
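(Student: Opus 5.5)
We argue by contradiction and compactness, in the spirit of the epiperimetric/decay arguments of \cite{shi18,cv24,col2025}. Suppose the statement fails: then there are solutions $w_j$ (with the same constants $C_f$, $\ell-\lambda$), normalized profiles $\widetilde p_j\in\mathcal E^+_\lambda$, times $\tau_j$, and $\delta_j\downarrow0$, $\eps_j\downarrow0$, such that \eqref{eq:closeness-assumption} holds with $\delta_j$ but $\widehat W_\lambda(\widetilde w_j,\tau_j+1)>(1-\eps_j)\widehat W_\lambda(\widetilde w_j,\tau_j)$. Write $\widehat W_j:=\widehat W_\lambda(\widetilde w_j,\tau_j)\in[0,1]$.

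\textbf{Step 1: small time derivative.} By \cref{lemma:shi}, the modified energy satisfies
\[
\widehat W_\lambda(\tau_j+1)-\widehat W_\lambda(\tau_j)\le-\int_{\tau_j}^{\tau_j+1}\|\partial_\tau\widetilde w_j\|^2_{L^2_\mu}-C_0e^{-\tau_j(\ell-\lambda)}\bigl(1-e^{-(\ell-\lambda)}\bigr).
\]
Hence both $\int_{\tau_j}^{\tau_j+1}\|\partial_\tau\widetilde w_j\|^2_{L^2_\mu}$ and $e^{-\tau_j(\ell-\lambda)}$ are bounded by $C\eps_j\widehat W_j$. This is the reason for the doubled constant $2C_0$ in \eqref{eq:widetildeWeiss}: it lets the error from $f$ be absorbed by the energy itself.

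\textbf{Step 2: lower bound on the energy.} The key identity I would use is obtained by testing the equation \eqref{eq:parabolic-signorini-problem-for-widetildew} with $\widetilde w_j$. Since $\widetilde w\,\partial_{n+1}\widetilde w=0$ on the thin space, this gives
\[
W_\lambda(\widetilde w_j,\tau)=-\int\widetilde w_j\,\partial_\tau\widetilde w_j\,d\mu-e^{-\tau}\!\int\widetilde w_j\widetilde f_j\,d\mu .
\]
Integrating over $(\tau_j,\tau_j+1)$ and using Cauchy--Schwarz, \eqref{controlftilde-L2} and Step 1, I expect
\[
\widehat W_j\lesssim\Bigl(\int\|\partial_\tau\widetilde w_j\|^2\Bigr)^{1/2}+e^{-\tau_j(\ell-\lambda)/2}\lesssim(\eps_j\widehat W_j)^{1/2}.
\]
This forces $\widehat W_j\lesssim\eps_j$. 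Note that this alone does not give a contradiction: it only shows that the energy is small compared with the time derivative. A contradiction requires the reverse comparison $\int\|\partial_\tau\widetilde w_j\|^2\gtrsim\widehat W_j$. Equivalently, I must show that near $\widetilde p$ the energy is controlled by the dissipation.

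\textbf{Step 3: linearization, the main obstacle.} Set $\eta_j^2:=\int_{\tau_j}^{\tau_j+1}\|\widetilde w_j-\widetilde p_j\|^2+\widehat W_j$. I expect this step to be the hardest, because the Signorini constraint makes the linearization one-sided. By \cref{lemma:wang} and \cref{lemma:localization}, for $j$ large $\widetilde w_j$ vanishes on $\{\widetilde T[\widetilde p_j]\ge\sigma\}$, so the contact set essentially contains the positivity set of $\widetilde T[\widetilde p_j]$. The normalized differences $(\widetilde w_j-\widetilde p_j)/\eta_j$ are bounded in $L^2_\tau W^{1,2}_\mu$ via a Caccioppoli estimate. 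By Step 1, their time derivatives tend to $0$, so after passing to a subsequence they converge to a time-independent $h$ satisfying $\mathcal L_\lambda h=0$ in $\R^{n+1}_+$, $h=0$ on $\{\widetilde T[\widetilde p]>0\}$, with the one-sided condition $-\partial_{n+1}h\ge0$ where $h=0$ on the remaining thin set. The energy $W_\lambda(\widetilde w_j)$ expands to second order as $\eta_j^2\,W_\lambda(h)$ plus a first-order term $\int\partial_{n+1}\widetilde p\,(\widetilde w-\widetilde p)\ge0$.

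\textbf{Step 4: spectral gap.} The point is that $\lambda=2m+1$ is an eigenvalue of the Dirichlet Ornstein--Uhlenbeck problem, and the next eigenvalue is $\lambda+1$. Therefore $W_\lambda(h)$ is nonnegative and vanishes only on $\mathcal E_\lambda$, and $W_\lambda(h)\ge c\|h-\Pi h\|^2$, where $\Pi$ is the projection onto $\mathcal E_\lambda$. Using that $\mathcal E^+_\lambda$ is a convex cone, replacing $\widetilde p_j$ by the best approximation in the cone, and applying the gap, I would deduce $\widehat W_j\ge c\,\eta_j^2$ while $\int\|\partial_\tau\widetilde w\|^2\ge c'\eta_j^2$. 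This contradicts Step 1 with $\eps_j\to0$. I expect the delicate points to be the compactness of the thin boundary term and the handling of the region where $\widetilde T[\widetilde p]$ is small; the localization lemma controls this up to error $\sigma$, which I would take $\to0$ in a diagonal argument.
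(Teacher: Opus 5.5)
Your overall frame is the right one: argue by contradiction, use the dissipation bound from \cref{lemma:shi}, linearize, and control the thin trace with \cref{lemma:localization}. However, the step that is supposed to produce the contradiction fails.

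\textbf{The coercivity in Step 4 is false.} The value $\lambda=2m+1$ is not the bottom of the Dirichlet Ornstein--Uhlenbeck spectrum on $\R^{n+1}_+$. The Dirichlet modes of homogeneity $1,\dots,2m$ (Hermite polynomials odd in $y_{n+1}$) lie below it. Hence $W_\lambda$ is indefinite on functions vanishing on the thin space; for instance $W_\lambda(|y_{n+1}|)=\frac{1-\lambda}{2}<0$.

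\textbf{The bound you aim for points the wrong way.} A lower bound $\widehat W_j\ge c\eta_j^2$ does not help. Step 1 says dissipation $\le\eps_j\cdot$energy, so you need the energy bounded \emph{above} by a quantity that the dissipation in turn controls. Your claim $\int_{I_j}\|\partial_\tau\widetilde w_j\|^2_{L^2_\mu}\ge c'\eta_j^2$ (with $I_j:=[\tau_j,\tau_j+1]$) has no argument behind it. Your Step 2 cannot supply such a bound, because it tests the equation with $\widetilde w_j$ itself, whose $L^2_\mu$-norm is of order one rather than small.

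\textbf{The missing idea: test with the distance to the linear eigenspace.}
\begin{enumerate}
\item Let $\widetilde q_j\in\mathcal E_\lambda$ be the time-independent minimizer of $\int_{I_j}\|\widetilde w_j(\cdot,\tau)-\widetilde q\|^2_{L^2_\mu}\,d\tau$ over the \emph{linear} space $\mathcal E_\lambda$, not over the cone. Set $\widetilde v_j:=\widetilde w_j-\widetilde q_j$ and test with $\widetilde v_j$.
\item Since $W_\lambda(\widetilde q_j)=0$, expanding gives $W_\lambda(\widetilde w_j)=-W_\lambda(\widetilde v_j)-2\int\widetilde v_j\,\mathcal L_\lambda\widetilde w_j\,d\mu-2\int_{\R^{n+1}_0}\widetilde v_j\,\partial_{n+1}\widetilde w_j\,G(\cdot,-1)$. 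The thin term vanishes because $\widetilde q_j=0$ there and $\widetilde w_j\partial_{n+1}\widetilde w_j=0$, whatever the sign of $\widetilde q_j$.
\item Use $-W_\lambda(\widetilde v_j)\le\frac\lambda2\|\widetilde v_j\|^2_{L^2_\mu}$ and $\mathcal L_\lambda\widetilde w_j=\partial_\tau\widetilde w_j+e^{-\tau}\widetilde f_j$. This gives $\int_{I_j}W_\lambda(\widetilde w_j)\le C\int_{I_j}\|\widetilde v_j\|^2_{L^2_\mu}$. Combined with Step 1, it yields $\int_{I_j}\|\partial_\tau\widetilde w_j\|^2_{L^2_\mu}\le C\eps_j\int_{I_j}\|\widetilde v_j\|^2_{L^2_\mu}$, and, using $\widehat W_\lambda\ge0$, a $W^{1,2}_\mu$ bound on $\widetilde v_j$.
\item Normalize by $\eta_j^2:=\int_{I_j}\|\widetilde v_j\|^2_{L^2_\mu}$. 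The limit $\hat v$ is time-independent, so $\mathcal L_\lambda\hat v=0$ in $\R^{n+1}_+$. It is this stationarity, not a spectral gap, that excludes the lower modes. The limit is also orthogonal to $\mathcal E_\lambda$.
\end{enumerate}

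\textbf{There is no one-sided condition to handle.} Since $\widetilde T[\widetilde p]$ is a nonzero polynomial, the set $\{\widetilde T[\widetilde p]=0\}\times\{0\}$ has zero capacity in $\R^{n+1}$. So \cref{lemma:localization} with $\sigma\downarrow0$ gives $\hat v=0$ on the whole thin space. Hence $\hat v\in\mathcal E_\lambda\cap\mathcal E_\lambda^\perp=\{0\}$, contradicting $\|\hat v\|_{L^2_\mu}=1$.

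Projecting onto the cone $\mathcal E^+_\lambda$, as you propose, gives only a variational inequality instead of orthogonality. That does not by itself force the limit to vanish when $\widetilde p$ lies on the boundary of the cone.
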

		
		\begin{proof}
			We divide the proof into three steps.
			
			\smallskip
			\noindent\textit{Step 1: Contradiction assumptions.}
			Suppose, for a contradiction, that there are solutions $w_j$ of \eqref{eq:w-solves}, with right-hand sides $f_j$ satisfying \eqref{controlf} with common constants $C_f$ and $\ell$, times $\tau_j\in(0,\infty)$, blow-ups in conformal coordinates $\widetilde p_j\in\mathcal{E}_{\lambda}^+$, $\|\widetilde p_j\|_{L^2_\mu}=1$ and constants $\eps_j\in(0,1/2)$, $\delta_j>0$, with $\eps_j\downarrow 0$, $\delta_j\downarrow 0$, such that 
			\be\label{eq:closeness-assumption-contradiction-ass}\|\widetilde w_j(\cdot,\tau')-\widetilde p_j\|_{L^2_\mu}\le\delta_j\quad \text{and}\quad 0\le\widehat W_\lambda(\widetilde w_j,\tau')\le 1\qquad\forall \tau'\in(\tau_j,\tau_j+1),\ee
			and
			$$\widehat W_{\lambda}(\widetilde w_j,\tau_j+1)>(1-\eps_j)\widehat W_{\lambda}(\widetilde w_j,\tau_j).$$ 
			Here $\widetilde w_j$ is obtained from $w_j$ through the transformation \eqref{eq:widetildev}.
			If $I_j:=[\tau_j,\tau_j+1]$, we have 
			$$\widehat W_{\lambda}(\widetilde w_j,\tau_j+1)-\widehat W_{\lambda}(\widetilde w_j,\tau_j)>\widehat W_{\lambda}(\widetilde w_j,\tau_j+1)-\frac{\widehat W_{\lambda}(\widetilde w_j,\tau_j+1)}{1-\eps_j}
			=\frac{-\eps_j}{1-\eps_j}\widehat W_{\lambda}(\widetilde w_j,\tau_j+1).$$
			Then, using the monotonicity formula \eqref{eq:Weiss-difference-derivative} and the definition of $\widehat{W}_{\lambda}$, we get
			\bea
			\int_{I_j}\|\partial_\tau \widetilde w_j\|_{L^2_\mu}^2+C_0e^{-\tau_j(\ell-\lambda)}-C_0e^{-(\tau_{j}+1)(\ell-\lambda)}&\le \widehat W_{\lambda}(\widetilde w_j,\tau_j)-\widehat W_{\lambda}(\widetilde w_j,\tau_j+1)
			\\&\le\frac{\eps_j}{(1-\eps_j)}\widehat W_{\lambda}(\widetilde w_j,\tau_j+1)\\&\le2\eps_j\int_{I_j}\widehat W_{\lambda}(\widetilde w_j,\tau)\,d\tau,\eea where in the last inequality we used again the monotonicity of $\widehat W_{\lambda}$ and that $\eps_j\le1/2$. Moreover,
            $$C_0e^{-\tau_j(\ell-\lambda)}-C_0e^{-(\tau_{j}+1)(\ell-\lambda)}\ge 2c_1C_0e^{-\tau_j(\ell-\lambda)},$$ for some $c_1>0$ depending only on $\ell-\lambda$. Then, using the definition of $\widehat W_{\lambda}$ in the right-hand side above, we obtain 
			\be\label{eq:proofepi-contrad-ass}\int_{I_j}\|\partial_\tau \widetilde w_j\|_{L^2_\mu}^2+c_1{C_0}e^{-\tau_j(\ell-\lambda)}\le2\eps_j\int_{I_j} W_{\lambda}(\widetilde w_j,\tau)\,d\tau,
			\ee for $j$ sufficiently large.
			
			We consider the projection of the time average $\int_{I_{j}}\widetilde w_j(\cdot,\tau)\,d\tau$ onto the eigenspace $\mathcal{E}_{\lambda}$ defined in \eqref{eq:eigenspacemathcalE}. Precisely, we let $\widetilde q_j\in\mathcal{E}_{\lambda}$ be such that 
			$$\int_{I_j}\|\widetilde w_j(\cdot,\tau)-\widetilde q_j\|_{L^2_\mu}^2\,d\tau=\min\left\{\int_{I_j}\|\widetilde w_j(\cdot,\tau)-\widetilde q\|_{L^2_\mu}^2\,d\tau:\ \widetilde q\in\mathcal{E}_\lambda\right\}.$$
			Let $\widetilde v_j(\cdot,\tau):= \widetilde w_j(\cdot,\tau)-\widetilde q_j(\cdot)$.
			Since $\mathcal{E}_{\lambda}$ is a vector space, we have the orthogonality condition
			\be\label{eq:proofepi-orthogonality-condition}\int_{I_j}\int_{\R^{n+1}_+} \widetilde v_j(y,\tau)\widetilde q(y)\,d\mu(y)\,d\tau =0\qquad\forall \widetilde q\in\mathcal{E}_{\lambda}.\ee
			
			\smallskip
			\noindent\textit{Step 2: Estimate for the $\tau$ derivative and the spatial gradient.} In this step we show the following estimates:
			\be\label{eq:proofepi-step3}\int_{I_j}\|\partial_\tau \widetilde w_j\|_{L^2_\mu}^2+c_{1}C_{0}e^{-\tau_j(\ell-\lambda)}\le C\eps_j\int_{I_j}\|\widetilde v_j\|_{L^2_\mu}^2\qquad\text{and}\qquad \int_{I_j}\|\nabla \widetilde v_j\|_{L^2_\mu}^2\le C\int_{I_j}\|\widetilde v_j\|_{L^2_\mu}^2.\ee
			In order to prove the first estimate in \eqref{eq:proofepi-step3}, we show an upper bound for $W_{\lambda}(\widetilde w_j,\tau)$ in terms of $\|\widetilde v_j\|_{L^2_\mu}^2$, and then use \eqref{eq:proofepi-contrad-ass}. Since $\widetilde q_j\in\mathcal{E}_\lambda$, an integration by parts gives $W_{\lambda}(\widetilde q_j,\cdot)\equiv0$. Then, expanding the definition of Weiss' energy, we get
			\be\label{eq:proofepi-identity}\begin{aligned} W_{\lambda}(\widetilde w_j,\tau)&=W_{\lambda}(\widetilde w_j,\tau)-W_{\lambda}(\widetilde q_j,\tau)=-W_{\lambda}(\widetilde v_j,\tau)+2\int_{\R^{n+1}_+}\left(\nabla \widetilde w_j\cdot \nabla \widetilde v_j-\frac{\lambda}2\widetilde w_j\widetilde v_j\right)\,d\mu\\&=
				-W_{\lambda}(\widetilde v_j,\tau)-2\int_{\R^{n+1}_+}\widetilde v_j\mathcal{L}_{\lambda}\widetilde w_j\,d\mu-2\int_{\R^{n+1}_0}\widetilde v_j\partial_{n+1}\widetilde w_j\,G(\cdot,-1).\end{aligned}\ee
			We want to integrate the last identity over $I_j$ and use \eqref{eq:proofepi-contrad-ass}. Thus we need to estimate the three terms in the right-hand side of \eqref{eq:proofepi-identity}. 
			We first observe that, by \eqref{eq:proofepi-contrad-ass} and the bounds for $\widetilde f_{j}$ in \eqref{controlftilde-L2}, we have
			
			\bea\label{eq:proofepi-trick}\begin{aligned}2\left|\int_{I_j}\int_{\R^{n+1}_+}\widetilde v_j(\partial_\tau\widetilde w_j+e^{-\tau}\widetilde f_{j})\,d\mu\,d\tau \right|&\le \int_{I_j}\|\widetilde v_j\|^2_{L^2_\mu}+\int_{I_j}\|\partial_\tau\widetilde w_j+e^{-\tau}\widetilde f_{j}\|^2_{L^2_\mu}\\&\le\int_{I_j}\|\widetilde v_j\|^2_{L^2_\mu}+2\int_{I_j}\|\partial_\tau\widetilde w_j\|^2_{L^2_\mu}+2C C_f^2\int_{I_j}e^{-\tau(\ell-\lambda)}\,d\tau
				\\&\le \int_{I_j}\|\widetilde v_j\|^2_{L^2_\mu}+C\eps_j\int_{I_j} W_{\lambda}(\widetilde w_j,\tau)\,d\tau.\end{aligned}\eea 
			Then, using the equation for $\widetilde w_j$ from \eqref{eq:parabolic-signorini-problem-for-widetildew}, we get the following estimate for the second term in the right-hand side of \eqref{eq:proofepi-identity}: 
			\be\label{eq:proofepi2}2\left|\int_{I_j}\int_{\R^{n+1}_+}\widetilde v_j\mathcal{L}_{\lambda}\widetilde w_j\,d\mu\,d\tau\right| \le \int_{I_j}\|\widetilde v_j\|^2_{L^2_\mu}+C\eps_j\int_{I_j} W_{\lambda}(\widetilde w_j,\tau)\,d\tau.\ee
			For the third term in the right-hand side of \eqref{eq:proofepi-identity}, since $\widetilde w_j\partial_{n+1}\widetilde w_j\equiv \widetilde q_j\equiv0$ on $\R^{n+1}_0$, then \be\label{eq:proofepi3}\int_{\R^{n+1}_0}\widetilde v_j\partial_{n+1}\widetilde w_j=\int_{\R^{n+1}_0}(\widetilde w_j-\widetilde q_j)\partial_{n+1}\widetilde w_j=0.\ee
			Combining \eqref{eq:proofepi2} and \eqref{eq:proofepi3}, then \eqref{eq:proofepi-identity} gives
			\be\label{eq:utiledopo} \int_{I_j} W_{\lambda}(\widetilde w_j,\tau)\,d\tau\le -\int_{I_j}W_{\lambda}(\widetilde v_j,\tau)\,d\tau+\int_{I_j}\|\widetilde v_j\|_{L^2_\mu}^2+C\eps_j\int_{I_j} W_{\lambda}(\widetilde w_j,\tau)\,d\tau.\ee
			Absorbing the last term above into the left-hand side for $j$ large enough, and using that \bea\label{eq:proofepi1}-\int_{I_j}W_{\lambda}(\widetilde v_j,\tau)\,d\tau\le C\int_{I_j}\|\widetilde v_j\|_{L^2_\mu}^2,\eea
			we obtain
			$$\int_{I_j} W_{\lambda}(\widetilde w_j,\tau)\,d\tau\le C\int_{I_j}\|\widetilde v_j\|_{L^2_\mu}^2.$$
			Together with \eqref{eq:proofepi-contrad-ass}, this proves the first estimate in \eqref{eq:proofepi-step3}.
			
			Similarly, absorbing the last term into the left-hand side of \eqref{eq:utiledopo} and using that $\widehat W_\lambda(\widetilde w_j,\tau)\ge0$, we have $$\int_{I_j}W_{\lambda}(\widetilde v_j,\tau)\,d\tau\le C\int_{I_j}\|\widetilde v_j\|_{L^2_\mu}^2+2C_0\int_{I_j}e^{-\tau(\ell-\lambda)}\,d\tau\le C\int_{I_j}\|\widetilde v_j\|_{L^2_\mu}^2,$$ where  in the last inequality we used the first estimate in \eqref{eq:proofepi-step3}. Finally, by definition of $W_\lambda$, we conclude the second estimate in \eqref{eq:proofepi-step3}. 
			
			\smallskip
			\noindent\textit{Step 3: Compactness and conclusion.} 
			We define the linearized sequence 
			$$\hat v_j(\cdot,\tau):=\frac{\widetilde v_j(\cdot,\tau_j+\tau)}{\eta_j}\quad \forall \tau\in(0,1),\qquad \eta_j^2:=\int_{I_j}\|\widetilde v_j\|_{L^2_\mu}^2=\int_{0}^1\|\widetilde v_j(\cdot,\tau_j+\tau)\|_{L^2_\mu}^2\,d\tau.$$ 
			By the minimizing property of $\widetilde q_j$, using $\widetilde p_j$ as a competitor, we have $\eta_j\le\delta_j$. Moreover, $\eta_j>0$ for all sufficiently large $j$: otherwise $\widetilde w_j(\cdot, \tau)=\widetilde q_j$ is stationary and $W_\lambda(\widetilde w_j,\tau)=0$ for $\tau \in [\tau_{j},\tau_{j}+1]$, contradicting the assumed failure of the decay inequality for $\widehat{W}_{\lambda}(\widetilde w_j, \cdot)$.
			We also set the corresponding linearized right-hand side
			$$\hat f_j(\cdot,\tau):=\frac{e^{-(\tau_j+\tau)}\widetilde f_{j}(\cdot,\tau_j+\tau)}{\eta_j}.$$
			Then $\hat v_j$ solves $$ \mathcal{L}_\lambda\hat v_j-\partial_{\tau}\hat v_j=\hat f_j\qquad\text{in }\R^{n+1}_+\times(0,1).$$
			Note that \eqref{eq:proofepi-step3} gives uniform boundedness of $\hat v_j$ in $W^{1,2}(\R^{n+1}_{+}\times(0,1), d\mu \,d\tau)$, since $\partial_{\tau}\widetilde v_j=\partial_{\tau}\widetilde w_j$. Therefore, up to extracting a subsequence, we can find a function $\hat{v}\in W^{1,2}(\R^{n+1}_{+}\times(0,1), d\mu \,d\tau)$ such that 
			$$\hat v_j\rightharpoonup \hat v\qquad \text{in } W^{1,2}(\R^{n+1}_{+}\times(0,1), d\mu \,d\tau),$$ 
			with strong convergence in $L^2(\R^{n+1}_{+}\times(0,1), d\mu \,d\tau)$.
			Moreover, by the first estimate in \eqref{eq:proofepi-step3}, we have $\partial_{\tau}\hat{v}_{j}\to 0$, and hence we actually get that $\partial_{\tau}\hat v\equiv0$ in $\R^{n+1}\times (0,1)$. We thus identify $\hat{v}$ with a function in $W^{1,2}_\mu$ such that $\|\hat v\|_{L^2_\mu}=1$, by the strong $L^{2}(d\mu\,d\tau)$-convergence.
			
			The orthogonality condition \eqref{eq:proofepi-orthogonality-condition} passes to the limit, and we get
			$$\int_{\R^{n+1}_+} \hat v\widetilde q\,d\mu =0\qquad\forall \widetilde q\in\mathcal{E}_{\lambda},$$ since both $\hat v$ and $\widetilde q$ are independent of $\tau$.
			In order to conclude the proof, we need to show that $\hat v\in\mathcal{E}_\lambda$, which implies $\hat{v}\equiv 0$, contradicting $\|\hat v\|_{L^2_\mu}=1$.
			In order to show $\hat v\in\mathcal{E}_\lambda$, observe first that, by the $L^{2}_{\mu}$-bound on $\widetilde f_{j}$ from \eqref{controlftilde-L2} and the first inequality in \eqref{eq:proofepi-step3} we have 
			$$\|\hat f_j(\cdot,\tau)\|_{L^2_\mu}\le \frac{Ce^{-\frac{\tau_j}2(\ell-\lambda)}}{\eta_j}\le C\eps_j^{1/2}\qquad\forall \tau\in(0,1).$$
			Then $\hat v$ is a solution of \bea\label{eq:proofepi-stationarysol}\mathcal{L}_{\lambda}\hat v=0\qquad\text{in }\R^{n+1}_+.\eea
			Thus, it only remains to prove that $\hat v\equiv 0$ on $\R^{n+1}_0$. Fix $\sigma\in (0,1/2)$, and let $\delta>0$ and $\overline{\tau}_{0}>0$ be the associated constants given by \cref{lemma:localization}. Then, by \cref{lemma:wang}, we have 
			$$\|\widetilde w_j(\cdot,\tau)-\widetilde p_j\|_{L^\infty(B_{\frac1\sigma})}\le C_\sigma\left(\int_{I_j}\|\widetilde w_j(\cdot,\tau')-\widetilde p_j\|^2_{L^2_\mu}\,d\tau'\right)^\frac12+C_\sigma e^{-\frac{\tau_j}2(\ell-\lambda)}\qquad\forall \tau\in(\tau_j+\sigma,\tau_j+1).$$ 
			In particular, \eqref{eq:closeness-assumption-contradiction-ass} and the first inequality in \eqref{eq:proofepi-step3} give 
			$$|\widetilde w_j-\widetilde p_j|\le C_\sigma (\delta_j+C\eps_j^{1/2}\eta_j) \qquad\text{in }B_{\frac1\sigma}\times (\tau_j+\sigma,\tau_j+1).$$ 
			Moreover, since $\mathcal{E}^+_\lambda$ is a closed cone in a finite-dimensional space, there is $\widetilde p\in \mathcal{E}^+_\lambda$ with $\|\widetilde{p}\|_{L^2_\mu}=1$ such that, up to a subsequence, $\widetilde p_j\to\widetilde p$ locally uniformly. 
			Thus, for sufficiently large $j$ we have 
			\bea |\widetilde w_j-\widetilde p|\le \delta\qquad\text{in }B_{\frac1\sigma}\times (\tau_j+\sigma,\tau_j+1).\eea
			Thanks to the first inequality in \eqref{eq:proofepi-step3}, $\tau_{j}\to \infty$ and so $\tau_{j}>\overline{\tau}_{0}$ for sufficiently large $j$. We can then apply \cref{lemma:localization} to get 
			$$\widetilde w_j(y',0,\tau)=0\qquad\forall  y'\in \left\{\widetilde T[\widetilde p\,]\ge\sigma\right\},\ |y'|<\frac{1}{2\sigma},\quad \forall  \tau\in (\tau_j+2\sigma,\tau_j+1).$$ Since $\widetilde q_j\equiv0$ in $\R^{n+1}_0$, then, for every $j$ large enough $$\hat v_j(y',0,\tau)=0\qquad\forall  y'\in \left\{\widetilde T[\widetilde p\,]\ge\sigma\right\},\ |y'|<\frac{1}{2\sigma}, \quad \forall  \tau\in (2\sigma,1).$$ 
			Passing to the limit for the thin traces we get 
			$$\hat v(y',0)=0\quad\forall  y'\in \left\{\widetilde T[\widetilde p\,]\ge\sigma\right\},\, |y'|<\frac{1}{2\sigma}.$$ By the arbitrariness of $\sigma$, we obtain that $\hat{v}(y',0)=0$ for every $y'\in \{\widetilde{T}[\widetilde{p}]>0\}$. Since $\widetilde{p}$ is not identically zero, the set $\{\widetilde T[\widetilde p\,]=0\}\times \{0\}$ has zero capacity in $\R^{n+1}$. This implies that $\hat v\equiv0$ on $\R^{n+1}_0$, as desired. The proof is thus concluded.
		\end{proof}
		
		\begin{remark}\label{rmk:epi-zero-rhs}
			When $f\equiv0$, the same conclusion of \cref{prop:epiperimetric-inequality} holds for the unmodified Weiss' energy $W_\lambda$, with constants depending only on $n$ and $m$. To establish this, it is sufficient to take $C_{0}=0$ in the proof, after deleting the forcing terms. Note that in this case $\tau_j\to\infty$ is no longer needed at the end of the proof, because one may take $\overline\tau_0=0$ in \cref{lemma:localization}.
		\end{remark}
		\subsection{Proof of the uniqueness of blow-ups at odd frequency points.}
		In this subsection we prove uniqueness of the blow-up at odd frequency points from \cref{prop:rate-for-w}. This is obtained as a consequence of the epiperimetric inequality in \cref{prop:epiperimetric-inequality}.
		
		\begin{proof}[Proof of \cref{prop:rate-for-w}]
			We divide the proof into three steps.
			
			\smallskip
			\noindent \textit{Step 1.}
			We first prove that there is $\delta_1>0$ such that, for every blow-up in conformal coordinates $\widetilde p\in\mathcal{E}_{\lambda}^+$ satisfying $\|\widetilde p\|_{L^2_\mu}=1$, if 
			$$\|\widetilde w(\cdot,0)-\widetilde p\|_{L^2_\mu}\le \delta_1,\quad  \widehat W_{\lambda}(\widetilde w,0)\le\delta_1\quad\text{and}\quad\widehat W_\lambda(\widetilde w,\tau)\ge0\qquad \forall \tau\in(0,\infty),$$ then \eqref{eq:closeness-assumption} holds for every $\tau'\in(0,\infty)$. In particular, the epiperimetric inequality in \cref{prop:epiperimetric-inequality} can be applied to $\widetilde w(\cdot,\tau),$ for every $\tau\in(0,\infty)$. 
			
			Let $\delta_1\in(0,1)$ to be chosen small enough later.
			We observe that, by the monotonicity formula \eqref{eq:Weiss-monotonicity-formula} for $\widehat{W}_{\lambda}$,
			$$0\le\widehat W_{\lambda}(\widetilde w,\tau')\le\delta_1\le1\qquad\forall \tau'\in(0,\infty).$$
			Then, it only remains to prove the $L^2_\mu$-closeness condition in \eqref{eq:closeness-assumption}, for every $\tau'\in(0,\infty)$. Note that, by \eqref{eq:Weiss-difference-derivative}, for every $0\le \tau_1\le\tau_2<\infty$ we have
			\be\label{eq:servedopo}\begin{aligned} \|\widetilde w(\cdot,\tau_2)-\widetilde w(\cdot,\tau_1)\|_{L^2_\mu}&\le \int_{\tau_1}^{\tau_2}\|\partial_\tau\widetilde w\|_{L^2_\mu}\le \left(\widehat W_\lambda(\widetilde w,\tau_1)-\widehat W_\lambda(\widetilde w,\tau_2)\right)^{1/2}(\tau_2-\tau_1)^{1/2}\\&\le \widehat W_\lambda(\widetilde w,\tau_1)^{1/2}(\tau_2-\tau_1)^{1/2}.\end{aligned}\ee
			Then, given $\sigma>1$ to be chosen large enough later, for every $\tau\in(0,\sigma]$, we have
			$$\|\widetilde w(\cdot,\tau)-\widetilde p\|_{L^2_\mu}\le \|\widetilde w(\cdot,\tau)-\widetilde w(\cdot,0)\|_{L^2_\mu}+\|\widetilde w(\cdot,0)-\widetilde p\|_{L^2_\mu}\le \delta_1^{1/2}\sigma^{1/2}+\delta_1\le C_{\sigma}\delta_1^{1/2}.$$ If $\delta_0$ is the constant in the epiperimetric inequality from \cref{prop:epiperimetric-inequality}, we choose $\delta_1$ depending on $\sigma$ in such a way that $C_{\sigma}\delta_1^{1/2}\le \delta_0$. In particular, we have 
			\begin{equation*}
				\|\widetilde w(\cdot,\tau)-\widetilde p\|_{L^2_\mu}\le \delta_{0}\qquad \forall \tau\in (0,\sigma].
			\end{equation*}
			We wish to extend this bound to any $\tau \in (0,\infty)$.
			
			Let $\tau_0\in(\sigma,\infty]$ be the largest number for which $\|\widetilde w(\cdot,\tau)-\widetilde p\|_{L^2_\mu}\le\delta_0$ for all $\tau \in (0,\tau_{0})$. We assume by contradiction that $\tau_0<\infty$.
			We may then apply repeatedly the epiperimetric inequality from \cref{prop:epiperimetric-inequality} to $\widetilde w(\cdot,\tau)$ for every $\tau\in(0,\tau_0-1)$ and get
			$$\widehat W_\lambda(\widetilde w,j)\le (1-\eps_{0})^j\widehat W_\lambda (\widetilde w,0)\qquad\forall j\in\N,\ j\le\tau_0.$$
			Combining the above inequality with the monotonicity formula \eqref{eq:Weiss-monotonicity-formula} for $\widehat{W}_{\lambda}$, we obtain
			
			\be\label{eq:decay-weiss-energy}\widehat W_{\lambda}(\widetilde w,\tau)\le Ce^{-\alpha\tau}\widehat W_{\lambda}(\widetilde w,0)\qquad\forall \tau\in[0,\tau_0],\ee for some $\alpha\in(0,1)$.
			For every $0\le \tau_1\le\tau_2\le\tau_0<\infty$, using \eqref{eq:servedopo} at all subsequent integer times between $\tau_1$ and $\tau_2$, we obtain 
			\be\label{eq:rate-implication}\|\widetilde w(\cdot,\tau_2)-\widetilde w(\cdot,\tau_1)\|_{L^2_\mu}\le Ce^{-\alpha\tau_1/2}\widehat W_{\lambda}(\widetilde w,0)^{1/2}.\ee 
			In particular
			$$\|\widetilde w(\cdot,\tau_0)-\widetilde w(\cdot,\sigma)\|_{L^2_\mu}\le Ce^{-\alpha\sigma/2}\widehat W_{\lambda}(\widetilde w,0)^{1/2}\le Ce^{-\alpha\sigma/2}\le\frac{\delta_0}2,$$ provided that $\sigma$ is chosen large enough.
			Therefore, using \eqref{eq:servedopo} and further decreasing  $\delta_1$ if necessary, for every $\tau\in(\tau_0,\tau_0+1)$ we find 
			\bea \|\widetilde w(\cdot,\tau)-\widetilde p\|_{L^2_\mu}&\le\|\widetilde w(\cdot,\tau)-\widetilde w(\cdot,\tau_0)\|_{L^2_\mu}+\|\widetilde w(\cdot,\tau_0)-\widetilde w(\cdot,\sigma)\|_{L^2_\mu}+\|\widetilde w(\cdot,\sigma)-\widetilde p\|_{L^2_\mu}\\&\le \delta_1^{1/2}+\frac{\delta_0}2+C_{\sigma}\delta_1^{1/2}\le \delta_0.\eea  This contradicts the maximality of $\tau_0$.

			\smallskip
			\noindent \textit{Step 2.} Next we prove that there exists a non-zero $p\in\mathcal{P}_\lambda^+$ and $t_0\in[-1,0)$ such that for every $t\in[t_0,0)$, 
			\be\label{eq:proof-step2}\int_{\R^{n+1}_+}|w(x,t)-p(x,t)|^2G(x,t)\,dx\le C(\sqrt{-t})^{2\lambda+2\alpha},\ee 
			for some $C>0$ depending only on $n$, $m$, $C_{f}$, $\ell$ and $\|w\|_{L^\infty(Q_1)}$.
			
			For any $\rho\in(0,1)$, we set
			\begin{equation*}
				w^\rho(x,t):=\frac{w(\rho x,\rho^2t)}{\big((\lambda+1)H(\rho,w)\big)^{1/2}}.
			\end{equation*}
			We want to apply \textit{Step 1} to the function $\widetilde w^\rho$ for some $\rho$ sufficiently small.
			Since $0\in\Gamma_{\lambda}(w)$, by \cite[Theorem 7.3]{dgpt17}, there exists $q\in\mathcal{P}_\lambda^+$ satisfying $H(1,q)=1/(\lambda+1)$, or equivalently $\|\widetilde q\|_{L^2_\mu}=1$, such that $q$ is a blow-up of $w$ at $0$ along some sequence of radii $\rho_{j}\downarrow 0$. In addition, there exists some $t_\ast\in(-1,-e^{-1})$ such that $$\int_{\R^{n+1}_+}|w^{\rho_{j}}(\cdot, t_{\ast})-q(\cdot, t_{\ast})|^2G(\cdot, t_{\ast})+\int_{\R^{n+1}_+}|\nabla w^{\rho_{j}}(\cdot, t_{\ast})-\nabla q(\cdot, t_{\ast})|^2G(\cdot,t_\ast)\to 0$$ as $j\to\infty$.
			Moreover, recalling that $\mathcal{M}_\lambda$ is the Weiss' energy in the original coordinates as defined in \eqref{eq:weiss-original-coordinates}, then
			$\mathcal{M}_\lambda(w^{\rho_j},t_\ast)\to \mathcal{M}_\lambda(q,t_\ast)=0$ as $j\to\infty$. The monotonicity formula from \cref{lemma:shi} for $\mathcal M_\lambda$ reads as follows:
			$$r\mapsto \mathcal M_{\lambda}(w^{\rho_{j}},-r^2)+C_0(\rho_{j}) r^{2(\ell-\lambda)}$$
            is monotone non-decreasing. Moreover, the corresponding constant $C_0(\rho_{j})$ goes to zero as $j\to \infty$.
			Therefore $\mathcal{M}_\lambda(w^{\rho_j},t_\ast)+C_0(\rho_j)(-t_{*})^{\ell-\lambda}\to 0$ as $j\to\infty$.
			As a consequence, we can choose $\rho=\rho_{N}$ for a sufficiently large $N\in \N$ such that, for $\tau_\ast:=-\log (-t_\ast)\in (0,1)$ we have 
			\be\label{eq:comb-1}\begin{aligned}\|\widetilde w^\rho(\cdot,\tau_\ast)-\widetilde q\|^2_{L^2_\mu}&=e^{\lambda\tau_\ast}\int_{\R^{n+1}_+}|w^\rho(e^{-\tau_\ast/2}x,-e^{-\tau_\ast})-q(e^{-\tau_\ast/2}x,-e^{-\tau_\ast})|^2G(x,-1)\,dx\\&\le C\int_{\R^{n+1}_+}|w^\rho(x,t_\ast)-q(x,t_\ast)|^2G(x,t_\ast)\,dx\le\delta_1^2,
			\end{aligned}\ee 
			and 
			\be\label{eq:comb-2}\widehat W_{\lambda}(\widetilde w^\rho,\tau_\ast)=\mathcal{M}_{\lambda}(w^\rho,t_\ast)+2C_0(\rho)(-t_\ast)^{\ell-\lambda}\le \delta_1.\ee 
			Finally, since $\widehat W_\lambda(\widetilde w^\rho,\infty)=\mathcal M_{\lambda}(w^\rho,0^-)=0$ and $\widehat W_\lambda$ is non-increasing, then 
			\be\label{eq:comb-3}\widehat W_{\lambda}(\widetilde w^\rho,\tau)\ge0\qquad\forall \tau\in(0,\infty).\ee
			Having \eqref{eq:comb-1}, \eqref{eq:comb-2}, and \eqref{eq:comb-3}, we can now apply \textit{Step 1} to $\widetilde{w}^{\rho}(\cdot, \tau_{*}+\cdot)$ and obtain, as in \eqref{eq:decay-weiss-energy},
			$$\widehat W_{\lambda}(\widetilde w^\rho,\tau)\le Ce^{-\alpha\tau}\qquad\forall \tau\in(\tau_\ast, \infty).$$ Thus, also \eqref{eq:rate-implication} holds for $\widetilde{w}^{\rho}$ for any $\tau_{\ast}<\tau_{1}\le \tau_{2}<\infty$, and therefore, sending $\tau_{2}\to \infty$, we obtain  
			$$\|\widetilde w^\rho(\cdot,\tau)-\widetilde p^\rho\|_{L^2_\mu}\le Ce^{-\alpha\tau/2}\widehat W_{\lambda}(\widetilde w^\rho,\tau_\ast)^{1/2}\qquad \forall \tau\in(\tau_\ast,\infty),$$ for some $\widetilde p^\rho\in\mathcal E_\lambda^+\setminus\{0\}$.
			Going back to physical coordinates and setting $p^\rho(x,t):=(-t)^{\lambda/2}
            \widetilde p^\rho\left(\frac{x}{\sqrt{-t}}\right),$ we find, for every $t\in[t_\ast,0)$,
			\bea \int_{\R^{n+1}_+}|w^\rho(x,t)-p^\rho(x,t)|^2G(x,t)\,dx&= (\sqrt{-t})^{2\lambda}\int_{\R^{n+1}_+}|\widetilde w^\rho(y,-\log(-t))-\widetilde p^\rho(y)|^2\,d\mu\\&\le
			C(\sqrt{-t})^{2\lambda}e^{-\alpha(-\log(-t))}\\&\le C(\sqrt{-t})^{2\lambda+2\alpha},\eea for some 
			$C>0$.
            The conclusion \eqref{eq:proof-step2} follows after decreasing $\alpha$ and choosing $t_0\in[t_\ast\rho^2,0)$.

			\smallskip
			\noindent \textit{Step 3.} Now we conclude the proof of the proposition.
			We consider the rescaling $w_r(x,t):=r^{-\lambda}w(rx,r^2t)$, for $r>0$.
			By \eqref{eq:proof-step2}, there exists $r_0\in(0,1)$ such that 
			\be\label{eq:decay-r}\int_{Q_{1}}|w_r(x,t)-p(x,t)|^2G(x,t)\,dx\,dt=\frac{1}{r^{2\lambda+2}}\int_{B_{r}\times(-r^2,0)}|w(x,t)-p(x,t)|^2G(x,t)\,dx\,dt\le Cr^{2\alpha},\ee  for every $r\in(0,r_0)$.
			Arguing as in \cref{lemma:wang}, we get $$\|w_r-p\|_{L^\infty(Q_{1/2})}\le C\|w_r-p\|_{L^2(Q_{1})}+Cr^{\ell-\lambda}\qquad \forall r\in(0,r_0).$$ 
			Let $\eta\in(0,1)$ to be chosen small enough later. Then, we can find a constant $C_\eta>0$ depending on $\eta$ such that, by \eqref{eq:decay-r}, for every $r\in(0,r_0)$, we have
			\bea \|w_r-p\|^2_{L^\infty(Q_{1/2})}&\le C\int_{Q_1}|w_r(x,t)-p(x,t)|^2\,dx\,dt+Cr^{2(\ell-\lambda)}\\&\le C\int_{B_1\times(-1,-\eta)}|w_r(x,t)-p(x,t)|^2\,dx\,dt+C\eta \|w_r-p\|^2_{L^\infty(Q_{1})}+Cr^{2(\ell-\lambda)}\\&\le C_{\eta}\int_{B_1\times(-1,-\eta)}|w_r(x,t)-p(x,t)|^2G(x,t)\,dx\,dt+C\eta \|w_r-p\|^2_{L^\infty(Q_{1})}+Cr^{2(\ell-\lambda)}\\&\le C_{\eta}r^{2\alpha}+C\eta \|w_r-p\|^2_{L^\infty(Q_{1})}+Cr^{2(\ell-\lambda)},
			\eea 
            where we used that $G\ge c_{\eta}$ in $B_1\times (-1,-\eta)$.
			Then, up to further decreasing $\alpha$, we have 
            \bea \|w-p\|^2_{L^\infty(Q_{r/2})}\le C_{\eta}r^{2\lambda +2\alpha}+C\eta \|w-p\|^2_{L^\infty(Q_{r})}\qquad \forall r\in(0,r_0).
			\eea
			If $\eta$ is chosen small enough, a standard iteration argument (see e.g.~the proof of \cite[Lemma 8.23]{GilbargTrudinger:Elliptic98}) gives 
            $$\|w-p\|^2_{L^\infty(Q_{r})}\le Cr^{2\lambda+2\alpha}\qquad\forall r\in(0,r_0),$$ after further decreasing $\alpha$ and $r_{0}$. This concludes the proof.
		\end{proof}
		
		\begin{proof}[Proof of \cref{thm:rate-for-u}]
			The result follows immediately from \cref{prop:rate-for-w}, for $w:=\zeta \widetilde u^{0,0}$. Note that, after multiplying $u$ by a positive normalization constant, we can assume that $\|u\|_{L^\infty(Q_1)}$ and the constant $C_f$ in \eqref{controlf} are bounded by $1$. Therefore, since $\lambda=3$ and $\ell\ge4$, we can apply \cref{prop:rate-for-w} obtaining an exponent $\alpha$ depending only on $n$.
		\end{proof}
        \begin{remark}
            A closer analysis of the proofs shows that the constants appearing in \cref{thm:rate-for-u} are uniform for cubic points in a given compact set. This remark will be used in the next sections without further reference. 
        \end{remark}
		\subsection{Proof of the frequency gap} As a further application of the epiperimetric inequality from \cref{prop:epiperimetric-inequality}, we prove the gap result at odd integer frequencies.
		\begin{proof}[Proof of \cref{prop:odd-gap}]
			Let $\lambda=2m+1$. Suppose, for a contradiction, that there is a sequence $u_{k}$ of non-trivial parabolically $(2m+1+\eps_k)$-homogeneous global solutions of \eqref{eq:parabolic-signorini-problem-global}, where $\eps_k>0$ and $\eps_k\downarrow 0$ as $k\to\infty$.
			We normalize the solutions so that $\|u_k(\cdot,-1)\|_{L^2_\mu}=1$. The blow-up analysis yields some $p\in\mathcal{P}_{\lambda}^+$, with $\lVert \widetilde{p}\rVert_{L^{2}_{\mu}}=1$ such that, up to a subsequence,  
			$$\int_{\R^{n+1}_+}|u_k(x,-1)- p(x,-1)|^2G(x,-1)\,dx\to 0\qquad\text{as }k\to\infty.$$
			We consider $$\widetilde u_k(y,\tau):=e^{\lambda\tau/2}u_k(e^{-\tau/2}y,-e^{-\tau})=e^{-\eps_k\tau/2}u_k(y,-1)\not\equiv0.$$ 
			We have
			\begin{equation*}
				\sup_{\tau'\in (1,2)}\|\widetilde u_k(\cdot,\tau')-\widetilde p\|_{L^2_\mu}\to 0,
			\end{equation*}
			and
			\begin{equation*}
				0<W_\lambda(\widetilde u_k,\tau')=\frac{\varepsilon_k}{2}e^{-\varepsilon_k\tau'}\le1\qquad \forall \tau'\in (1,2)
			\end{equation*}
			for all sufficiently large $k$. Thus \eqref{eq:closeness-assumption} holds for $\tau=1$ and $\widehat{W}_{\lambda}$ replaced by $W_{\lambda}$.
			We may then apply the epiperimetric inequality from \cref{prop:epiperimetric-inequality} and \cref{rmk:epi-zero-rhs} and obtain
			$$e^{-\eps_k(\tau+1)}\mathcal{M}_{\lambda}(u_k,-1)=W_{\lambda}(\widetilde u_k,\tau+1)\le (1-\eps_0)W_{\lambda}(\widetilde u_k,\tau)=(1-\eps_0)e^{-\eps_k\tau}\mathcal{M}_{\lambda}(u_k,-1).$$ Dividing by $\mathcal{M}_{\lambda}(u_k,-1)>0$, we see that $e^{-\eps_k}\le(1-\eps_0)$, which is a contradiction for $k$ large enough.
		\end{proof}

\section{Cleaning at cubic points}\label{sec-cleaning-cubic}
In this section we prove that the set $\Gamma_3$ of cubic free boundary points can be cleaned up with a superquadratic rate, namely $\Gamma_3\in\text{Clean}(2+\eta)$ for some $\eta>0$. The proof uses the polynomial rate of convergence to the blow-up
established in \cref{thm:rate-for-u} and a quantitative refinement of the barrier argument in \cref{lemma:localization}. 
\begin{proposition}\label{prop:cleaning-odd}
	Let $u$ be a solution of \eqref{eq:parabolic-signorini-problem}, with obstacle $\varphi$ satisfying \eqref{eq:ostacolo}. Then $$\Gamma_{3}\in\text{Clean}(2+\eta),$$ for some $\eta>0$ depending only on $n$.  
\end{proposition}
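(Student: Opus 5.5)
Fix $(x_0,t_0)\in\Gamma_3$ with $t_0<0$; after translation and parabolic rescaling we take $(x_0,t_0)=(0,0)$ and work with $\widetilde u:=\widetilde u^{0,0}$. By \cref{thm:rate-for-u} there is a nonzero $p\in\mathcal P_3^+$ with $\|\widetilde u-p\|_{L^\infty(Q_r)}\le Cr^{3+\alpha}$. By \cref{lem:classification-odd-blow-ups}, $p=-|x_{n+1}|(q_0(x',t)+x_{n+1}^2 q_1)$ with $q_0\ge0$ parabolically $2$-homogeneous. Time monotonicity forces $\partial_t p\ge 0$. Since $\partial_tq_0$ is a constant $a\ge 0$ and $\partial_t p=-|x_{n+1}|(a+x_{n+1}^2\partial_tq_1)$, we get $a\le 0$ near $x_{n+1}=0$, hence $a=0$. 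So $q_0(x',t)=q_0(x')$ is a nonnegative quadratic form, $q_1$ is a constant, and $\mathbf Hp=0$ gives $6q_1=-\Delta_{x'}q_0$. Next we use \cref{lemma:monotonicity-ass}: $\partial_t\widetilde u\ge c|x_{n+1}|$ in $Q_{1/2}$, with $c$ also uniform on nearby points. We plan to play this forward-in-time growth against the backward cubic expansion.

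\textbf{Key step (a quantitative version of \cref{lemma:localization} in forward cylinders).} Take $r$ small and set $s:=r^{2-\eta'}\gg r^2$ for a suitable $\eta'>0$. For $t\in(0,s)$ we use monotonicity and integrate $\partial_t\widetilde u\ge c|x_{n+1}|$ from time $0$. The expansion at time $0$ gives $\widetilde u(x,0)\ge p(x,0)-Cr^{3+\alpha}=-|x_{n+1}|q_0(x')+O(|x_{n+1}|^3)-Cr^{3+\alpha}$ on $B_r$. Hence
\[
\widetilde u(x,t)\ge |x_{n+1}|\bigl(ct-C|x'|^2-Cx_{n+1}^2\bigr)-Cr^{3+\alpha}\qquad\text{in } B_r\times(0,s).
\]
At the final time $t=s$, on the lateral region $|x_{n+1}|\sim r$ with $|x'|\le r$, the right side is $\gtrsim rs-Cr^3-Cr^{3+\alpha}\gtrsim r s$. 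The right side is also $\ge -Cr^{3+\alpha}$ everywhere in the cylinder.

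\textbf{Barrier.} We now run a barrier argument as in Step 2 of the proof of \cref{prop:cleaning-gamma2}, but in the forward cylinder $B_r\times(s/2,s)$ and with the source term bounded by $|f|\le Cr^{\ell-2}\ll s$. Compare $\widetilde u$ with $-Cr^{3+\alpha}+K rs\,h(x/r,(t-s/2)/r^2)$, where $h$ is the caloric barrier from that proof. Since $s\gg r^2$, there are many parabolic times of length $r^2$ available, so $h$ has become uniformly positive on $B_{r/2}$. Since $rs\gg r^{3+\alpha}$, we obtain $\widetilde u>0$ on $B'_{r/2}\times\{s\}$, and by monotonicity also for all later times. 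Translating back, no free boundary point lies in $B'_{r/2}\times(Cr^{2-\eta'},\rho^2)$. Scaling $|x|\sim r$, this is the cleaning $t>C|x|^{2-\eta'}$, which is only subquadratic; it is a first step toward the superquadratic statement.

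\textbf{Obstacle: reaching a superquadratic exponent.} To get $2+\eta$ we must use that points of $\Gamma_3$ near $0$ are themselves cubic, not merely that $0$ is cubic. The idea is to apply the same analysis at a nearby point $(x_1,t_1)\in\Gamma_3$ with $t_1>0$, together with the uniform expansion at both points. Write $\widetilde u\approx p$ near $0$ and $\widetilde u\approx p^{x_1,t_1}$ near $(x_1,t_1)$, each with error $O(d^{3+\alpha})$ at scale $d:=|x_1|$. Comparing the normal derivatives on the thin space, $-\partial_{n+1}\widetilde u(\cdot,t_1)$ vanishes near $x_1$ to second order. On the other hand, integrating $\partial_t\widetilde u\ge c|x_{n+1}|$ forward from time $0$ to $t_1$ shows that $-\partial_{n+1}$ has decreased by at least about $ct_1$. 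So at scale $d$ we get $ct_1\lesssim d^2\cdot(\text{small})+d^{2+\alpha}$. The expected main difficulty is showing that the quadratic term $q_0(x_1)$ is also controlled by $d^{2+\alpha}$. For this I would use that $x_1$ is a zero of $q_0^{x_1}$, and that $q_0^{x_1}-q_0(\cdot-x_1)=O(d^{\alpha})$ in norm by the uniform Hölder dependence of blow-ups. Then $q_0(x_1)\lesssim d^{2+\alpha}$, which gives $t_1\le Cd^{2+\alpha}$, i.e.\ $\eta=\alpha$.
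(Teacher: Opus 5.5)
There is a genuine gap, and it starts in your first paragraph. The claim $\partial_t q_0\equiv 0$ is false. Nonnegativity of $q_0$ on $\R^n\times(-\infty,0]$ gives $q_0(0,t)=at\ge 0$ for $t<0$, i.e.\ $a\le 0$ (not $a\ge 0$), and time monotonicity gives $a\le 0$ again. In fact, passing $\partial_t\widetilde u_r\ge c|x_{n+1}|$ to the cubic blow-up gives $\partial_t p=-a|x_{n+1}|\ge c|x_{n+1}|$, so $a\le -c<0$. This is \cref{lemma:p-depend-time}; for instance, time-like blow-ups $\kappa|x_{n+1}|(6t+x_{n+1}^2)$ have $q_0=-6\kappa t$.

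Your decisive last step silently uses $a=0$. Comparing the expansions at $(0,0)$ and $(x_1,t_1)$ on overlapping backward cylinders of size $\delta:=(d^2+t_1)^{1/2}$ only gives $q_0(x_1',t_1)=Q(x_1')-|a|t_1=O(\delta^{2+\alpha'})$, where $Q:=q_0(\cdot,0)$. That is, it gives $Q(x_1')\approx |a|t_1$, not $Q(x_1')\lesssim d^{2+\alpha}$. Inserted into your forward bound $ct_1\le Q(x_1')+Cd^{2+\alpha'}$, this yields $ct_1\le |a|t_1+C\delta^{2+\alpha'}$. That is empty because $|a|\ge c$, and it is consistent with cubic points lying on a parabola $t_1\approx Q(x_1')/|a|\sim d^2$. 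Excluding exactly this configuration is the content of the proposition. The same obstruction is why your forward barrier from the earlier point stalls at a quadratic rate: lower bounds propagated forward from the earlier point see $Q(x')$, which is genuinely of size $|x'|^2$.

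The missing idea is to use the expansion at the \emph{later} point and propagate information \emph{backward}, where the signs cooperate. This is what the paper does. Center at the later point $(x_1,t_1)$. Since $q_0(\cdot,0)\ge 0$, one has $p(x,0)\le C|x_{n+1}|^3$, so the cubic rescaling satisfies $\widetilde u_r(x,0)\le C|x_{n+1}|^3+Cr^\alpha$. Integrating $\partial_t\widetilde u_r\ge c|x_{n+1}|$ backward gives $\widetilde u_r\le -c\,r^{\eta}|x_{n+1}|+C|x_{n+1}|^3+Cr^\alpha$ for $t\le -r^{\eta}$. With $\eta=\alpha/4$, this lies below $-(n+2)\sigma^2$ at height $\sigma=r^{\alpha/2}$. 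The upper barrier $-(n+2)x_{n+1}^2+|x'-z'|^2-2(t-\overline t)$, as in \cref{lemma:localization}, then forces $\widetilde u=0$ on $B'_{3r/2}\times(-1,-r^{2+\eta})$. Hence the earlier point cannot be a free boundary point; this stronger vanishing statement is reused in \cref{lemma:u=0}.

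The same asymmetry also rescues your compatibility approach: evaluate the later point's polynomial at the earlier point rather than the reverse. This gives $0=-\partial_{n+1}\widetilde u(0,0)=q_0^{x_1,t_1}(-x_1',-t_1)+O(\delta^{2+\alpha'})$. Here $q_0^{x_1,t_1}(-x_1',-t_1)=q_0^{x_1,t_1}(-x_1',0)+|a_1|t_1\ge c\,t_1$, since both terms are nonnegative and $|a_1|\ge c$. Hence $c\,t_1\le C(d^2+t_1)^{1+\alpha'/2}$, which forces $t_1\le Cd^{2+\alpha'}$ for small $d$. You would still need to justify the normal-derivative expansion, for example by interpolating the $L^\infty$ rate of \cref{thm:rate-for-u} with uniform $H^{3/2,3/4}$ bounds on the cubic rescalings. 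The paper only obtains normal-derivative expansions in $K_r'$, after cleaning.
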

\begin{proof}
	The result follows if we prove that there is $\eta>0$ depending only on $n$ such that, for every $(x_0,t_0)\in\Gamma_3$, there exists $\rho >0$ such that
	\begin{equation*}\label{eq:claim-cleaning-cubic}
		\left\{(x,t)\in B_{\rho}(x_0)\times(-1,0]:t>t_{0}+|x-x_0|^{2+\eta}\right\}\cap\Gamma_3 = \emptyset.
	\end{equation*}
	Let $(x_1,t_1)\in \Gamma_3$ be such that $r:=|x_1-x_0|<\rho$ and $t_1>t_0+r^{2+\eta}$. We will find a contradiction for sufficiently small $\rho>0$ and $\eta>0$, the latter depending only on $n$. 
	For $\rho$ small enough we may translate and rescale the solution around $(x_{1}, t_{1})$, so that, without loss of generality, we suppose that $(x_{1},t_{1})=(0,0)$.
	We will show that the function $\widetilde u:= \widetilde{u}^{0,0}$ as defined in \eqref{eq:utilde} vanishes in a neighborhood of $(x_0,t_0)$ inside $Q_{1}'$, which is in contradiction with $(x_0,t_0)\in\Gamma_3$.

	By \cref{thm:rate-for-u}, we know that there is $p\in\mathcal{P}^+_{3}$ such that, for all sufficiently small $r>0$,
	\be\label{eq:rate-clean}\|\widetilde u_r-p\|_{L^\infty(Q_2)}\le Cr^\alpha,\ee where $\widetilde u_r(x,t):=r^{-3}\widetilde u(rx,r^2t)$. 
	By \cref{lemma:monotonicity-ass}, there is some $c_{0}>0$ depending on $u$ such that 
	$$\partial_t\widetilde u=\partial_t u\ge c_{0}|x_{n+1}|\qquad\text{in }Q_{1/2}.$$ 
	Then, by the cubic rescaling, we deduce
	\be\label{eq:rescaling-bound}\partial_t \widetilde u_r\ge c|x_{n+1}|\qquad\text{in }Q_2,\ee for all $r>0$ small enough. 
	
	We will prove that 
	\be\label{eq:toprove}\widetilde u_r(x,t)=0\qquad\forall (x,t)\in B_{\frac32}'\times(-1,-r^\eta).\ee
	Given this, rescaling back and using the time-monotonicity of $\widetilde u$, we get \be\label{eq:bettercleaning}\widetilde u(x,t)=0\qquad\forall  (x,t)\in B_{\frac{3}{2}r}'\times (-1,-r^{2+\eta}),\ee from which the conclusion follows, since $|x_0|=r$ and $t_0<-r^{2+\eta}$.
	
	In order to prove \eqref{eq:toprove}, let $(z,\overline t)\in B_{3/2}'\times (-1,-r^{\eta})$. We need to prove that $\widetilde u_r(z,\overline t)=0$.
	We let $$U:=\left\{(x,t):\,|x'-z'|< 1/2,\ |x_{n+1}|<\sigma,\ t\in(\overline t-1/5,\overline t]\right\}$$ for some $\sigma\in(0,1)$, to be chosen below. By \cref{lem:classification-odd-blow-ups}, $p$ can be written as $$p(x,t)=-|x_{n+1}|(q_0(x',t)+bx_{n+1}^2),$$ where $q_0\ge0$ is a non-zero parabolically $2$-homogeneous polynomial, and $b\in\R$. In particular, $p(x,t)\le C|x_{n+1}|^3$.
	Then, choosing $\sigma=r^{\alpha/2}$, $\eta=\alpha/4$ and using \eqref{eq:rate-clean}, \eqref{eq:rescaling-bound}, for every $(x,t)\in \partial_{\text{par}}U$ and $r$ sufficiently small, we get
	\bea \widetilde u_r(x,t)&\le \widetilde u_r(x,0)+c_0|x_{n+1}|t\le c_0|x_{n+1}|t+
	p(x,0)+Cr^\alpha\le c_0|x_{n+1}|t+C|x_{n+1}|^3+Cr^\alpha\\&\le\begin{cases}
		-c_0 r^\eta\sigma +C\sigma^3+Cr^\alpha\le -(n+2)\sigma^2\qquad &\text{if $|x_{n+1}|=\sigma$},\\
		-c_0 r^\eta|x_{n+1}|+C\sigma^3+Cr^\alpha\le -(n+2)x_{n+1}^2+1/4\qquad &\text{if $|x_{n+1}|<\sigma$ and $|x'-z'|=1/2$},\\
		C\sigma^3+Cr^\alpha\le -(n+2)\sigma^2+2/5\qquad &\text{if $|x_{n+1}|<\sigma$ and $t=\overline t-1/5$}.\\
	\end{cases} \eea
	It follows that, for every $c\ge 0$,
	$$\widetilde u_r(x,t)\le -(n+2)x_{n+1}^2+|x'-z'|^2-2(t-\overline t)+c=:\phi_c(x,t)\qquad\forall (x,t)\in\partial_{\text{par}}U.$$
	Arguing exactly as in the proof of \cref{lemma:localization}, we may use $\phi_{c}$ as an upper barrier (since $\mathbf{H}\phi_{c}=-2$ and $\phi_c>0$ in $\{x_{n+1}=0\}$ for $c>0$), and get $\widetilde u_r(x,t)\le \phi_0(x,t)$ for every $(x,t)\in U$ and every $r$ sufficiently small. In particular $\widetilde u_r(z,\overline t)=0$. By the arbitrariness of $z$ and $\overline t$, we get \eqref{eq:toprove}, concluding the proof.
	\end{proof}
		
		\section{Higher order expansions at cubic points}\label{sec-expansion-cubic}
		The goal of this section is to prove a higher order expansion of a solution $u$ in backward cylinders near cubic points. If $\ell=k+\gamma$ is the regularity of the obstacle, we will approximate $u$ at order $k$ using ansatz polynomials in the following class:
		\bea \mathcal{P}_{\le k}:=\Big\{P:S_\infty\to\R&\text{ such that $P|_{S_{\infty}^{+}}$ is a polynomial of parabolic degree $\le k$},\ \\&\mathbf{H}P=0\text{ in }S_{\infty}^{+},\ P\equiv0\text{ on }S_{\infty}',\ P(x',x_{n+1},t)=P(x',-x_{n+1},t) \Big\}.\eea 
		We also introduce the backward cylinder \be\label{Kr}K_r:=B_r\times (-r^2,-r^2/100).\ee
		The main result of this section is the following theorem.
		\begin{theorem}\label{prop:high-order-blow-up}
			Let $u$ be a solution of \eqref{eq:parabolic-signorini-problem}, with obstacle $\varphi\in C^{k,\gamma}(B_1')$ satisfying \eqref{eq:ostacolo}, and assume that $(0,0)\in\Gamma_3$. Let $\widetilde u:=\widetilde u^{0,0}$ be defined as in \eqref{eq:utilde}. Then, there are $r_0\in (0,1)$ and a unique non-zero $P_{\le k}\in\mathcal{P}_{\le k}$ such that, for every $\alpha\in(0,\gamma)$, 
			\bea\|\widetilde u-P_{\le k}\|_{L^\infty(K_r)}\le Cr^{k+\alpha}\qquad\forall r\in(0,r_0),\eea
			for some constant $C>0$.
		\end{theorem}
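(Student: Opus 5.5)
The key input is the cleaning estimate \eqref{eq:bettercleaning} from the proof of \cref{prop:cleaning-odd}. It shows that, near a cubic point, the thin trace of $\widetilde u$ vanishes in $B'_{3r/2}\times(-1,-r^{2+\eta})$ for every small $r$. In the region $|x|^2\lesssim|t|$, and in particular on $\overline{K_r}$ and a slightly enlarged backward cylinder, this gives $\widetilde u(x',0,t)=0$. Indeed, for $(x,t)$ with $|x|\le r$ and $t\le -r^2/200$ we have $t<-(Cr)^{2+\eta}$ at the scale $\rho\sim|t|^{1/2}$.

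Consequently, in a backward parabolic neighbourhood $\mathcal{D}:=\{|x|^2< c|t|,\ -r_0^2<t<0\}$ the function $\widetilde u$ restricted to $\{x_{n+1}\ge 0\}$ solves a Dirichlet problem with zero trace: $\mathbf{H}\widetilde u=\widetilde f$ in $\mathcal{D}^+$ and $\widetilde u=0$ on $\mathcal{D}'$. Here $\widetilde f$ is the time-independent source coming from \eqref{eq:utilde}, and it vanishes to order $\ell-2$ by \eqref{controlf}. Its odd reflection $\widehat u:=\sgn(x_{n+1})\widetilde u$ is then a solution of the heat equation with right-hand side $\sgn(x_{n+1})\widetilde f$, which is $C^{k-2,\gamma}$-like away from the thin space. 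Near $\{x_{n+1}=0\}$ we handle it through boundary Schauder estimates for the Dirichlet problem on the half space.

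The expansion itself I would obtain by an improvement-of-flatness iteration on the dyadic cylinders $K_{2^{-j}}$, or rather on the slightly larger sets $\widehat K_\rho:=B_{2\rho}\times(-4\rho^2,-\rho^2/400)$. The iteration is in the spirit of \cite{frs24}, using $\mathcal{P}_{\le k}$ as the ansatz class. The inductive claim is that there are $P_j\in\mathcal{P}_{\le k}$ with $\|\widetilde u-P_j\|_{L^\infty(\widehat K_{2^{-j}})}\le C2^{-j(k+\alpha)}$ and $|\text{coeff}(P_{j+1}-P_j)|\lesssim 2^{-j(k+\alpha-\deg)}$. To pass from step $j$ to step $j+1$, consider $v_j:=\widetilde u-P_j$ on $\widehat K_{2^{-j}}$. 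It solves the half-space Dirichlet problem with zero thin data and source $\widetilde f$, and $\widetilde f$ can be replaced by its own degree-$(k-2)$ Taylor-type remainder. Absorbing the polynomial part of the source into the ansatz requires a caloric, odd-reflectable polynomial correction, which exists since $\mathbf{H}$ maps $\mathcal{P}_{\le k}$-type odd polynomials onto the appropriate class. Boundary $C^{k,\alpha}$ parabolic Schauder estimates, applied to the rescaled $v_j$ in a region bounded away from $t=0$ (which is exactly why backward cylinders $K_r$ are used), give a Taylor polynomial of order $k$ at an interior base point. Because the domain avoids the origin, I would expand around $t$-slices and use that $K$-type regions at scales $2^{-j}$ and $2^{-j-1}$ overlap. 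This yields the next polynomial, with error decaying by $2^{-(k+\alpha)}$ for any $\alpha<\gamma$; the loss from $\gamma$ to $\alpha$ absorbs the constants. Summing the geometric series gives a limit $P_{\le k}$ and the stated bound. Uniqueness holds because two such polynomials would differ by a polynomial of degree $\le k$ bounded by $r^{k+\alpha}$ on $K_r$ for all small $r$, hence it must vanish by homogeneity scaling. Nontriviality follows since $P_{\le k}$ has cubic part equal to $p\neq0$ from \cref{thm:rate-for-u}.

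The main obstacle I expect is the base of the iteration and the geometry near $t=0$. The cleaning only gives zero trace in a cusp-like region $t<-|x|^{2+\eta}$, so every estimate must be localized to regions comparable to $K_r$, and the Schauder step must be done uniformly on them. I would first check that $\bigcup_j K_{2^{-j}}$-type sets, together with their enlargements $\widehat K$, stay inside the cleaned region; \eqref{eq:bettercleaning} gives this with room. I would also start the iteration from \cref{thm:rate-for-u}, which controls the lower-order terms, before climbing to order $k$.
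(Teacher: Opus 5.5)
There is a genuine gap in the improvement step. You drive the iteration only by local boundary Schauder estimates for the zero-trace Dirichlet problem on backward regions $\widehat K_\rho$ of fixed aperture, and this cannot close.

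First, the expansion point (the origin) lies outside every $\widehat K_\rho$, and these regions are not nested. $\widehat K_{\rho/2}$ contains the times $t\in(-\rho^2/400,-\rho^2/1600)$, where the hypothesis at scale $\rho$ says nothing about $\widetilde u-P_j$. Moreover, a Taylor polynomial of $v_j$ at an interior base point of $\widehat K_\rho$ approximates $v_j$ on $\widehat K_{\rho/2}$ only up to an error of order $\rho^{k+\alpha}$, because all parabolic distances involved are comparable to $\rho$. So no factor $2^{-(k+\alpha)}$ is gained.

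More fundamentally, no argument that uses only the equation and the zero thin trace in a backward cone $\{|x|^2<c|t|\}$ of fixed aperture can give the expansion. Take a non-integer $\mu<k$, larger than the exponent in \cref{thm:rate-for-u}, such that the Dirichlet problem for $\mathcal{L}_\mu$ in $B_{\sqrt c}$ is uniquely solvable; all but countably many $\mu$ qualify. Solve $\mathcal{L}_\mu\phi=0$ in $B_{\sqrt c}$ with boundary datum $y_{n+1}$. Then $\phi\not\equiv0$ is odd in $y_{n+1}$. The function $\Phi(x,t):=(-t)^{\mu/2}\phi(x/\sqrt{-t})$ is caloric in the cone, vanishes on its thin part, and is $O((|x|+\sqrt{|t|})^{\mu})$. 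Adding $\Phi$ to $\widetilde u$ preserves every property you use inside the cone, including the cubic rate. Yet $\widetilde u+\Phi$ has no expansion of order $k+\alpha$ on $K_r$, since a non-integer homogeneous function of degree $\mu<k+\alpha$ cannot be matched by polynomials up to $r^{k+\alpha}$. In self-similar variables, the lateral boundary of a fixed cone is a fixed sphere through which arbitrary modes can enter.

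The missing idea is that the superquadratic cleaning is the essential resource, not just a geometric restriction. By \cref{lemma:u=0}, $\widetilde u$ vanishes on the thin part of the cusp $\Omega^\eta_N=\{|x|^{2+\eta/2}<N|t|\}$. Under the rescaling $(x,t)\mapsto(2^{-m}x,4^{-m}t)$, this zero-trace region becomes $\Omega^\eta_{N2^{m\eta/2}}$, whose aperture grows and which exhausts $\R^{n+1}\times(-\infty,0)$. This is what kills the modes above.

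The paper exploits this through an improvement of flatness by contradiction and compactness in $L^2(\Omega^\eta_N\cap Q_1)$ (\cref{lemma:improvement}). The normalized errors $w^N$ converge to a limit that is caloric in $S_\infty^+$, vanishes on all of $S_\infty'$, has growth $R^{k+\alpha}$, and is orthogonal to $\mathcal{P}_{\le k}$; hence it is zero by the Liouville \cref{lemma:w}. The uniform $W^{1,2}$ bound up to the curved boundary of the cusp (\cref{lemma:technical-lemma}) then gives the strong convergence that contradicts the non-degeneracy of $w^N$. Local parabolic estimates enter only at the very end, to pass from the $L^2$ bound to $L^\infty(K_r)$. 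Your uniqueness and non-triviality arguments are fine once the expansion is available.
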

		Let $\eta>0$ be the constant given by \cref{prop:cleaning-odd}. For every $N>0$, we set
		$$\Omega_N^\eta:=\left\{(x,t)\in\R^{n+1}\times (-\infty,0):\ |x|^{2+\frac\eta2}< N|t|\right\}.$$
		The following lemma, which is a direct consequence of the $2+\eta$ cleaning of cubic points, states that $\widetilde u$ vanishes on the thin part of $\Omega_{N}^{\eta}$.
		\begin{lemma}\label{lemma:u=0}
			Let $u$ be a solution of \eqref{eq:parabolic-signorini-problem}, with obstacle $\varphi$ satisfying \eqref{eq:ostacolo}, and assume that $(0,0)\in\Gamma_3$.
			For every $N>0$, there exists $\rho >0$ such that
			\begin{equation*}
				\Omega^\eta_N\cap Q'_\rho\subseteq \Lambda.
			\end{equation*}
		\end{lemma}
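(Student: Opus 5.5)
The plan is to read off the inclusion from the quantitative statement proved inside \cref{prop:cleaning-odd}, namely \eqref{eq:bettercleaning}, rather than from the abstract cleaning class. That proof shows more than membership in $\text{Clean}(2+\eta)$. For every cubic point $(x_1,t_1)$ and every sufficiently small $r>0$, the translated function $\widetilde u^{x_1,t_1}$ vanishes on $B'_{3r/2}\times(-1,-r^{2+\eta})$. The parameter $r$ there is free: the barrier argument only needs \eqref{eq:rate-clean} and \eqref{eq:rescaling-bound} at scale $r$, and it does not use the second point $(x_0,t_0)$. I will apply this with $(x_1,t_1)=(0,0)$. Since $\varphi$ does not depend on time, the thin trace satisfies $\widetilde u(x',0,t)=u(x',0,t)-\varphi(x')$, so vanishing of the trace of $\widetilde u$ means the point lies in $\Lambda$.

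\emph{Step 1.} Fix $r_1>0$ such that \eqref{eq:bettercleaning} holds for every $r\in(0,r_1)$, with exponent $\eta$. Time monotonicity then upgrades the conclusion: the thin trace of $\widetilde u$ vanishes on
\[
E:=\bigcup_{0<r<r_1} B'_{3r/2}\times(-1,-r^{2+\eta}).
\]

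\emph{Step 2.} Take $(x,t)\in\Omega^\eta_N\cap Q'_\rho$, so that $|x|^{2+\eta/2}<N|t|$ and $|t|<\rho^2$. If $x=0$, pick any small $r$ with $r^{2+\eta}<|t|$. Otherwise set $r:=|x|$, so that $x\in B'_{3r/2}$. It remains to check $|t|>r^{2+\eta}$. We have $|t|>N^{-1}|x|^{2+\eta/2}=N^{-1}r^{2+\eta/2}$, and $N^{-1}r^{2+\eta/2}\ge r^{2+\eta}$ exactly when $r^{\eta/2}\le N^{-1}$. Since $r=|x|<\rho$, this holds once $\rho\le N^{-2/\eta}$. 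Also choose $\rho<r_1$ so that $r<r_1$, and $\rho<1$ so that $t>-1$. With these choices $(x,t)\in E$, hence $(x,t)\in\Lambda$.

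The main point to verify is that \eqref{eq:bettercleaning} is really uniform in $r\in(0,r_1)$ with fixed constants. In the proof of \cref{prop:cleaning-odd} the smallness of $r$ was needed only in \eqref{eq:rate-clean}, in \eqref{eq:rescaling-bound}, and in the three barrier inequalities with $\sigma=r^{\alpha/2}$ and $\eta=\alpha/4$. All of these depend only on $u$ and on the base point $(0,0)$. So a single threshold $r_1$ works, and the lemma follows directly.
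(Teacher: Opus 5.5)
Your proposal is correct and follows essentially the paper's proof. Both arguments turn $N|t|>|x|^{2+\eta/2}$ into $|t|>|x|^{2+\eta}$ for $|x|$ small (your explicit choice $\rho\le N^{-2/\eta}$), and then invoke \eqref{eq:bettercleaning} at scale $r=|x|$, using that the barrier argument in \cref{prop:cleaning-odd} needs only $r$ small and not the second point. Your separate treatment of $x=0$, where $r=|x|$ would degenerate, is a detail the paper leaves implicit.
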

		\begin{proof}
			By the definition of $\Omega_{N}^{\eta}$, for a sufficiently small $\rho>0$, we have 
			\bea\Omega^\eta_N\cap Q'_\rho = \left\{(x,t)\in Q_\rho':\ N|t|>|x|^{2+\frac\eta2}\right\}\subseteq \left\{(x,t)\in Q_\rho':\ |t|>|x|^{2+\eta}\right\}.\eea
			Now, any $(x,t)\in Q_{\rho}'$ such that $|t|>|x|^{2+\eta}$ belongs to $B_{\frac{3}{2}r}'\times (-1,-r^{2+\eta})$ for $r:=|x|<\rho$.
			Then, up to further decreasing $\rho$, the argument in the proof of \cref{prop:cleaning-odd} gives $\widetilde{u}^{0,0}(x,t)=0$ (see \eqref{eq:bettercleaning}).
			This concludes the proof. 
		\end{proof}
		We will also use the following Liouville-type lemma.
		\begin{lemma}\label{lemma:w}
			Let $k\in\N$, $\alpha\in(0,1)$ and $w:S_\infty\to \R$ be an even function with respect to $\{x_{n+1}=0\}$ such that the following hold:
			\begin{itemize}
				\item[i)] $\mathbf{H}w=0$ in $S_\infty^+$;

                \smallskip
				\item[ii)] $w\equiv0$ on $S_\infty'$;

                \smallskip
				\item[iii)] $w$ satisfies the growth estimate $$\|w\|_{L^2(Q_R)}\le CR^{k+\alpha+\frac{n+3}2}\qquad\forall R\ge 1;$$

                \smallskip
				\item[iv)] $w$ is orthogonal to $\mathcal{P}_{\le k}$ in $Q_1$, namely $$\int_{Q_1}wP=0\qquad\forall P\in\mathcal{P}_{\le k}.$$
			\end{itemize}
			Then $w\equiv0$.
		\end{lemma}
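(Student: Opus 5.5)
The plan is to reduce to a Liouville theorem for caloric functions on the whole space. Let $\widehat w(x,t):=\operatorname{sgn}(x_{n+1})\,w(x,t)$ be the odd reflection of $w$. By i) and ii), $w$ is caloric in $S_\infty^+$, continuous up to $\{x_{n+1}=0\}$, and vanishes there. By the Schwarz reflection principle for the heat equation, $\widehat w$ is then caloric in all of $S_\infty$. Concretely, I would verify that $\mathbf{H}\widehat w=0$ in the distributional sense by testing against $\psi\in C^\infty_c$ and integrating by parts in each half-space. The boundary terms coming from $\widehat w$ cancel because $w\equiv 0$ on the thin space. The boundary terms coming from $\partial_{n+1}\widehat w$ cancel by oddness, since they appear with opposite normals and matching values, so there is no jump. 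Hypoelliptic regularity then shows that $\widehat w$ is smooth.

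Next, I would use the growth condition iii). The $L^2$ norm is unchanged by the reflection, so $\|\widehat w\|_{L^2(Q_R)}\le CR^{k+\alpha+\frac{n+3}{2}}$. Interior $L^2$–$L^\infty$ estimates for caloric functions on $Q_R$, after parabolic rescaling, give $\|\widehat w\|_{L^\infty(Q_{R/2})}\le CR^{k+\alpha}$. The parabolic Liouville theorem for ancient caloric functions with polynomial growth on backward cylinders then says $\widehat w$ is a caloric polynomial of parabolic degree at most $k+\alpha$. Since degrees are integers and $\alpha<1$, the degree is at most $k$. This step is the one that needs care: the Liouville theorem must be stated in the backward-strip form, as it is in the proof of \cref{lem:classification-odd-blow-ups}, and one should check that growth only on the cylinders $Q_R$ controls the function on all of $S_\infty$. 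It does, because every point of $S_\infty$ lies in $Q_R$ for $R$ large.

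Then $w=\operatorname{sgn}(x_{n+1})\widehat w$ has the following properties:
\begin{itemize}
\item its restriction to $S_\infty^+$ is a polynomial of parabolic degree $\le k$;
\item it is caloric in $S_\infty^+$;
\item it vanishes on $S_\infty'$;
\item it is even in $x_{n+1}$.
\end{itemize}
Hence $w\in\mathcal{P}_{\le k}$.

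Finally, I would apply iv) with $P=w$, which gives $\int_{Q_1}w^2=0$. So $w\equiv0$ on $Q_1$. Since $\widehat w$ is a polynomial, it vanishes identically everywhere, and therefore $w\equiv 0$.
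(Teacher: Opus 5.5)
Your proposal is correct and follows the paper's proof step by step. You take the odd reflection to get a caloric function on all of $S_\infty$, use interior $L^2$--$L^\infty$ estimates to turn iii) into the pointwise growth $\|\widehat w\|_{L^\infty(Q_R)}\le CR^{k+\alpha}$, apply the parabolic Liouville theorem to get $w\in\mathcal{P}_{\le k}$, and conclude with iv) for $P=w$; your extra details (the distributional check of the reflection principle and the remark that $\alpha<1$ caps the degree at $k$) are what the paper leaves implicit.
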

		\begin{proof}
			By $i)$ and $ii)$, the odd extension $\hat{w}:= \sgn(x_{n+1})w$ is caloric in the whole $S_{\infty}$. 
			Then, $iii)$ and standard parabolic estimates give $$\|\hat w\|_{L^\infty(Q_R)}\le CR^{-\frac{n+3}2}\|\hat w\|_{L^2(Q_{2R})}\le CR^{k+\alpha}\qquad\forall R\ge1.$$ The standard parabolic Liouville theorem then gives 
			$w\in\mathcal{P}_{\le k}$. Therefore, by the orthogonality condition iv) with $P=w$, we obtain $w\equiv0$.
		\end{proof}
		In the following lemma we prove an estimate of the space-time $W^{1,2}$-norm in terms of the $L^2$-norm in domains $\Omega_N^\eta$.
		\begin{lemma}\label{lemma:technical-lemma}
			For every $\theta\in(1/2,1)$, there exist $N_0>0$ and $C>0$ depending only on $n$ and $\theta$ such that for every $N\ge N_0$ and every solution $w$ of \,$\mathbf{H}w=f$ in $\Omega_N^\eta\cap Q_2$, we have
			$$\int_{\Omega_{\theta N}^\eta\cap Q_1}\Big(|\nabla w|^2+|\partial_t w|^2\Big)\le C\int_{\Omega_N^\eta\cap Q_2} w^2+C\|f\|^2_{L^\infty(Q_2)}.$$ 
		\end{lemma}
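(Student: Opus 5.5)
The estimate is interior regularity for the heat equation, localized to the cusp-type region $\Omega_N^\eta$. Two features matter: $\Omega^\eta_{\theta N}\cap Q_1$ sits inside $\Omega^\eta_N\cap Q_2$ with a gap comparable to the local parabolic scale, and that scale degenerates only at the vertex $(0,0)$, where the weight $|t|$ makes everything integrable. The plan is a Whitney-type covering of $\Omega^\eta_{\theta N}\cap Q_1$ by parabolic cylinders adapted to the distance from the vertex. On each cylinder we use the standard interior energy (Caccioppoli) estimate for $\nabla w$ and the standard $W^{2,1}_2$-type estimate for $\partial_t w$, and then we sum.

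\textbf{Step 1 (geometry).} Fix $(x_0,t_0)\in\Omega^\eta_{\theta N}\cap Q_1$ and set $s:=|t_0|^{1/2}$. Take the cylinder $\mathcal{Q}(x_0,t_0):=B_{c s}(x_0)\times(t_0-c^2s^2,t_0+c^2s^2)$ with $c\in(0,1/4)$ small. We check that the doubled cylinder $B_{2cs}(x_0)\times(t_0-4c^2s^2,t_0+4c^2s^2)$ lies in $\Omega^\eta_N\cap Q_2$, provided $c$ is small depending on $\theta$ and $N\ge N_0$ is large.
\begin{itemize}
\item Times in the doubled cylinder satisfy $|t|\in[(1-4c^2)|t_0|,(1+4c^2)|t_0|]$.
\item Points satisfy $|x|\le|x_0|+2cs$.
\item From $|x_0|^{2+\eta/2}<\theta N|t_0|$ we get $(|x_0|+2cs)^{2+\eta/2}\le (1+\epsilon)\theta N|t_0|+C_\epsilon (cs)^{2+\eta/2}$.
\item Since $s\le 1$, the last term is at most $C c^{2}|t_0|$, which is much smaller than $(1-\theta)N|t_0|/2$ once $N\ge N_0$.
\end{itemize}
Choosing $\epsilon$ and $c$ in terms of $\theta$ gives $|x|^{2+\eta/2}<N|t|$ on the doubled cylinder. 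Here $\theta>1/2$ is only used to have a fixed margin $1-\theta$. Note that $N_0$ is needed precisely because near $|x|\sim s$ the cusp can be narrower than $s$ when $N$ is small.

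\textbf{Step 2 (local estimates).} On each doubled cylinder, of size $r=cs$, the interior estimates for $\mathbf{H}w=f$ give
$$\int_{\mathcal{Q}}|\nabla w|^2\le Cr^{-2}\int_{2\mathcal{Q}}w^2+Cr^{2}\int_{2\mathcal{Q}} f^2 ,\qquad \int_{\mathcal{Q}}|\partial_t w|^2\le Cr^{-4}\int_{2\mathcal{Q}}w^2+C\int_{2\mathcal{Q}}f^2 .$$
The first is Caccioppoli. The second follows by rescaling the $W^{2,1}_2$ interior estimate to the unit cylinder.

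\textbf{Step 3 (covering and summation).} This is the main obstacle. The local constants blow up like $r^{-4}=c^{-4}|t_0|^{-2}$, whereas the target bound has no weight. A naive summation therefore fails near the vertex, and we need to use the gain coming from $w$ small near $0$.

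The approach I would try first is to avoid the vertex by exploiting the structure of $\Omega^\eta$ in time slabs. Split $Q_1$ into dyadic slabs $|t|\in(4^{-j-1},4^{-j}]$. In slab $j$ the spatial section of $\Omega^\eta_N$ has radius $R_j\sim (N4^{-j})^{1/(2+\eta/2)}$. This is much larger than $s_j=2^{-j}$, so the slab can be covered by cylinders of size $r_j=c2^{-j}$ with bounded overlap, and we get
$$\int_{\text{slab}_j}\big(|\nabla w|^2+|\partial_t w|^2\big)\le C 4^{2j}\int_{\widehat{\text{slab}}_j} w^2+C\|f\|^2_{L^\infty}|\widehat{\text{slab}}_j| .$$
Summing the $f$-term over $j$ is harmless. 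The summation of the $w$-term is the delicate point.

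As an alternative, I would instead use a single cutoff $\psi$ adapted to the region: $\psi\equiv1$ on $\Omega^\eta_{\theta N}\cap Q_1$, supported in $\Omega^\eta_N\cap Q_2$, with $|\nabla\psi|\lesssim 1$ and $|\partial_t\psi|\lesssim 1$ away from the vertex. Near the vertex we note that the lateral boundary $|x|^{2+\eta/2}=N|t|$ is separated from $\Omega_{\theta N}$ by a spatial distance of about $(1-\theta)|x|$ and a time gap $\gtrsim (1-\theta)|t|$. A cutoff of the form $\psi=\chi\big(|x|^{2+\eta/2}/(N|t|)\big)$ has $|\nabla\psi|\lesssim |x|^{-1}$ and $|\partial_t\psi|\lesssim|t|^{-1}$, and these singular weights are integrable against the region only if the $|x|^{-2}$ and $|t|^{-1}$ blow-ups are compensated.

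I expect the honest resolution to be the following. The energy identity $\int\psi^2(|\nabla w|^2+\partial_t w^2)$, obtained by testing the equation with $\psi^2\partial_t w$ and $\psi^2 w$, produces terms $\int w^2(|\nabla\psi|^2+|\partial_t\psi|)$. On the transition layer one has $|x|^{2+\eta/2}\sim N|t|$, so $|x|\gg|t|^{1/2}$, and the weight $|x|^{-2}$ is bounded by $|t|^{-1}$ restricted to a thin layer. Whether this closes, or whether one instead has to accept that the constant degenerates and restrict to $|t|$ bounded below, is where I am least sure. I would first try the cutoff argument with $N_0$ large, so that the layer has $|x|\ge (N|t|)^{1/(2+\eta/2)}\gg1$ relative to the scale. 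Then $|\nabla\psi|$ and $|\partial_t\psi|$ would be bounded on $Q_2$ except in a region of negligible measure, and the estimate would close.
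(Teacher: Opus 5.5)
Neither of your two routes closes. The step you flag as delicate is the entire content of the lemma.

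\textbf{Why the Whitney/slab route fails.} Interior estimates on cylinders of size $c|t|^{1/2}$ give $\int w^2$ with coefficients $\sim|t|^{-1}$ for $\nabla w$ and $\sim|t|^{-2}$ for $\partial_t w$. Even the largest backward cylinders that fit near the vertex have size $\lesssim (N|t|)^{1/(2+\eta/2)}$, and they still leave coefficients that are unbounded as $t\to0^-$. The right-hand side of the lemma is the \emph{unweighted} $\int_{\Omega^\eta_N\cap Q_2}w^2$, and nothing forces $w$ to be small near $(0,0)$. So a sum like $\sum_j 4^{2j}\int_{\widehat{\mathrm{slab}}_j}w^2$ is not controlled, whatever the overlap of the cover.

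\textbf{Why the cutoff route fails as written.} You estimate $\int w^2(|\nabla\psi|^2+|\partial_t\psi|)$ in absolute value. For $\psi=\chi(|x|^{2+\eta/2}/(N|t|))$ one has $|\partial_t\psi|\sim|t|^{-1}$ on the transition layer for \emph{every} $N$. Taking $N_0$ large therefore does not remove the singularity at the vertex. Boundedness outside a set of small measure is also useless when $w$ is only controlled in $L^2$. What you need is a pointwise \emph{upper} bound on the coefficient of $w^2$, not integrability of that coefficient.

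\textbf{The missing idea: use the sign of $\partial_t$ of the cutoff.} Since $\Omega^\eta_N$ shrinks forward in time, take $\xi_N:=\xi(s)$ with $s=|x|/(N|t|)^{\alpha}$, $\alpha=1/(2+\eta/2)$, and $\xi$ nonincreasing. Then $\partial_t\xi_N=-\alpha s|\xi'(s)|/|t|\le0$. In the identity $\frac{d}{dt}\int w^2\phi+2\int(wf+|\nabla w|^2)\phi=\int w^2(\partial_t+\Delta)\phi$, only an \emph{upper} bound on $(\partial_t+\Delta)\phi$ is needed, since the term at the final time has a good sign.

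The paper chooses $\xi(s)=\kappa(1-s)^{2+p}$ near $s=1$, with $p/(p+1)=2\alpha$, following \cite[Lemma 13.7]{frs24}. With this choice:
\begin{itemize}
\item the positive part $(N|t|)^{-2\alpha}\xi''$ is dominated by the negative term $\partial_t\xi_N$, except where $1-s\lesssim N^{-2\alpha}|t|^{1-2\alpha}$;
\item in that thin zone it is $O(1)$, precisely because of the choice of $p$.
\end{itemize}
Hence $(\partial_t+\Delta)\xi_N\le C_0$ uniformly up to the vertex. This gives the gradient bound on $\Omega^\eta_{\sqrt\theta N}$.

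\textbf{The time-derivative bound.} The paper tests the equation with $\partial_t w$ times a spatial cutoff times $\xi_{\sqrt\theta N}^2$. It then needs $16|\nabla\xi_{\sqrt\theta N}|^2+2\xi_{\sqrt\theta N}\,\partial_t\xi_{\sqrt\theta N}\le C_0$. Your observation that $|\nabla\psi|^2\sim(N|t|)^{-2\alpha}\ll|t|^{-1}$ on the layer is exactly the relevant comparison. It must be used to \emph{absorb} $|\nabla\xi|^2$ into the negative term $\xi\,\partial_t\xi$, not to bound both terms in absolute value. The condition $N\ge N_0$ enters only to make the factor $\sim N^{-2\alpha}$ in front of $|\nabla\xi|^2$ small. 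The resulting $\int|\nabla w|^2$ over $\Omega^\eta_{\sqrt\theta N}$ is then controlled by the gradient step.
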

		
		\begin{proof}
			Let $\mu := \sqrt{\theta}$, $\alpha := 1/(2+\frac{\eta}{2}) \in \left(1/3,1/2\right)$, let $p>2$ satisfy
			$\frac{p}{p+1} = 2\alpha$, and let $\xi \in C^{1,1}([0,\infty))$ be a cut-off function such that 
            \begin{equation*}
            \begin{cases}
                \xi(s)=1\qquad &\text{for $s\in [0,\mu^{\alpha}]$},\\
                \xi \text{ is concave and decreasing }\qquad &\text{in $[\mu^{\alpha},\frac{\mu^{\alpha}+1}{2}]$},\\
                \xi(s)=\kappa(1-s)^{2+p}\qquad &\text{for $s\in [\frac{\mu^{\alpha}+1}{2},1]$},\\
                \xi(s)=0\qquad &\text{for $s\in [1,\infty)$},
            \end{cases}
            \end{equation*}
			where $\kappa>0$ is chosen depending on $\mu$ and $p$. Set
			$$
			\xi_N(x,t) := \xi\left(\frac{|x|}{(N|t|)^{\alpha}}\right),
			$$
			so that $\operatorname{supp}\xi_N \subseteq \overline{\Omega_N^\eta}$ and $\xi_N \equiv 1$ on
			$\Omega_{\mu N}^\eta$. Arguing as in \cite[proof of Lemma 13.7]{frs24}, we can find $N_0>0$
			and $C_0>0$, depending only on $n$, $\theta$ and $\eta$, such that for every $N\ge N_0$,
            \begin{equation*}
                \partial_t \xi_N \leq 0\qquad \text{and}\qquad \Delta \xi_N + \partial_t \xi_N \leq C_0
			\quad \text{in } \mathbb{R}^{n+1}\times(-4,0).
            \end{equation*}
			Moreover, we claim that after enlarging $N_0$ or $C_0$ if necessary,
			$$16|\nabla\xi_{\mu N}|^2 + 2\xi_{\mu N}\,\partial_t\xi_{\mu N} \leq C_0.$$
			
			To prove the claim, let $s = (\mu N|t|)^{-\alpha}|x|$, and note that if $s \in [0,\mu^\alpha]$ or $s \geq 1$, $\xi' = 0$ and the claim is trivial. Now, if $s \in (\mu^\alpha,1)$, $|x| > \mu^{\alpha}(\mu N|t|)^\alpha$, and then since $\xi' \leq 0$,
			\begin{align*}
				16|\nabla\xi_{\mu N}|^2 + 2\xi_{\mu N}\,\partial_t\xi_{\mu N} &= 16(\mu N|t|)^{-2\alpha}|\xi'(s)|^2 + 2\alpha(\mu N)^{-\alpha}|t|^{-\alpha-1}|x|\,\xi(s)\xi'(s)\\
				&\leq 16(\mu N|t|)^{-2\alpha}|\xi'(s)|^2 + 2\alpha\mu^\alpha|t|^{-1}\,\xi(s)\xi'(s)\\
				&= 2\alpha\mu^\alpha|\xi'(s)|\left(\frac{8\mu^{-3\alpha}N^{-2\alpha}}{\alpha}|t|^{-2\alpha}|\xi'(s)| - |t|^{-1}\xi(s)\right)\\
				&= C|\xi'(s)|\left(\varepsilon|t|^{-2\alpha}|\xi'(s)| - |t|^{-1}\xi(s)\right),
			\end{align*}
			where $\varepsilon > 0$ can be chosen as small as necessary by making $N_0$ large.
			
			Fix $\delta>0$ small enough that $1-\delta|t|^{1-2\alpha}>\frac{1+\mu^\alpha}{2}$ for $|t|<4$. Then, we separate three cases.
			\begin{itemize}
				\item If $s \in [\mu^\alpha, \frac{1+\mu^\alpha}{2}]$, we just note that $|\xi'(s)| \leq L$ and $\xi(s) \geq \kappa(\frac{1-\mu^\alpha}{2})^{2+p}$. Hence, for $\varepsilon$ small enough
				$$\varepsilon|t|^{-2\alpha}|\xi'(s)|-|t|^{-1}\xi(s) \leq |t|^{-1}\left(4\varepsilon L - \kappa\left(\frac{1-\mu^\alpha}{2}\right)^{2+p}\right) \leq 0.$$

                \smallskip
				\item If $s \in [\frac{1+\mu^\alpha}{2}, 1 - \delta|t|^{1-2\alpha}]$,
				\begin{align*}
					|\xi'(s)|\left(\varepsilon|t|^{-2\alpha}|\xi'(s)|-|t|^{-1}\xi(s)\right) &= \kappa^2(2+p)(1-s)^{2+2p}\left((2+p)\varepsilon|t|^{-2\alpha} - |t|^{-1}(1-s)\right)\\
					&\leq C(1-s)^{2+2p}\left((2+p)\varepsilon|t|^{-2\alpha} - \delta|t|^{-2\alpha}\right) \leq 0,
				\end{align*}
				after choosing $\varepsilon$ smaller if necessary.

                \smallskip
				\item Finally, if $s > 1 - \delta|t|^{1-2\alpha}$, $\xi(s) = \kappa(1-s)^{2+p}$, and hence
				$$|\xi'(s)| \le \kappa(2+p)\delta^{1+p}|t|^{(1-2\alpha)(1+p)} = \kappa(2+p)\delta^{1+p}|t|,$$
				which implies
				$$C|\xi'(s)|\left(\varepsilon|t|^{-2\alpha}|\xi'(s)| - |t|^{-1}\xi(s)\right) \le C\varepsilon|t|^{-2\alpha}|\xi'(s)|^2 \le C|t|^{2-2\alpha},$$
				which is bounded.
			\end{itemize}
			
			Let us prove our regularity estimates. First, set $\widetilde\eta \in C^\infty_c(B_{5/3})$ such that $\widetilde\eta \equiv 1$ in $\overline{B_{4/3}}$, $0 \leq \widetilde\eta \leq 1$ and $|\nabla\widetilde\eta|+|D^2\widetilde\eta| \leq C$. Then
			\begin{align*}
				\frac{d}{dt}\int_{B_{5/3}\times\{t\}}w^2\widetilde\eta\,\xi_N + 2\int_{B_2\times\{t\}} (w\mathbf{H}w+|\nabla w|^2)\widetilde\eta\,\xi_N &= \int_{B_2\times\{t\}}w^2(\partial_t + \Delta)(\widetilde\eta\,\xi_N)\\
				&= \int_{B_2\times\{t\}}w^2(\widetilde\eta\,(\partial_t+\Delta)\xi_N+2\nabla\widetilde\eta\,\cdot\nabla\xi_N+\xi_N\Delta\widetilde\eta).
			\end{align*}
			Now, $\widetilde\eta\,(\partial_t+\Delta)\xi_N \leq C_0\chi_{B_{5/3}}(x)\chi_{\Omega^\eta_N}$. Moreover, on the support of $\nabla\widetilde\eta\,\cdot\nabla\xi_N$, $|x| \sim 1$, $\frac{1}{(N|t|)^\alpha} \sim 1$, and hence $|\nabla\xi_N|$ is bounded. Therefore,
			$$\frac{d}{dt}\int_{B_{5/3}\times\{t\}}w^2\widetilde\eta\,\xi_N + 2\int_{B_2\times\{t\}} (w\mathbf{H}w+|\nabla w|^2)\widetilde\eta\,\xi_N \leq C\int_{B_{5/3}\times\{t\}}w^2\chi_{\Omega^\eta_N}.$$
			Furthermore, using that there exists $T_* \in (\tfrac{3}{2},2)$ such that
			$$\int_{(B_{5/3}\times\{-T_*\})\cap\Omega^\eta_N}w^2\widetilde\eta\,\xi_N \leq 2\int_{(B_{5/3}\times(-2,0))\cap\Omega^\eta_N}w^2\widetilde\eta\,\xi_N,$$
			and then integrating with respect to $t \in (-T_*,0)$, using that $\mathbf{H}w = f$,
			\begin{align*}
				2\int_{-T_*}^0\int_{B_2\times\{t\}} (wf+|\nabla w|^2)\widetilde\eta\,\xi_N &\leq \int_{B_{5/3}\times\{-T_*\}}w^2\widetilde\eta\,\xi_N - \int_{B_{5/3}\times\{0\}}w^2\widetilde\eta\,\xi_N + C\int_{B_{5/3}\times(-T_*,0)}w^2\chi_{\Omega^\eta_N}\\
				&\leq C\int_{(B_{5/3}\times(-2,0))\cap\Omega^\eta_N}w^2.
			\end{align*}
			Finally, recalling the definitions of $\widetilde\eta$ and $\xi_N$ and using Young's inequality, it follows that
			$$\int_{(B_{4/3}\times(-\frac32,0))\cap \Omega^\eta_{\mu N}}|\nabla w|^2 \leq C\int_{\Omega^\eta_N\cap Q_2}w^2 + f^2,$$
			concluding our estimate for $|\nabla w|^2$. Now to estimate $|\partial_t w|^2$, we define $\eta \in C^\infty_c(B_{4/3})$ such that $\eta \equiv 1$ in $\overline{B_1}$. Then, integrating by parts,
			\begin{align*}
				(*) &:= 2\int_{B_2\times\{t\}}|\partial_t w|^2\eta^2\xi_{\mu N}^2 + \frac{d}{dt}\int_{B_2\times\{t\}}|\nabla w|^2\eta^2\xi_{\mu N}^2 - \int_{B_2\times\{t\}}|\nabla w|^2\partial_t\left[\eta^2\xi_{\mu N}^2\right]\\
				&= 2\int_{B_2\times\{t\}}|\partial_t w|^2\eta^2\xi_{\mu N}^2 + 2\int_{B_2\times\{t\}}\nabla w\cdot\nabla \partial_t w\,\eta^2\xi_{\mu N}^2\\
				&= \int_{B_2\times\{t\}}2(\partial_t w - \Delta w)\partial_t w \,\eta^2\xi_{\mu N}^2 - 4\nabla w\cdot\nabla\eta\, \partial_t w\,\eta\,\xi_{\mu N}^2 - 4\partial_t w \nabla w\cdot\nabla\xi_{\mu N}\,\eta^2\,\xi_{\mu N}.
			\end{align*}
			Now, using $\mathbf{H}w = f$, $4ab \leq \frac{1}{4}a^2 + 16b^2$ and $2ab \leq 2a^2 + \frac{1}{2}b^2$,
			\begin{align*}
				(*) &\leq -\int_{B_2\times\{t\}}2f\,\partial_t w \,\eta^2\xi_{\mu N}^2 + \frac{1}{2}\int_{B_2\times\{t\}}|\partial_t w|^2\eta^2\xi_{\mu N}^2 + 16\int_{B_2\times\{t\}}|\nabla w|^2(|\nabla\eta|^2\xi_{\mu N}^2 + |\nabla\xi_{\mu N}|^2\eta^2)\\
				&\leq 2\int_{B_2\times\{t\}}f^2\eta^2\xi_{\mu N}^2 + \int_{B_2\times\{t\}}|\partial_t w|^2\eta^2\xi_{\mu N}^2 + 16\int_{B_2\times\{t\}}|\nabla w|^2(|\nabla\eta|^2\xi_{\mu N}^2 + |\nabla\xi_{\mu N}|^2\eta^2).
			\end{align*}
			Since $|\nabla\eta|$ is bounded, $\operatorname{supp}(\eta\,\xi_{\mu N})\subseteq B_{4/3}\cap\overline{\Omega^\eta_{\mu N}}$, and $16|\nabla\xi_{\mu N}|^2 + 2\xi_{\mu N}\,\partial_t\xi_{\mu N} \leq C_0$, it follows that
			\begin{align*}
				\int_{B_2\times\{t\}}|\partial_t w|^2\eta^2\xi_{\mu N}^2 + \frac{d}{dt}\int_{B_2\times\{t\}}|\nabla w|^2\eta^2\xi_{\mu N}^2 &\leq C\int_{(B_{4/3}\times\{t\})\cap\Omega^\eta_{\mu N}}\big(|\nabla w|^2 + f^2\big).
			\end{align*}
			
			Finally, choosing $T_\diamond \in (1,\frac{3}{2})$ such that
			$$\int_{(B_{4/3}\times\{-T_\diamond\})\cap\Omega^\eta_{\mu N}}|\nabla w|^2\eta^2\xi_{\mu N}^2 \leq 2\int_{(B_{4/3}\times(-\frac{3}{2},-1))\cap\Omega^\eta_{\mu N}}|\nabla w|^2\eta^2\xi_{\mu N}^2,$$
			and then integrating in $(-T_\diamond,0)$ as before and using the definition of $\eta$,
			$$\int_{\Omega^\eta_{\mu^2 N}\cap Q_1}|\partial_t w|^2 \leq C\int_{(B_{4/3}\times(-\frac32,0)) \cap \Omega^\eta_{\mu N}}|\nabla w|^2 + f^2,$$
			which together with the estimate for $|\nabla w|^2$ completes the proof.
		\end{proof}
		
		The proof of \cref{prop:high-order-blow-up} will follow by iterating the following improvement of flatness lemma. \begin{lemma}\label{lemma:improvement}
			Let $u$ be a solution of \eqref{eq:parabolic-signorini-problem}, with obstacle $\varphi$ satisfying \eqref{eq:ostacolo}. Let $\widetilde u:=\widetilde u^{0,0}$ be defined as in \eqref{eq:utilde}, and let $\alpha\in(0,\gamma)$. Then, there exists $N_0\in\N$ such that the following holds. 
			Suppose that for some $m\in\N_{\ge N_0}$ there are polynomials $\{P_j\}_{j=0}^{m}\subseteq \mathcal{P}_{\le k}$ such that
			$$\left(\int_{\Omega^\eta_{N_0}\cap Q_1}(\widetilde u-P_j)^2(2^{-j}x,4^{-j}t)\,dx\,dt\right)^\frac12\le (2^{-j})^{k+\alpha}\qquad\forall j\in \{0,\ldots, m\}.$$
			Suppose also that $\widetilde u\equiv0$ in $\Omega^\eta_{N_0}\cap Q_{1}'$. Then there exists $ P_{m+1}\in \mathcal{P}_{\le k}$ such that 
			$$\left(\int_{\Omega^\eta_{N_0}\cap Q_1}(\widetilde u-P_{m+1})^2(2^{-(m+1)}x,4^{-(m+1)}t)\,dx\,dt\right)^\frac12\le ( 2^{-(m+1)})^{k+\alpha}.$$
		\end{lemma}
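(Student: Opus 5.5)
We argue by contradiction and compactness, following the improvement-of-flatness scheme of \cite[Lemma 13.8]{frs24}. Suppose the statement fails for every $N_0$ (with the $N_0$ in the domain tied to the index threshold). Then we find solutions $\widetilde u_i$ (within a uniform class: fixed $\varphi$-regularity bounds, so $|f_i|\le C_f(|x|+\sqrt{-t})^{\ell-2}$), integers $m_i\to\infty$ and polynomials $P_{i,j}\in\mathcal P_{\le k}$ satisfying the hypotheses for $j\le m_i$, but such that no $P\in\mathcal P_{\le k}$ works at scale $2^{-(m_i+1)}$. Let $\widehat P_i$ be the $L^2(\Omega^\eta_{N_0}\cap Q_{2^{-m_i}})$-orthogonal projection of $\widetilde u_i$ onto $\mathcal P_{\le k}$, and define the normalized blow-up sequence
\[
w_i(x,t):=\frac{(\widetilde u_i-\widehat P_i)(2^{-m_i}x,4^{-m_i}t)}{\theta_i},\qquad \theta_i:=\Big(\int_{\Omega^\eta_{N_0}\cap Q_1}(\widetilde u_i-\widehat P_i)^2(2^{-m_i}\cdot,4^{-m_i}\cdot)\Big)^{1/2}.
\]
By minimality, $\theta_i\le 2^{-m_i(k+\alpha)}$, and failure at the next scale gives $\theta_i\ge c\,2^{-m_i(k+\alpha)}$. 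Note that $\Omega^\eta_N$ is not scale invariant: under the parabolic rescaling by $2^{-m}$, $\Omega^\eta_{N}$ becomes $\Omega^\eta_{N2^{m\eta/2}}$, so the rescaled domains exhaust $S_\infty\cap\{t<0\}$ as $m_i\to\infty$. This is where the superquadratic cleaning of \cref{prop:cleaning-odd} enters.

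\textbf{Step 1 (growth).} Using the hypotheses at all scales $j\le m_i$ and the triangle inequality, one controls $\|P_{i,j}-P_{i,j+1}\|$ on $\Omega^\eta_{N_0}\cap Q_{2^{-j}}$ by $C2^{-j(k+\alpha)}$. Since $\mathcal P_{\le k}$ is finite-dimensional, norms on $\Omega\cap Q_1$ and on $Q_R$ are comparable, with polynomial loss $R^{k}$. Summing the geometric series gives
\[
\|w_i\|_{L^2(\Omega^\eta_{N_0 2^{m_i\eta/2}}\cap Q_R)}\le CR^{k+\alpha+\frac{n+3}{2}}\qquad\text{for } 1\le R\le 2^{m_i},
\]
where the constant uses $\alpha<1$, i.e.\ the gap between $k+\alpha$ and $k+1$.

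\textbf{Step 2 (equation).} Since $\widetilde u_i\equiv0$ on the thin part of $\Omega^\eta_{N_0}$ near the origin (\cref{lemma:u=0}), the measure $\partial_{n+1}\widetilde u_i$ supported on the contact set does not enter. Consequently $w_i$ vanishes on the thin part of the rescaled domain and solves $\mathbf H w_i=F_i$ in its positive part, with
\[
\|F_i\|_{L^\infty(Q_R)}\le C\,\theta_i^{-1}4^{-m_i}(R2^{-m_i})^{\ell-2}\le C_R\,2^{-m_i(\gamma-\alpha)}\to 0,
\]
because $\ell=k+\gamma>k+\alpha$. Applying the odd reflection and \cref{lemma:technical-lemma} on dyadic subcylinders gives local $W^{1,2}$ bounds, hence compactness in $L^2_{\rm loc}$.

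\textbf{Step 3 (limit and contradiction).} Passing to the limit, $w_i\to w$, where $w$ is even, caloric in $S^+_\infty$, vanishes on $S'_\infty$, has the growth in Lemma~\ref{lemma:w} iii), and is orthogonal to $\mathcal P_{\le k}$ (the orthogonality is inherited from the projection, after adapting Lemma~\ref{lemma:w} iv) to the weighted domain $\Omega^\eta_{N_0}\cap Q_1$; the Liouville proof is unchanged). Hence $w\equiv0$. Strong convergence on $\Omega^\eta_{N_0}\cap Q_1$, however, gives $\|w\|=1$, which is a contradiction provided we also exclude mass escaping near the boundary of $\Omega^\eta_{N_0}$ and near $t=0$. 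The remaining scaling mismatch between unit and half scale is absorbed by the factor $2^{-(k+\alpha)}$ versus the $2^{-(k+1)}$-type decay of the limit.

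\textbf{Main obstacle.} The hardest step is this last compactness: obtaining strong $L^2$ convergence up to the boundary of $\Omega^\eta_{N_0}\cap Q_1$, where the domain degenerates at $t\to0$. I would handle it with the cutoff $\xi_N$ of \cref{lemma:technical-lemma} together with the uniform $L^\infty$ bound from \cref{lemma:wang-start}, applied in slightly larger domains $\Omega^\eta_{N}$ with $N=N_0 2^{m_i\eta/2}\gg N_0$. This makes the thin layer near $\partial\Omega^\eta_{N_0}$ carry small mass.
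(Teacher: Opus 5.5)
Your architecture is the paper's: contradiction at scale $2^{-m}$, a telescoping growth bound, vanishing forcing, Liouville via \cref{lemma:w}, and $W^{1,2}$ bounds via \cref{lemma:technical-lemma}. However, you misplace the domain on which the $L^2$ hypotheses give control, and this breaks exactly the two delicate steps.

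The hypotheses are stated in rescaled variables. At every scale $j$ they control $(\widetilde u-P_j)(2^{-j}\cdot,4^{-j}\cdot)$ only on the fixed set $\Omega^\eta_{N_0}\cap Q_1$. Written in the variables of $w_i$, the hypothesis at scale $m_i-q$ controls $w_i$ only on $\Omega^\eta_{N_0R^{\eta/2}}\cap Q_R$ with $R=2^q$. For $R$ of order one this is essentially $\Omega^\eta_{N_0}\cap Q_R$, not $\Omega^\eta_{N_02^{m_i\eta/2}}\cap Q_R$ as your Step 1 claims.

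The enlargement $\Omega^\eta_N\mapsto\Omega^\eta_{N2^{m\eta/2}}$ under zooming in applies to the thin vanishing set, which is a hypothesis in the original variables. It does not apply to the $L^2$ bounds, and interior parabolic estimates cannot push $L^2$ control outward. Your mismatch between the projection domain $\Omega^\eta_{N_0}\cap Q_{2^{-m_i}}$ (original variables) and $\theta_i$ (computed on $\Omega^\eta_{N_0}\cap Q_1$ in rescaled variables) is a symptom of the same confusion.

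Consequences with $N_0$ fixed:
\begin{itemize}
\item The limit has growth control only on these restricted regions and orthogonality only on $\Omega^\eta_{N_0}\cap Q_1$, so \cref{lemma:w} does not apply as stated.
\item The ``larger domain $\Omega^\eta_{N_02^{m_i\eta/2}}$'' you invoke to control the boundary layer is not available.
\end{itemize}

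The way out is the one your own parenthetical hints at, and it is what the paper does. Since $N_0$ is simultaneously the index threshold and the domain parameter, negating the lemma yields, for each $N$, a counterexample with $m_N\ge N$. Its hypotheses, including $\widetilde u^N\equiv0$ on $\Omega^\eta_N\cap Q_1'$, and its failing conclusion are all posed on $\Omega^\eta_N$.
\begin{itemize}
\item The telescoping then gives $\|w^N\|_{L^2(\Omega^\eta_N\cap Q_R)}\le CR^{k+\alpha+\frac{n+3}{2}}$. These domains exhaust $\R^{n+1}\times(-\infty,0)$ because $N\to\infty$, not because $m_N\to\infty$.
\item The projection domain $\Omega^\eta_N\cap Q_1$ tends to $Q_1$, so \cref{lemma:w} applies verbatim.
\item For the contradiction, do not aim for strong convergence on the normalization domain. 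The failure at scale $2^{-(m_N+1)}$ rescales to $\int_{\Omega^\eta_{2^{-\eta/2}N}\cap Q_{1/2}}(w^N)^2>c_0$. With $\theta=2^{-\eta/2}$, the region $\Omega^\eta_{\theta N}\cap Q_{1/2}$ is exactly where \cref{lemma:technical-lemma}, applied to the odd reflection on $\Omega^\eta_N\cap Q_2$, gives uniform $W^{1,2}$ bounds.
\item These bounds give strong convergence to $0$ on compact sets. A uniform Sobolev $L^q$ bound with $q>2$ (the boundaries of $\Omega^\eta_{\theta N}$ are uniformly Lipschitz graphs) makes the mass outside a compact set small. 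Hence $\int_{\Omega^\eta_{\theta N}\cap Q_{1/2}}(w^N)^2\to0$, a contradiction.
\end{itemize}

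Two further remarks:
\begin{itemize}
\item An $L^\infty$ boundary-layer argument via \cref{lemma:wang-start} would not be uniform near $(0,0)$, where $\Omega^\eta_N$ pinches. It would need an integrability substitute such as the $L^q$ bound above.
\item Your remark about a ``$2^{-(k+1)}$-type decay of the limit'' has no content once $w\equiv0$. The unit/half-scale mismatch is absorbed simply by placing the lower bound on $\Omega^\eta_{\theta N}\cap Q_{1/2}$.
\end{itemize}
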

		\begin{proof}
			Suppose by contradiction that, for every $N\in\N$, there exist integers $m_N\ge N$, solutions $u^{N}$ and polynomials $\{P_{j}^{N}\}_{j=0}^{m_{N}}\subseteq \mathcal{P}_{\le k}$ such that \be\label{eq:contrad-ass-improvement}\left(\int_{\Omega^\eta_{N}\cap Q_1}(\widetilde u^{N}-P_j^{N})^2(2^{-j}x,4^{-j}t)\,dx\,dt\right)^\frac12\le (2^{-j})^{k+\alpha}\qquad\forall j\in \{0,\ldots,m_N\},\ee
			and $\widetilde u^{N}\equiv0$ in $\Omega^\eta_{N}\cap Q_1'$,
			but the conclusion does not hold, namely:
			\be\label{eq:contrad-thesis-improvement} \left(\int_{\Omega^\eta_{N}\cap Q_1}(\widetilde u^{N}-P)^2(2^{-(m_N+1)}x,4^{-(m_N+1)}t)\,dx\,dt\right)^\frac12> ( 2^{-(m_N+1)})^{k+\alpha}\qquad\forall P\in\mathcal{P}_{\le k}.\ee
			We define $$w^N(x,t):=\frac{\widetilde u^N(2^{-m_N}x,4^{-m_N}t)-P^{N}_{m_N+1}(2^{-m_N}x,4^{-m_N}t)}{(2^{-m_N})^{k+\alpha}},$$ where $P^N_{m_N+1}\in\mathcal{P}_{\le k}$ is such that
			$$P^N_{m_N+1}\in \underset{P\in\mathcal{P}_{\le k}}{\text{argmin}}\int_{\Omega^\eta_N\cap Q_1} (\widetilde u^N-P)^2(2^{-m_N}x,4^{-m_N}t)\,dx\,dt.$$
			Note that, by \eqref{eq:contrad-thesis-improvement}, there exists a constant $c_0>0$ such that \be\label{eq:finalestimate-improv}\int_{\Omega_{2^{-\eta/2 }N}^\eta\cap Q_{1/2}}(w^N)^2> c_0.\ee
			We divide the rest of the proof into several steps.

			\smallskip
			\noindent\textit{Step 1: $P^N_{m_N+1}$ approximates $\widetilde u^N$ at every scale.} In the first step we prove that
			\be\label{eq:contrad-ass-improvement-with-j}\left(\int_{\Omega^\eta_N\cap Q_1}(\widetilde u^{N}-P_{m_N+1}^{N})^2(2^{-j}x,4^{-j}t)\,dx\,dt\right)^\frac12\le C (2^{-j})^{k+\alpha}\qquad\forall j\in\{0,\ldots,m_N\},\ee for some $C>0$ independent of $N$.
			
			As a consequence of \eqref{eq:contrad-ass-improvement}, for every $j\in\{0,\ldots,m_N-1\}$, we have \bea\left(\int_{\Omega^\eta_{N}\cap Q_1}(\widetilde u^{N}-P_j^{N})^2(2^{-(j+1)}x,4^{-(j+1)}t)\,dx\,dt\right)^\frac12&\le C\left(\int_{\Omega^\eta_{N}\cap Q_1}(\widetilde u^{N}-P_j^{N})^2(2^{-j}x,4^{-j}t)\,dx\,dt\right)^\frac12
			\\&\le C(2^{-(j+1)})^{k+\alpha}.\eea
			Therefore, combining the previous inequality with \eqref{eq:contrad-ass-improvement} applied for $j+1$, the triangle inequality gives   
			\be\label{eq:allafine}\left(\int_{\Omega^\eta_{N}\cap Q_1}(P_{j+1}^{N}-P_{j}^{N})^2(2^{-(j+1)}x,4^{-(j+1)}t)\,dx\,dt\right)^\frac12\le C(2^{-(j+1)})^{k+\alpha}\qquad\forall j\in\{0,\ldots, m_N-1\}.\ee 
			We denote by $a^{N}_{i,j}$ the coefficient vector of the $i$-homogeneous part of $P_j^N$. Since $\mathcal{P}_{\le k}$ is finite-dimensional, we have $$|a^N_{i,j+1}-a^N_{i,j}|\le C(2^{-j})^{k+\alpha-i}\qquad\forall i\in\{0,\ldots ,k\},\ \forall j\in\{0,\ldots,m_N-1\}.$$
			Summing the geometric series from $j$ to $m_N-1$, we get
			\be\label{eq:bound-a}|a^N_{i,m_N}-a^N_{i,j}|\le C(2^{-j})^{k+\alpha-i}\qquad\forall i\in\{0,\ldots, k\},\ \forall j\in\{0,\ldots,m_N-1\}.\ee
			Then, using again that the space $\mathcal{P}_{\le k}$ is finite-dimensional, we deduce that
			\bea\label{eq:0est-improvement}\left(\int_{\Omega^\eta_N\cap Q_1}(P_{m_N}^{N}-P_{j}^{N})^2(2^{-j}x,4^{-j}t)\,dx\,dt\right)^\frac12\le C (2^{-j})^{k+\alpha}\qquad\forall j\in\{0,\ldots,m_N\}.\eea 
			Combining this estimate with \eqref{eq:contrad-ass-improvement}, we obtain
			\bea\label{eq:1est-improvement} \left(\int_{\Omega^\eta_{N}\cap Q_1}(\widetilde u^{N}-P_{m_N}^{N})^2(2^{-j}x,4^{-j}t)\,dx\,dt\right)^\frac12\le C (2^{-j})^{k+\alpha}\qquad\forall j\in\{0,\ldots,m_N\}.\eea
			On the other hand, the definition of $P^N_{m_N+1}$ and \eqref{eq:contrad-ass-improvement} with $j=m_N$ give  \bea\left(\int_{\Omega^\eta_N\cap Q_1}(\widetilde u^{N}-P_{m_N+1}^{N})^2(2^{-m_N}x,4^{-m_N}t)\,dx\,dt\right)^\frac12\le (2^{-m_N})^{k+\alpha}.\eea
			Combining this inequality with \eqref{eq:contrad-ass-improvement} for $j=m_N$, we have 
			$$\left(\int_{\Omega^\eta_N\cap Q_1}(P_{m_N+1}^{N}-P_{m_N}^N)^2(2^{-m_N}x,4^{-m_N}t)\,dx\,dt\right)^\frac12\le C(2^{-m_N})^{k+\alpha}.$$
			Arguing as above, we obtain that $$|a^N_{i,m_N+1}-a^N_{i,m_N}|\le C(2^{-m_N})^{k+\alpha-i}\qquad\forall i\in\{0,\ldots, k\}.$$
			Combining this bound with \eqref{eq:bound-a}, we get
			$$|a^N_{i,m_N+1}-a^N_{i,j}|\le C(2^{-j})^{k+\alpha-i}\qquad\forall i\in\{0,\ldots, k\},\ \forall j\in\{0,\ldots,m_N\}.$$
			Therefore, 
			$$\left(\int_{\Omega^\eta_N\cap Q_1}(P_{m_N+1}^{N}-P_{j}^N)^2(2^{-j}x,4^{-j}t)\,dx\,dt\right)^\frac12\le C(2^{-j})^{k+\alpha}\qquad\forall j\in\{0,\ldots,m_N\}.$$
			Finally, combining the previous estimate with \eqref{eq:contrad-ass-improvement}, we get \eqref{eq:contrad-ass-improvement-with-j}.
			
			\smallskip
			\noindent\textit{Step 2: $w^N$ satisfies a growth estimate.} In this step we prove that for every $R\ge1$, there exists $N_R\in\N$ such that for every $N\in\N_{\ge N_R}$, we have    
            \be\label{step2:improvement}\|w^N\|_{L^2(\Omega_N^\eta\cap Q_R)}\le CR^{k+\alpha+\frac{n+3}2},\ee for some $C>0$ independent of $R$ and $N$.   
			
			Let $R\ge1$ and choose $q\in\N$ such that $2^{q-1}< R\le 2^q$. For $N$ large enough, by \eqref{eq:contrad-ass-improvement-with-j} with $j=m_N-q$, we have 
			$$\left(\int_{\Omega^\eta_{N}\cap Q_1}(\widetilde u^{N}-P_{m_N+1}^{N})^2(2^{-m_N}2^{q}x,4^{-m_N}4^{q}t)\,dx\,dt\right)^\frac12\le C(2^{-m_N}2^{q})^{k+\alpha}.$$
			With the change of variables $(y,\tau)=(2^qx, 4^q t)$,  
			one can verify that if $(y,\tau)\in \Omega_N^\eta \cap Q_{2^q}$, then $(x,t)=(2^{-q}y,4^{-q}\tau)\in \Omega_N^\eta\cap Q_1$. Therefore, since $\,dx\,dt= 2^{-q(n+3)}\,dy\,d\tau$, we get
			$$\left(\int_{\Omega^\eta_{N}\cap Q_{2^q}}(\widetilde u^{N}-P_{m_N+1}^{N})^2(2^{-m_N}x,4^{-m_N}t)\,dx\,dt\right)^\frac12\le C(2^{-m_N})^{k+\alpha}(2^{q})^{k+\alpha+\frac{n+3}2}.$$
			By $Q_R\subseteq Q_{2^q}$ and $2^q\le 2R$, the proof of \eqref{step2:improvement} is concluded.  
			
			\smallskip
			\noindent\textit{Step 3: Compactness.}
			In this step we prove that there exists a function $w:S_\infty\to\R$ such that $$w^N\rightharpoonup w\quad\text{in } L^2(K)\qquad \forall K\mathrel{\subset\!\subset}\R^{n+1}\times (-\infty,0),$$ and $w$ satisfies the hypotheses of \cref{lemma:w}.
			The existence of $w$ follows from the fact that for every compact set $K\mathrel{\subset\!\subset} \R^{n+1}\times (-\infty,0)$, $K\subseteq\Omega_N^\eta\cap Q_R$ for $N$ and $R$ large enough. Then, by \eqref{step2:improvement}, the sequence $w^N$ is bounded in $L^2(K)$, and so, up to extracting a subsequence, it converges weakly in $L^2(K)$ to some function $w$. Moreover, by \eqref{step2:improvement} and the lower semicontinuity of the $L^2$-norm, the function $w$ satisfies
			$$\|w\|_{L^2(K)}\le CR^{k+\alpha+\frac{n+3}2}$$
			for some $C$ which does not depend on $K$. 
			Choosing a compact exhaustion of $Q_R$, by the monotone convergence theorem we obtain the growth estimate 
			$$\|w\|_{L^2(Q_R)}\le CR^{k+\alpha+\frac{n+3}2}\qquad\forall R\ge1.$$
			Additionally, for every $K\mathrel{\subset\!\subset} B_1\times(-1,0)$, by the definition of $P^N_{m_N+1}$, we have $$0=\int_{\Omega^\eta_N\cap Q_1} w^N P=\int_{K} w^N P+\int_{(\Omega^\eta_N\cap Q_1)\setminus K} w^N P\qquad\forall P\in\mathcal{P}_{\le k}.$$ 
			In particular, by \eqref{step2:improvement}, $$\left|\int_{K} w^N P\right|\le C\|P\|_{L^2(Q_1\setminus K)}\qquad\forall P\in\mathcal{P}_{\le k}.$$
			Therefore, since $w^N$ converges weakly to $w$ in $L^2(K)$, passing to the limit we can replace $w^N$ with $w$ in the estimate above. Then,
			by the arbitrariness of $K$, we obtain $$\int_{Q_1} w P=0\quad\forall P\in\mathcal{P}_{\le k}.$$

			Next we prove that $w$ is caloric in $S_\infty^+$. We first recall that $$\mathbf{H}\widetilde u^N=f^N\quad \text{ in }Q_1^+\qquad\text{and}\qquad \|f^N\|_{L^\infty(Q_r)}\le Cr^{k+\gamma-2},$$ for every $r\in(0,1)$. Moreover, the polynomials in $\mathcal{P}_{\le k}$ are caloric in $S_\infty^+$. Then, given $R\ge1$, $w^N$ solves
			\be\label{eq:wN-improvement}\mathbf{H} w^N(x,t)=\widetilde f^N(x,t):=\frac{2^{-2m_N}f^N(2^{-m_N}x,4^{-m_N}t)}{({2^{-m_N}})^{k+\alpha}}\qquad\text{in }Q_{R}^+,\ee
			for $N$ large enough. Here, since $\alpha\in(0,\gamma)$,
			\be\label{eq:estfN}\|\widetilde f^N\|_{L^\infty(Q_{R})}=(2^{-m_N})^{2-k-\alpha}
			\|f^N\|_{L^\infty(Q_{R2^{-m_N}})}\le C(R)(2^{-m_N})^{\gamma-\alpha}\to0\qquad\text{as }N\to\infty.\ee
			Passing to the limit as $N\to \infty$ in \eqref{eq:wN-improvement}, we obtain  $\mathbf{H} w=0$ in $Q_R^+$. By the arbitrariness of $R\ge1$, we obtain that $w$ is caloric in $S_\infty^+$.
			
			Finally, we prove that $w\equiv0$ on $S_\infty'$.
			Given $K\mathrel{\subset\!\subset} \R^{n+1}\times(-\infty,0)$, we choose $R\ge1$ such that $K\subseteq Q_R$. Then, there exists $N$ large enough such that if $(x,t)\in K$, then $(2^{-m_N}x,4^{-m_N}t)\in\Omega_N^\eta\cap Q_1$. Since the polynomials in $\mathcal{P}_{\le k}$ vanish on $\{x_{n+1}=0\}$ and $\widetilde u^N\equiv0$ in $\Omega^\eta_N\cap Q'_1$, then $w^N\equiv0$ on $K'$ for any $N$ large enough. In $K$, we can consider the odd reflection of $w^N$ with respect to $\{x_{n+1}=0\}$, which satisfies the heat equation with a right-hand side given by the odd reflection of $\widetilde f^N$ with respect to $\{x_{n+1}=0\}$. Then the weak convergence in $L^2(K)$ can be improved to uniform convergence in $K$, by standard parabolic estimates. This gives $w\equiv0$ in $K'$, and so, by the arbitrariness of $K$, we get $w\equiv 0$ on $S_\infty'$.

			\smallskip
			\noindent\textit{Step 4: Contradiction and conclusion.} 
			By \cref{lemma:w}, the limit function $w$ obtained in \textit{Step 3} is identically zero in $S_\infty$.
			We choose $\theta:=2^{-\eta/2}$. After decreasing $\eta$, if necessary, we can suppose that $\theta\in(1/2,1)$.
			Since $\mathbf{H}w^N=\widetilde f^N$ in $Q_2^+$ and $w^N\equiv0$ in $\Omega_N^\eta\cap Q_2'$ (see \textit{Step 3}), then we can apply \cref{lemma:technical-lemma} to the odd reflection of $w^N$ in $\Omega_N^\eta\cap Q_2$, obtaining 
			$$\int_{\Omega_{\theta N}^\eta\cap Q_1}|\nabla w^N|^2+|\partial_t w^N|^2\le C\int_{\Omega_N^\eta\cap Q_2} (w^N)^2+C\|\widetilde f^N\|^2_{L^\infty(Q_2)}$$
			for $N$ large enough.
			By \eqref{step2:improvement} and \eqref{eq:estfN}, the sequence $w^N$ is uniformly bounded in 
			$W^{1,2}(\Omega^{\eta}_{\theta N}\cap Q_{1/2})$. Moreover, by Sobolev inequality \be\label{eq:sobolev-ineq-unif}\|w^N\|_{L^q(\Omega^\eta_{\theta N}\cap Q_{1/2})}\le C,\ee for some $q>2$. Note that the constant in the Sobolev inequality in \eqref{eq:sobolev-ineq-unif} is uniform since the boundary of $\Omega^\eta_N$ is given by a function with uniformly bounded Lipschitz constant.
			
			Now let $K\mathrel{\subset\!\subset} \R^{n+1}\times (-1,0)$, with $K\subseteq Q_{1/2}$ and write
			\be\label{eq:write}\int_{\Omega^\eta_{\theta N}\cap Q_{1/2}}(w^N)^2=\int_{K}(w^N)^2+\int_{(\Omega^\eta_{\theta N}\cap Q_{1/2})\setminus K}(w^N)^2.\ee
			On the one hand, by H\"older inequality along with \eqref{step2:improvement} and \eqref{eq:sobolev-ineq-unif}, we have $$\left|\int_{(\Omega^\eta_{\theta N}\cap Q_{1/2})\setminus K}(w^N)^2\right|\le C|Q_{1/2}\setminus K|^{1-\frac2q}.$$
			On the other hand, $w^N\to 0$ strongly in $L^2(K)$ up to a subsequence by the $W^{1,2}$-boundedness. Therefore, by the arbitrariness of $K$, \eqref{eq:write} gives
			$$\lim_{N\to \infty}\int_{\Omega^\eta_{\theta N}\cap Q_{1/2}}(w^N)^2=0.$$
			This last estimate contradicts \eqref{eq:finalestimate-improv}, concluding the proof.
		\end{proof}

		Now we can iterate \cref{lemma:improvement} to get the higher order expansion of $u$ in \cref{prop:high-order-blow-up}.
		\begin{proof}[Proof of \cref{prop:high-order-blow-up}]
			Using the notation in \cref{sec-subtraction-obstacle}, for $r\in (0,1/2)$, we define $$U_r(x,t):=r^{-3}(u(rx,r^2t)-\widetilde q_k(rx))\quad\text{and}\quad \varphi_r(x'):=r^{-3}(\varphi(rx')-q_k(rx'))$$ so that $\widetilde U_r(x,t)=\widetilde u_r(x,t):=r^{-3}\widetilde u(rx,r^2t)$.
            In order to apply \cref{lemma:improvement}, we notice that $\|U_r\|_{L^\infty(Q_1)}\le C$ and $\|\varphi_r\|_{C^{k,\gamma}(B_1')}\le Cr^{k+\gamma-3}$. Therefore, the threshold $N_0\in\N$ in \cref{lemma:improvement} can be chosen independently of $r$. Enlarging $N_0$ if necessary, we can assume that $K_{r}\subseteq \Omega_{N_0}^\eta\cap Q_1$. 
            By \cref{lemma:u=0}, for any sufficiently small $r_0>0$, we have
			\be\label{eq:u=0proof}\widetilde u_{r_0}\equiv0\qquad\text{on }\Omega_{N_0}^\eta\cap Q'_1.\ee 
			Moreover, since $(0,0)$ is a cubic point, then by \cref{thm:rate-for-u}, there exist $p\in\mathcal{P}_{3}^+\subseteq \mathcal{P}_{\le k}$ and $\alpha_0\in(0,1)$ such that
			$$\|\widetilde u_{r_0}-p\|_{L^\infty(Q_1)}\le Cr_0^{\alpha_0}.$$
			Then, up to choosing $r_0$ small enough depending on $N_0$, we can assume that $$\left(\int_{\Omega^\eta_{N_0}\cap Q_1}(\widetilde u_{r_0}-p)^2(2^{-j}x,4^{-j}t)\,dx\,dt\right)^\frac12\le (2^{-N_0})^{k+\alpha}\le (2^{-j})^{k+\alpha}\qquad\forall j\in \{0,\ldots, N_0\}.$$ Iterating \cref{lemma:improvement}, we get a sequence of polynomials $\{P_j\}_{j\in\N}\subseteq \mathcal{P}_{\le k}$, with $P_j=p$ for every $j\in \{0,\ldots,N_0\}$, such that 
			\be\label{eq:est1-proof-f}\left(\int_{\Omega^\eta_{N_0}\cap Q_1}(\widetilde u_{r_0}-P_j)^2(2^{-j}x,4^{-j}t)\,dx\,dt\right)^\frac12\le (2^{-j})^{k+\alpha}\qquad\forall j\in\N.\ee
			Arguing as in \eqref{eq:allafine}, we get 
			$$\left(\int_{\Omega^\eta_{N_0}\cap Q_1}(P_{j+1}-P_j)^2(2^{-(j+1)}x,4^{-(j+1)}t)\,dx\,dt\right)^\frac12\le C(2^{-(j+1)})^{k+\alpha}\qquad\forall j\in\N.$$ Using that $\mathcal{P}_{\le k}$ is a finite-dimensional space, we get $P_j\to P_\infty\in\mathcal{P}_{\le k}$, where $P_\infty$ satisfies \be\label{eq:est2-proof-f}\left(\int_{\Omega^\eta_{N_0}\cap Q_1}(P_{\infty}-P_j)^2(2^{-j}x,4^{-j}t)\,dx\,dt\right)^\frac12\le C(2^{-j})^{k+\alpha}\qquad\forall j\in\N.\ee
			For every $r\in(0,1)$, we choose $j$ such that $2^{-(j+1)}<r\le 2^{-j}$. Then, combining \eqref{eq:est1-proof-f} and \eqref{eq:est2-proof-f}, we get 
			$$\left(\int_{\Omega^\eta_{N_0}\cap Q_1}(\widetilde u_{r_0}-P_\infty)^2(rx,r^2t)\,dx\,dt\right)^\frac12\le Cr^{k+\alpha}.$$
			Finally, by \eqref{eq:u=0proof} we have $\widetilde u_{r_0}\equiv0$ in $K_{r}'$ for every $r\in(0,1)$, since $K_{r}\subseteq \Omega_{N_0}^\eta\cap Q_1$. In particular, the odd extension of $\widetilde{u}_{r_{0}}-P_{\infty}$ satisfies the heat equation with right-hand side the odd extension of $r_{0}^{-1}f(r_{0}\cdot,r_{0}^{2}\cdot)$ in $K_{r}$. Then, by standard parabolic estimates (see \cref{lemma:wang-start}), the above $L^2$-estimate and the bound for the right-hand side $f$ from \eqref{controlf}, we deduce
			\begin{equation*}
				\lVert \widetilde u_{r_0}-P_\infty\rVert_{L^{\infty}(K_{r})}\le Cr^{k+\alpha}\qquad \forall r\in (0,1).
			\end{equation*}
			Rescaling back, the conclusion follows by choosing $P_{\le k}(x,t):=r_0^3P_\infty(x/r_0,t/r_0^2)$.
		\end{proof}

		\section{Fine stratification of cubic points}\label{sec-time-like-cubic}
		In this section we study the fine structure of the set of cubic points $\Gamma_3$. We first prove generic regularity for cubic points in the dimensions allowed by the regularity of the obstacle, and then split the time-like stratum into a part with improved cleaning and a part with lower-dimensional spatial projection. Both results rely on the higher order expansion provided by \cref{prop:high-order-blow-up}.

        \smallskip
		The first result of this section is about generic regularity for $\Gamma_{3}$. This is a consequence of the higher order expansion given by \cref{prop:high-order-blow-up} combined with Sard's theorem.
		
		\begin{theorem}\label{prop:sard}
			Let $u$ be a solution of \eqref{eq:parabolic-signorini-problem} in dimension $n+1\le k$, with obstacle $\varphi\in C^{k,\gamma}(B_1')$ satisfying \eqref{eq:ostacolo}. Then \be\label{eq:generic-cubic}\mathcal{H}^1(\pi_t(\Gamma_3))=0.\ee In particular, if $\varphi\in C^\infty(B_1')$, then \eqref{eq:generic-cubic} holds in any dimension $n+1\ge 2$.
		\end{theorem}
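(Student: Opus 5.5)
We follow the strategy announced in the introduction: encode the cubic points as critical points of a single $C^{k-1,\alpha}$ function $T$ on the thin space, and then apply Sard's theorem to $T$. Fix a compact piece of $\Gamma_3$ on which the constants of \cref{thm:rate-for-u}, \cref{prop:cleaning-odd} and \cref{prop:high-order-blow-up} are uniform. At each $(x_1,t_1)\in\Gamma_3$ we have the polynomial $P^{x_1,t_1}_{\le k}$ from \cref{prop:high-order-blow-up}, and its normal derivative $Q^{x_1,t_1}(x',t):=-\partial_{n+1}P^{x_1,t_1}_{\le k}(x',0,t)$ is a polynomial of parabolic degree $\le k-1$ in $(x',t)$, whose lowest-order part is $T[p^{x_1,t_1}]\ge0$.

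We will use these polynomials as Whitney jets for a function of the \emph{spatial} variable only. By the $C^1$ graph structure of \cref{theorem:structure-spacetime}, $\Gamma_3$ locally coincides with $\{t=\tau(x')\}$ restricted to $\pi_x(\Gamma_3)$, with $\nabla\tau=0$ there. We define jets $f_\beta(x_1')$ for $|\beta|\le k-1$ by formally solving $Q^{x_1,t_1}(x'-x_1',\,t-t_1)=0$ for $t$ as a function of $x'$ near $x_1'$, in the sense of Taylor jets. This is the natural higher-order analogue of $\tau$: it is the jet of the level set $\{-\partial_{n+1}\widetilde u=0\}$, which is formally the contact boundary. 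At time-like points the coefficient of $t$ in $Q$ is nonzero, so the formal inversion is possible. If the coefficient of $t$ vanishes, we will instead use the Hessian in $x'$ of $T[p]$, which then has positive rank; those points lie on a lower-dimensional $C^1$ manifold and have projection of dimension $\le n-1$. Since they also satisfy $\text{Clean}(2+\eta)$ with $2+\eta>n-1$ once $n\le 2$, \cref{lemma:gmt} handles them directly. For larger $n$, we prefer to define $T$ uniformly, with $\Gamma_3\subseteq\{t=T(x'),\ \nabla T(x')=0\}$ as in \eqref{eq:inclgamma3}, because the jet of the melting time still exists; this is the version we aim for.

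The key step, and the main obstacle, is the Whitney compatibility estimate. For two cubic points $(x_1,t_1),(x_2,t_2)$ with $d=|x_1'-x_2'|$, we must show
$$\Big|f_\beta(x_1')-\sum_{|\sigma|\le k-1-|\beta|}\frac{f_{\beta+\sigma}(x_2')}{\sigma!}(x_1'-x_2')^\sigma\Big|\le Cd^{k-1+\alpha-|\beta|}.$$
To prove it, we compare $P^{x_1,t_1}_{\le k}$ and the translate of $P^{x_2,t_2}_{\le k}$ on a backward cylinder $K_r$ of scale $r\approx d$ placed in the common region where both expansions are valid. The cleaning $\Gamma_3\in\text{Clean}(2+\eta)$ from \cref{prop:cleaning-odd} forces $|t_1-t_2|\le Cd^{2+\eta}\ll d^2$. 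Consequently, a backward cylinder of size comparable to $d$ based slightly below both points lies in $\Omega^\eta_{N}$ relative to each of them. On that cylinder both expansions hold with error $Cd^{k+\alpha}$, so the two polynomials agree up to $O(d^{k+\alpha})$ there. Finite-dimensionality of $\mathcal P_{\le k}$ then converts this into coefficientwise bounds $d^{k+\alpha-i}$, exactly as in \textit{Step 1} of \cref{lemma:improvement}. Because the formal inversion is algebraic, these bounds transfer to the jets $f_\beta$ with a loss of one order, which produces $C^{k-1,\alpha}$.

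The Whitney extension theorem then gives $T\in C^{k-1,\alpha}$, hence $T\in C^{n}$ when $n+1\le k$. Its jet at every cubic point has $T(x_1')=t_1$ and $\nabla T(x_1')=0$; the gradient vanishes since $\nabla\tau=0$ at singular points, equivalently because the first-order jet of $Q$ in $x'$ vanishes by the homogeneity of $T[p]\ge0$. Therefore $\pi_t(\Gamma_3)\subseteq T(\{\nabla T=0\})$, and the classical Sard theorem for $C^{n}$ maps $\R^n\to\R$ (which requires regularity $\max\{n-1+1,1\}=n$) yields $\mathcal H^1(\pi_t(\Gamma_3))=0$, covering $\Gamma_3$ by countably many such neighborhoods. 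For $\varphi\in C^\infty$ we may take $k$ arbitrarily large, so the same argument works in every dimension. The delicate point we expect is the uniformity of the backward-cylinder overlap in the compatibility estimate, which relies crucially on the exponent $2+\eta>2$.
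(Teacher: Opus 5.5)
Your overall plan is the paper's: turn the normal-derivative jets into a $C^{k-1,\alpha}$ time graph $T$ with $\nabla T=0$ on $\pi_x(\Gamma_3)$, then apply Sard to a $C^{n}$ function on $\R^n$. However, there is a genuine gap exactly where you invert the jets.

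Solving $Q^{x_1,t_1}(x'-x_1',t-t_1)=0$ for $t$ requires $\partial_tQ^{x_1,t_1}(0,0)\neq0$ at \emph{every} cubic point, and you only establish this at time-like points. Your fallback for the remaining case is not an argument.
\begin{itemize}
\item The claim that such points lie on a lower-dimensional $C^1$ manifold comes with no construction. In the paper, the stratification of $\Gamma_3$ is itself derived from $T$.
\item For larger $n$, the assertion that ``the jet of the melting time still exists'' has no support. \cref{theorem:structure-spacetime} only gives $\tau\in C^1$. If the $t$-coefficient of $Q^{x_1,t_1}$ vanished, its zero set need not be a graph over $x'$ near $(x_1,t_1)$.
\end{itemize}
Note also that for $n\le2$ the theorem already follows from $\Gamma_3\in\text{Clean}(2+\eta)$ and point a) of \cref{lemma:gmt} with $\nu=n<2+\eta$. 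So the case you leave unresolved, $n\ge3$, is exactly the one with content.

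The missing ingredient is \cref{lemma:p-depend-time}: strict time monotonicity forces every cubic blow-up to depend on time. If $p$ were time-independent, \cref{thm:rate-for-u} would give $|\widetilde u(0,r,0)-\widetilde u(0,r,-r^2)|\le Cr^{3+\alpha}$. On the other hand, \cref{lemma:monotonicity-ass} gives $\widetilde u(0,r,0)-\widetilde u(0,r,-r^2)=\int_{-r^2}^0\partial_tu(0,r,s)\,ds\ge cr^3$, a contradiction for small $r$. A cubic $p\in\mathcal P_3^+$ has the form $-|x_{n+1}|(q_0(x',t)+bx_{n+1}^2)$ with $b\in\R$, so this forces $\partial_tq_0\neq0$, that is, $\partial_tQ^{x_1,t_1}(0,0)\neq0$. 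By continuity of the jets along $\Gamma_3$, which your compatibility estimate provides, it stays bounded away from zero near any given cubic point.

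With this fact, no case distinction is needed and the rest of your plan goes through. It then differs from the paper only in bookkeeping:
\begin{itemize}
\item The paper first Whitney-extends the jets to a space-time function $\Theta$ of class $H^{k+\alpha-1,\frac{k+\alpha-1}{2}}$, comparing expansions at scale $2\,\text{dist}_{\text{par}}$ with no cleaning needed.
\item It then obtains a $C^1$ graph $T$ by the implicit function theorem and derives $|T(x_1)-T(x_2)|\le C|x_1-x_2|^2$ from $\nabla_{x'}\Theta=0$ on $\Gamma_3$.
\item Only then does it Whitney-extend the implicit-function jets.
\end{itemize}
You instead use $\text{Clean}(2+\eta)$ up front to make spatial and parabolic distances comparable, and do a single spatial Whitney extension. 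That is equally valid.
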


        In order to state the next results, we start by introducing some notation.
		For any non-trivial cubic blow-up $p\in \mathcal{P}_{3}^{+}$, the normal derivative $x'\mapsto -\partial_{n+1}p(x',0,0)$ is a nonnegative quadratic polynomial in the $x'$-variable. The spine of $p$ is defined as the null vector space of $-\partial_{n+1}p(x',0,0)$, which also coincides with the space of its invariant directions:
		\begin{equation}\label{eq:spine-of-p-cubic}
			S(p):= \left\{\xi\in \R^{n}: -\partial_{n+1}p(\xi,0,0)=0\right\}=\left\{\xi\in \R^{n}: -\partial_{\xi}\partial_{n+1}p(\cdot, 0, 0) \equiv 0\right\}.
		\end{equation}
		The dimension of the spine is denoted by 
		\begin{equation*}
			d(p):= \dim S(p)\in \{0,\dots,n\}.
		\end{equation*}
		Let $u$ be a solution of \eqref{eq:parabolic-signorini-problem}, with obstacle $\varphi$ satisfying \eqref{eq:ostacolo}. By \cref{thm:rate-for-u}, every point $(x_{0},t_{0})\in \Gamma_{3}$ admits a unique non-zero blow-up $p^{x_0,t_0}\in\mathcal{P}_{3}^{+}$. We may then split $\Gamma_{3}$ according to the value of the dimension of the spine of its blow-up:
		\begin{equation}\label{eq:slitting-gamma3}          \Gamma_{3}=\bigcup_{j=0}^{n}\Gamma_{3}^{j},\qquad \Gamma_{3}^{j}:= \left\{(x_{0},t_{0})\in \Gamma_{3}: d(p^{x_0,t_0})=j\right\}\quad \forall j\in \{0,\dots,n\}.
		\end{equation}
		Notice that $\Gamma_3^n$ coincides with the set of cubic points for which the blow-up is of the form $a|x_{n+1}|(6t+x_{n+1}^2)$ for some $a>0$. 
		The points in this set are called \emph{time-like}.
		In contrast, the points belonging to $\Gamma_3^j$ for $j\in \{0,\ldots,n-1\}$ have blow-ups depending also on at least one tangential variable.

		Next, we study the structure of the strata $\Gamma_{3}^{j}$, for any $j\in\{0,\ldots,n\}$.
		
		\begin{proposition}\label{prop:stratification-for-u}
			Let $u$ be a solution of \eqref{eq:parabolic-signorini-problem}, with obstacle $\varphi$ satisfying \eqref{eq:ostacolo}. Consider the splitting of \,$\Gamma_3$ in \eqref{eq:slitting-gamma3}. Then, for every $\alpha\in(0,\gamma)$ and $j\in \{0,\dots,n\}$, 
			$\Gamma_{3}^j$ is locally contained in a $j$-dimensional manifold of class $C^{k-2,\alpha}$ of the form $(x_{j+1},\ldots,x_{n},t)=g(x_1,\ldots,x_j)$, after a suitable rotation in the thin space. 
            In particular 
			\bea\label{eq:n-1bound}\dim_{\mathcal{H}}(\pi_x(\Gamma_3\setminus \Gamma_3^n))\le n-1.\eea
            Moreover, if $\varphi\in C^\infty(B_1')$, then the covering manifolds are $C^\infty$.
		\end{proposition}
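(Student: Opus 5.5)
The plan is a Whitney-extension argument in the spirit of \cref{corollary:stratification-even}, now fed by the much stronger higher order expansion of \cref{prop:high-order-blow-up} rather than the quadratic blow-up alone. The natural object to extend is the thin normal derivative. For $(x_1,t_1)\in\Gamma_3$ let $P^{x_1,t_1}_{\le k}\in\mathcal{P}_{\le k}$ be the polynomial from \cref{prop:high-order-blow-up}, and set $Q^{x_1,t_1}(x',t):=-\partial_{n+1}P^{x_1,t_1}_{\le k}(x',0,t)$. This is a polynomial of parabolic degree $\le k-1$ in $(x',t)$. Its leading part is $T[p^{x_1,t_1}]$, which is parabolically $2$-homogeneous, nonnegative, and has no terms of degree $0$ or $1$. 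I would take as Whitney data the Taylor coefficients $f_{\beta,j}(x_1,t_1):=\partial^\beta_{x'}\partial_t^jQ^{x_1,t_1}(0,0)$ for $|\beta|+2j\le k-1$, in the parabolic sense, together with the analogous data $\widetilde f_{\beta,j}$ from $\partial_t\widetilde u$. This gives a time-function $T$ with $T=\nabla_{x'}T=0$ on $\Gamma_3$ up to translation.

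\textbf{Step 1 (compatibility).} For two cubic points $(x_1,t_1),(x_2,t_2)$ at distance $r$, I would compare $P^{x_1,t_1}_{\le k}(\cdot-x_1,\cdot-t_1)$ and $P^{x_2,t_2}_{\le k}(\cdot-x_2,\cdot-t_2)$. Both approximate $\widetilde u$, with the obstacle correction $\widetilde q_k$ reexpanded, to order $r^{k+\alpha}$ on a backward cylinder $K_{Cr}$ common to both. Such a common cylinder exists thanks to the cleaning $\Gamma_3\in\mathrm{Clean}(2+\eta)$ of \cref{prop:cleaning-odd}: if $t_2\le t_1$, then $t_1-t_2\lesssim|x_1-x_2|^{2+\eta}\ll r^2$, so the backward cylinders of size $\sim r$ overlap in a set of proportional measure. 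A polynomial of degree $\le k$, odd-reflected and caloric, that is $O(r^{k+\alpha})$ on such a set has its coefficients controlled. This yields $|g_{\beta,j}|\le Cr^{k-1+\alpha-|\beta|-2j}$ in the parabolic Whitney conditions, hence a parabolic $C^{k-1,\alpha}$ extension $F(x',t)$. The cleaning also lets me trade parabolic distance for Euclidean distance in $x'$ plus a small time term, exactly as in \cref{corollary:stratification-even}.

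\textbf{Step 2 (implicit function).} By construction $\Gamma_3\subseteq\{F=0,\ \nabla_{x'}F=0\}$, and $D^2_{x'}F(x_0',t_0)=D^2_{x'}T[p^{x_0,t_0}](\cdot,0)$, whose kernel is $S(p^{x_0,t_0})$. At a point of $\Gamma_3^j$ this Hessian has rank $n-j$. So there are $n-j$ components $\partial_{i}F$ with independent $x'$-gradients. I would add one more equation to pin down $t$. The natural choice is $F$ itself, or more precisely $\partial_t$-information: $T[p](0,t)=-6at$ type terms give $\partial_tF(x_0',t_0)=-c\,\partial_t$ of the quadratic part. The cleanest route is to use the time-monotonicity data as in the $\Theta$ construction of \cref{corollary:stratification-even}. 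There, the cleaning of order $2+\eta>1$ makes $\Theta$ with $\Theta=|\nabla\Theta|=0$, $\partial_t\Theta=1$ on $\Gamma_3$ a valid Whitney jet. Inspecting the regularity of that jet, I would instead take the Whitney jet for $\Theta$ directly from the time-derivatives of $Q$. By \cref{lemma:monotonicity-ass}, $\partial_t Q\ge c>0$, and the coefficient of $t$ in $T[p]$ is positive. The gradients of $\partial_{i_1}F,\dots,\partial_{i_{n-j}}F,F$ in $(x',t)$ are then independent, and the implicit function theorem gives a $j$-dimensional graph $(x_{j+1},\dots,x_n,t)=g(x_1,\dots,x_j)$.

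\textbf{Regularity count and main obstacle.} $F$ is $C^{k-1,\alpha}$ in the parabolic sense, so it is at least Euclidean $C^{k-1,\alpha}$ in $x'$ after restricting to the graph, and $\nabla_{x'}F\in C^{k-2,\alpha}$; this gives $g\in C^{k-2,\alpha}$. For $C^\infty$ obstacles, $k$ is arbitrary, while the manifold should be fixed. I would handle this by noting that the graph is uniquely determined as a subset of $\Gamma_3$ locally, or better by choosing it as the zero set for every $k$ and appealing to uniqueness of the implicit function. The dimension bound follows since $\pi_x$ of a $j$-dimensional $C^1$ graph over $x_1,\dots,x_j$ has dimension $\le j\le n-1$ for $j<n$. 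The main obstacle I expect is Step 1. The parabolic Whitney theorem needs the mixed time-space compatibility, and the expansions only hold in backward cylinders $K_r$ rather than full $Q_r$. Producing a common region for two points at nearly equal times is where the superquadratic cleaning must be used carefully, including the case where the time order is reversed.
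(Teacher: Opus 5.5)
Up to the last step your plan matches the paper. Step 1 is \cref{lemma:key-time-like}; measuring distances parabolically you would not even need cleaning to find the common cylinder, but using it is harmless. Your appeal to \cref{lemma:monotonicity-ass}, passed to the cubic blow-up, is a valid substitute for \cref{lemma:p-depend-time}. Note, though, that the $t$-coefficient of $T[p]$ is negative (e.g.\ $-6a$), so what you get is $|\partial_t Q|\ge c$, not $\partial_tQ\ge c$. The rank count is also the same. This correctly shows that $\Gamma_3^j$ lies in a $j$-dimensional $C^1$ graph, and hence gives the bound on $\dim_{\mathcal{H}}(\pi_x(\Gamma_3\setminus\Gamma_3^n))$.

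The gap is the regularity count. Your $F$ lies in the \emph{parabolic} class $H^{k-1+\alpha,\frac{k-1+\alpha}{2}}$, so it has only about $\frac{k-1+\alpha}{2}$ derivatives in $t$. For $k=4$, for example, $\partial_iF\in H^{2+\alpha,1+\alpha/2}$ is only Euclidean $C^{1,\alpha/2}$ in $(x',t)$, because $\partial_t\partial_{x'}\partial_iF$ need not exist. The implicit function theorem for $\Psi(x',t)=(\partial_{i_1}F,\dots,\partial_{i_{n-j}}F,F)$ uses the joint Euclidean regularity of $\Psi$. The $t$-component of $g$ is produced precisely through the $t$-dependence of $\Psi$. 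So you obtain roughly half of the claimed derivatives: $C^{1,\alpha/2}$ instead of $C^{2,\alpha}$ when $k=4$. Saying that $F$ is $C^{k-1,\alpha}$ in $x'$ ``after restricting to the graph'' is circular, since the regularity of that graph is exactly what is at stake.

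The missing idea is the second, purely spatial Whitney extension in \cref{prop:t=T(x)}. You already have its key input in Step 1: cubic points satisfy $|t_1-t_2|\le C|x_1'-x_2'|^2$. The paper derives this from $\Theta=\nabla_{x'}\Theta=0$ on $\Gamma_3$, and your $\text{Clean}(2+\eta)$ gives it too. Hence the parabolic distance between cubic points is comparable to $|x_1'-x_2'|$. Let $\tau^{x_1}$ be the smooth implicit function with $J^{x_1}_{k-1}(x',\tau^{x_1}(x'))=0$, where $J^{x_1}_{k-1}$ is the polynomial jet at $(x_1,t_1)$. The parabolic jet compatibility at scale $r\simeq|x_1'-x_2'|$ then shows that the order-$(k-1)$ Taylor polynomials of $\tau^{x_1}$ and $\tau^{x_2}$ differ by $O(r^{k-1+\alpha})$ on a ball of radius $\sim r$. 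In other words, they satisfy the \emph{Euclidean} $C^{k-1,\alpha}$ Whitney conditions on $\pi_x(\Gamma_3)$.

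This produces a new time graph $\widetilde T\in C^{k-1,\alpha}(B'_\rho)$ with $\Gamma_3\subseteq\{t=\widetilde T\}$, and with $\nabla_{x'}\widetilde T=0$ and $\ker D^2_{x'}\widetilde T=S(p)$ on $\pi_x(\Gamma_3)$. It agrees with your implicit function only on $\pi_x(\Gamma_3)$. The implicit function theorem is then applied to $(\partial_{i_1}\widetilde T,\dots,\partial_{i_{n-j}}\widetilde T)$ in the variables $x'$ alone, giving a $C^{k-2,\alpha}$ graph. The $t$-component is $\widetilde T$ composed with that graph.

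Finally, for $\varphi\in C^\infty$, ``uniqueness of the implicit function'' does not give a $k$-independent manifold, since different extensions have different zero sets. Instead, apply the $C^\infty$ Whitney theorem to the jets of all orders. Because parabolic and Euclidean smoothness coincide, your implicit-function argument then does yield a $C^\infty$ manifold.
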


		\cref{prop:stratification-for-u} follows from standard arguments based on the Whitney extension theorem, once uniqueness
and classification of blow-ups are known. 
		Observe that \cref{prop:stratification-for-u} does not give any information about the dimension of the spatial projection of the set of time-like cubic points $\Gamma_{3}^{n}$.
		In the following theorem we obtain a finer stratification of $\Gamma_{3}^{n}$.  
		\begin{theorem}\label{prop:complete-time-like}
			Let $u$ be a solution of \eqref{eq:parabolic-signorini-problem}, with obstacle $\varphi\in C^{k,\gamma}(B_1')$ satisfying \eqref{eq:ostacolo}. Then, there exists a subset $\Gamma_3^{n,\ge k}\subseteq \Gamma_3^n$ such that, for any $\alpha\in(0,\gamma)$,
			$$\Gamma_3^{n,\ge k}\in \text{Clean}(k+\alpha-1)\qquad\text{and}\qquad\text{dim}_{\HH}(\pi_x(\Gamma_3^n\setminus \Gamma_3^{n,\ge k}))\le n-1.$$ If $\varphi\in C^\infty(B_1')$, then there is $\Gamma_3^{n,\infty}\subseteq \Gamma_3^n$ such that 
			$$\Gamma_3^{n,\infty}\in \text{Clean}(\infty)\qquad\text{and}\qquad\text{dim}_{\HH}(\pi_x(\Gamma_3^n\setminus \Gamma_3^{n,\infty}))\le n-1.$$
		\end{theorem}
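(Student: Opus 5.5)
The plan is to read off, at each time-like point, the first order in the higher order expansion of \cref{prop:high-order-blow-up} at which the normal derivative depends on $x'$. Fix $(x_0,t_0)\in\Gamma_3^n$ and let $P_{\le k}=P^{x_0,t_0}$ be the polynomial given by \cref{prop:high-order-blow-up}. I consider the thin normal derivative
\[
T^{x_0,t_0}(x',t):=-\partial_{n+1}P^{x_0,t_0}(x',0,t).
\]
This is a polynomial of parabolic degree $\le k-1$. Its lowest-order part is $a(6t)$, $a>0$, coming from the time-like blow-up $a|x_{n+1}|(6t+x_{n+1}^2)$.

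By caloricity of the odd extension, $P$ is determined by its Cauchy data on the thin space: $P$ vanishes there, so $T$ determines $P$. I write $T=\sum_{i=2}^{k-1}T_i$, with $T_i$ parabolically $i$-homogeneous. I then define $\Gamma_3^{n,\ge k}$ as the set of time-like points for which every $T_i$, $i\le k-1$, is independent of $x'$; equivalently $T(x',t)=T(0,t)$ is a polynomial in $t$ alone. For a $C^\infty$ obstacle I take $\Gamma_3^{n,\infty}$ as the intersection over all $k$ of these sets.

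\emph{Cleaning for $\Gamma_3^{n,\ge k}$.} At such a point, $\widetilde u=P+O(r^{k+\alpha})$ in $K_r$, and $P$ depends only on $(x_{n+1},t)$. Since $T$ has leading term $6at$ and is $x'$-independent, I expect that, after adapting the barrier argument of \cref{prop:cleaning-odd} (replacing \eqref{eq:rate-clean} by the order-$k$ expansion), $\widetilde u>0$ on the thin space at times $t\ge C|x|^{k+\alpha-1}$. Indeed, on the strip $|x_{n+1}|\sim\sigma$ the main term $-|x_{n+1}|\,T(0,t)$ is independent of $x'$, and the only $x'$-dependent error is of size $|x|^{k+\alpha}$, to be compared with $\sigma\cdot t$. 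The barrier-and-comparison argument of \cref{lemma:localization}, run from above and from below, should then show that no free boundary point $(x_1,t_1)$ exists with $t_1-t_0\gg|x_1-x_0|^{k+\alpha-1}$. Here the exponent loses one derivative because we divide by $|x_{n+1}|$. I would phrase this symmetrically as in \cref{prop:cleaning-odd}: center at the later point $(x_1,t_1)$ and show that $\widetilde u$ vanishes on a thin backward region containing $(x_0,t_0)$.

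\emph{Dimension reduction for the remainder.} At points of $\Gamma_3^n\setminus\Gamma_3^{n,\ge k}$, let $i_0\le k-1$ be the smallest index with $\nabla_{x'}T_{i_0}\not\equiv 0$. I would use the uniqueness of $P$, together with the uniform estimates, to get Whitney-compatibility of the jets $(x_0,t_0)\mapsto D^\beta_{x'}\partial_t^jT^{x_0,t_0}(0,0)$ along $\Gamma_3^n$, in the spirit of \cref{corollary:stratification-even} and \cref{lemma:key-time-like}. This gives a $C^{k-1,\alpha}$ function $\mathcal T$ on the thin space-time with $\Gamma_3\subseteq\{\mathcal T=0\}$. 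Exactly as in \eqref{eq:inclgamma3}, this function can be arranged so that $\Gamma_3$ lies in $\{t=T(x')\}$ with $\nabla_{x'}T=0$, and so that the successive $x'$-derivatives of $T$ encode the $T_i$. At a point where $T_{i_0}$ is the first $x'$-dependent term, some derivative $\partial^{\beta}_{x'}\partial_t^j\mathcal T$, with $|\beta|\ge1$ and $|\beta|+2j=i_0$, vanishes on the stratum and has nonzero $x'$-gradient there. The implicit function theorem then puts this piece, locally, inside an $(n-1)$-dimensional $C^1$ graph over $x'$. Taking a countable union over $i_0$ and over local charts gives $\dim_{\mathcal H}\pi_x(\Gamma_3^n\setminus\Gamma_3^{n,\ge k})\le n-1$. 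In the smooth case the same argument applies at every finite order, so the remainder is a countable union of $(n-1)$-dimensional pieces, and $\Gamma_3^{n,\infty}\in\mathrm{Clean}(\infty)$ because it lies in $\Gamma_3^{n,\ge k}$ for every $k$.

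\emph{Main obstacle.} I expect the hardest step to be the Whitney compatibility estimate. One has to compare the expansions $P^{x_1,t_1}$ and $P^{x_2,t_2}$ at nearby points. The expansion of \cref{prop:high-order-blow-up} holds only in backward cylinders $K_r$, so the comparison must be made in a region that is backward for both points. This is possible thanks to the $2+\eta$ cleaning, which forces $|t_1-t_2|\ll|x_1-x_2|^2$. There I would Taylor-expand $P^{x_2,t_2}$ around $(x_1,t_1)$ and use the finite-dimensionality of $\mathcal P_{\le k}$ to convert the $L^\infty(K_r)$ closeness into coefficient bounds of order $r^{k+\alpha-|\beta|-2j}$.
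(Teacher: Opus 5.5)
There is a genuine gap in your definition of $\Gamma_3^{n,\ge k}$. You require every $T_i$, $2\le i\le k-1$, to be independent of $x'$. What governs the geometry, however, is only the restriction to $t=0$: the right condition is $T_i(\cdot,0)\equiv0$, i.e.\ $T_i\in\mathcal{Q}_i^0$.

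Terms of the form $t\,R(x',t)$ are harmless. Along the time graph (call it $t=g(x')$, to avoid a clash with your normal derivative $T$) one has $|t-t_0|\ll|x'-x_0'|^2$, so such terms only affect $g$ at higher order.

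With your definition the bound $\dim_{\mathcal{H}}\pi_x(\Gamma_3^n\setminus\Gamma_3^{n,\ge k})\le n-1$ is false. Take $\varphi\equiv0$ and a solution which, for $t\le t_0$, coincides with
\begin{equation*}
a|x_{n+1}|\bigl(6(t-t_0)+x_{n+1}^2\bigr)+\epsilon P_5(x,t-t_0).
\end{equation*}
Here $P_5$ is the even reflection of the caloric polynomial vanishing on $\{x_{n+1}=0\}$ with $-\partial_{n+1}P_5(x',0,t)=t|x'|^2$, and $\epsilon>0$ is small.
\begin{itemize}
\item Every point of $B_{1/2}'\times\{t_0\}$ is a time-like cubic point.
\item At $(x_1',0,t_0)$ the thin normal derivative of the expansion is $-(6a-\epsilon|x_1'|^2)t+2\epsilon t\,x_1'\cdot x'+\epsilon t|x'|^2$.
\item So $T_3=2\epsilon t\,x_1'\cdot x'$ depends on $x'$ for every $x_1'\neq0$, and your remainder has $n$-dimensional spatial projection.
\end{itemize}

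Your implicit-function step fails in exactly this situation. If the $x'$-dependence of $T_{i_0}$ carries a factor $t$, the nonvanishing derivatives of $\mathcal{T}$ of that order are mixed ones $\partial^\beta_{x'}\partial_t^j\mathcal{T}$ with $j\ge1$. These are nonzero at the point, rather than functions vanishing on the stratum.

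The correct splitting is by the first $m$ with $T_m(\cdot,0)\not\equiv0$. The $(k-1)$-jet of $g$ at $x_0'$ is obtained by solving $\sum_i T_i^{x_0,t_0}(x'-x_0',t-t_0)=0$ for $t$, using $\partial_tT_2\neq0$. This gives $D^\beta_{x'}g(x_0')=0$ for $1\le|\beta|\le m-1$, and $D^\beta_{x'}g(x_0')=-D^\beta_{x'}T_m(0,0)/\partial_tT_2(0,0)$ for $|\beta|=m$ (compare \eqref{eq:derivatives-T-nm2}). Hence the functions $\partial^\beta_{x'}g$ with $|\beta|=m-1$ vanish on the stratum. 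Their gradients have rank $n-d$, where $d\le n-1$ is the dimension of the spine of $T_m(\cdot,0)$.

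The cleaning step is also unsupported as written. The order-$k$ expansion of \cref{prop:high-order-blow-up} holds only in the backward cylinders $K_r$, i.e.\ in $\{|x-x_0|^2\lesssim t_0-t\}$. By \cref{prop:cleaning-odd}, two nearby cubic points at spatial distance $r$ satisfy $|t_1-t_0|\le r^{2+\eta}\ll r^2$. So the parabolic boundary on which a barrier around one point needs control is not covered by the other point's expansion. The barrier in \cref{prop:cleaning-odd} worked only because \cref{thm:rate-for-u} gives $L^\infty(Q_r)$ control up to the top time, and that is available only at cubic order.

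Once you have the $C^{k-1,\alpha}$ time graph $g$ (which you construct anyway), cleaning is immediate. At points of $\Gamma_3^{n,\ge k}$ all $x'$-derivatives of $g$ of orders $1,\dots,k-1$ vanish. Taylor's formula then gives $|t_1-t_0|=|g(x_1')-g(x_0')|\le C|x_1'-x_0'|^{k+\alpha-1}$. This is how the paper argues, and it works for the correct, larger set as well.
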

	
		Roughly speaking, the set $\Gamma_3^{n,\ge k}$ is given by those cubic free boundary points around which the solution has a time-like behavior up to the $k$-th order expansion (see \eqref{eq:gamma3ngek} below).
		\subsection{The higher order expansion of the normal derivative} 
		For every $j\in\N$, we let $\mathcal{Q}_{j}$ be the set of parabolically $j$-homogeneous polynomials of $n+1$ variables: \be\label{eq:Qj}\mathcal{Q}_{j}:=\left\{Q=Q(x',t):\R^{n}\times \R\to \R \text{ parabolically $j$-homogeneous polynomial}\right\}.\ee 
		
		Let $u$ be a solution of \eqref{eq:parabolic-signorini-problem}, with obstacle $\varphi\in C^{k,\gamma}(B_{1}')$ satisfying \eqref{eq:ostacolo}.
		For every $(x_0,t_0)\in \Gamma_3$, \cref{prop:high-order-blow-up} gives a unique polynomial ansatz $P^{x_0,t_0}_{\le k}\in\mathcal{P}_{\le k}$ such that
		\be\label{eq:estimateLinfty} \|\widetilde u^{x_0,t_0}-P_{\le k}^{x_0,t_0}\|_{L^\infty(K_r)}\le Cr^{k+\alpha}\qquad\forall r\in(0,r_0),\ee
		where $\widetilde u^{x_0,t_0}$ is defined in \eqref{eq:utilde} and $K_r$ is the backward cylinder introduced in \eqref{Kr}. 
		We decompose uniquely $P^{x_0,t_0}_{\le k}=\sum_{j=3}^k P^{x_0,t_0}_j$, where $P^{x_0,t_0}_j$ are the even reflections of parabolically $j$-homogeneous caloric polynomials on $S_{\infty}^{+}$. Then, we set 
        $$Q_j^{x_0,t_0}(x',t):=\partial_{n+1}P_{j+1}^{x_0,t_0}(x',0,t)\in\mathcal{Q}_j\qquad\forall j\in \{2,\ldots,k-1\}.$$ 
		Since $\widetilde u^{x_0,t_0}\equiv0$ in $K_r'$ by \cref{lemma:u=0}, and $\partial_{n+1}\widetilde u^{x_0,t_0}=\partial_{n+1} u(x_0+\cdot,t_0+\cdot)$ on the thin space, then, for any $\alpha\in(0,\gamma)$, \eqref{eq:estimateLinfty} and standard parabolic estimates give  
		\be\label{eq:expansion-normal-derivative} \Bigg\|\partial_{n+1} u(x_0+\cdot,t_0+\cdot)-\sum_{j=2}^{k-1}Q_j^{x_0,t_0}\Bigg\|_{L^\infty(K_r')}\le Cr^{k+\alpha-1}\qquad\forall r\in(0,r_0),\ee 
		for some $C>0$ and $r_{0}\in (0,1)$.
		We may then define the map
		\be\label{eq:map-Pi}\Pi:\Gamma_3\to \mathcal{Q}_2\times\ldots\times\mathcal{Q}_{k-1}\ee  
		which assigns to any $(x_{0},t_{0})\in\Gamma_3$ the list of homogeneous modes in the expansion of the normal derivative on $\{x_{n+1}=0\}$:
		$$\Pi(x_0,t_0)=(\Pi_2(x_0,t_0),\ldots,\Pi_{k-1}(x_0,t_0))=(Q_2^{x_0,t_0},\ldots, Q_{k-1}^{x_0,t_0}).$$ 
		Observe that $\Pi_{2}(x_{0},t_{0})(\cdot,0)\equiv 0$ if and only if $(x_{0},t_{0})\in \Gamma_{3}^{n}$.
		
		\subsection{Localization of cubic points in a regular time graph}
		The main result of this subsection is the following proposition stating that $\Gamma_{3}$ is locally contained in the graph of a $C^{k-1,\alpha}$ function $t=T(x',0)$ whose gradient vanishes on $\pi_{x}(\Gamma_{3})$. 
		The new information with respect to \cref{theorem:structure-spacetime} is the improved regularity of the time graph.

		\begin{proposition}\label{prop:t=T(x)}
			Let $u$ be a solution of \eqref{eq:parabolic-signorini-problem}, with obstacle $\varphi\in C^{k,\gamma}(B_{1}')$ satisfying \eqref{eq:ostacolo}.
			For every $(x_0,t_0)\in\Gamma_3$ there are $\rho>0$ and $T:B'_\rho(x_0)\to \R$ of class $C^{k-1,\alpha}(B'_\rho(x_0))$ for any $\alpha\in (0,\gamma)$, such that
			\bea\label{eq:construction-G-corollary}\Gamma_3\cap \mathscr{C}_\rho'(x_0,t_0)\subseteq \{t=T(x',0)\}\eea
			and such that, for every $x_{1}\in \pi_{x}(\Gamma_3\cap \mathscr{C}_\rho'(x_0,t_0))$, we have     $$\nabla_{x'}T(x_1)=0\qquad\text{and}\qquad \ker \left(D_{x'}^2 T(x_1)\right)=S(p^{x_1,T(x_1)}),$$
			where $S(p)$ is the spine defined in \eqref{eq:spine-of-p-cubic}. Moreover, if $\varphi\in C^{\infty}(B_{1}')$, then $T$ is $C^{\infty}$. 
		\end{proposition}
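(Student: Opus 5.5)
We plan to build $T$ with the Whitney extension theorem in the thin space $\R^n$, treating $T$ as the $C^{k-1,\alpha}$ function whose $(k-1)$-jet at each $x_1\in\pi_x(\Gamma_3)$ is read off from the expansion \eqref{eq:expansion-normal-derivative} of the normal derivative. By \cref{prop:cleaning-odd}, the map $x_1\mapsto t_1$ is well defined on $\pi_x(\Gamma_3\cap\mathscr C'_\rho(x_0,t_0))$, and $|t_1-t_2|\le C|x_1-x_2|^{2+\eta}$; this says that in the parabolic metric, distances between cubic points are comparable to Euclidean spatial distances. The plan is then to realize the free boundary near cubic points as the zero level of $-\partial_{n+1}u$ in the $t$-direction.

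Concretely, by \cref{lem:classification-odd-blow-ups} the leading mode is $Q_2^{x_1,t_1}(x',t)=-T[p^{x_1,t_1}](x',t)=-(q(x')+6a\,t)$ with $q\ge0$ quadratic and $a>0$. Here $a>0$ follows from $\partial_t u\ge c|x_{n+1}|$, giving $\partial_t\partial_{n+1}$ of the blow-up positive. Since $\partial_{n+1}u(x_1+\cdot,t_1+\cdot)$ vanishes exactly when $\widetilde u$ detaches, and $\partial_t$ of the expansion is nondegenerate, we solve the polynomial equation $\sum_{j=2}^{k-1}Q_j^{x_1,t_1}(y',s)=0$ for $s$ as a formal power series $s=\tau_{x_1}(y')=-\frac{q(y')}{6a}+O(|y'|^3)$, truncated at order $k-1$. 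This is an implicit function argument on parabolically graded polynomials, and it is where $t$-degree counts as $2$. We then set $f_\beta(x_1):=\partial^\beta\tau_{x_1}(0)$ for $|\beta|\le k-1$, with $f_0(x_1)=t_1$ and $f_\beta=0$ for $|\beta|=1$.

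The main step, and the main obstacle, is the Whitney compatibility condition $|f_\beta(x_1)-\sum_{|\sigma|\le k-1-|\beta|}f_{\beta+\sigma}(x_2)(x_1-x_2)^\sigma/\sigma!|\le C|x_1-x_2|^{k-1+\alpha-|\beta|}$. To get it, we compare the two expansions \eqref{eq:expansion-normal-derivative} centered at $(x_1,t_1)$ and $(x_2,t_2)$ on a backward thin cylinder $K'_r$ at scale $r\simeq|x_1-x_2|$, placed so that it is admissible for both centers. This is possible precisely because $|t_1-t_2|\ll r^2$ by the $2+\eta$ cleaning. Both polynomials then agree with $\partial_{n+1}u$ up to $Cr^{k-1+\alpha}$ on an open set of size $r$, so after recentering their coefficients agree with the usual homogeneous-scale errors, as in \cite[Lemma 14.5]{dgpt17} and \cref{lemma:key-time-like}. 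The difficulty is that the expansion is only valid in backward cones $K_r$, not full cylinders, so we must choose the comparison region inside $K_r(x_1,t_1)\cap K_{2r}(x_2,t_2)$ and use polynomial norm equivalence on it. The implicit-function map from coefficients to jets of $\tau$ is smooth, because $a$ stays bounded below by uniformity, so compatibility transfers. Whitney then gives $T\in C^{k-1,\alpha}$, where the $\alpha$-Hölder loss comes from $k+\alpha-1$.

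Finally, $T(x_1)=t_1$ gives the inclusion, and $\nabla T(x_1)=f_{e_i}=0$. The relation $D^2T(x_1)=-D^2q/(6a)$ gives $\ker D^2_{x'}T(x_1)=\ker q=S(p^{x_1,t_1})$ by \eqref{eq:spine-of-p-cubic}. For $C^\infty$ obstacles we run the argument for every $k$ and note that the jets are consistent, since they are independent of $k$ up to truncation. This yields $C^{k-1,\alpha}$ graphs for all $k$ containing the same set and with the same jets there. We then take $T$ from a $C^\infty$ Whitney extension of the full jet, which is compatible at every order.
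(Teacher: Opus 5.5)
Your proposal is correct and is essentially the paper's argument. You compare the normal-derivative expansions on a common backward cylinder, solve each jet implicitly for $t$ using $\partial_t Q_2\neq 0$, and Whitney-extend the resulting Taylor polynomials in $x'$, exactly as the paper's final step does. The only difference is that you skip the paper's preliminary space-time Whitney function $\Theta$ from \cref{lemma:key-time-like} and the $C^1$ graph it yields: you take $|t_1-t_2|\lesssim|x_1-x_2|^{2+\eta}$ directly from \cref{prop:cleaning-odd}, whereas the paper obtains the quadratic bound from the Taylor expansion of $\Theta$.

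There is a harmless sign slip. Since $q_0=q(x')-6at\ge 0$ for $t\le 0$, one has $Q_2^{x_1,t_1}=-q(x')+6at$, hence $D^2_{x'}T(x_1)=D^2q/(6a)$ rather than $-D^2q/(6a)$; this does not affect the kernel identity.
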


		The proof of \cref{prop:t=T(x)} is based on the following two lemmas. 
		First, we show that cubic points are locally contained in the zero set of a space-time function $\Theta$, constructed by Whitney extension from the polynomial expansion of the normal derivative of the solution.  
		\begin{lemma}\label{lemma:key-time-like}
			Let $u$ be a solution of \eqref{eq:parabolic-signorini-problem}, with obstacle $\varphi\in C^{k,\gamma}(B_{1}')$ satisfying \eqref{eq:ostacolo}.
			For every $(x_0,t_0)\in\Gamma_3$, there are $\rho>0$ and a function $\Theta:\mathscr{C}'_\rho(x_0,t_0)\to \R$ of class $H^{k+\alpha-1,\frac{k+\alpha-1}{2}}(\mathscr{C}'_\rho(x_0,t_0))$ for any $\alpha\in (0,\gamma)$, such that 
			\bea\label{eq:construction-G}\Gamma_3\cap \mathscr{C}'_\rho(x_0,t_0)\subseteq \{\Theta=0\}\eea 
			and such that, for every $(x_1,t_1)\in \Gamma_3\cap \mathscr{C}'_\rho(x_0,t_0)$ the following hold:
			\be\label{eq:prop-of-theta}\Theta(x_1,t_1)=|\nabla_{x'}\Theta(x_1,t_1)|=0\quad \text{and}\quad \partial_{x'}^\beta\partial^j_t\Theta(x_1,t_1)=\partial^\beta_{x'}\partial_t^j Q_{|\beta|+2j}^{x_1,t_1}(0,0)\qquad \text{for $2\le |\beta|+2j\le k-1$}.\ee 
            Moreover, if $\varphi\in C^{\infty}(B_{1}')$, then $\Theta$ is $C^{\infty}$.
		\end{lemma}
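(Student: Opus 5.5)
The plan is to apply the parabolic Whitney extension theorem (in the anisotropic space $H^{k+\alpha-1,\frac{k+\alpha-1}{2}}$) to a jet defined on $\Gamma_3\cap\mathscr C'_\rho(x_0,t_0)$, taken from the expansion \eqref{eq:expansion-normal-derivative} of the normal derivative. For $(x_1,t_1)\in\Gamma_3$ and multi-indices with $|\beta|+2j\le k-1$, I prescribe
$$f_{\beta,j}(x_1,t_1):=\partial^\beta_{x'}\partial^j_t\Big(\sum_{i=2}^{k-1}Q_i^{x_1,t_1}\Big)(0,0)=\partial^\beta_{x'}\partial^j_tQ^{x_1,t_1}_{|\beta|+2j}(0,0),$$
so that $f_{\beta,j}=0$ whenever $|\beta|+2j\le1$. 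The candidate Taylor polynomial at $(x_1,t_1)$ is therefore exactly $\sum_i Q_i^{x_1,t_1}$. Once the extension $\Theta$ exists, it satisfies \eqref{eq:prop-of-theta} by construction, and since $\Theta=0$ on $\Gamma_3\cap\mathscr C'_\rho$, the inclusion \eqref{eq:construction-G} follows. For a $C^\infty$ obstacle, the same argument works for every $k$ (with $r_0,\rho$ depending on $k$). A $C^\infty$ extension then comes from the $C^\infty$ Whitney theorem, because the compatibility estimates hold at every order.

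The core step is the Whitney compatibility condition. For two points $(x_1,t_1),(x_2,t_2)\in\Gamma_3$ at parabolic distance $d$, I need
$$\Big|\partial^\beta_{x'}\partial^j_t\Big(\sum_iQ_i^{x_1,t_1}\Big)(z)-\partial^\beta_{x'}\partial^j_t\Big(\sum_iQ_i^{x_2,t_2}\Big)\big(z+(x_1-x_2,t_1-t_2)\big)\Big|\le C d^{k+\alpha-1-|\beta|-2j}$$
for $z$ near $0$. I will compare the two polynomials through the function $\partial_{n+1}u$, in the spirit of \cite[Lemma 14.5]{dgpt17}. By \eqref{eq:expansion-normal-derivative}, each polynomial approximates $\partial_{n+1}u$ to order $r^{k+\alpha-1}$, but only on the thin backward cylinder $K_r'$ centred at its own point. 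The difficulty is to find a common region where both approximations hold. The cleaning estimates in \cref{prop:cleaning-odd} and \cref{lemma:u=0} give $|t_1-t_2|\le C|x_1-x_2|^{2+\eta}\ll|x_1-x_2|^2$, so $d\simeq|x_1-x_2|$. With $r\simeq Cd$, the cylinders $K_r'(x_1,t_1)$ and $K_r'(x_2,t_2)$ then overlap in a set containing a fixed-proportion thin parabolic cylinder of size comparable to $r$ (both time windows are $(t_i-r^2,t_i-r^2/100)$, and the centers differ by $o(r^2)$ in time). On that overlap, the difference of the two polynomials has sup bound $Cr^{k+\alpha-1}$. Since the polynomial space is finite-dimensional, the equivalence of norms on a fixed cylinder, after rescaling, bounds every coefficient of the difference at the right scale. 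This gives the compatibility estimate for all $(\beta,j)$, uniformly in the points, because the constants in \cref{prop:high-order-blow-up} are uniform on compact sets. I expect this overlap and scale argument to be the main technical point, because the expansion is available only in backward cylinders and not in full neighbourhoods.

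With the estimates in hand, the Whitney extension in parabolic Hölder classes yields $\Theta\in H^{k+\alpha-1,\frac{k+\alpha-1}{2}}$ with the prescribed jet on the closed set $\overline{\Gamma_3}\cap\mathscr C'_\rho$. The jets extend continuously to the closure by the same estimates, and only the values on $\Gamma_3$ matter for the statement.
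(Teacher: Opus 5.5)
Your proposal is correct and follows the paper's proof. It uses the same jet $f_{\beta,j}=\partial^\beta_{x'}\partial^j_tQ^{x_1,t_1}_{|\beta|+2j}(0,0)$, the same comparison of the two expansions \eqref{eq:expansion-normal-derivative} of $\partial_{n+1}u$ on a common thin backward cylinder followed by finite-dimensional norm equivalence, and the Whitney extension of \cite[Theorem B.1]{dgpt17}. The one difference is that the cleaning from \cref{prop:cleaning-odd} is not needed for the overlap: if $t_1\le t_2$ and $r:=2\,\text{dist}_{\text{par}}((x_1,t_1),(x_2,t_2))$, then $t_2-t_1\le r^2/4$, so the cylinder $B'_{r/2}(x_1)\times(t_1-r^2/2,t_1-r^2/4)$ used in the paper already lies in both $(x_i,t_i)+K'_r$.
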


		\begin{proof}
			Let $(x_0,t_0)\in\Gamma_3$.
			For any multiindex $(\beta,j)\in \N^n\times\mathbb N$ such that $|\beta|+2j\le k-1$, we define the coefficient $f_{\beta,j}:\Gamma_3\to \R$ as
			\begin{equation*}
				f_{\beta,j}(x_{1},t_{1}):=
				\begin{cases}
					0\qquad &\text{if $0\le |\beta|+2j\le 1$},\\
					\partial^\beta_{x'}\partial_t^j Q_{|\beta|+2j}^{x_1,t_1}(0,0)\qquad &\text{if $2\le |\beta|+2j\le k-1$}.
				\end{cases}
			\end{equation*}
			We claim that the family $\{f_{\beta,j}\}_{|\beta|+2j\le k-1}$ satisfies the Whitney compatibility condition in the statement of \cite[Theorem B.1]{dgpt17} with a H\"older modulus of continuity in the set $\Gamma_{3}\cap \mathscr{C}'_{\rho}(x_{0},t_{0})$ for a sufficiently small $\rho>0$. Assuming the claim, the theorem provides us with a function $\Theta\in H^{k+\alpha-1,\frac{k+\alpha-1}{2}}(\mathscr{C}'_\rho(x_0,t_0))$ such that 
            $$\partial^\beta_{x'}\partial_t^j\Theta=f_{\beta,j}\qquad\text{in }\Gamma_3\cap \mathscr{C}'_\rho(x_0,t_0),$$ concluding the proof. 
			
			In order to prove the claim, let us consider the jets
			\begin{align*}
				J_{k-1}^{x_1,t_1}(x',t)&:=Q_{2}^{x_1,t_1}(x'-x_1',t-t_1)+\ldots+Q_{k-1}^{x_1,t_1}(x'-x_1',t-t_1)\\
				&=\sum_{|\beta|+2j\le k-1}\frac{f_{\beta,j}(x_1,t_1)}{\beta!j!}(x'-x_1')^\beta(t-t_1)^j.
			\end{align*}
			The expansion of the normal derivative \eqref{eq:expansion-normal-derivative} becomes \be\label{eq:expansion-final}\left\|\partial_{n+1} u(x',0,t)-J_{k-1}^{x_1,t_1}(x',t)\right\|_{L^\infty((x_1,t_1)+K_r')}\le Cr^{k+\alpha-1} \qquad \forall  r\in (0,r_{0}).\ee 
			Let $(x_1,t_1),(x_2,t_2)\in \Gamma_3\cap \mathscr{C}'_\rho(x_0,t_0)$ and assume without loss of generality that $t_1\le t_2$. We set $$r:=2\,\text{dist}_{\text{par}}((x_1,t_1),(x_2,t_2))=2\sqrt{|x_1'-x_2'|^2+|t_1-t_2|},$$ and we consider the cylinder $$\mathcal{C}'_r:=B'_{r/2}(x_1)\times ( t_1-r^2/2,t_1-r^2/4).$$ Then one can verify that $ \mathcal{C}'_r\subseteq \Big((x_1,t_1)+K'_r\Big)\cap \Big((x_2,t_2)+K'_r\Big).$ Therefore, inside $\mathcal{C}_r'$, we can subtract the two expansions \eqref{eq:expansion-final} centered at $(x_1,t_1)$ and $(x_2,t_2)$ to get the compatibility of jets
			\be\label{witney:compat1}\|J_{k-1}^{x_1,t_1}-J_{k-1}^{x_2,t_2}\|_{L^\infty(\mathcal{C}_r')}\le Cr^{k+\alpha-1}.\ee 
			In addition, $$J_{k-1}^{x_1,t_1}(x',t)-J_{k-1}^{x_2,t_2}(x',t)=\sum_{|\beta|+2j\le k-1}\frac{g_{\beta,j}(x_1,x_2,t_1,t_2)}{\beta!j!}(x'-x_1')^\beta(t-t_1)^j,$$
			where
			$$g_{\beta,j}(x_1,x_2,t_1,t_2):=f_{\beta,j}(x_1,t_1)-\sum_{|\sigma|+2i\le k-1-|\beta|-2j}\frac{f_{\beta+\sigma,j+i}(x_2,t_2)}{\sigma!i!}(x_1'-x_2')^\sigma(t_1-t_2)^i. $$ Then, by the compatibility of the jets \eqref{witney:compat1} and using the fact that the space of parabolic polynomials of degree less than or equal to $k-1$ is finite-dimensional (arguing as in the proof of \cref{lemma:improvement}), we obtain $$|g_{\beta,j}(x_1,x_2,t_1,t_2)|\le Cr^{k+\alpha-1-|\beta|-2j}\le C\text{dist}_{\text{par}}((x_1,t_1),(x_2,t_2))^{k+\alpha-1-|\beta|-2j}.$$
			Then the family $\{f_{\alpha,j}\}_{|\alpha|+2j\le k-1}$ satisfies the Whitney compatibility in \cite[Theorem B.1]{dgpt17}, with H\"older modulus of continuity, producing a function $\Theta$ of class $H^{k+\alpha-1,\frac{k+\alpha-1}2}$. This concludes the proof for $C^{k,\gamma}$ obstacles. 
            If $\varphi\in C^\infty(B_1')$, the same argument yields compatible jets of all orders, so the $C^\infty$ Whitney extension theorem provides a smooth function $\Theta$.
		\end{proof}

		Next, we show that blow-ups at cubic points must have non-trivial time dependence.
		As a consequence, the function $\Theta$ constructed in \cref{lemma:key-time-like} satisfies $\partial_t\Theta\not=0$ at points of $\Gamma_{3}$.
		\begin{lemma}\label{lemma:p-depend-time}
			Let $u$ be a solution of \eqref{eq:parabolic-signorini-problem}, with obstacle $\varphi$ satisfying \eqref{eq:ostacolo}, and suppose that $(0,0)\in\Gamma_3$. Let $p\in\mathcal{P}_3^+$ be the blow-up of $u$ at $0$, as in \cref{thm:rate-for-u}. Then $\partial_tp\not\equiv0$. As a consequence, $\partial_t\partial_{n+1}p(0,0)\not=0$.
		\end{lemma}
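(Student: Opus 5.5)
We argue by contradiction, exploiting the quantitative time monotonicity of \cref{lemma:monotonicity-ass} exactly as in the proof of \cref{prop:onlyle3}. Suppose $\partial_t p\equiv0$. By \cref{lem:classification-odd-blow-ups}, $p(x,t)=-|x_{n+1}|\big(q_0(x')+x_{n+1}^2q_1(x',x_{n+1})\big)$, with $q_0\ge0$ a $2$-homogeneous polynomial and $q_1$ of degree zero, i.e. a constant $b$. Then $p$ is a time-independent cubic. By \cref{thm:rate-for-u}, applied at the scale $r$ in the cylinder $Q_{2r}$, we have $\|\widetilde u-p\|_{L^\infty(Q_{2r})}\le Cr^{3+\alpha}$.

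Evaluate at the point $(0,r,\cdot)$, where $x'=0$ and $x_{n+1}=r$, and at the two times $0$ and $-r^2$. The time-independence of $p$ gives
$$\widetilde u(0,r,0)-\widetilde u(0,r,-r^2)\le 2Cr^{3+\alpha}.$$
On the other hand, since $\partial_t\widetilde u=\partial_t u\ge c|x_{n+1}|$ in $Q_{1/2}$ by \cref{lemma:monotonicity-ass}, integrating in time gives the lower bound $\ge cr\cdot r^2=cr^3$. This contradicts the upper bound for small $r$, so $\partial_tp\not\equiv0$.

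For the consequence, write $p=-|x_{n+1}|(q_0(x',t)+x_{n+1}^2q_1)$ with $q_0$ parabolically $2$-homogeneous. Caloricity of $p$ in $S_\infty^+$ requires $\partial_t q_0=\Delta_{x'}q_0+6q_1$ (up to the sign conventions), and $q_1$ is constant, so $\partial_tp=-|x_{n+1}|\partial_tq_0$. Writing $q_0=at+Q(x')$, the time-independence of $q_1$ yields $\partial_tp\equiv0$ exactly when $a=0$. Hence $a\ne0$, and $\partial_t\partial_{n+1}p(0^+,0)=-a\ne0$, where $\partial_{n+1}$ denotes the one-sided derivative from $\{x_{n+1}>0\}$. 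The only delicate point is checking this structural bookkeeping: it uses that $p$ is cubic, so $q_1$ is constant. The contradiction itself is routine.
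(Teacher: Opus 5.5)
Your proposal is correct and matches the paper's proof: you assume $p$ is time-independent, integrate $\partial_t\widetilde u\ge c|x_{n+1}|$ at $(0,r)$ over $[-r^2,0]$ to get a lower bound $cr^3$, and contradict the $Cr^{3+\alpha}$ rate from \cref{thm:rate-for-u}; then, since $q_1$ is constant for a cubic blow-up, $\partial_t q_0\neq 0$. Your use of $Q_{2r}$ rather than $Q_r$ is a small improvement, since it keeps the evaluation points $(0,r,0)$ and $(0,r,-r^2)$ strictly inside the cylinder, and the explicit form $q_0=at+Q(x')$ makes the final step fully concrete.
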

		\begin{proof}
			Suppose by contradiction that $p$ is time-independent. We denote by $\widetilde v=\widetilde u-p$, where $\widetilde u:=\widetilde u^{0,0}$ is defined in \eqref{eq:utilde}. Then, by \cref{lemma:monotonicity-ass}, $$\partial_t\widetilde v=\partial_t\widetilde u=\partial_tu \ge c|x_{n+1}|\qquad\text{in }Q_{1/2}.$$
			Applying this estimate to $(x',x_{n+1})=(0,r)$, we have $$\widetilde v(0,r,0)-\widetilde v(0,r,-r^2)=\int_{-r^2}^{0}\partial_t\widetilde v(0,r,s)\,ds\ge c r^3.$$
			On the other hand, by the rate of convergence at cubic points in \cref{thm:rate-for-u}, we have $$\|\widetilde v\|_{L^\infty(Q_r)}\le Cr^{3+\alpha}.$$ Then
			$|\widetilde v(0,r,0)|\le Cr^{3+\alpha}$ and $|\widetilde v(0,r,-r^2)|\le Cr^{3+\alpha}$. Thus $$2Cr^{3+\alpha}\ge cr^3,$$ which is a contradiction for $r$ small enough. This shows that $ \partial_tp\not\equiv0$, namely $p$ is time-dependent. Since also $p$ is of the form \eqref{eq:expr-p-odd} and cubic, we obtain that $q_1\in\R$ and thus $\partial_{t}q_0\not\equiv0$. This implies that $\partial_t\partial_{n+1}p(0,0)\not=0$, concluding the proof. 
		\end{proof}
		
		We are now in the position to prove the main result of this subsection.
		\begin{proof}[Proof of \cref{prop:t=T(x)}]
			Let $(x_0,t_0)\in \Gamma_3$, and let $\rho>0$ and $\Theta\in H^{k+\alpha-1,\frac{k+\alpha-1}{2}}(\mathscr{C}'_\rho(x_0,t_0))$ be given by \cref{lemma:key-time-like}.
			Then, by \cref{lemma:key-time-like} and \cref{lemma:p-depend-time}, for every $(x_1,t_1)\in\Gamma_3\cap \mathscr{C}'_\rho(x_0,t_0),$ we have
			\be\label{eq:boh1}\nabla_{x'}\Theta(x'_1,0,t_1)=0\qquad\text{and}\qquad\partial_t \Theta(x'_1,0,t_1)=\partial_t Q_2^{x_1,t_1}(0,0)=\partial_t\partial_{n+1}p^{x_1,t_1}(0,0)\not=0.\ee
			After further decreasing  $\rho$ if necessary, we can assume that
            $|\partial_t\Theta|\ge c$ in $\mathscr{C}'_\rho(x_0,t_0)$, for some $c>0$. By the implicit function theorem, we can find a function $T:B_\rho'(x_0)\to\R$ of class $C^{1}$ such that 
			\be\label{eq:boh2}\Gamma_3\cap \mathscr{C}'_\rho(x_0,t_0)\subseteq \{\Theta(x',0,t)=0\}=\{t=T(x',0)\}.\ee
			Moreover, $$\nabla_{x'} T(x',0)=-\frac{\nabla_{x'}\Theta(x',0,T(x',0))}{\partial_t\Theta(x',0,T(x',0))}=0\qquad\forall (x',0)\in \pi_x(\Gamma_3\cap \mathscr{C}'_\rho(x_0,t_0)).$$
			Note that if $\varphi\in C^\infty(B_1')$, then \cref{lemma:key-time-like} gives the smoothness of $\Theta$, and the implicit function theorem directly yields a smooth time graph $T$.
            Thus, in order to conclude the proof for $C^{k,\gamma}$ obstacles, we only need to show that $T\in C^{k-1,\alpha}(B'_\rho(x_0))$.
			To this end, we use as a key ingredient the quadratic (actually superquadratic) cleaning of $\Gamma_3$ from \cref{prop:cleaning-odd}.
			
			For $x_1\in \pi_x(\Gamma_3\cap \mathscr{C}'_\rho(x_0,t_0))$, let us consider the jets of $\Theta$:
			$$ J^{x_1}_{k-1}(x',t):=\sum_{|\beta|+2j\le k-1}\frac{\partial_{x'}^\beta \partial^j_t\Theta(x_1,T(x_1))}{\beta!j!}(x'-x_1')^\beta(t-T(x_1))^j.$$
			The Whitney estimate for $\Theta$ gives $$|\Theta(x',0,t)- J^{x_1}_{k-1}(x',t)|\le C\text{dist}_{\text{par}}((x,t),(x_1,T(x_1)))^{k+\alpha-1}
			\qquad \forall (x,t)\in \mathscr{C}_{\rho}'(x_0,t_0).$$ 
             Given $x_1,x_2\in \pi_x(\Gamma_3\cap \mathscr{C}'_\rho(x_0,t_0))$, we set $r:=|x_1-x_2|$. By \eqref{eq:boh1} and \eqref{eq:boh2}, we have 
			\bea c|T(x_1)-T(x_2)|&\le |\Theta(x_1,T(x_1))-\Theta(x_1,T(x_2))|\\&=
			|\Theta(x_1,T(x_2))-\Theta(x_2,T(x_2))-\nabla_{x'} \Theta(x_2,T(x_2))\cdot (x_{1}'-x_{2}')|\le C|x_1-x_2|^2.\eea 
            As a consequence,
            \bea\label{eq:reggsdsd}\text{dist}_{\text{par}}((x_1,T(x_1)),(x_2,T(x_2)))\le Cr.\eea
            Therefore, on a common parabolic cylinder $\mathcal{C}_r'$ lying at distance comparable with $r$ from both $(x_1,T(x_1))$ and $(x_2,T(x_2))$, one has
			\be\label{eq:comp-jet}\|J_{k-1}^{x_1}-J_{k-1}^{x_2}\|_{L^\infty(\mathcal{C}_r')}\le Cr^{k+\alpha-1}.\ee
			Since $\partial_t J_{k-1}^{x_1}=\partial_t\Theta\not=0$ at $(x_1,T(x_1))$, the zero set of $J^{x_1}_{k-1}$ is locally the graph of a smooth function $\tau^{x_1}$ satisfying $$ J_{k-1}^{x_1}(x',\tau^{x_1}(x))=0\qquad\text{and}\qquad \tau^{x_1}(x_1)=T(x_1).$$
			By finite-dimensional norm equivalence after parabolic rescaling, \eqref{eq:comp-jet} also holds on a comparable parabolic cylinder containing both graphs over a ball $D_r$ of radius comparable to $r$. Together with the uniform lower bound on the time derivatives of $J^{x_1}$, this yields the following estimate on $D_r$:
            \begin{align*}
                |\tau^{x_1}(x)-\tau^{x_2}(x)|&\le C |J_{k-1}^{x_1}(x,\tau^{x_1}(x))-J_{k-1}^{x_1}(x,\tau^{x_2}(x))|=C|J_{k-1}^{x_1}(x,\tau^{x_2}(x))-J_{k-1}^{x_2}(x,\tau^{x_2}(x))|\\&\le Cr^{k+\alpha-1}.
            \end{align*}
			Let $P_{k-1}^{x_i}$ be the Taylor polynomial of order $k-1$ of $\tau^{x_i}$ at $x_i$, for $i=1,2$. By uniform $C^k$-estimates, we have $\|\tau^{x_i}-P_{k-1}^{x_i}\|_{L^\infty(D_r)}\le Cr^k$.
            Therefore, since $\alpha<1$, we have
			$$\|P_{k-1}^{x_1}-P_{k-1}^{x_2}\|_{L^\infty(D_r)}\le \|\tau^{x_1}-P_{k-1}^{x_1}\|_{L^\infty(D_r)}+\|\tau^{x_2}-P_{k-1}^{x_2}\|_{L^\infty(D_r)}+\|\tau^{x_1}-\tau^{x_2}\|_{L^\infty(D_r)}\le Cr^{k+\alpha-1}.
			$$
            By finite-dimensional norm equivalence, the coefficients of the polynomials $P_{k-1}^{x_1}$ satisfy the Whitney compatibility conditions of class $C^{k-1,\alpha}$ on $\pi_x(\Gamma_3\cap \mathscr{C}'_\rho(x_0,t_0))$. Hence the classical Whitney extension theorem gives a function $\widetilde T:B_\rho'(x_0)\to\R$ of class $C^{k-1,\alpha}$ such that $\widetilde T(x_1)=\tau^{x_1}(x_1)=T(x_1)$.
            In particular, \bea\Gamma_3\cap \mathscr{C}_\rho'(x_0,t_0)\subseteq \{t=\widetilde T(x',0)\}.\eea Moreover, differentiating $J_{k-1}^{x_1}(x',\tau^{x_1}(x))=0$ and using that $\nabla_{x'}\Theta(x_1,T(x_1))=0$, we get $\nabla_{x'}\tau^{x_1}(x_1)=0$. Consequently, $\nabla_{x'} \widetilde T(x_1)=\nabla_{x'}\tau^{x_1}(x_1)=0$. Differentiating once more, we obtain $$D_{x'}^2 \widetilde T(x_1)=D_{x'}^2\tau^{x_1}(x_1)=-\frac{D^2_{x'}Q_2^{x_1,\widetilde T(x_1)}(0,0)}{\partial_tQ_2^{x_1,\widetilde T(x_1)}(0,0)}.$$ Consequently, $$\ker \left(D_{x'}^2 \widetilde T(x_1)\right)=\ker \left(D_{x'}^2 Q_2^{x_1, \widetilde T(x_1)}(0,0)\right)=S(p^{x_1,\widetilde T(x_1)}).$$ Renaming $\widetilde T$ as $T$, we conclude the proof.
		\end{proof}

		\subsection{Generic regularity and stratification of cubic points} In this subsection we use \cref{prop:t=T(x)} to prove \cref{prop:sard} and \cref{prop:stratification-for-u}.
		Generic regularity for cubic points is a consequence of Sard's theorem.
		\begin{proof}[Proof of \cref{prop:sard}]
			Let $(x_0,t_0)\in \Gamma_3$ and let $\rho>0$ and $T\in C^{k-1,\alpha}(B'_\rho(x_0))$ be given by \cref{prop:t=T(x)}.
			Then
			$$\Gamma_3\cap \mathscr{C}'_\rho(x_0,t_0)\subseteq \left\{(x',0,t)\in \mathscr{C}'_\rho(x_0,t_0): \ t=T(x',0),\ \nabla_{x'}T(x',0)=0\right\}.$$ Therefore, $$\pi_t(\Gamma_3\cap \mathscr{C}'_\rho(x_0,t_0))\subseteq T\left(\{\nabla_{x'}T=0\}\cap B_{\rho}'(x_0)\right).$$ 
			Since $T$ is $C^{k-1}$-regular, by Sard's theorem (see for instance \cite[Theorem 3.4.3]{federer}), the $1$-dimensional Lebesgue measure of the set in the right-hand side is $0$, provided that $n\le k-1$. Therefore, $$\mathcal{H}^1(\pi_t(\Gamma_3\cap \mathscr{C}'_\rho(x_0,t_0)))=0.$$ Covering $\Gamma_{3}$ with countably many cylinders as above concludes the proof. 
		\end{proof}
		
		Next we prove the stratification result for cubic points in \cref{prop:stratification-for-u}.
		\begin{proof}[Proof of \cref{prop:stratification-for-u}]
			For $j=n$, the result follows from \cref{prop:t=T(x)}. Then, we assume that $j<n$. Let $(x_0,t_0)\in\Gamma_3^{j}$, for some $j\in\{0,\ldots,n\}$, and let $T$ be the function given by \cref{prop:t=T(x)}. For every $i\in \{1,\ldots,n\}$, we set $F_i(x'):=\partial_{x_i} T(x',0)$. 
			Then, by \cref{prop:t=T(x)}, there is some $\rho>0$ for which  
            \be\label{eq:inclusion-gamma-new}\Gamma_{3}^{j}\cap \mathscr{C}'_\rho(x_0,t_0)\subseteq \left\{t=T(x',0), \ F_i(x')=0 \ \ \forall i\in\{1,\ldots,n\} \right\}.\ee
			Consider the map $F(x')=(F_1(x'),\ldots, F_n(x'))$. Then, again by \cref{prop:t=T(x)} the kernel of the linear map $D_{x'}F(x_0')$ coincides with the spine of $p$: $$\ker\left(D_{x'}F(x_0')\right)=S(p^{x_0,T(x_0)}),$$ where $S(p)$ is the spine defined in \eqref{eq:spine-of-p-cubic}. Since $(x_0,t_0)\in\Gamma_3^j$, the spine $S(p^{x_0,T(x_0)})$ has dimension $j$. Therefore, $${\rm rank}\left(D_{x'}F(x_0')\right)=n-j.$$ Then, there exist distinct indices $i_1,\ldots, i_{n-j}\in\{1,\ldots,n\}$ such that $\nabla_{x'} F_{{i_1}}(x_0'),\ldots,\nabla_{x'} F_{{i_{n-j}}}(x_0')$ are linearly independent. We define the function 
			\bea\Psi(x'):=\left(F_{{i_1}}(x'),\ldots, F_{{i_{n-j}}}(x')\right).\eea
			By \eqref{eq:inclusion-gamma-new}, around $(x_0,t_0)$, the set $\Gamma_3^j$ is contained in the zero set $\{t=T(x',0),\ \Psi=0\}$. Moreover, by the implicit function theorem, the zero set $\{\Psi=0\}$ is locally a $j$-dimensional manifold of class $C^{k-2,\alpha}$ of the form $(x_{j+1},\ldots,x_n)=\widetilde g(x_1,\ldots,x_j)$. By defining $$g(x_1,\ldots,x_j):=\Big(\widetilde g(x_1,\ldots,x_j),T(x_1,\ldots,x_j,\widetilde g(x_1,\ldots,x_j),0)\Big),$$ we conclude the construction of the manifolds.
			Finally, by a covering argument, for every $j\in\{0,\ldots,n-1\}$,
			$$\text{dim}_{\HH}(\pi_x(\Gamma_3^{j}))\le \text{dim}^{\text{eucl}}_{\HH}(\Gamma_3^{j})\le j\le n-1,$$ where $\text{dim}^{\text{eucl}}_{\HH}$ is the dimension with respect to the usual Euclidean distance. 
            Finally, if $\varphi$ is smooth, \cref{prop:t=T(x)} gives that $T$ is $C^\infty$, and thus the zero set $\{\Psi=0\}$ is locally smooth, concluding the proof.
		\end{proof}

			\subsection{Splitting of the time-like cubic points} 
			In order to prove \cref{prop:complete-time-like}, we introduce the following subset of parabolically $j$-homogeneous polynomials: 
			$$\mathcal{Q}_j^{0}:=\{Q\in \mathcal{Q}_{j}: \ Q(x',0)=0\quad \forall x'\in\R^n\}\subseteq \mathcal{Q}_j,$$ where $\mathcal{Q}_j$ is defined in \eqref{eq:Qj}.

			The set of time-like cubic points $\Gamma_3^n$ is further decomposed as follows. 
			We consider the map $\Pi$ as defined in \eqref{eq:map-Pi}. For any $m\in\{3,\ldots,k-1\}$ we set
			\bea \Gamma_3^{n,m}:=\left\{(x_0,t_0)\in \Gamma_3^n:\ \Pi_j(x_0,t_0)\in\mathcal{Q}_{j}^{0} \quad \forall j\in \{2,\ldots,m-1\}, \quad \Pi_{m}(x_0,t_0)\not\in\mathcal{Q}_{m}^{0}\right\},\eea
			and 
			\be\label{eq:gamma3ngek}\Gamma_3^{n,\ge k}:=\left\{(x_0,t_0)\in \Gamma_3^n:\ \Pi_j(x_0,t_0)\in\mathcal{Q}_{j}^{0}\quad \forall j\in \{2,\ldots,k-1\}\right\}.\ee 
			If $\varphi\in C^\infty(B_1')$, we also set $$\Gamma_3^{n,\infty}:=\bigcap_{k\ge 4}\Gamma_3^{n,\ge k}=\left\{(x_0,t_0)\in \Gamma_3^n:\ \Pi_j(x_0,t_0)\in\mathcal{Q}_j^{0}\quad \forall j\in\N_{\ge 2}\right\}.$$
			\noindent The set of time-like cubic points $\Gamma_{3}^{n}$ can then be decomposed as follows:
			\be\label{eq:splitting-time-like}\Gamma_3^n=\Gamma_3^{n,\ge k}\cup \left(\Gamma_3^{n}\setminus\Gamma_3^{n,\ge k}\right)=\Gamma_3^{n,\ge k}\cup\bigcup_{m=3}^{k-1}\Gamma_{3}^{n,m}.\ee
			For any $m\in \{3,\ldots,k-1\}$ and any $(x_0,t_0)\in \Gamma_{3}^{n,m}$, the spine of 
            the first nonzero term in the expansion of the normal derivative at $(x_{0},t_{0})$ restricted to $t=0$ is denoted by
			\be\label{eq:spinesm} S_{m}^{x_0,t_0}=\left\{\xi\in\R^{n}: \partial_{\xi}\Pi_m(x_0,t_0)(\cdot,0)\equiv0\right\}.\ee 
			Note that since $\Pi_m(x_0,t_0)(\cdot, 0)\not\equiv0$, then 
			$$d_m^{x_0,t_0}:=\text{dim}\,S_{m}^{x_0,t_0}\in \{0,\dots,n-1\}.$$
			For every $d\in\{0,\ldots,n-1\}$, we set 
			$$\Gamma_3^{n,m,d}:=\left\{(x_0,t_0)\in \Gamma_3^{n,m}:\ d_m^{x_0,t_0}=d\right\}.$$ 
			Then, $\Gamma_{3}^{n,m}$ can be further split as follows:  
			\be\label{eq:splitting-nmd}\Gamma_{3}^{n,m}=\bigcup_{d=0}^{n-1} \Gamma_3^{n,m,d}\qquad \forall  m\in \{3,\dots,k-1\}.\ee
		This further stratification is motivated by the following remark, which relates the strata to the order of vanishing of the spatial derivatives of the time graph $T$.
			\begin{remark}\label{remark:derivatives-of-T}
			    The construction of the function $T$ in the proof of \cref{prop:t=T(x)}, together with \eqref{eq:prop-of-theta}, identifies its spatial derivatives on the strata introduced above. More precisely, for every $m\in\{3,\ldots,k-1\}$ and every $(x_0,t_0)\in\Gamma_3^{n,m}$, in the neighborhood where $T$ is defined, we have $T(x'_0,0)=t_0$ and \be\label{eq:derivatives-T-nm1}
                    D_{x'}^\beta T(x'_0,0)=0\qquad\text{for }|\beta|\in\{1,\ldots,m-1\}\ee and \be\label{eq:derivatives-T-nm2}D_{x'}^\beta T(x'_0,0)
                    =-\frac{D_{x'}^\beta \Theta(x'_0,0,t_0)}{\partial_t\Theta(x'_0,0,t_0)}=-\frac{D_{x'}^\beta Q_m^{x_0,t_0}(0,0)}{\partial_tQ_2^{x_0,t_0}(0,0)}\qquad
                    \text{for }|\beta|=m.\ee Similarly, for every $(x_0,t_0)\in\Gamma_3^{n,\ge k}$ in that neighborhood we have
                    \be\label{eq:derivatives-T-ngek} D_{x'}^\beta T(x'_0,0)=0
                    \qquad\text{for }|\beta|\in\{1,\ldots,k-1\}.\ee
                Indeed, by construction, the spatial Taylor polynomial of \,$T$ at $x_0$ coincides with that of the function
                implicitly defined by the parabolic Taylor polynomial of $\Theta$ at $(x_0,t_0)$. Since $\partial_t\Theta(x'_0,0,t_0)\neq0$,
                comparing homogeneous terms and using $Q_j^{x_0,t_0}(\cdot,0)\equiv0$ for $2\le j<m$ gives \eqref{eq:derivatives-T-nm1} and \eqref{eq:derivatives-T-nm2}. At points of $\Gamma_3^{n,\ge k}$, these vanishing conditions hold for every $2\le j\le k-1$, yielding \eqref{eq:derivatives-T-ngek}.
			\end{remark}

			\subsection{Dimensional bound for $\Gamma_{3}^{n}\setminus \Gamma_{3}^{n,\ge k}$.}
			In the following proposition we prove a finer version of \cref{prop:stratification-for-u} for $\Gamma_{3}^{n}\setminus \Gamma_{3}^{n,\ge k}$. As a consequence, we obtain a dimensional bound for its spatial projection.
			
			\begin{proposition}\label{prop:dimensional-bound-time-like}
				Let $u$ be a solution of \eqref{eq:parabolic-signorini-problem}, with obstacle $\varphi$ satisfying \eqref{eq:ostacolo}. Then, for every $\alpha\in(0,\gamma)$, $m\in\{3,\ldots,k-1\}$ and $d\in\{0,\ldots,n-1\}$, 
				$\Gamma_{3}^{n,m,d}$ is locally contained in a $d$-dimensional manifold of class $C^{k-m,\alpha}$ of the form $(x_{d+1},\ldots,x_{n},t)=g(x_1,\ldots,x_d)$, after a suitable rotation in the thin space.
				Consequently, 
				$$\text{dim}_{\HH}(\pi_x(\Gamma_3^{n}\setminus \Gamma_3^{n,\ge k}))\le n-1.$$
			\end{proposition}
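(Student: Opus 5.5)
The plan is to follow the proof of \cref{prop:stratification-for-u}, replacing the first derivatives of $T$ by its derivatives of order $m-1$. Fix $(x_0,t_0)\in\Gamma_3^{n,m,d}$ and let $T\in C^{k-1,\alpha}(B'_\rho(x_0))$ be the time graph from \cref{prop:t=T(x)}, so that $\Gamma_3\cap\mathscr{C}'_\rho(x_0,t_0)\subseteq\{t=T(x',0)\}$. By \cref{remark:derivatives-of-T}, every point $(x_1,t_1)\in\Gamma_3^{n,m}$ in this neighborhood satisfies $D^\beta_{x'}T(x_1',0)=0$ for $1\le|\beta|\le m-1$. The same holds at points of $\Gamma_3^{n,m'}$ with $m'>m$ and at points of $\Gamma_3^{n,\ge k}$, but we only need it on $\Gamma_3^{n,m,d}$. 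We therefore consider the map
$$F(x'):=\big(D^\beta_{x'}T(x',0)\big)_{|\beta|=m-1},$$
which is of class $C^{k-m,\alpha}$ because $T\in C^{k-1,\alpha}$. It vanishes on $\pi_x(\Gamma_3^{n,m,d}\cap\mathscr{C}'_\rho(x_0,t_0))$.

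The key step is to compute the rank of $D_{x'}F(x_0')$. Its entries are the order-$m$ derivatives $D^\gamma_{x'}T(x_0',0)$ with $|\gamma|=m$. By \eqref{eq:derivatives-T-nm2}, these equal $-D^\gamma_{x'}Q_m^{x_0,t_0}(0,0)/\partial_tQ_2^{x_0,t_0}(0,0)$, and the denominator is nonzero by \cref{lemma:p-depend-time}. Hence the order-$m$ Taylor part of $T$ at $x_0$ is a nonzero multiple of the homogeneous polynomial $q(x'):=Q_m^{x_0,t_0}(x',0)=\Pi_m(x_0,t_0)(x',0)$. This polynomial is nonzero because $(x_0,t_0)\in\Gamma_3^{n,m}$.

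We then need the standard linear-algebra fact for a homogeneous polynomial $q$ of degree $m$: the common kernel
$$\{\xi:\ \xi\cdot\nabla D^\beta q=0\ \ \forall|\beta|=m-1\}$$
coincides with the invariance space $\{\xi:\partial_\xi q\equiv0\}$. Indeed, $\partial_\xi q$ is homogeneous of degree $m-1$, so it vanishes identically if and only if all of its $(m-1)$-th derivatives, which are constants, vanish. Those constants are exactly $\xi\cdot\nabla D^\beta q$. Applying this to $q$ gives $\ker D_{x'}F(x_0')=S_m^{x_0,t_0}$, which has dimension $d$, so $\operatorname{rank}D_{x'}F(x_0')=n-d$.

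Next we pick $n-d$ components $F_{i_1},\dots,F_{i_{n-d}}$ whose gradients at $x_0'$ are linearly independent and set $\Psi:=(F_{i_1},\dots,F_{i_{n-d}})\in C^{k-m,\alpha}$. Since $m\le k-1$, we have $k-m\ge1$, so the implicit function theorem applies: after a rotation of the thin space, $\{\Psi=0\}$ is locally a graph $(x_{d+1},\dots,x_n)=\widetilde g(x_1,\dots,x_d)$ of class $C^{k-m,\alpha}$. Setting $g:=(\widetilde g,\,T(\cdot,\widetilde g,0))$ produces the desired $d$-dimensional manifold containing $\Gamma_3^{n,m,d}$ locally; it is $C^{k-m,\alpha}$ because $T$ is more regular than $\widetilde g$.

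Finally, a countable covering argument gives $\dim_{\HH}\pi_x(\Gamma_3^{n,m,d})\le d\le n-1$. Using the splittings \eqref{eq:splitting-time-like} and \eqref{eq:splitting-nmd}, which involve only finitely many $m$ and $d$, we conclude that $\dim_{\HH}\pi_x(\Gamma_3^n\setminus\Gamma_3^{n,\ge k})\le n-1$.

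The main obstacle I expect is justifying the identification in \eqref{eq:derivatives-T-nm2}, namely that the order-$m$ spatial Taylor coefficients of $T$ are a nonzero multiple of those of $Q_m(\cdot,0)$. This requires the implicit differentiation of $\Theta(x',0,T(x'))=0$, using that the mixed terms $\partial_t^jD^\beta\Theta$ with $|\beta|+2j<m$, $j\ge1$, do not contribute at order $m$ because the lower derivatives of $T$ vanish. \cref{remark:derivatives-of-T} already asserts this, so I would take it as given.
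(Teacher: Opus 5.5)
Your proof is correct and is essentially the paper's argument. It uses the same ingredients: the map $F=(\partial^\beta_{x'}T)_{|\beta|=m-1}$, which vanishes on the stratum by \eqref{eq:derivatives-T-nm1}; the identification $\ker D_{x'}F(x_0')=S_m^{x_0,t_0}$ via \eqref{eq:derivatives-T-nm2}, which you justify more explicitly than the paper through $\xi\cdot\nabla D^\beta q=D^\beta\partial_\xi q$; the implicit function theorem, using $k-m\ge1$; and a covering argument.

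One minor correction to your closing comment concerns how \eqref{eq:derivatives-T-nm2} is obtained. It comes from implicitly differentiating the polynomial jet $J^{x_0}_{k-1}$ of $\Theta$, that is, from the functions $\tau^{x_0}$ whose Taylor jets are Whitney-extended to build $T$ in \cref{prop:t=T(x)}. It does not come from differentiating $\Theta(x',0,T(x',0))=0$ itself. The final $T$ need not parametrize $\{\Theta=0\}$ away from $\pi_x(\Gamma_3)$, and $\Theta$ has only limited regularity in $t$.
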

			
			\begin{proof} We argue as in the proof of \cref{prop:stratification-for-u}.    
				Let $(x_0,t_0)\in\Gamma_3^{n,m,d}$, for some $m\in\{3,\ldots,k-1\}$ and $d\in\{0,\ldots,n-1\}$. Let $T\in C^{k-1,\alpha}(B_\rho'(x_0))$ be the function given by \cref{prop:t=T(x)}, for some $\rho>0$. For every multiindex $\beta\in \N^n$ with $|\beta|=m-1$, we set $F_\beta(x'):=\partial_{x'}^\beta T(x',0)\in C^{k-m,\alpha}(B_\rho'(x_0))$. 
				Then, by \eqref{eq:derivatives-T-nm1}, 
				\bea\Gamma_{3}^{n,m,d}\cap \mathscr{C}'_\rho(x_0,t_0)\subseteq \{t=T(x',0)\}\cap\bigcap_{|\beta|=m-1}\{F_\beta=0\}.\eea
				Consider the map $F(x')=(F_\beta(x'))_{|\beta|=m-1}$. 
				Arguing as in the proof of \cref{prop:stratification-for-u}, and using \eqref{eq:derivatives-T-nm2}, we get
				$$\ker\left(D_{x'}F(x_0')\right)=S_m^{x_0,t_0},$$ where $S_m^{x_0,t_0}$ is the spine defined in \eqref{eq:spinesm}. Since $d_m^{x_0,t_0}:=\dim S_m^{x_0,t_0}=d$, we have $${\rm rank}\left(D_{x'}F(x_0')\right)=n-d.$$ As a consequence, there exist distinct multiindices $\beta_1,\ldots, \beta_{n-d}$, with $|\beta_{i}|=m-1$, such that 
				the spatial gradients $\nabla_{x'} F_{{\beta_1}}(x_0'),\ldots,\nabla_{x'} F_{{\beta_{n-d}}}(x_0')$ are linearly independent.
				Then, around $(x_{0},t_{0})$, the spatial projection of $\Gamma_3^{n,m,d}$ is contained in the zero set of the function
				\bea\Psi(x'):=\left(F_{\beta_1}(x'),\ldots,F_{\beta_{n-d}}(x')\right),\eea 
				which is locally a $d$-dimensional manifold of class $C^{k-m,\alpha}$. 
                By the implicit function theorem, after a suitable rotation in the thin space, its zero set is locally a $d$-dimensional manifold of class $C^{k-m,\alpha}$ of the form $(x_{d+1},\ldots,x_n)=\widetilde g(x_1,\ldots,x_d)$.
                Setting $g(x_1,\ldots,x_d):=\big(\widetilde g(x_1,\ldots,x_d),T(x_1,\ldots,x_d,\widetilde g(x_1,\ldots,x_d),0)\big),$ we obtain the claimed local containment in a $d$-dimensional manifold of class $C^{k-m,\alpha}$.
				Finally, by a covering argument,
				$$\text{dim}_{\HH}(\pi_x(\Gamma_3^{n,m,d}))\le \text{dim}^{\text{eucl}}_{\HH}(\Gamma_3^{n,m,d})\le d\le n-1,$$ where $\text{dim}^{\text{eucl}}_{\HH}$ is the dimension with respect to the usual Euclidean distance. Recalling the splittings \eqref{eq:splitting-time-like} and \eqref{eq:splitting-nmd}, this estimate concludes the proof. 
			\end{proof}

			\subsection{Cleaning results for $\Gamma_{3}^{n,\ge k}$}
			In the next proposition we improve the cleaning result of \cref{prop:cleaning-odd} for the sets $\Gamma_3^{n,m}$ and $\Gamma_{3}^{n,\ge k}$. We recall the definition of $\text{Clean}$ from \cref{def:cleaned}.

			\begin{proposition}\label{prop:cleaning-time-like}
				Let $u$ be a solution of \eqref{eq:parabolic-signorini-problem}, with obstacle $\varphi$ satisfying \eqref{eq:ostacolo}. Then, for every $m\in\{3,\ldots,k-1\}$ and $\alpha\in(0,\gamma)$, we have
				$$\Gamma_3^{n,m}\in\text{Clean}(m)\qquad\text{and}\qquad \Gamma_3^{n,\ge k}\in \text{Clean}(k+\alpha-1).$$
				Moreover, if $\varphi\in C^\infty(B_1')$, then $$\Gamma_3^{n,\infty}\in \text{Clean}(\infty).$$
			\end{proposition}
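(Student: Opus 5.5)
The plan is to reuse the barrier argument from the proof of \cref{prop:cleaning-odd}, but to replace the cubic expansion of \cref{thm:rate-for-u} by the higher order expansion of the normal derivative \eqref{eq:expansion-normal-derivative}. The extra vanishing at time-like points then improves the exponent.

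Fix $(x_1,t_1)\in\Gamma_3^{n,m}$ (or $\Gamma_3^{n,\ge k}$) and let $(x_0,t_0)$ be another cubic point, with $r:=|x_0-x_1|$ small and $t_0>t_1+Cr^{\beta}$, where $\beta=m$ or $\beta=k+\alpha-1$. We want a contradiction. As in \cref{prop:cleaning-odd}, it suffices to show that $\widetilde u^{x_1,t_1}$ vanishes on a thin neighbourhood of $(x_0,t_0)$.

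The key observation concerns the modes. At $(x_1,t_1)$ we have $\Pi_j(x_1,t_1)\in\mathcal{Q}_j^0$ for $2\le j\le \beta-1$, so each $Q_j^{x_1,t_1}$ is divisible by $t$. Writing $Q_j(x',t)=t\,R_j(x',t)$, the factor $R_j$ is parabolically $(j-2)$-homogeneous. The leading mode is $Q_2=-6a\,t$ with $a>0$ and $\partial_tQ_2<0$. Hence, for $t-t_1<0$ and $|x'-x_1'|\le 2r$, with $|t-t_1|\gtrsim r^{\beta}$,
$$-\sum_{j=2}^{\beta-1}Q_j^{x_1,t_1}(x'-x_1',t-t_1)\ge c|t-t_1|\bigl(1-C\textstyle\sum_{j\ge3}(r+|t-t_1|^{1/2})^{j-2}\bigr)\ge \tfrac c2|t-t_1|.$$
For $m<k$ the remaining mode $Q_m$ is bounded by $Cr^m$, and the error in \eqref{eq:expansion-normal-derivative} is $O(r^{k+\alpha-1})$. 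It follows that $-\partial_{n+1}u\ge c|t-t_1|-Cr^{\beta}>0$ on the thin region $\{|x'-x_1'|\le 2r,\ t_1-\rho^2<t<t_1-Cr^\beta\}$. One caveat: \eqref{eq:expansion-normal-derivative} only holds in backward cylinders $K_s$ with $|x|\le s$ and $|t|\ge s^2/100$. I would take $s\approx |t-t_1|^{1/2}\ge r$, which covers the region because $r^{\beta}\ll r^2$ forces $|t-t_1|\gg r^2$.

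Strict positivity of $-\partial_{n+1}u$ gives contact, so $u=\varphi$ there. This yields the mirror statement: the contact set contains the backward region $t\le t_1-Cr^\beta$ over $B'_{2r}(x_1)$.

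To rule out $(x_0,t_0)$, I would use a symmetric version instead of the forward direction. Apply the same argument centered at $(x_0,t_0)$ rather than $(x_1,t_1)$; this is legitimate since $(x_0,t_0)\in\Gamma_3$ is only assumed cubic, so its expansion holds, but its modes need not vanish at $t=0$. This is the main obstacle. For a general cubic $(x_0,t_0)$, the term $Q_2^{x_0,t_0}$ may have a nontrivial $x'$ part, and the positivity argument fails.

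Instead I would use \cref{prop:t=T(x)}. Both points lie on the graph $t=T(x')$, and by \cref{remark:derivatives-of-T} we have $D^\beta T(x_1)=0$ for $1\le|\beta|\le m-1$ (respectively $\le k-1$). Taylor expansion of $T\in C^{k-1,\alpha}$ at $x_1$ then gives
$$|t_0-t_1|=|T(x_0)-T(x_1)|\le Cr^{m}\quad\text{(respectively } Cr^{k-1+\alpha}\text{)}.$$
This is exactly the cleaning condition and makes the barrier step unnecessary.

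The uniformity of the constants follows from the uniform $C^{k-1,\alpha}$ norm of $T$ near $x_1$. Note that $T$ is only $C^{k-1,\alpha}$ in general, so for $m\le k-1$ the Taylor remainder is $O(r^m)$ when $m\le k-1$, and $O(r^{k-1+\alpha})$ in the top case. For smooth $\varphi$, the set $\Gamma_3^{n,\infty}$ has $D^\beta T(x_1)=0$ for all $\beta$ and $T\in C^\infty$, which gives $\mathrm{Clean}(\infty)$.
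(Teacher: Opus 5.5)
Your final argument is correct and is exactly the paper's proof: both points lie on the $C^{k-1,\alpha}$ time graph $t=T(x')$ of \cref{prop:t=T(x)}, the tangential derivatives of $T$ at the given point vanish up to order $m-1$ (resp.\ $k-1$, resp.\ all orders) by \cref{remark:derivatives-of-T}, and Taylor expansion of $T$ gives the bound, so the barrier discussion before it can be dropped. The one step you leave implicit is that this bound inside $\mathscr{C}'_\rho$ also excludes points lying far above in time and hence gives the full cleaning condition; the paper likewise handles this in one line via time monotonicity.
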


			\begin{proof}[Proof of \cref{prop:cleaning-time-like}]
				We only prove the cleaning result for $\Gamma_{3}^{n,\ge k}$, the cleaning for $\Gamma_3^{n,m}$ following similarly.
				In order to prove that $\Gamma_3^{n,\ge k}\in\text{Clean}(k+\alpha-1)$, by the time-monotonicity of the solution, it is enough to show that for every $(x_0,t_0)\in\Gamma_3^{n,\ge k}$ there exists $\rho>0$ and $C>0$ such that 
				$$|t_1-t_0|\le C|x'_1-x'_0|^{k+\alpha-1}\qquad\forall (x_1,t_1)\in\Gamma_3^{n,\ge k}\cap \mathscr{C}'_\rho(x_0,t_0).$$
                Let $T$ be the function from \cref{prop:t=T(x)}. By \eqref{eq:derivatives-T-ngek}, its tangential derivatives at $x_{0}$ vanish up to order $k-1$.
				For every $(x_1,t_1)\in\Gamma_3^{n,\ge k}\cap \mathscr{C}'_\rho(x_0,t_0)$, since $\Gamma_3^n\cap \mathscr{C}'_\rho(x_0,t_0)\subseteq\{t=T(x',0)\}$, we have $t_i=T(x_i',0)$. Therefore, the Taylor expansion of $T$ at $x_{0}$ gives $$|t_1-t_0|=|T(x_1',0)-T(x_0',0)|\le C|x_1'-x_0'|^{k+\alpha-1}.$$
				By the arbitrariness of $(x_1,t_1)$, we conclude the proof in the case $\varphi\in C^{k,\gamma}(B_1')$. When $\varphi$ is smooth, the cleaning result of $\Gamma_3^{n,\infty}$ follows from the cleaning result for $\Gamma_3^{n,\ge k}$ and by $\Gamma_3^{n,\infty}=\bigcap_{k\ge 4}\Gamma_3^{n,\ge k}$.
			\end{proof}
		
			\begin{proof}[Proof of \cref{prop:complete-time-like}] The result follows by combining \cref{prop:dimensional-bound-time-like} and \cref{prop:cleaning-time-like}.
			\end{proof}

			\section{Proof of the main results}\label{sec:proof_main}
			In this final section, we conclude the proof of our main results, \cref{t:thm0}, \cref{t:thm1}, and \cref{thm:n-1andtime-like}. Throughout the section, for any set $E\subseteq (\R^n\times\{0\})\times\R$, we call $\pi_{t}(E):= \{t\in \R: \exists \, x\text{ such that } (x,t)\in E\}$ the time projection. Moreover, for any $t\in \R$, we denote by $E_t:=\{x\in \R^n\times \{0\}:\ (x,t)\in E\}$ the time slice at time $t$. 
			We start by proving that the singular set has parabolic dimension at most $n$.
			
			\begin{proof}[Proof of \cref{t:thm0}]
				We need to prove that $\text{dim}_{\text{par}}(\Sigma)\le n$. By \cref{prop:onlyle3}, we know that $\Sigma=\Gamma_{2}\cup \Gamma_{3}$. Now, on the one hand, we have the trivial bound $\dim_{\mathcal{H}}(\pi_x(\Sigma))\le n$, since $\pi_{x}(\Sigma)\subseteq B_1'$.
				On the other hand, by the cleaning results in \cref{prop:cleaning-gamma2} and \cref{prop:cleaning-odd}, we have $\Gamma_2\in\text{Clean}(2)$ and $\Gamma_3\in\text{Clean}(2+\eta)$ for some $\eta>0$. 
				Then, applying \cref{lemma:par-gmt} separately to $\Gamma_{2}$ and $\Gamma_{3}$, we deduce that both sets have parabolic dimension at most $n$. This concludes the proof.
			\end{proof}
			
			Next, we pass to the proof of the generic regularity result in \cref{t:thm1}. As a byproduct of our analysis, we are actually able to prove the more general result contained in the following proposition.
			\begin{proposition}\label{prop10}
				Let $u$ be a solution of \eqref{eq:parabolic-signorini-problem}, with obstacle $\varphi\in C^{k,\gamma}(B_1')$ satisfying \eqref{eq:ostacolo}.
				\begin{itemize} 
					\item[i)] If $n+1=2$, then $$\dim_{\mathcal{H}}(\pi_t(\Sigma))\le \frac{1}{k+\gamma-1}.$$
					In particular, if $\varphi\in C^\infty(B_1')$, then $\dim_{\mathcal{H}}(\pi_t(\Sigma))=0$.
                    
                    \smallskip
					\item[ii)] If $n+1=3$, then $$dim_{\mathcal{H}}(\pi_t(\Sigma))\le \max\left\{\frac12,\frac{2}{k+\gamma-1}\right\}.$$
					In particular, if $\varphi\in C^\infty(B_1')$, then $\dim_{\mathcal{H}}(\pi_t(\Sigma))\le 1/2$.

                    \smallskip
					\item[iii)] If $n+1=4$, then $$\mathcal{H}^1(\pi_t(\Sigma))=0.$$

                    \smallskip
					\item[iv)] If $n+1\ge 5$, then $$\dim_{\mathcal{H}}(\Sigma_t)\le n-3\qquad\text{for a.e. }t\in (-1,0].$$
					\end{itemize}    
				\end{proposition}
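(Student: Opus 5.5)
The plan is to decompose $\Sigma=\Gamma_2\cup\Gamma_3$ (\cref{prop:onlyle3}) and further split $\Gamma_3=(\Gamma_3\setminus\Gamma_3^n)\cup(\Gamma_3^n\setminus\Gamma_3^{n,\ge k})\cup\Gamma_3^{n,\ge k}$. Each piece will be estimated separately with the geometric measure theory lemmas of \cref{geometricmeastheory}. The conclusion for $\Sigma$ then follows by finite (countable) unions, since Hausdorff dimension of a union is the maximum and null sets are preserved.

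The inputs are as follows.

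For $\Gamma_2$: $\dim_{\mathcal H}(\pi_x(\Gamma_2))\le n-1$, $\pi_x(\Gamma_2)$ is $(n-1)$-rectifiable (\cref{corollary:stratification-even}), and $\Gamma_2\in\text{Clean}(2)$ (\cref{prop:cleaning-gamma2}).

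For $E_1:=(\Gamma_3\setminus\Gamma_3^n)\cup(\Gamma_3^n\setminus\Gamma_3^{n,\ge k})$: $\dim_{\mathcal H}(\pi_x(E_1))\le n-1$ by \cref{prop:stratification-for-u} and \cref{prop:dimensional-bound-time-like}, and $E_1\in\text{Clean}(2+\eta)$ by \cref{prop:cleaning-odd}.

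For $\Gamma_3^{n,\ge k}$: $\dim_{\mathcal H}(\pi_x)\le n$ trivially, and it lies in $\text{Clean}(k+\alpha-1)$ for every $\alpha<\gamma$ (\cref{prop:cleaning-time-like}).

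Case $n=1$. \cref{lemma:gmt}a) with $\nu=0,\beta=2$ gives $\dim\pi_t(\Gamma_2)=0$, and likewise $\dim\pi_t(E_1)=0$. For $\Gamma_3^{n,\ge k}$, take $\nu=1$ and $\beta=k+\alpha-1>1$; this gives the bound $1/(k+\alpha-1)$, and letting $\alpha\uparrow\gamma$ yields $1/(k+\gamma-1)$. For smooth $\varphi$, apply this with every $k$ (or use $\Gamma_3^{n,\infty}\in\text{Clean}(\infty)$ and $\Gamma_3^n\setminus\Gamma_3^{n,\infty}$ having a $0$-dimensional projection) to get dimension $0$.

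Case $n=2$. For $\Gamma_2$, take $\nu=1,\beta=2$, giving $1/2$. For $E_1$, take $\nu=1,\beta=2+\eta$, giving $1/(2+\eta)<1/2$. For $\Gamma_3^{n,\ge k}$, take $\nu=2,\beta=k+\alpha-1\ge 3+\alpha>2$, giving $2/(k+\gamma-1)$ after letting $\alpha\uparrow\gamma$. In the smooth case, $\Gamma_3^{n,\infty}$ contributes dimension $0$ and the finitely-stratified remainder contributes less than $1/2$, so the maximum is $1/2$.

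Case $n=3$. For $\Gamma_2$, apply \cref{lemma:colombo} with $m=2$ and $\beta=2$, giving $\mathcal H^1(\pi_t(\Gamma_2))=0$. This is the borderline step, where \cref{lemma:gmt} alone would only give dimension $\le1$. For $E_1$, take $\nu=2,\beta=2+\eta>\nu$, giving dimension $<1$. For the whole of $\Gamma_3$ one can instead cite \cref{prop:sard}, since $n+1=4\le k$.

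Case $n\ge4$. Apply \cref{lemma:gmt}b). For $\Gamma_2$, with $\nu=n-1,\beta=2$, the slices have dimension $\le n-3$. For $E_1$, with $\beta=2+\eta$, the slices have dimension $\le n-3-\eta$. For $\Gamma_3$ as a whole, \cref{prop:sard} requires $n+1\le k$, which may fail. Instead, use $\Gamma_3^{n,\ge k}$ with $\nu=n$ and $\beta=k+\alpha-1\ge3+\alpha$: this gives slices of dimension $\le n-3-\alpha$ a.e., or empty slices outside a null set of times if $\beta>n$. The slice of $\Sigma$ is the union of these slices, so a.e. it has dimension at most $n-3$.

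The main obstacle is the borderline case $n+1=4$ for $\Gamma_2$, which genuinely needs the rectifiability statement through \cref{lemma:colombo} rather than the dimension counting of \cref{lemma:gmt}. The remaining work is bookkeeping of exponents, and checking that the $\alpha\uparrow\gamma$ limits are legitimate, since the sets $\Gamma_3^{n,\ge k}$ do not depend on $\alpha$.
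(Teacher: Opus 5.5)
Your proposal is correct and follows the paper's proof: the same three-piece splitting $\Sigma=\Gamma_2\cup(\Gamma_3\setminus\Gamma_3^{n,\ge k})\cup\Gamma_3^{n,\ge k}$, the same spatial-projection and cleaning inputs fed into \cref{lemma:gmt}, and \cref{lemma:colombo} for the borderline quadratic case $n+1=4$. The only difference is optional: for $n+1=4$ you cite \cref{prop:sard} for all of $\Gamma_3$ (valid since $k\ge 4$), whereas the paper applies \cref{lemma:gmt}a) to $\Gamma_3^{n,\ge k}$ with $\nu=3<k+\alpha-1$.
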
   
				
				\begin{proof} 
					By \cref{prop:onlyle3}, \eqref{eq:slitting-gamma3} and \eqref{eq:splitting-time-like}, we may use the splitting \be\label{eq:splitting-final}\Sigma= \Gamma_{2}\cup (\Gamma_{3}\setminus \Gamma_{3}^{n,\ge k}) \cup \Gamma_{3}^{n,\ge k}.\ee For each subset we use one of the GMT lemmas from \cref{geometricmeastheory}. 
					\begin{itemize}
						\item[(1)] \noindent \textit{The set \,$\Gamma_{2}$.} By \cref{corollary:stratification-even}, we know that $\pi_{x}(\Gamma_{2})$ is $(n-1)$-rectifiable and so $\dim_{\mathcal{H}}(\pi_{x}(\Gamma_{2}))\le n-1$. Moreover, by \cref{prop:cleaning-gamma2}, $\Gamma_{2}\in \text{Clean}(2)$. Then, if $n+1\le 3$, point $a)$ in \cref{lemma:gmt} gives $\dim_{\mathcal{H}}(\pi_{t}(\Gamma_{2}))\le (n-1)/2$. If $n+1=4$, \cref{lemma:colombo} gives $\mathcal{H}^{1}(\pi_{t}(\Gamma_{2}))=0$. Finally, if $n+1\ge 5$, point b) in \cref{lemma:gmt} gives $\dim_{\mathcal{H}}((\Gamma_{2})_{t})\le n-3$ for almost every $t\in(-1,0]$. 

                        \smallskip
						\item[(2)] \noindent \textit{The set \,$\Gamma_{3}\setminus \Gamma_{3}^{n,\ge k}$.} By \cref{prop:stratification-for-u} and \cref{prop:complete-time-like}, we have $\dim_{\mathcal{H}}(\pi_{x}(\Gamma_{3}\setminus \Gamma_{3}^{n,\ge k}))\le n-1$. Moreover, by \cref{prop:cleaning-odd}, $\Gamma_{3}\setminus \Gamma_{3}^{n,\ge k}\in \text{Clean}(2+\eta)$ for some $\eta\in (0,1)$. Then, if $n+1\le 4$, point $a)$ in \cref{lemma:gmt} gives $\dim_{\mathcal{H}}(\pi_{t}(\Gamma_{3}\setminus \Gamma_{3}^{n,\ge k}))\le (n-1)/(2+\eta)<1$. If instead $n+1\ge 5$, point b) in \cref{lemma:gmt} gives $\dim_{\mathcal{H}}((\Gamma_{3}\setminus \Gamma_{3}^{n,\ge k})_{t})\le n-3-\eta$ for almost every $t\in(-1,0]$.

                        \smallskip
						\item[(3)] \noindent \textit{The set $\Gamma_{3}^{n,\ge k}$.} We have the trivial dimensional bound $\dim_{\mathcal{H}}(\pi_{x}(\Gamma_{3}^{n,\ge k}))\le n$. Moreover, by \cref{prop:complete-time-like}, $\Gamma_{3}^{n,\ge k}\in \text{Clean}(k+\alpha-1)$, for any $\alpha \in (0,\gamma)$. Then, if $n< k+\gamma-1$, point $a)$ in \cref{lemma:gmt} gives $\dim_{\mathcal{H}}(\pi_{t}(\Gamma_{3}^{n,\ge k}))\le n/(k+\gamma-1)<1$. If instead $n\ge k+\gamma-1$, point b) in \cref{lemma:gmt} gives $\dim_{\mathcal{H}}((\Gamma_{3}^{n,\ge k})_{t})\le n-k-\gamma+1<n-3$ for almost every $t\in(-1,0]$.
					\end{itemize} 
					The conclusion follows by combining all the results obtained for each individual term in the splitting of $\Sigma$.  
				\end{proof}
				
				\begin{proof}[Proof of \cref{t:thm1}]
					The first part of the statement $\mathcal{H}^1(\pi_t(\Sigma))=0$ is obtained as a special case of the more general result proved in \cref{prop10}. By \cite[Theorem 11.3]{dgpt17}, the regular set $\Gamma\setminus \Sigma$ is locally an $n$-dimensional submanifold of $\{x_{n+1}=0\}$ of the form $x_{n}=g(x'', t)$, up to a rotation in the thin space, where $x'':=(x_{1},\ldots,x_{n-1})$. Moreover, by \cite[Theorem 11.6]{dgpt17}, $\nabla_{x''}g \in H^{\alpha,\alpha/2}$ for some $\alpha>0$. 
					In addition, if $\varphi\equiv0$, then $g$ is smooth, by \cite[Theorem 7]{smooth-parabolic-signorini}.
					As a consequence, for any $t_0\in (-1,0]\setminus \pi_{t}(\Sigma)$, the set $\{x\in B_{1}': (x,t_0)\in \Gamma\}= \{x\in B_{1}':(x,t_0)\in \Gamma\setminus \Sigma\}$ is an $(n-1)$-dimensional manifold of class $C^{1,\alpha}$ (or 
					$C^\infty$ if $\varphi\equiv0$) of the form $x_n=g(x'',t_0)$. 
				\end{proof}
				
				Finally, we prove \cref{thm:n-1andtime-like} on the fine splitting of the singular set. We prove below a more general result for $C^{k,\gamma}$ obstacles.

				\begin{proposition}\label{prop:general-kgamma}
					Let $u$ be a solution of \eqref{eq:parabolic-signorini-problem}, with obstacle $\varphi\in C^{k,\gamma}(B_1')$ satisfying \eqref{eq:ostacolo}. Let $\Gamma_3^{n,\ge k}\subseteq\Sigma$ be the set given by \cref{prop:complete-time-like}. Then 
					\be\label{eq:dim-bound-final}\text{dim}_{\text{par}}(\Sigma\setminus \Gamma_3^{n,\ge k})\le n-1.\ee
					Moreover:
					\begin{itemize} 
						\item[i)] if $n+1<k+\gamma$, then $$\text{dim}_{\mathcal{H}}(\pi_t(\Gamma_3^{n,\ge k}))\le \frac{n}{k+\gamma-1};$$ 

                        \smallskip
						\item[ii)] if $n+1\ge k+\gamma$, then $$\text{dim}_{\mathcal{H}}((\Gamma_3^{n,\ge k})_t)\le n-k-\gamma+1 \qquad \text{for a.e. $t\in (-1,0]$}.$$ 
					\end{itemize}
					In particular, if $\varphi\in C^\infty(B_1')$, then, calling $\Gamma_{3}^{n,\infty}:= \bigcap_{k\ge 4}\Gamma_{3}^{n,\ge k}$, we have
					\begin{equation*}
						\text{dim}_{\text{par}}(\Sigma\setminus \Gamma_3^{n,\infty})\le n-1\qquad \text{and}\qquad \text{dim}_{\mathcal{H}}(\pi_t(\Gamma_3^{n,\infty}))=0.
					\end{equation*}
				\end{proposition}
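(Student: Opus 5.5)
The plan is to use the splitting \eqref{eq:splitting-final}, namely $\Sigma=\Gamma_2\cup(\Gamma_3\setminus\Gamma_3^{n,\ge k})\cup\Gamma_3^{n,\ge k}$, and to apply the geometric measure theory lemmas of \cref{geometricmeastheory} to each piece. For \eqref{eq:dim-bound-final} it suffices to bound the parabolic dimension of $\Gamma_2$ and of $\Gamma_3\setminus\Gamma_3^{n,\ge k}$ separately, since the parabolic Hausdorff dimension of a finite union is the maximum. The tool is \cref{lemma:par-gmt} with $\nu=n-1$, which needs two inputs: a bound on the spatial projection, and cleaning with rate $2$.

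For $\Gamma_2$, \cref{corollary:stratification-even} gives $\dim_{\mathcal H}(\pi_x(\Gamma_2))\le n-1$, and \cref{prop:cleaning-gamma2} gives $\Gamma_2\in\text{Clean}(2)$. For $\Gamma_3\setminus\Gamma_3^{n,\ge k}$, write it as $(\Gamma_3\setminus\Gamma_3^n)\cup(\Gamma_3^n\setminus\Gamma_3^{n,\ge k})$. \cref{prop:stratification-for-u} bounds the projection of the first set by $n-1$, and \cref{prop:dimensional-bound-time-like} (equivalently \cref{prop:complete-time-like}) does the same for the second. Cleaning comes from \cref{prop:cleaning-odd}: $\Gamma_3\in\text{Clean}(2+\eta)$. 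This class is inherited by subsets and implies $\text{Clean}(2)$ locally, because $|x-x_0|^{2+\eta}\le|x-x_0|^2$ for $|x-x_0|<1$, so the forbidden region for rate $2$ is contained in that for rate $2+\eta$ after shrinking $\rho$. Applying \cref{lemma:par-gmt} to both pieces then yields \eqref{eq:dim-bound-final}.

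For items i) and ii), I would apply \cref{lemma:gmt} to $E=\Gamma_3^{n,\ge k}$ with $\nu=n$ (the trivial projection bound) and $\beta=k+\alpha-1$ for any $\alpha\in(0,\gamma)$, using \cref{prop:complete-time-like}. If $n+1<k+\gamma$, choose $\alpha$ close to $\gamma$ so that $\beta>n$; part a) then gives $\dim_{\mathcal H}(\pi_t(E))\le n/(k+\alpha-1)$, and letting $\alpha\uparrow\gamma$ gives i). If $n+1\ge k+\gamma$, then $\beta<k+\gamma-1\le n$ for every $\alpha<\gamma$, and part b) gives, for almost every $t$, the bound $n-k-\alpha+1$. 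Taking a sequence $\alpha_j\uparrow\gamma$ and a countable union of null sets of times gives ii).

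For a smooth obstacle, the inclusion $\Sigma\setminus\Gamma_3^{n,\infty}\subseteq(\Sigma\setminus\Gamma_3^{n,\ge 4})\cup\bigcup_{k\ge4}\bigcup_{m=3}^{k-1}\Gamma_3^{n,m}$ holds, and each $\Gamma_3^{n,m}$ has projection of dimension at most $n-1$ by \cref{prop:dimensional-bound-time-like} and lies in $\text{Clean}(2)$ as a subset of $\Gamma_3$. Countable stability of Hausdorff dimension therefore gives $\dim_{\text{par}}\le n-1$. Since $\Gamma_3^{n,\infty}\in\text{Clean}(\infty)$, part a) of \cref{lemma:gmt} with $\nu=n$ and arbitrarily large $\beta$ gives $\dim_{\mathcal H}(\pi_t(\Gamma_3^{n,\infty}))\le n/\beta\to0$. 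The main technical point is the bookkeeping: the $m$-strata must be defined consistently across different $k$, which holds because the expansion coefficients $\Pi_j$ do not depend on $k$ when $\varphi$ is smooth. The rest is direct invocation of the earlier results.
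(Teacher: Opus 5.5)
Your proposal is correct and follows essentially the same route as the paper: the splitting \eqref{eq:splitting-final}, \cref{lemma:par-gmt} with $\nu=n-1$ for $\Gamma_2$ and $\Gamma_3\setminus\Gamma_3^{n,\ge k}$, and \cref{lemma:gmt} with $\nu=n$ and $\beta=k+\alpha-1$ for $\Gamma_3^{n,\ge k}$. You also fill in details the paper leaves implicit: the limit $\alpha\uparrow\gamma$ with a countable union of null time sets, and the consistency of the modes $\Pi_j$ across different $k$ for smooth obstacles, which holds because $\Pi$ is defined through $\partial_{n+1}u$ on the thin space, where the $k$-dependent correction $q_k-\widetilde q_k$ has zero normal derivative, and such expansions on $K_r'$ are unique.
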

				\begin{proof}
					Let us first prove \eqref{eq:dim-bound-final}.
					By \cref{corollary:stratification-even}, \cref{prop:stratification-for-u} and \cref{prop:complete-time-like}, we know that $\pi_{x}(\Gamma_{2})$ and $\pi_{x}(\Gamma_{3}\setminus \Gamma_{3}^{n,\ge k})$ have both Hausdorff dimension at most $n-1$. In addition, by \cref{prop:cleaning-gamma2} and \cref{prop:cleaning-odd} they can both be cleaned up with rate $2$. Using \cref{lemma:par-gmt} we deduce that $\dim_{\text{par}}(\Gamma_{2}) $ and $\dim_{\text{par}}(\Gamma_{3}\setminus \Gamma_{3}^{n,\ge k})$ are at most $n-1$, which implies \eqref{eq:dim-bound-final} by the splitting \eqref{eq:splitting-final}.
					Finally, points $i)$ and $ii)$ were already proved in point (3) in the proof of \cref{prop10}.   
				\end{proof}
				\begin{proof}[Proof of \cref{thm:n-1andtime-like}]
					We take $\Sigma^{\infty}:=\Gamma_{3}^{n,\infty}$, where $\Gamma_{3}^{n,\infty}$ is the set given by \cref{prop:general-kgamma}. The bounds
					$\text{dim}_{\text{par}}(\Sigma\setminus \Sigma^\infty)\le n-1$ and $\text{dim}_{\mathcal{H}}(\pi_t(\Sigma^\infty))=0$
					follow immediately from \cref{prop:general-kgamma}. Finally, since $\Gamma_{3}^{n,\infty}\subseteq \Gamma_{3}^{n}$, \cref{prop:stratification-for-u} implies that $\Sigma^{\infty}$ is locally contained in a smooth $n$-dimensional manifold. This concludes the proof.  
				\end{proof}
				
				\subsection*{Acknowledgements}
				We would like to thank Bozhidar Velichkov and Xavier Fernández-Real for supporting the visits between the authors, which provided valuable opportunities to develop the collaboration underlying this work.
				M.C. also thanks Giacomo Colombo for a useful discussion about the Stefan problem.
                M.C. is supported by the European Research Council (ERC) via the project ERC FiRM - {\em Fine structure and regularity of stationary and moving free boundaries} (grant agreement No. 101230705). 
				R.C. is supported by the Swiss State Secretariat for Education, Research and Innovation (SERI) under contract number MB22.00034 through the project TENSE, and by the Swiss National Science Foundation (SNF grant PZ00P2\_208930).
				C.T.-L. has received funding from the European Research Council (ERC) under the Grant Agreement No. 862342, from AEI project PID2024-156429NB-I00 (Spain), and from the Grant CEX2023-001347-S funded by MICIU/AEI/10.13039/501100011033 (Spain).

                \subsection*{AI use disclosure}
                The authors used AI tools to proofread the manuscript, identify mathematical and typographical errors, assist with literature searches, and generate \cref{fig:thin-ice-model}. The authors take full responsibility for all content of this article.
				
				\printbibliography
			\end{document}